\tikzstyle{startstop} = [rectangle, rounded corners, minimum width=2.5cm, minimum height=0.5cm,text centered, text width=2cm, draw=black, fill=white!30]
\tikzstyle{startstop2} = [rectangle, rounded corners, minimum width=2.5cm, minimum height=0.5cm,text centered, text width=2cm, draw=black, fill=white!30]
\tikzstyle{startstop3} = [rectangle, rounded corners, minimum width=2.5cm, minimum height=0.5cm,text centered, text width=2.5cm, draw=black, fill=white!30]
\tikzstyle{arrow} = [thick,->,>=stealth]
\newtheorem{theorem}{Theorem}[section]
\newtheorem{lemma}[theorem]{Lemma}
\newtheorem{corollary}[theorem]{Corollary}
\newtheorem{proposition}[theorem]{Proposition}
\newtheorem{definition}[theorem]{Definition}
\newtheorem{example}[theorem]{Example}
\newtheorem{remark}[theorem]{Remark}
\newtheorem{conjecture}[theorem]{Conjecture}
\numberwithin{equation}{section}
\numberwithin{figure}{section}
\newcommand{\defeq}{\vcentcolon=}
\newcommand{\ul}[1]{\underline{#1}}
\newcommand{\ZZ}{\mathbb{Z}}
\newcommand{\RR}{\mathbb{R}}
\newcommand{\Z}{\mathbb{Z}}
\newcommand{\kg}{\mathfrak{g}}
\newcommand{\FF}{\mathbb{F}}
\newcommand{\CC}{\mathbb{C}}
\newcommand{\cY}{\mathcal{Y}}
\newcommand{\Cone}{\mathrm{Cone}}
\newcommand{\cL}{\mathcal{L}}
\newcommand{\Ham}{\mathrm{Ham}}
\newcommand{\Symp}{\mathrm{Symp}}
\newcommand{\crit}{\mathrm{crit}}
\newcommand{\cl}{\operatorname{cl}}
\newcommand{\Conf}{\operatorname{Conf}}
\newcommand{\Hilb}{\operatorname{Hilb}}
\newcommand{\Sym}{\operatorname{Sym}}
\newcommand{\length}{\operatorname{length}}
\newcommand{\Br}{\operatorname{Br}}
\newcommand{\AKh}{\operatorname{AKh}}
\newcommand{\Kh}{\operatorname{Kh}}
\newcommand{\CKh}{\operatorname{CKh}}
\newcommand{\Fix}{\operatorname{Fix}}
\newcommand{\id}{\operatorname{id}}
\def\eL{\EuScript{L}}
\newcommand{\CFhat}{\widehat{\mathit{CF}}}
\newcommand{\HFhat}{\widehat{\mathit{HF}}}
\newcommand{\CFKhat}{\widehat{\mathit{CFK}}}
\newcommand{\HFKhat}{\widehat{\mathit{HFK}}}
\newcommand{\HFLhat}{\widehat{\mathit{HFL}}}
\newcommand{\alphas}{{\boldsymbol \alpha}}
\newcommand{\betas}{{\boldsymbol \beta}}
\newcommand{\HFhatDual}{\HFhat^{\raisebox{-3pt}{$\scriptstyle *$}}}
\newcommand{\HFKhatDual}{\HFKhat^{\raisebox{-3pt}{$\scriptstyle *$}}}
\newcommand{\HFLhatDual}{\HFLhat^{\raisebox{-3pt}{$\scriptstyle *$}}}
\newcommand{\fix}{\mathrm{fix}}
\newcommand{\co}{\nobreak\mskip2mu\mathpunct{}\nonscript
  \mkern-\thinmuskip{:}\penalty300\mskip6muplus1mu\relax}
\begin{document}

\title
[Symplectic annular Khovanov homology and fixed point localizations]
{Symplectic annular Khovanov homology and fixed point localizations}

\author{Kristen Hendricks}
\address{Department of Mathematics\\Rutgers University\\  New Brunswick, NJ 08854}
\email{kristen.hendricks@rutgers.edu}
\thanks{KH was partially supported by NSF CAREER grant DMS-2019396 and a Sloan Research Fellowship.}

\author{Cheuk Yu Mak}
\address{School of Mathematical and Physical Sciences\\ University of Sheffield\\ Sheffield S10 2TN, UK\\}
\email{c.mak@sheffield.ac.uk}
\thanks{CYM was partially supported by the Royal Society University Research Fellowship and the University of Southampton FSS ECR Career Development Fund}

\author{Sriram Raghunath}
\address{Department of Mathematics\\Rutgers University\\  New Brunswick, NJ 08854}
\email{sr1508@math.rutgers.edu}
\thanks{SR was partially supported by NSF CAREER grant DMS-2019396.}

\begin{abstract}
We introduce a new version  of symplectic annular Khovanov homology and establish spectral sequences from (i) the symplectic annular Khovanov homology of a knot to the link Floer homology of the lift of the annular axis in the double branched cover; (ii) the symplectic Khovanov homology of a two-periodic knot to the symplectic annular Khovanov homology of its quotient; and (iii) the symplectic Khovanov homology of a strongly invertible knot to the cone of the axis-moving map between the symplectic annular Khovanov homology of the two resolutions of its quotient.
\end{abstract}

\maketitle

\tableofcontents

\section{Introduction}

\subsection{Background}

Khovanov homology is a link invariant first defined by Khovanov in 2000 \cite{Khovanov:Jones} which associates to $L \subset S^3$ and a choice of ring $R$ a bigraded module $\Kh(L; R)$, the Euler characteristic of which recovers the Jones polynomial. In its original form, it is defined combinatorially from a diagram of the link in a straightforward manner; however, despite this simplicity, it has proved to contain a wealth of geometric information and has been used for numerous deep applications, e.g. \cite{Rasmussen:Milnor, Ng:tb, LZ:ribbon, Olga:transverse, Piccirillo:Conway}.

Khovanov homology has connections to invariants from Floer theory, and in particular to Heegaard Floer homology. The first of these was discovered in 2004, when Ozsv{\'a}th and Szab{\'o} \cite{OS:branched} showed the existence of a spectral sequence
\begin{equation} \label{eq:ozsz} \widetilde{\Kh}(mL; \FF_2) \rightrightarrows \HFhat(\Sigma(L); \FF_2).\end{equation}
Here $\widetilde{\Kh}$ is a variant called the \emph{reduced} Khovanov homology, which has half the dimension of unreduced Khovanov homology, and $mL$ is the mirror of $L$. The $E_{\infty}$ page is isomorphic to $\HFhat(\Sigma(L); \FF_2)$, the simplest Heegaard Floer invariant of the double branched cover $\Sigma(L)$ of $L$.\footnote{This spectral sequence also has a lift to integral coefficients, with the following caveat. Khovanov homology has two non-equivalent sign assignments, the ``even'' and ``odd'' variations. The Oszv{\'a}th-Szab{\'o} spectral sequence has an integral lift to odd Khovanov homology \cite{ORS:odd}. Symplectic Khovanov homology corresponds to even Khovanov homology.}. For more on the Heegaard Floer theories, see Section~\ref{subsec:knotfloer}.

Inspired by connections between Khovanov homology and representation theory, in 2006 Seidel and Smith defined a conjectural Floer-theoretic analog of Khovanov homology, called the symplectic Khovanov homology $\Kh_{symp}(L)$ \cite{SS06}. Given a braid representation $b \in Br_n$ for a link $L$ and its Plat closure $b'=b\times 1^n$ in $Br_{2n}$, they consider a symplectic fibration over the configuration space of $2n$ distinct points in the plane. The fibre $\mathcal Y_n$ of the fibration contains a standard Lagrangian $\eL$; the authors construct a parallel transport action $\phi_{b'}$ on the fibration which lifts the action on the configuration space, and then let $\Kh_{symp}(L)$ be the Lagrangian Floer cohomology of $\eL$ and $\phi_{b'}(\eL)$ in $\mathcal Y_n$. (For more details on the construction, see Section \ref{s:SympAKh}.) Although it is harder to compute than combinatorial Khovanov homology, symplectic Khovanov homology has the advantage that its connections to Heegaard Floer homology and its behavior with respect to symmetries of knots and links can be understood in terms of the geometry of the theory. For example, shortly after its introduction Manolescu embedded $\mathcal Y_{n}$ as an open subset of a Hilbert scheme of an affine algebraic surface in a way which gave a clear connection to the Heegaard Floer homology of the double branched cover of $L$ \cite{Manolescu:nilpotent}. Using this and technical work in Lagrangian Floer cohomology, Seidel and Smith were able to produce an analog of the Ozsv{\'a}th-Szab{\'o} spectral sequence \cite{SS10}. (Since this time, many further analogs of the Ozsv{\'a}th-Szab{\'o} spectral sequence and generalizations have been extensively studied \cite{KM:KhUnknot, Scaduto:instantons, Daemi:gauge, SSS:geometric, BHL:functoriality, Dowlin:KhtoHFK}.)

In 2019, Abouzaid and Smith proved that over fields of characteristic zero, Khovanov homology and symplectic Khovanov homology are isomorphic in an appropriate sense \cite{AS16, AS19}. In particular, symplectic Khovanov homology is a singly-graded theory whereas Khovanov homology is a bigraded theory; Abouzaid and Smith show that
\[ \Kh_{symp}^i(L; \mathbb Q) \simeq \oplus_{j-q=i} \Kh^{j,q}(L; \mathbb Q) \]
(Symplectic Khovanov homology was later given a relative second grading over fields of characteristic zero by Cheng \cite{Cheng23}; his techniques do not extend to fields of finite characteristic.) It is also known that the theories agree over integer coefficients for quasi-alternating links, since both satisfy an unoriented skein exact triangle, tautologically in the case of Khovanov homology and by work of Abouzaid and Smith for symplectic Khovanov homology \cite[Section 7 and Appendix]{AS19}. The general relationship of the two theories, however, remains unknown.

Khovanov homology has an annular variation, as follows. Recall that an \emph{annular link} is the union of a link $L$ and an additional unknot component $A$ linked with $L$, called the annular axis (and usually suppressed from the notation). \emph{Annular Khovanov homology}, introduced by Asaeda, Przytycki, and Sikora \cite{APS:annular} and elaborated by L.~Roberts \cite{Roberts:knotfloer}, uses the data of the annular axis to induce an annular filtration on the Khovanov chain complex. The homology of the associated graded chain complex with respect to the filtration is the annular Khovanov homology $\AKh(L) = \oplus_{j, q, k} \AKh^{j,q,k}(K)$, a triply-graded theory. As with Khovanov homology, this complex may be computed from a diagram. The theory also has an interpretation in terms of the Hochschild homology of braid bimodules over extended arc algebras, established by Beliakova, Putyra, and Wehrli \cite{BPW:trace} following a conjecture of Auroux, Grigsby, and Wehrli \cite{AGW:sutured}. Annular Khovanov homology plays a key role in the understanding of equivariant versions of Khovanov homology; we discuss the details in Section~\ref{subsec:equivariant}. 

In 2019, the second author and Smith constructed an annular variant of symplectic Khovanov homology, defined over fields of characteristic zero, and showed that it agrees with combinatorial annular Khovanov homology over such fields \cite{MS22}. Their theory is inspired by the Hochschild homology interpretation of annular Khovanov homology; despite its technical elegance, understanding group actions on it geometrically is difficult. Moreover, while it is believed to be extendable to finite characteristic, a new suite of transversality arguments would be required in order to carry out such an extension \cite[Section 2]{MS22}.

In this paper, we construct a second, conceptually simple version of annular symplectic Khovanov homology $\AKh_{symp}(L)$, definable over any field. Our construction involves inducing a filtration on the Lagrangian Floer cochain complex $CKh_{symp}(L)$ via counting intersections of the pseudoholomorphic disks appearing in the differential with a holomorphic divisor $D_o \subseteq \mathcal Y_n$. This induces an annular filtration on the chain complex; the homology of the associated graded with respect to this filtration is the symplectic annular Khovanov homology $\AKh_{symp}(L)$. Invariance of our theory follows quickly from similar arguments to symplectic Khovanov homology.

\begin{theorem} \label{thm:invariant} The theory $\AKh_{symp}(L)$ is an annular link invariant. \end{theorem}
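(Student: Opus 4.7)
The plan is to upgrade the standard invariance proof for the (unfiltered) symplectic Khovanov complex to a \emph{filtered} invariance statement. Fix an almost complex structure $J$ compatible with the symplectic form on $\mathcal{Y}_n$ and for which $D_o$ is a $J$-holomorphic hypersurface, disjoint from both Lagrangians $\eL$ and $\phi_{b'}(\eL)$. By positivity of intersection, any Floer strip $u$ contributing to the differential on $CKh_{symp}(L)$ satisfies $u \cdot D_o \geq 0$. Defining the filtration level of each generator by counting intersections with $D_o$ along a reference path (or equivalently through the combinatorial annular data encoded in the braid presentation), this shows that the Floer differential is filtered; its associated graded is a well-defined chain complex whose cohomology is $\AKh_{symp}(L)$.

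To prove that this filtered chain homotopy type is independent of auxiliary Floer-theoretic data, I would first check that the space of compatible almost complex structures making $D_o$ holomorphic and satisfying the relevant convexity conditions of \cite{SS06} is nonempty and contractible. Any path in this space yields continuation maps, and homotopies between such continuation maps, which are themselves filtered by applying the positivity of intersection argument in the parametrized moduli spaces. Together with small Hamiltonian perturbations supported away from $D_o$, this gives invariance of the filtered chain homotopy type under the Floer-theoretic choices, and hence invariance of the associated graded homology.

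To establish independence of the annular braid presentation, I would verify that each annular Markov-type move --- conjugation by a braid not linking the annular axis, and positive/negative stabilization performed away from the axis --- can be realized either by a symplectomorphism of $\mathcal{Y}_n$ carrying $D_o$ to $D_o$, or by a stabilization embedding $\mathcal{Y}_n \hookrightarrow \mathcal{Y}_{n+1}$ that sends $D_o$ into $D_o$ compatibly with the filtration indexing. The Seidel--Smith invariance quasi-isomorphisms produced in \cite{SS06} can then be arranged to be strictly filtered, and so descend to isomorphisms on the associated graded. The main obstacle is the stabilization step: one must place the added pair of branch points and choose the accompanying Lagrangian handle so as to \emph{strictly} preserve the filtration rather than only up to filtered chain homotopy. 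This is the annular analog of the technical core of the Seidel--Smith invariance proof, and is where the geometric placement of $D_o$ relative to the stabilization region must be arranged most carefully; once this is done, the filtered invariance argument follows the same structure as the unfiltered one, and Theorem~\ref{thm:invariant} follows by passing to associated graded homology.
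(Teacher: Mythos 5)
Your proposal follows essentially the same route as the paper: define the annular theory via the filtration by intersection number with $D_o$ (positivity of intersection), reduce invariance to the annular Markov moves of Theorem~\ref{t:Markov}, and upgrade the Seidel--Smith invariance arguments to respect the filtration by checking that all handleslides, conjugation isotopies, and the Morse--Bott degeneration used for stabilization take place away from the origin and carry the annular divisor to itself. The "main obstacle" you flag at the stabilization step is exactly what Lemma~\ref{l:M2} resolves, by choosing the collision path $\gamma$ in the configuration space to avoid the origin so that the critical-locus identification matches annular divisors.
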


The second author and Smith establish a geometrically-constructed symplectic analog of the spectral sequence from annular Khovanov homology to ordinary Khovanov homology for their construction \cite[Section 12]{MS22}. For our theory, the corresponding theorem is immediate from the algebraic definitions.

\begin{theorem} \label{thm:akh-to-kh} There is a spectral sequence with $E_1$ page $\AKh_{symp}(L)$ and $E_{\infty}$ page isomorphic to $\Kh_{symp}(L)$, every page of which is an invariant of the annular link.
 \end{theorem}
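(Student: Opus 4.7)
The plan is to exhibit the desired spectral sequence as the standard one associated to a bounded filtered cochain complex. First, I would verify that the intersection number with $D_o$ induces a filtration $\cF^{\bullet}$ on $\CKh_{symp}(L)$ preserved by the Floer differential. The key input is that the almost complex structure used in the construction is arranged so that $D_o$ is a pseudoholomorphic divisor disjoint from $\eL$ and $\phi_{b'}(\eL)$; consequently any pseudoholomorphic strip contributing to the differential has non-negative intersection number with $D_o$, so the differential is filtration-preserving. The filtration is automatically bounded since the complex is finite-dimensional.

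Applying the standard spectral sequence of a bounded filtered cochain complex, the $E_0$ page is the associated graded $\mathrm{gr}^{\bullet}\CKh_{symp}(L)$, equipped with the component of the differential that preserves the filtration level---the count of pseudoholomorphic strips with zero intersection with $D_o$. By the very construction of $\AKh_{symp}(L)$, this is the complex whose cohomology computes the symplectic annular Khovanov homology, so $E_1 \cong \AKh_{symp}(L)$. Boundedness guarantees convergence to the associated graded of $H^{*}(\CKh_{symp}(L)) = \Kh_{symp}(L)$ for the induced filtration, which is abstractly isomorphic to $\Kh_{symp}(L)$ as an ungraded module.

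For the invariance of each $E_r$ page, I would upgrade the proof of Theorem~\ref{thm:invariant}: the Hamiltonian isotopies and families of almost complex structures that produce the chain homotopy equivalences between different models of $\CKh_{symp}(L)$ should be chosen so that $D_o$ remains pseudoholomorphic throughout. Then continuation strips have non-negative intersection with $D_o$ and the resulting equivalence is filtered, hence induces isomorphisms on every $E_r$ for $r \geq 1$. The main obstacle I anticipate is the parametrized transversality step: one must simultaneously constrain the family of almost complex structures so that $D_o$ stays pseudoholomorphic and still achieve transversality for the moduli spaces of continuation strips. This is handled by perturbing only away from a neighborhood of $D_o$ or through a standard relative transversality argument; once this input is in place, the remainder of the proof is purely homological.
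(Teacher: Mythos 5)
Your proposal is correct and follows essentially the same route as the paper: the paper also obtains the spectral sequence from the filtration on $CF(\cY_{n,\tau_o}, \eL_{\ul{c}_{up}}, \beta\eL_{\ul{c}_{up}})$ induced by intersections with $D_o$ (phrased via the relative winding grading / the $U$-power weighting), invokes standard homological algebra for a bounded filtered complex, and deduces invariance of every page from the fact that the Markov-move equivalences of Lemmas~\ref{l:M1} and \ref{l:M2} are filtered chain homotopy equivalences, since those isotopies are constructed in the complement of $D_o$. Your worry about parametrized transversality near $D_o$ is handled in the paper exactly as you suggest, by fixing the complex structure near $D_o$ and perturbing away from it.
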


We expect that our theory is isomorphic to the Mak-Smith formulation of symplectic annular Khovanov homology where both are defined, which is to say over characteristic zero, and thus is isomorphic to combinatorial annular Khovanov homology, but do not attempt to prove as much in the present paper. Section~\ref{sec:motivation} is devoted to an explanation of our motivation and the precise statement of this conjecture as Conjecture~\ref{con:isom}; we plan to investigate it in this paper's sequel.

Our primary motivation for this construction involves equivariant versions of Khovanov and symplectic Khovanov homology with respect to various intrinsic and extrinsic symmetries of the theories, which we now discuss.

\subsection{Equivariant symplectic Khovanov homologies} \label{subsec:equivariant} The symplectic manifolds and Lagrangians used to define symplectic Khovanov homology and the annular symplectic Khovanov homology of the present paper admit an $O(2)$-action. Considering the equivariant cohomology with respect to various subsets of this intrinsic action, and with respect to various actions induced by extrinsic symmetries on the link, has produced a variety of interesting relationships between theories and inspired others in combinatorial Khovanov homology.

We begin by reminding the reader of some details of Manolescu's reformulation of symplectic Khovanov homology \cite{Manolescu:nilpotent}, which is reviewed at greater length in Sections~\ref{subsec:nilpotent} and \ref{subsec:bridge}. Given a link $L$, one chooses a bridge diagram for $L$ in $\mathbb C$. If this bridge diagram has $n$ bridges, let the set of endpoints of the bridges be $\tau = [\tau_1, \dots, \tau_{2n}]$. Letting $p(z) = \prod_{i=1}^{2n} (z-\tau_i)$, one considers a Milnor fibre
\[ A_{\tau} = \{ (u,v,z) \subset \mathbb C^3 : u^2+v^2 = p(z)\}\]
together with the map $\pi \co A_{\tau} \rightarrow \mathbb C$ given by projecting onto the third coordinate. The symplectic manifold $\mathcal Y_n$ used to define $\Kh_{symp}(L)$ is a subset of $\Hilb^n(A_{\tau})$. The action of $O(2; \mathbb C)$ on $(u,v) \in \mathbb C^2$ induces a symplectic action of $O(2; \mathbb R)$ on $(\mathcal Y_n, \eL_0, \eL_1)$ the symplectic manifold and Lagrangians used in defining symplectic Khovanov homology. (The restriction to $O(2;\mathbb R)$ is necessary to preserve the Lagrangians). This action preserves the complement of the annular divisor, and therefore also induces an action on the symplectic manifold and (unchanged) Lagrangians used in defining our annular symplectic Khovanov homology.

The spectral sequences discussed in this section use a fixed point localization theorem for $\mathbb Z/2\mathbb Z$ actions on Lagrangian Floer cohomology due to Seidel and Smith \cite{SS10}. Given an exact symplectic manifold $M$ containing exact Lagrangians $\eL_0$ and $\eL_1$, and a symplectic involution $\iota \co M \rightarrow M$ fixing the Lagrangians setwise, under restrictive algebraic-topological conditions they show that there is a spectral sequence whose $E_1$ page is isomorphic to
\[ HF(M, \eL_0, \eL_1) \otimes \FF_2[\theta, \theta^{-1}]\]
where $\theta$ is a degree one generator of $H^*(\mathbb{RP}^{\infty})$ and  $\FF[\theta, \theta^{-1}] = \theta^{-1}H_*(\mathbb{RP}^{\infty})$, and whose $E_{\infty}$ page is isomorphic to
\[ HF(M^{fix}, \eL_0^{fix}, \eL_1^{fix}) \otimes \FF_2[\theta, \theta^{-1}]\]
where $(M^{fix}, \eL_0^{fix}, \eL_1^{fix})$ denote the fixed sets under $\iota$, which form a new exact symplectic manifold with exact Lagrangians if all three are connected. Their theory was later generalized by \cite{Large}. The spectral sequences in symplectic and annular symplectic Khovanov homology which follow all arise from Seidel-Smith localization; we specify the involutions as we go along, and leave precise discussion of the fixed sets and the hypotheses for the body of the paper.

\subsubsection{Intrinsic symmetries} As discussed above, Seidel and Smith \cite{SS10} established an analog of the Ozsv{\'a}th-Szab{\'o} spectral sequence from the Khovanov homology of a link $L$ to the Heegaard Floer homology of the branched double cover of (the mirror of) the link \cite{OS:branched} in symplectic Khovanov homology, and used it to prove that
\[\dim(\Kh_{symp}(L; \FF_2)) \geq 2\dim(\widehat{HF}(\Sigma(L);\FF_2)).\]
This spectral sequence uses the action induced by $(u,v,z) \mapsto (u,-v,z)$ on $\mathcal Y_n \subset \Hilb^n(A_{\tau})$. The target of their spectral sequence is the Lagrangian Floer cohomology of a triple $(M \setminus \nabla, \eL_0, \eL_1)$ where $(M, \eL_0, \eL_1)$ is a suitable symplectic manifold and Lagrangians for computing Heegaard Floer homology, and $\nabla$ is the \emph{anti-diagonal} divisor. For more on this divisor, see Section \ref{sec:knotfloer}.

We prove the following refinement in the annular theory, again using the action $(u,v,z) \mapsto (u,-v,z)$. In the discussion below, if $K$ is a nullhomologous knot in a three-manifold $Y$, then $\widehat{HFK}(Y,K)$ denotes the Heegaard Floer knot homology of the pair, due to Ozsv{\'a}th-Szab{\'o} and J. Rasmussen, whose definition is reviewed in Section~\ref{sec:knotfloer}, and similarly if $L$ is a nullhomologous link in $Y$ for $\widehat{HFL}(Y,L)$. 

\begin{theorem} \label{thm:knotfloer} Let $L$ be an annular link in $S^3$, and $A$ its annular axis. If the linking number of the annular axis with $L$ is odd, such that the lift $\widetilde{A}$ of $A$ in the double branched cover $\Sigma(L)$ is a knot, there is a dimension inequality
\[\dim (\AKh_{symp}(L; \FF_2)) \geq 2\dim (\widehat{HFK}(\Sigma(L), \widetilde{A}; \FF_2))\]
and if the linking number of the annular axis with the link is even, such that $\widetilde{A}$ is a two-component link, there is a dimension inequality
\[\dim (\AKh_{symp}(L; \FF_2)) \geq \dim (\widehat{HFL}(\Sigma(L), \widetilde{A}; \FF_2)).\]
 \end{theorem}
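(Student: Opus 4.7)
The plan is to apply the Seidel-Smith $\ZZ/2$-equivariant fixed-point localization theorem to the symplectic involution $\iota(u,v,z) = (u,-v,z)$ on $\mathcal Y_n$, working throughout within the filtered framework defined by the annular divisor $D_o$. The involution $\iota$ fixes the base configuration $\tau$ pointwise, and, after equivariant Hamiltonian perturbation, preserves the Lagrangians $\eL_0$ and $\eL_1$ setwise as well as the divisor $D_o$. This makes $CKh_{symp}(L)$ into an $\iota$-equivariant filtered complex, so that the localization theorem (in the version of Seidel-Smith or the later generalization of Large) produces a filtered spectral sequence with $E_1$ page $\AKh_{symp}(L;\FF_2)\otimes \FF_2[\theta,\theta^{-1}]$, converging to the filtered Lagrangian Floer cohomology of the fixed data tensored with the same Laurent ring.

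Next I would identify that fixed data with a model for the Heegaard Floer theory of the pair $(\Sigma(L), \widetilde A)$. The fixed locus of $\iota$ on $A_\tau = \{u^2+v^2=p(z)\}$ is the curve $\{v=0,\ u^2=p(z)\}$, a double cover of $\mathbb C$ branched over $\tau$, which, compactified as in the original Manolescu and Seidel-Smith arguments, provides a Heegaard surface for $\Sigma(L)$. The induced fixed subset of $\mathcal Y_n$, together with the fixed Lagrangians, is precisely the input for $\widehat{HF}(\Sigma(L))$ appearing as the target of the Seidel-Smith spectral sequence. The key new input is the restriction $D_o\cap \Fix(\iota)$: by construction of $D_o$ via the projection $(u,v,z)\mapsto z$ and the lift of the annular axis curve in the base, this restriction is the divisor in the symmetric product cut out by the preimage $\widetilde A$. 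Intersection counts with this divisor define the Alexander filtration on $\widehat{CF}(\Sigma(L))$, whose associated graded is $\widehat{HFK}(\Sigma(L),\widetilde A;\FF_2)$ when the linking number of $A$ with $L$ is odd and $\widetilde A$ is a single knot, and is $\widehat{HFL}(\Sigma(L),\widetilde A;\FF_2)$ when the linking number is even and $\widetilde A$ has two components.

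Passing to associated gradeds with respect to the annular filtration on both sides of the localization spectral sequence, and then cancelling the $\FF_2[\theta,\theta^{-1}]$ factor, yields the stated dimension inequalities via $\dim(E_1) \geq \dim(E_\infty)$. The factor of $2$ in the odd case parallels the analogous factor in the Seidel-Smith bound $\dim(\Kh_{symp}(L)) \geq 2\dim(\widehat{HF}(\Sigma(L)))$ and has the same origin; in the two-component case the additional basepoint pair in the Heegaard diagram for $\widetilde A$ absorbs this factor. The principal technical obstacles I expect are the verification of $\iota$-equivariant transversality compatibly with the annular filtration, the confirmation that the topological hypotheses of the localization theorem (stable normal triviality of the fixed Lagrangians, behavior at infinity in the complement of $D_o$) continue to hold in this annular setting, and making the chain-level identification of $D_o|_{\Fix(\iota)}$ with the Heegaard-Floer Alexander divisor precise enough to conclude a comparison of filtered complexes rather than merely of ungraded Floer cohomology.
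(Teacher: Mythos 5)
Your proposal follows essentially the same route as the paper's proof: the involution $\iota_D(u,v,z)=(u,-v,z)$ acting on the horizontal Hilbert scheme with the annular divisor deleted, identification of the fixed surface as the double cover of $\CC$ branched over $\tau$ with the two preimages of the origin serving as the extra pair of basepoints that encode the lift $\widetilde A$, and the observation that the resulting extra basepoint pair produces the factor of $2$ in the odd case and is absorbed when $\widetilde A$ has two components. The one step you gloss over is the anti-diagonal: the fixed locus of $\iota_D$ in the Hilbert scheme is not all of $\Sym^n(\Sigma_{\tau}\setminus\{w_1,w_2,z_1,z_2\})$ but its complement of the annular anti-diagonal divisor $\nabla_o$, so the $E_\infty$ page of the localization is the homology of the associated graded of the (annular) anti-diagonal filtration rather than the link Floer homology itself, and a second spectral sequence induced by that filtration is needed to reach $\HFLhat(\Sigma(L),\widetilde A)$, respectively $\HFKhat(\Sigma(L),\widetilde A)\otimes V$. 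Since the theorem asserts only a dimension inequality, chaining the two spectral sequences still yields the claim, and the stable normal trivialization you flag as a technical obstacle is obtained in the paper simply by restricting the non-annular one to the complement of the annular divisor.
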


This is the analog of a spectral sequence in (annular) Khovanov homology first described by L. Roberts \cite{Roberts:knotfloer} and further investigated by E. Grigsby and S. Wehrli \cite{GW:sutured}. As with Seidel and Smith's spectral sequence, the target of our spectral sequence is the Lagrangian Floer cohomology of a triple $(M \backslash \nabla_o, \eL_0, \eL_1)$, where $(M, \eL_0, \eL_1)$ are suitable manifolds for defining a version of link Floer homology, and $\nabla_o$ is again an anti-diagonal divisor. Note that these spectral sequences in fact relate the symplectic Khovanov homologies of the mirror $mL$ of the link $L$ with the Heegaard Floer homologies of the branched double cover $\Sigma(L)$ of the link, as in the Ozsv{\'a}th-Szab{\'o} spectral sequence, but one nevertheless has the rank inequalities above.

Theorem \ref{thm:knotfloer} has the following special case. Recall that any two-bridge knot is in particular two-periodic. If the annular axis $A$ is chosen to be the axis of periodicity, then the lifts $\widetilde{A}$ and $\widetilde{K}$ are isotopic in $\Sigma(K)$. (For a quick proof of this well-known fact, see Lemma~\ref{lemma:interchange}). We obtain the following.

\begin{corollary} \label{cor:two-bridge} Let $K$ be a two-bridge knot with annular axis $A$ given by the fixed set of its two-periodic symmetry. Then there is a spectral sequence with $E_1$ page isomorphic to
\[ \AKh_{symp}(mK; \FF_2) \otimes \FF_2[\theta, \theta^{-1}] \]
and with $E_{\infty}$ page isomorphic to
\[ \widehat{HFK}(\Sigma(K), \widetilde{K}; \FF_2) \otimes V \otimes \FF_2[\theta, \theta^{-1}],\]
where $\widetilde{K}$ denotes the lift of the knot in the branched double cover and $V$ is a two-dimensional vector space. It follows that there is a dimension inequality
\[\dim(\AKh_{symp}(K); \FF_2) \geq 2\dim(\widehat{HFK}(\Sigma(K), \widetilde{K}; \FF_2)).\]
\end{corollary}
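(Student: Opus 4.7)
The plan is to specialize Theorem \ref{thm:knotfloer} to two-bridge knots and use Lemma \ref{lemma:interchange} to identify the two relevant lifts. First, I would verify that when $K$ is two-bridge and $A$ is the axis of its two-periodic symmetry, the linking number of $A$ with $K$ is odd, so that Theorem \ref{thm:knotfloer} applies in its first case. This parity check is elementary once an explicit two-bridge diagram symmetric under the $\ZZ/2$-rotation is fixed: the knot winds around the axis an odd number of times as a direct consequence of the bridge structure. The lift $\widetilde A \subset \Sigma(K)$ is then a knot, and Lemma \ref{lemma:interchange} provides an ambient isotopy $\widetilde A \simeq \widetilde K$ in $\Sigma(K)$, so
\[ \widehat{HFK}(\Sigma(K),\widetilde A;\FF_2) \;\simeq\; \widehat{HFK}(\Sigma(K),\widetilde K;\FF_2). \]

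Second, rather than only invoking the dimension inequality from Theorem \ref{thm:knotfloer}, I would extract from its proof the full Seidel--Smith localization spectral sequence associated to the $\ZZ/2$-action $(u,v,z)\mapsto (u,-v,z)$ on $\mathcal{Y}_n$ in the annular setting. By the localization theorem, this is a spectral sequence of $\FF_2[\theta,\theta^{-1}]$-modules with $E_1$ page
\[ \AKh_{symp}(mK;\FF_2)\otimes \FF_2[\theta,\theta^{-1}], \]
converging to the Lagrangian Floer cohomology of the fixed Lagrangians in the fixed symplectic manifold, tensored with $\FF_2[\theta,\theta^{-1}]$. Under Manolescu's identification of $\mathcal Y_n$ with a subset of a Hilbert scheme, this fixed-locus Floer cohomology computes $\widehat{HFK}(\Sigma(K),\widetilde A;\FF_2)\otimes V$, where the tensor factor $V$ arises from one extra free basepoint on the Heegaard surface intrinsic to the symplectic presentation. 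Substituting $\widetilde A \simeq \widetilde K$ produces the stated $E_\infty$ page, and the rank inequality follows from $\dim_{\FF_2[\theta,\theta^{-1}]} E_\infty \le \dim_{\FF_2[\theta,\theta^{-1}]} E_1$ combined with the $\FF_2$-duality $\dim \AKh_{symp}(K) = \dim \AKh_{symp}(mK)$.

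The main obstacle is the identification of the fixed-locus Floer cohomology with $\widehat{HFK}(\Sigma(K),\widetilde A;\FF_2)\otimes V$: one must track the fixed points of the involution inside the Hilbert-scheme model, show they correspond to a Heegaard diagram for $(\Sigma(K),\widetilde A)$ together with exactly one additional free basepoint, and verify the Lagrangian Floer cochain complex on the fixed locus matches the combinatorial definition of $\widehat{HFK}\otimes V$. Once this identification is in hand, the remaining ingredients --- the parity of the linking number, the isotopy from Lemma \ref{lemma:interchange}, and the formal comparison $\dim E_\infty \le \dim E_1$ --- are essentially immediate.
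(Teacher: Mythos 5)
There is a genuine gap at the step you yourself flag as ``the main obstacle,'' and it is not the kind of obstacle that can be resolved by tracking fixed points and matching basepoints. The fixed locus of $\iota_D(u,v,z)=(u,-v,z)$ in the annular Hilbert scheme is not $\Sym^n(\Sigma_\tau\setminus\{w_1,w_2,z_1,z_2\})$ but rather its complement of the (annular) anti-diagonal divisor $\nabla_o$. Deleting $\nabla_o$ changes the Lagrangian Floer cohomology: what the fixed locus computes is the homology of the associated graded $\oplus_i H_*(\mathcal G_i/\mathcal G_{i+1})$ of the annular anti-diagonal filtration, not $\HFKhat(\Sigma(K),\widetilde A)\otimes V$ itself. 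For a general knot these two groups are related only by a further spectral sequence, and the claim that this further spectral sequence always collapses is the open Seidel--Smith/Tweedy conjecture. So the $E_\infty$ page you would obtain directly from Theorem~\ref{thm:knotfloer} is $\oplus_i H_*(\mathcal G_i/\mathcal G_{i+1})\otimes\FF_2[\theta,\theta^{-1}]$, and the content of the corollary is precisely that this equals $\HFKhat(\Sigma(K),\widetilde K)\otimes V\otimes\FF_2[\theta,\theta^{-1}]$ in the two-bridge case.

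The missing ingredient is therefore a proof that the anti-diagonal filtration is trivial for two-bridge knots. The paper supplies this in two ways: by citing Tweedy's theorem that the differential preserves the anti-diagonal filtration level for two-bridge knots, and by a direct argument: the branched double cover of a periodic two-bridge diagram over the four bridge endpoints is a \emph{nice} Heegaard diagram whose non-basepointed regions are all embedded rectangles, so by Sarkar--Wang every disk counted in the differential has an embedded rectangle as its shadow; such a rectangle lies entirely on the front or back half of the genus-one surface and hence cannot contain a pair of points $(u,0,z)$ and $(-u,0,z)$, so no differential meets $\nabla$. Without this step (or an equivalent), your argument only establishes the weaker statement with $E_\infty$ page the associated graded of the anti-diagonal filtration. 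The remaining ingredients in your outline --- the odd linking number, Lemma~\ref{lemma:interchange}, and the formal rank comparison --- do match the paper's proof.
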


This is the analog of a result of Tweedy for two-bridge knots in the context of Seidel and Smith's original spectral sequence \cite[Theorem 1.0.14]{Tweedy:anti-diagonal-reduced}; for more detail, see Section~\ref{sec:knotfloer}.

\subsubsection{Extrinsic symmetries} We now turn to interactions of our theory with various extrinsic symmetries of knots and links. Recall that a link $L\subseteq S^3$ is said to be \emph{two-periodic} if it is preserved setwise by an orientation-preserving $\mathbb Z/ 2\mathbb Z$-action on $S^3$ whose fixed set is an unknotted axis $A$ disjoint from the knot. A two-periodic link has a symmetric bridge diagram on $\mathbb C$ which is preserved by the action $z\mapsto -z$. In 2010 Seidel and Smith \cite{SS10} proved there is a spectral sequence
\begin{equation} \label{eq:ss} \Kh_{symp}(L; \FF_2) \otimes \mathbb F_2[\theta, \theta^{-1}] \rightrightarrows \Kh_{symp}(\overline{L}; \FF_2)\otimes \mathbb F_2[\theta, \theta^{-1}].\end{equation}

\noindent This spectral sequence uses the action induced by $(u,v,z) \mapsto (u,v,-z)$ on $\mathcal Y_n \subseteq \Hilb^n(A_{\tau})$ for a symmetric bridge diagram for $L$. There is no analog of this spectral sequence in combinatorial Khovanov homology; indeed, Seidel and Cornish \cite[Remark 2.5.2]{Cornish} independently observed that no such analog can preserve the quantum grading. We elaborate on Seidel's unpublished example in Section~\ref{subsec:Hopf}. However, in 2018 Stoffregen and Zhang proved the existence of a remarkable spectral sequence from the ordinary Khovanov homology of a periodic link to the \textit{annular} Khovanov homology \cite{SZ:localization}. To wit, they show that for a two-periodic link $L$ with quotient $\overline{L}$ there is a spectral sequence
\begin{equation} \label{eq:sz}
\Kh(L; \FF_2) \otimes \mathbb F_2[\theta, \theta^{-1}] \rightrightarrows \AKh(\overline{L}; \FF_2)\otimes \mathbb F_2[\theta, \theta^{-1}].
\end{equation}

\noindent This spectral sequence splits along the quantum grading in an appropriate sense. More generally, they show that a suitable version of the above spectral sequence holds for $p$-periodic links. The proof uses classical Smith theory on the Khovanov homotopy type of Lipshitz and Sarkar \cite{LS:homotopy, LLS:cube, LLS:products} (see also \cite{HKK:Khovanov}). Constructions of equivariant Khovanov homotopy types have also been done by Borodzik, Politarczyk, and Silvero \cite{BPS}, whose construction recovers the above spectral sequence using significantly different methods \cite[Theorems 1.2 and 1.3]{BPS}, and by Musyt \cite{Musyt:thesis}.

The apparent difference between the equivariant theories with respect to the action induced by a doubly-periodic link on the symplectic and combinatorial side has occasioned speculation that combinatorial and symplectic Khovanov homology might fail to agree over $\mathbb F_2$ coefficients. In this paper we show that we can in fact recover the analog of the Stoffregen-Zhang spectral sequence on the symplectic side, as follows, using the action induced by $(u,v,z) \mapsto (-u,-v,-z)$ on $\mathcal Y_n \subset \Hilb^n(A_{\tau})$ for a symmetric bridge diagram for $L$.

\begin{theorem}\label{thm:2periodic}
 Let $L$ be a 2-periodic link with quotient link $\overline{L}$. There is a spectral sequence whose $E_1$ page is isomorphic to
\[ \Kh_{symp}(L; \FF_2) \otimes \mathbb F_2[\theta, \theta^{-1}]\]
and whose $E_{\infty}$ page is isomorphic to \[\AKh_{symp}(\overline{L}; \FF_2)\otimes \mathbb F_2[\theta, \theta^{-1}].\]
\noindent It follows that 
\[ \dim \Kh_{symp}(L; \FF_2) \geq \dim \AKh_{symp}(\overline{L}; \FF_2).\] \end{theorem}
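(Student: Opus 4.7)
The strategy is to apply the Seidel--Smith fixed point localization theorem to the symplectic involution $\sigma$ on $\mathcal Y_{2m}$ induced by $(u,v,z)\mapsto(-u,-v,-z)$. I would start from a symmetric bridge diagram for $L$ whose $2m$ endpoints are paired as $\tau=\{\pm\tau_1,\ldots,\pm\tau_m\}$ under $z\mapsto -z$, with $0\notin\tau$. Since $p(z)=\prod_i(z^2-\tau_i^2)$ satisfies $p(-z)=p(z)$, the map $\sigma$ preserves $A_\tau=\{u^2+v^2=p(z)\}$ and descends to actions on $\Hilb^{2m}(A_\tau)$ and hence on $\mathcal Y_{2m}$. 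A symmetric choice of matching paths produces Lagrangians $\eL_0,\eL_1\subset\mathcal Y_{2m}$ each preserved by $\sigma$.

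First I would identify the fixed data explicitly. Because $\sigma$ acts freely on $A_\tau$ (as $0\notin\tau$), a $\sigma$-invariant $2m$-point configuration is a disjoint union of $m$ $\sigma$-orbits. The invariants $X=uz$, $Y=vz$, $Z=z^2$ exhibit $A_\tau/\sigma$ as the Milnor fibre $\{X^2+Y^2=Z\prod_i(Z-\tau_i^2)\}$. The $\mathcal Y$-condition that the $2m$ points have distinct $z$-projections, combined with freeness, forces the $m$ orbit representatives to have distinct and nonzero $Z$-values. This identifies the fixed locus $\mathcal Y_{2m}^{\sigma}$ with $\mathcal Y_m\setminus D_o$, where $\mathcal Y_m$ is the symplectic Khovanov manifold for the quotient link $\overline L$ (with bridge endpoints $\{\tau_1^2,\ldots,\tau_m^2\}$) and $D_o\subset\mathcal Y_m$ is the annular divisor. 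Under this identification the fixed Lagrangians $\eL_0^{\sigma}, \eL_1^{\sigma}$ match the Lagrangians defining $\AKh_{symp}(\overline L)$.

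Next I would verify the algebro-topological hypotheses of Seidel--Smith localization, in particular the existence of a stable normal trivialization of the inclusion $(\eL_i,\eL_i^{\sigma})\subset(\mathcal Y_{2m},\mathcal Y_{2m}^{\sigma})$. This is where I expect the main obstacle to lie: the involution used here differs from those in \cite{SS10} and \cite{MS22} in that it simultaneously negates the base coordinate $z$ and the fibre coordinates $(u,v)$, so the $(-1)$-eigenbundle of $d\sigma$ along the fixed locus must be analyzed afresh. The Hilbert scheme structure and the matching-path presentation of the Lagrangians should ultimately reduce the check to a manageable linear-algebra calculation on each $\sigma$-orbit.

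Granted the framing, Seidel--Smith localization yields a spectral sequence with
\[
E_1 \;\cong\; HF(\mathcal Y_{2m},\eL_0,\eL_1)\otimes\FF_2[\theta,\theta^{-1}] \;\cong\; \Kh_{symp}(L;\FF_2)\otimes\FF_2[\theta,\theta^{-1}]
\]
and
\[
E_\infty \;\cong\; HF(\mathcal Y_{2m}^{\sigma},\eL_0^{\sigma},\eL_1^{\sigma})\otimes\FF_2[\theta,\theta^{-1}] \;\cong\; \AKh_{symp}(\overline L;\FF_2)\otimes\FF_2[\theta,\theta^{-1}],
\]
the second isomorphism being the content of our definition of $\AKh_{symp}$ as Floer cohomology on $\mathcal Y_m\setminus D_o$. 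The rank inequality is then the standard consequence of the ranks of the $E_r$ pages being nonincreasing in $r$, together with the fact that tensoring by $\FF_2[\theta,\theta^{-1}]$ preserves this comparison on finite-rank pieces.
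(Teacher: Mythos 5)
Your proposal is correct and follows essentially the same route as the paper: your involution $\sigma$ is the paper's $\iota_B$, your invariant-theoretic identification of the fixed locus (via $X=uz$, $Y=vz$, $Z=z^2$, together with the observation that horizontality plus freeness of the action forces fixed subschemes off the zero fibre) is precisely the content of Lemma~\ref{l:fixed2} and Corollaries~\ref{c:A=A}--\ref{c:A=B}, and the concluding appeal to Seidel--Smith localization and the divisor-deleted model of $\AKh_{symp}$ is identical. The one step you flag but do not carry out, the stable normal trivialization, is handled in Section~\ref{sec:normal-trivs} not by a fresh eigenbundle analysis but by noting that $\iota_B$ extends to the nilpotent slice and preserves Seidel--Smith's local Morse--Bott coordinates, so their original equivariant trivialization argument applies essentially verbatim.
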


Most of the difficulty of proving Theorem ~\ref{thm:2periodic} lies in identifying the Floer cohomology of the fixed set. In the case of the involution $(u,v,z) \mapsto (u,v,-z)$ and the spectral sequence \eqref{eq:ss}, this step is straightforward; it is easy to see that the fixed symplectic manifold and fixed Lagrangians are those used in computing the symplectic Khovanov homology of the quotient link. In the case of the involution $(u,v,z) \mapsto (-u,-v,-z)$ and the spectral sequence of Theorem~\ref{thm:2periodic}, there is no such immediate identification, and we must analyze the geometry of the situation more carefully. We carry out this analysis of the fixed set in Section~\ref{ss:quotient}.

The reader may wonder why we recover only the analog of the spectral sequence (\ref{eq:sz}) for 2-periodic links, and not the more general results of \cite{SZ:localization} for $p$-periodic links. This is because Seidel and Smith's fixed point localization theorem for Lagrangian Floer cohomology \cite{SS10, Large} is presently only known for actions of the group $\mathbb Z/2\mathbb Z$. We expect that the techniques of Rezchikov's recent work for Hamiltonian Floer theory \cite{Rezchikov}, which recover a parallel localization result for fixed point Floer cohomology, could be combined with a Floer homotopy type for Lagrangian Floer cohomology to recover a $\mathbb Z/p\mathbb Z$-localization theorem for Lagrangian Floer cohomology. In that case, one would begin with a bridge diagram for a $p$-periodic link $L$ preserved under $z \mapsto \xi z$, where $\xi$ is a primitive $p$th root of unity. Using the change of coordinates 
\[x=u-iv \qquad \qquad  y=u+iv \qquad \qquad xy = u^2+v^2 = p(z),\] 
one would then consider the action $\mathcal Y_n$ induced by the action $(x,y,z) \mapsto (\xi x, \xi^{-1} y, \xi z)$ on $A_{\tau}$ in order to obtain the following conjecture. Recall that $H^*(B\mathbb Z_p; \mathbb F_p)$ is the ring $\mathbb F_p[\alpha, \theta]/(\alpha^2)$ where $\deg(\alpha) = 1$ and $\deg(\theta) = 2$.
\begin{conjecture} Let $L$ be a $p$-periodic link for $p$ an odd prime and $\overline{L}$ its quotient link, treated as an annular link with the axis of symmetry as the annular axis. There is a spectral sequence with $E_1$ page isomorphic to
\[ \Kh_{symp}(L; \mathbb F_p) \otimes \theta^{-1}H^*(B\mathbb Z_p; \mathbb F_p) \]
and $E_{\infty}$ page isomorphic to 
\[ \AKh_{symp}(\overline{L}; \mathbb F_p) \otimes \theta^{-1}H^*(B\mathbb Z_p; \mathbb F_p). \]
\end{conjecture}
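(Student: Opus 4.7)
The plan is to follow the blueprint outlined in the paragraph immediately preceding the conjecture and to mirror the proof of Theorem~\ref{thm:2periodic}, using a conjectural $\ZZ/p\ZZ$-equivariant fixed point localization theorem for Lagrangian Floer cohomology in place of the Seidel--Smith $\ZZ/2\ZZ$ localization. First one chooses a bridge diagram for $L$ symmetric under $z \mapsto \xi z$, producing a $\ZZ/p\ZZ$-invariant multiset of endpoints $\tau$; passing to the coordinates $x = u - iv$, $y = u + iv$ rewrites the Milnor fibre as $A_\tau = \{xy = p(z)\}$. One then takes the $\ZZ/p\ZZ$-action $(x,y,z) \mapsto (\xi^a x, \xi^b y, \xi z)$, with exponents $a,b$ chosen so that the defining equation $xy = p(z)$ is preserved; this lifts to a symplectic action on $\Hilb^n(A_\tau)$ and restricts to the Manolescu space $\cY_n$. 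A $\ZZ/p\ZZ$-equivariant parallel transport construction, leveraging the fact that the braid $b'$ can be chosen to commute up to isotopy with the periodic symmetry, produces an equivariant symplectomorphism $\phi_{b'}$ under which the Lagrangians $\eL_0$ and $\phi_{b'}(\eL_1)$ are preserved setwise.

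The second step, and the principal conjectural input, is to apply a $\ZZ/p\ZZ$-localization theorem for Lagrangian Floer cohomology to the equivariant triple $(\cY_n, \eL_0, \phi_{b'}(\eL_1))$. Such a theorem should produce a spectral sequence whose $E_1$ page is $\Kh_{symp}(L; \FF_p) \otimes \theta^{-1}H^*(B\ZZ_p; \FF_p)$ and which converges on the $E_\infty$ page to the Lagrangian Floer cohomology of the fixed triple, tensored with the same localized ring. Constructing this localization is the main obstacle of the proof. The suggested route is to combine Rezchikov's Smith-theoretic framework for Hamiltonian Floer cohomology with a Lagrangian Floer stable homotopy type, and to verify equivariant stable normal triviality conditions in the Manolescu setting analogous to those used by Seidel and Smith at $p=2$ and strengthened by \cite{Large}.

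The final step is to identify the fixed data with the geometric input computing $\AKh_{symp}(\bar L; \FF_p)$. A $\ZZ/p\ZZ$-fixed zero-dimensional subscheme of $A_\tau$ decomposes as a union of free $\ZZ/p\ZZ$-orbits together with contributions supported at the fixed locus $\{x = y = z = 0\}$; projecting the free orbits to the quotient $A_\tau / \ZZ_p$ identifies them with points of the Milnor fibre $A_{\bar\tau}$ associated to the quotient bridge diagram $\bar\tau$ of $\bar L$, and the branched covering $A_\tau \to A_{\bar\tau}$ is ramified precisely over the annular divisor $D_o = \pi^{-1}(0)$. Consequently the fixed component of $\cY_n$ should match a Manolescu space for $\bar L$ in which a prescribed number of points are constrained to lie on $D_o$, while the descended Lagrangians and quotient braid provide exactly the data computing $\AKh_{symp}(\bar L; \FF_p)$ of Theorem~\ref{thm:invariant}. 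Verifying that the fixed locus produces precisely the constraint along $D_o$ encoded by the annular filtration, and that the local contributions at the origin match correctly without introducing spurious strata, is the chief additional technical step beyond the $p=2$ case, and requires a careful analysis of the $\ZZ/p\ZZ$-fixed Hilbert scheme strata paralleling the computation underlying Theorem~\ref{thm:2periodic}.
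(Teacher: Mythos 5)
The statement you are addressing is stated in the paper as a \emph{conjecture}, and the paper offers no proof of it; the paragraph preceding it only sketches the strategy one would follow if the missing ingredient existed. Your proposal faithfully recapitulates that strategy (symmetric bridge diagram, the $(x,y)$ change of coordinates, an action of the form $(x,y,z)\mapsto(\xi^a x,\xi^b y,\xi z)$ on $A_\tau$, identification of the fixed locus of the induced action on the Hilbert scheme with the data computing $\AKh_{symp}(\overline{L})$ as in Lemma~\ref{l:fixed2} and Section~\ref{ss:quotient}), but it is not a proof. The central step — a $\mathbb Z/p\mathbb Z$ fixed-point localization theorem for Lagrangian Floer cohomology with the appropriate equivariant-transversality and stable-normal-triviality package — is precisely what does not currently exist: as the paper notes, the Seidel--Smith/Large machinery is only established for $\mathbb Z/2\mathbb Z$, and combining Rezchikov's Hamiltonian-Floer Smith theory with a Lagrangian Floer homotopy type is an open research program, not a verifiable step in an argument. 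Since you yourself label this "the principal conjectural input," the proposal should be understood as a strategy outline rather than a proof; the statement remains a conjecture.

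Two smaller points. First, your worry about "contributions supported at the fixed locus $\{x=y=z=0\}$" is misplaced: since $0\notin\tau$ one has $p(0)\neq 0$, so the origin does not lie on $A_\tau$ and the action $(x,y,z)\mapsto(\xi^a x,\xi^b y,\xi z)$ is free on $A_\tau$ (exactly as $\iota_B$ is free in the $p=2$ case); the fixed Hilbert scheme points are then pullbacks of length-$n$ subschemes from the quotient, and the horizontal condition forces their support off $\pi_A^{-1}(0)$, paralleling Lemma~\ref{l:fixed2}. Second, your insistence on general exponents $a,b$ with $\xi^{a+b}=\xi^{m}$ (where $m=|\tau|$) is actually a useful correction to the paper's stated action $(\xi x,\xi y,\xi z)$, which preserves $xy=p(z)$ only when $m\equiv 2\pmod p$; but this bookkeeping does not affect the fundamental gap above.
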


One would also immediately recover an analog of Seidel and Smith's original spectral sequence (\ref{eq:ss}) for $p$-periodic links using the action induced by $(u,v,z) \mapsto (u,v, \xi z)$.

We also consider the case of \emph{strongly invertible} knots. A knot $K \subseteq S^3$ is said to be \emph{strongly invertible} if it is preserved setwise by an orientation-preserving $\mathbb Z/2\mathbb Z$ action of $S^3$ fixing an unknotted axis $A$ which reverses the orientation of the knot; the intersection of the knot with the fixed axis consists of two points. Given a strongly invertible knot, we may associate to it a choice of half-axis, which is to say one of the two arcs into which $A$ is divided by its intersection with the knot. Given a choice of half-axis, one may form a \emph{quotient knot} $\overline{K}$ which is the union of the quotient of $K$ under the action together with the half-axis. In general, the two quotient knots associated to a strong inversion on a knot are non-isotopic. Recently, following a similar strategy to Stoffregen and Zhang, Lipshitz and Sarkar used the Khovanov homotopy type to construct a localization spectral sequence for the Khovanov homology of a strongly invertible knot \cite{LS:strongly_invertible}. For a strongly invertible knot $K$, the $E_1$ page of their spectral sequence is isomorphic to $Kh(K; \FF_2) \otimes \FF_2[\theta, \theta^{-1}]$. The final page is somewhat more intricate, as follows: in the language introduced by Boyle and Issa \cite[Definition 3.3]{BI:equivariant-genera}, consider an \emph{intravergent} diagram for the strongly invertible knot, that is, a diagram such that the axis of rotation is orthogonal to the page and passes through a crossing at the center of the diagram. An intravergent diagram naturally distinguishes a preferred half-axis by choosing the short segment connecting the two strands of the crossing passing through the plane of the diagram. Moreover, the two possible resolutions of the center crossing induce two presentations of the quotient knot as an annular knot, which we will denote $\overline{K}_0$ and $\overline{K}_1$, both of whose underlying non-annular knot is $\overline{K}$. We describe these annular resolutions in more detail in Section~\ref{sec:symmetry}. The annular Khovanov homologies of these annular knots are related by an \emph{axis-moving map}, here denoted $f$. The $E_{\infty}$ page of the Lipshitz-Sarkar spectral sequence is 
\[ \Cone \left(\AKh(\overline{K}_1; \FF_2) \xrightarrow{f} \AKh(\overline{K}_0; \FF_2)\right) \otimes \mathbb F_2[\theta, \theta^{-1}].\]

A strongly invertible knot has a symmetric bridge diagram on $\mathbb C$ fixed by the action $z \mapsto -z$, with an odd number of bridges, one of which is sent to itself by the action. The analog of the axis-moving map in symplectic annular Khovanov homology is the $U=0$ truncation of the filtered continuation map associated to a Hamiltonian isotopy over the annular divisor. Because the Lipshitz-Sarkar conventions for Khovanov homology are dual to the standard conventions used in this paper \cite[Section 2]{LS:strongly_invertible}, our axis-moving map goes from $\AKh_{symp}(\overline{K}_0; \FF_2) \xrightarrow{f} \AKh_{symp}(\overline{K}_1; \FF_2)$. We prove the existence of the following spectral sequence, using the action induced by $(u,v,z) \mapsto (u,v,-z)$ on $\mathcal Y_n \subset \Hilb^n(A_{\tau})$ for a symmetric bridge diagram for $K$.

\begin{theorem}\label{thm:stronglyinvertible} There is a spectral sequence whose $E_1$ page is isomorphic to 
\[\Kh_{symp}(K; \FF_2) \otimes \mathbb F_2[\theta, \theta^{-1}]\]
and whose $E_{\infty}$ page is isomorphic to
\[\mathrm{Cone}\left(\AKh_{symp}(\overline{K}_0; \FF_2) \xrightarrow{f} \AKh_{symp}(\overline{K}_1; \FF_2)\right)\otimes \mathbb F_2[\theta, \theta^{-1}]\]
\noindent along with a corresponding dimension inequality. 
\end{theorem}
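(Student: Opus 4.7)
The strategy is to apply the Seidel-Smith fixed-point localization theorem, in the form extended by \cite{Large}, to the $\mathbb Z/2\mathbb Z$-action on $\mathcal Y_n$ induced by $(u,v,z)\mapsto (u,v,-z)$ for a symmetric bridge diagram of $K$, and to identify the resulting fixed-set Floer cohomology with $\Cone(f)$. The high-level structure of the argument parallels that of Theorem \ref{thm:2periodic}; the essential new feature is that the odd number of bridges in a symmetric diagram for a strongly invertible knot forces one bridge to be fixed setwise by the involution, and it is this distinguished bridge that produces the cone structure on the $E_\infty$ page rather than a single annular invariant.

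Starting from an intravergent diagram for $K$ with central crossing, I would first select a symmetric bridge presentation on $\mathbb C$ with $2m+1$ bridges invariant under $z\mapsto -z$: $m$ pairs of bridges exchanged by the involution, together with one distinguished axial bridge through the origin. The plat closure braid $b'$ can be chosen equivariantly, so $\phi_{b'}$ commutes with the induced symplectic involution on $\mathcal Y_n$, and after an equivariant Hamiltonian isotopy both $\eL_0$ and $\phi_{b'}(\eL_1)$ are setwise invariant. I would then verify the hypotheses of the localization theorem, most significantly the stable normal trivialization along the fixed Lagrangians, by the same sort of construction used in \cite{SS10} to produce the spectral sequence (\ref{eq:ss}). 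The localization theorem then supplies a spectral sequence with $E_1$ page $\Kh_{symp}(K;\FF_2)\otimes \FF_2[\theta,\theta^{-1}]$ and $E_\infty$ page
\[ HF\bigl(\mathcal Y_n^{\fix},\; \eL_0^{\fix},\; \phi_{b'}(\eL_1)^{\fix}\bigr)\otimes \FF_2[\theta,\theta^{-1}]. \]

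The main geometric work is the identification of this fixed Floer cohomology with $\Cone(f)$. The fixed locus of the involution on $A_\tau$ is the smooth conic $\{z=0,\; u^2+v^2=p(0)\}$, and an invariant length-$n$ subscheme in $\mathcal Y_n$ decomposes as an orbit-paired configuration on the $m$ swapped bridges, which naturally yields a bridge diagram for the quotient knot $\bar K$, together with a contribution supported on the axial matching sphere. The axial matching sphere meets the fixed fiber in a vanishing $S^1$; once the annular divisor $D_o$ is placed near the origin as in our definition of $\AKh_{symp}$, this $S^1$ can sit on either of the two sides of $D_o$. These two positions correspond precisely to the two annular presentations $\bar K_0$ and $\bar K_1$ of the quotient knot arising from the two resolutions of the central crossing, and the Hamiltonian isotopy which pushes $D_o$ across this $S^1$ is exactly the one whose $U=0$ truncation defines the axis-moving map $f$. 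I would then argue that the fixed Floer theory carries a natural two-step filtration whose associated graded pieces are $\AKh_{symp}(\bar K_0;\FF_2)$ and $\AKh_{symp}(\bar K_1;\FF_2)$ with connecting differential $f$, yielding the cone.

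The hard part will be this last identification: establishing rigorously that the fixed axial matching sphere contributes exactly this two-step filtration and that the connecting map agrees on the nose with $f$, rather than merely with some cohomologous or related continuation map. This should follow from a careful analysis of a local model near $\{z=0\}$, tracking the interaction of the vanishing $S^1$, the invariant Lagrangians coming from the other bridges, and the position of $D_o$ under the relevant perturbations. Once this identification is established, the dimension inequality is an immediate consequence of the usual rank estimate applied to the spectral sequence.
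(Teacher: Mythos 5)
Your overall strategy --- localize with respect to the involution induced by $(u,v,z)\mapsto(u,v,-z)$ on an intravergent symmetric bridge diagram, verify the stable normal trivialization as in \cite{SS10}, and then identify the fixed-set Floer cohomology with the cone of the axis-moving map --- is the same as the paper's. The gap is in the final identification. You propose that the fixed-set Floer complex carries a two-step filtration whose associated graded pieces are the full complexes $CF_{ann}$ for $\overline{K}_0$ and $\overline{K}_1$ with connecting differential $f$, i.e.\ that the fixed complex \emph{is} the cone. A rank count already rules this out: a fixed generator upstairs must contain exactly one of the two lifts $\lambda_0,\lambda_1$ of the central intersection point, and its remaining components must be a symmetric configuration; these biject only with the generators downstairs containing the intersection point nearest the origin (the ``type-f'' generators), not with all generators of the two annular complexes. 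So the fixed complex is strictly smaller than the cone complex, and no filtration argument of the kind you describe can produce $f$ on the nose.

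What is actually needed, and what the paper supplies, is a splitting statement on the algebraic side. One shows that the type-f generators form a subcomplex $C_0$ of $CF_{ann}(\overline{K}_0)$ and a quotient complex $C_1$ of $CF_{ann}(\overline{K}_1)$ (via the tautological correspondence and the open mapping theorem applied to strips that would have to cross $D_o$), and that for a sufficiently small isotopy of the undercrossing arc the axis-moving map is the identity on the complementary generators and zero on the type-f ones. From this the connecting map in the resulting long exact sequence vanishes and $H(\Cone(f))\simeq H(C_0)\oplus H(C_1)[-1]$. Separately, an analysis of holomorphic strips in the fixed set (again via the tautological correspondence: each strip either contains one of the two strips in $\pi_A^{-1}(0)$, and these cancel in pairs, or is constant at $\lambda_0$ or $\lambda_1$ and lifts a strip from the quotient) identifies the fixed complex with $C_0\oplus C_1[-1]$ with the direct-sum differential. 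Only after both steps do the two sides agree. Your plan to match the connecting differential of a filtration on the fixed set with $f$ directly would fail, because $f$ itself never appears in the fixed-set differential; the comparison goes through the auxiliary complexes $C_0$ and $C_1$ and the degeneration of the cone.
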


Indeed, in our case we are able to show that the kernel and cokernel of $f$ are canonically chain homotopy equivalent, and therefore the homology of the mapping cone decomposes as a direct sum of the homology of two isomorphic chain complexes offset by a grading shift. As far as we are aware this result is not known for combinatorial Khovanov homology (see Remark \ref{r:new}).

In some respects, our spectral sequence is somewhat simpler to construct than the Lipshitz-Sarkar sequence, as it uses properties of the continuation map associated to an isotopy in Lagrangian Floer cohomology which are well-understood. However, the identification of the fixed set as a mapping cone requires some delicate curve-counting, carried out in Section \ref{s:strongly}.

\begin{remark} \label{rem:asymmetry} The reader familiar with equivariant Khovanov homology may find the difference in involutions used in constructing our analogs of the Stoffregen-Zhang and Lipshitz-Sarkar spectral sequences surprising, as the constructions of the involutions on the Khovanov homotopy type are very similar. This is an artifact of the fact that a symmetric diagram for a 2-periodic knot necessarily has an even number of bridges and a symmetric diagram for a strongly invertible knot necessarily has an odd number of bridges; if one writes the involutions in terms of the coordinates used in Seidel and Smith's original formulation of symplectic Khovanov homology, the two maps more clearly agree. We discuss this coordinate system in the Appendix.
\end{remark}

\begin{remark} We expect that it should be straightforward to use the techniques of \cite{HLS:flexible} to show that the spectral sequence of Theorems~\ref{thm:knotfloer}, \ref{thm:2periodic}, and \ref{thm:stronglyinvertible} are invariants of the underlying knot or link, in the second two cases together with the data of the extrinsic symmetry.
\end{remark}

\subsubsection{A summary table}

The three involutions discussed above will soon acquire the following names.
\[ \iota_D(u,v,z) = (u,-v,z) \qquad \qquad \iota_A(u,v,z)=(u,v,-z) \qquad \qquad \iota_B(u,v,z)=(-u,-v,-z).\]
The motivation behind this notation is as follows: $A$ and $B$ are a pair of options which exist for any two-periodic link or strongly-invertible knot. The choice of $D$ for the remaining action, which exists for an arbitrary link, is meant to emphasize its connection to the double branched cover.

With this in mind, we collect the spectral sequences discussed in this paper into the following table. The term $\FF_2[\theta, \theta^{-1}]$ is omitted throughout. If $H(X)$ is the homology of some chain complex, we let $gH(X)$ indicate the homology of the associated graded of some filtration on the complex, in both cases below the appropriate version of the anti-diagonal filtration. For the cases where Heegaard Floer homology appears, we omit possible factors of two-dimensional vector spaces. The abbreviation ``s.i.'' stands for strongly invertible. More precise references to the combinatorial Khovanov analogs of our spectral sequences are given in the discussion surrounding the relevant theorems. Finally, for discussion of the unusual-seeming entry in the fourth row, see Remark~\ref{rem:either}.

\begin{center}
\resizebox{\textwidth}{!}{%
\def\arraystretch{1.5}
\begin{tabular}{|c|c|c|c|c|c|} \hline
Involution & Symmetry & First Page & Final Page & Reference & Analog in Kh \\ \hline
$\iota_D$ & - & $\Kh_{symp}(mL)$ & $g\HFhat(\Sigma(L))$ &  \eqref{eq:ssdbc}; \cite{SS10} & \eqref{eq:ozsz}; \cite{OS:branched} \\ \hline
$\iota_D$ & - & $\AKh_{symp}(mL)$ & $g\HFLhat(\Sigma(L), \widetilde{A})$ &  Theorem \ref{thm:knotfloer} & \cite{Roberts:knotfloer} \\ \hline
$\iota_A$ & 2-periodic & $\Kh_{symp}(L)$ & $\Kh_{symp}(\overline{L})$ & \eqref{eq:ss}; \cite{SS10} & - \\ \hline
$\iota_A$ or $\iota_B$ & 2-periodic & $\AKh_{symp}(L)$ & $\AKh_{symp}(\overline{L})$ & Proposition \ref{p:2periodicEasy} & \cite{Zhang:annular} \\ \hline
$\iota_B$ & 2-periodic & $\Kh_{symp}(L)$ & $\AKh_{symp}(\overline{L})$ & Theorem~\ref{thm:2periodic} & \eqref{eq:sz}; \cite{SZ:localization, BPS} \\ \hline 
$\iota_A$ & s.i. & $\Kh_{symp}(K)$ & $Cone(\AKh_{symp}(\overline{K}_0 \rightarrow \overline{K}_1))$ & Theorem~\ref{thm:stronglyinvertible} & \cite{LS:strongly_invertible} \\ \hline
\end{tabular}}

\end{center}

\subsection*{Organization} This paper is organized as follows. In Section \ref{sec:background} we introduce certain necessary background topics; specifically, Section \ref{sec:braid} reviews the theory of annular braids, Section \ref{subsec:knotfloer} contains a brief rundown of Heegaard Floer homology for three-manifolds and knots within them, Section \ref{sec:localization} reviews Seidel and Smith's localization theorem for $\mathbb Z / 2 \mathbb Z$-equivariant Lagrangian Floer cohomology, and Section \ref{sec:symmetry} briefly recalls relevant aspects of knot symmetry. In Section \ref{s:SympAKh} we introduce our new annular symplectic Khovanov homology. Specifically, in Section~\ref{sec:geometry} we discuss the geometric inputs of the theory, and in Section~\ref{sec:floer} we touch on some necessary aspects of Floer theory and define our $\AKh_{symp}$. In Section~\ref{sec:invariance} we prove invariance of the theory and discuss its basic properties, including defining an absolute winding grading and proving Theorem~\ref{thm:akh-to-kh}. In Section~\ref{ss:quotient} we introduce two additional link invariants which are useful for studying equivariance. Finally, in Section~\ref{subsec:bridge} we review important aspects of symplectic Khovanov homology for bridge diagrams and their application to our setting. We then proceed to investigate the relationship of our theory to various order-two actions. In particular, we set up and prove Theorems \ref{thm:knotfloer}, \ref{thm:2periodic} and \ref{thm:stronglyinvertible} in Sections \ref{sec:knotfloer}, \ref{s:2periodic} and \ref{s:strongly} respectively, in all cases modulo checking the existence of stable normal trivializations, which we carry out for all of the relevant manifolds in Section \ref{sec:normal-trivs}. Note that Section~\ref{s:2periodic} concludes with discussing the example of the periodic Hopf link in Section~\ref{subsec:Hopf}, and Section~\ref{s:strongly} concludes in Section~\ref{subsec:examples} with several computations of annular symplectic Khovanov homology and the mapping cone of the axis-moving map from Theorem~\ref{thm:stronglyinvertible}. Finally, in the Appendix we compare the actions on symplectic Khovanov homology considered in this paper as they appear on the coordinates of Seidel and Smith's original formulation of the theory and Manolescu's reformulation, with (inter alia) the goal of explaining the apparent asymmetry in the constructions of this paper.

\subsection*{Conventions} For the reader's convenience, we gather here the following conventions. 

\begin{itemize}
\item We use the ``cohomological'' convention for Lagrangian Floer theory, counting strips as described in Equation \eqref{eq:strips} in Section~\ref{sec:floer}. The reader familiar with Heegaard Floer theory, which ordinarily uses the opposite convention, may wish to skim Section \ref{subsec:knotfloer} for the implications of this convention.

\item Throughout, the terms ``ordinary'' and ``annular'' are antonyms, and the terms ``combinatorial'' and ``symplectic'' are antonyms. 

\item For notational simplicity, we refer to the version of annular symplectic Khovanov homology introduced in this paper as $\AKh_{symp}(L)$ and the original Mak-Smith formulation as $\AKh^{HH}_{symp}(L)$, referencing its roots in Hochschild homology.

\item Throughout, $mK$ denotes the mirror of a knot $K$, and $\overline{K}$ indicates the quotient of a knot $K$ under some action, and similarly for links.

\item We reserve $L$ to denote links, whereas Lagrangian submanifolds are denoted by $\eL$ if not otherwise named.

\item From this point forward, most theories are taken with coefficients in $\mathbb F_2$, and this field is suppressed from the notation where it is used.
\end{itemize}

\subsection*{Acknowledgments} We thank Mohammed Abouzaid, Mina Aganagic, Keegan Boyle, Robert Lipshitz, Aakash Parikh, Ivan Smith, Matt Stoffregen, Catharina Stroppel, Ben Webster, Melissa Zhang, and Peng Zhou for helpful conversations, in some cases over many years. This work was inspired by discussions at an AIM Workshop on ``Floer theory of symmetric products and Hilbert schemes'' in December 2022 (of which the first two authors were co-organizers), and was partially carried out at the Clay Mathematics Institute during the conference ``Gauge Theory and Topology: In Celebration of Peter Kronheimer’s 60th Birthday'' in July 2023. We are grateful to both institutions for their hospitality. Finally, we thank the anonymous referees for their careful reading and helpful comments.

\section{Background} \label{sec:background}

In this section we review some background topics useful to our constructions.

\subsection{Annular braids and the Markov theorem} \label{sec:braid}

Recall from the introduction that an \emph{annular link} $L$ is a link together with an unknot component linked with $L$, the \emph{annular axis}. Equivalently, if $H=(\mathbb{R}^2 \setminus \{0\}) \times \mathbb{R}$, an \emph{annular link} $L$ is a smooth embedding of a disjoint union of circles into $H$. One has analogs of many of the basic theorems of knot theory in $S^3$ for annular links. We here review certain of these theorems which we will need.

Let $\vec{n} = \{1,2, \dots, n\} \subseteq \mathbb R$. An \emph{annular $n$-strand braid} in $(\mathbb{R}^2 \setminus \{0\}) \times [0,1]$ is a smooth embedding \[f \co \coprod_{i=1}^n [0,1] \to (\mathbb{R}^2 \setminus \{0\}) \times [0,1]\]  such that the image of $f$ contains $\vec{n} \times \{0,1\}$, and after post-composing $f$ with projection $p_2$ to the second factor, the map \[p_2 \circ f \co \coprod_{i=1}^n [0,1] \to [0,1]\] is a covering map. Given this set-up, we say that the annular $n$-strand braid starts at $\vec{n} \times \{0\}$ and ends at $\vec{n} \times \{1\}$.
An \emph{isotopy} of an annular $n$-strand braid indicates an isotopy relative to the endpoints $\vec{n} \times \{0,1\}$. An \emph{annular braid} is an annular $n$-strand braid for some $n \in \mathbb{N}$.

As in the case of non-annular braids, $n$-strand annular braids have an obvious composition operation via stacking, which preserves isotopy equivalence. If $\Br_{1,n} =\pi_1(\Conf^n(\mathbb{R}^2 \setminus \{0\}),\vec{n})$ is the fundamental group of the configuration space of $n$ unordered distinct points in $\mathbb{R}^2 \setminus \{0\}$ based at the point $\vec{n}$, then
elements $\beta \in \Br_{1,n}$ are in natural bijective correspondence with isotopy classes of $n$-strand braids in $(\mathbb{R}^2 \setminus \{0\}) \times [0,1]$ which start at $\vec{n} \times \{0\}$ and end at $\vec{n} \times \{1\}$, and the composition laws also agree. We henceforth treat these two groups as interchangeable.

The closure of an annular braid is the annular link, well-defined up to isotopy, obtained by connecting $\{i\} \times \{0\}$ with $\{i\} \times \{1\}$ for all $i \in \vec{n}$ in the standard way depicted in Figure \ref{fig:closure}.
For any $\beta \in \Br_{1,n}$, we use $\cl(\beta)$ to denote the isotopy class of the annular link obtained by taking closure of the annular braid corresponding to $\beta$.

\begin{figure}[ht]
\scalebox{.5}{
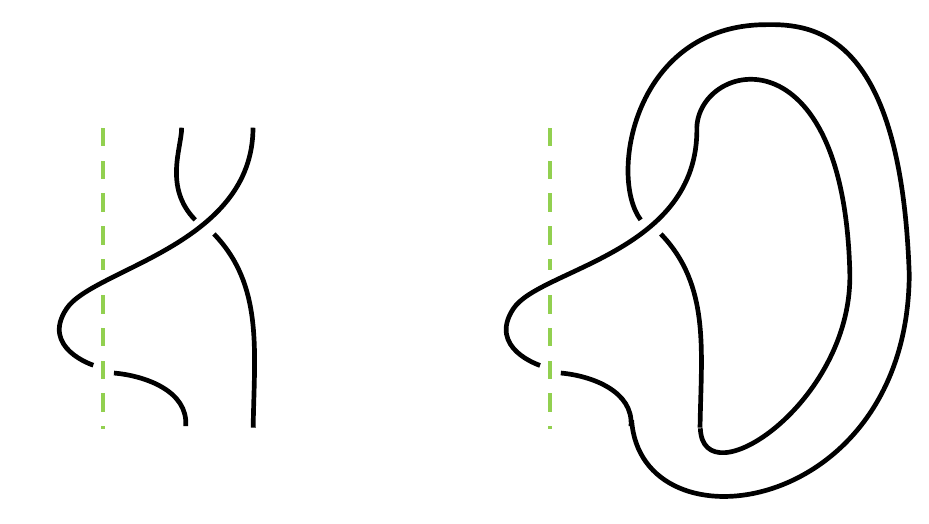}
\caption{Left: A $2$-strand annular braid $\beta$, with strands drawn in black, and the axis $\{0\} \times \mathbb{R}$, drawn dashed and in green. Right: The closure $\cl(\beta)$ of $\beta$.}
\label{fig:closure}
\end{figure}

One has the following analog of the Alexander theorem for the case of annular links.

\begin{theorem}\cite[Theorem 2]{HOL} \label{t:Alex}
Every annular link is isotopic to the closure of an annular braid.
\end{theorem}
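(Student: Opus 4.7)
The plan is to prove the annular analog of Alexander's classical theorem by reducing to the classical statement and adding a general-position argument to keep the isotopy off of the annular axis. Let $L \subset H$ be an annular link; compactify $\mathbb{R}^3$ to $S^3$ so that the annular axis $\{0\} \times \mathbb{R}$ becomes an unknot $A$. I will choose an auxiliary unknot $B \subset S^3$ disjoint from $L \cup A$ that together with $A$ forms a Hopf link --- concretely, $B$ can be taken to be a circle at infinity with respect to the $\mathbb{R}^3$ picture. Under this choice, $S^3 \setminus (A \cup B) \cong (\mathbb{R}^2 \setminus \{0\}) \times S^1$, and any link in this complement transverse to every slice $(\mathbb{R}^2 \setminus \{0\}) \times \{t\}$ is exactly the closure of an annular braid in the sense of the definition (after cutting open the $S^1$-factor).

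By the classical Alexander theorem applied in $S^3$ with braid axis $B$, the link $L$ is ambient isotopic to a closed braid around $B$ through a finite sequence of Alexander flips, each supported in a small 3-dimensional region. The remaining step is to arrange this isotopy so that it avoids $A$. The trace of the isotopy is a 2-dimensional immersed submanifold of $S^3$, and $A$ is 1-dimensional, so by transversality the trace meets $A$ in a finite set of isolated points. Near each such point I would modify the isotopy by a small local perturbation pushing the link momentarily off $A$; since being a closed braid around $B$ is an open condition in the $C^{\infty}$-topology on embeddings, these perturbations can be chosen small enough not to disturb the braid form at the endpoint. The resulting isotopy takes place entirely in $S^3 \setminus A$ and terminates at a closed braid around $B$ that is disjoint from $A$, which under the identification above is precisely the closure of an annular braid.

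The main obstacle is the general-position step used to avoid $A$; all other steps are routine given the classical Alexander theorem. An equivalent direct route, avoiding any appeal to the non-annular statement, is to re-run Alexander's original combinatorial argument in the annular setting: choose a planar projection of $L$ with both $B$ and $A$ marked, then iteratively flip each arc of $L$ that travels the wrong way around $B$ by an isotopy supported in a small 3-dimensional neighborhood, chosen at each stage to be disjoint from $A$ (which is possible since $A$ is 1-dimensional and each flip region is 3-dimensional, so generic choices avoid $A$).
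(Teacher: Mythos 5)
The paper does not prove Theorem~\ref{t:Alex}; it simply cites \cite[Theorem 2]{HOL}, so the relevant comparison is with the standard solid-torus Alexander theorem rather than with an argument in the text. Your overall strategy (pick a braid axis $B$ Hopf-linked with the annular axis $A$, braid around $B$, and keep everything away from $A$) is the right framework, but the step you use to stay away from $A$ contains a genuine error.

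The gap is in the general-position step. The trace of the isotopy is (the image of) a map from the $2$-dimensional manifold $L\times[0,1]$ into $S^3$, and $A$ is $1$-dimensional, so $2+1=3=\dim S^3$: transversality makes the intersection a finite set of points, but such transverse intersection points are \emph{stable} under $C^\infty$-small perturbation -- they cannot be pushed off. Removing one requires rerouting a strand around the circle $A$, which is a large modification that changes the isotopy class of the path in $\mathrm{Emb}(L, S^3\setminus A)$. If your perturbation argument worked, it would prove that any two annular links which are isotopic in $S^3$ and disjoint from $A$ are isotopic in $S^3\setminus A$; this is false. For instance, an unknot split from $A$ and an unknot clasping $A$ with linking number zero are isotopic in $S^3$, but together with $A$ they form a two-component unlink and a Whitehead link respectively, so they are not isotopic as annular links. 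Your second, combinatorial route (run Alexander's sawtooth argument and choose each flip to miss $A$) is essentially what the cited reference does and is the correct approach, but the justification you give rests on the same dimension count: a flip is supported in a $3$-dimensional ball, and a generic $3$-ball in $S^3$ does \emph{not} miss a $1$-dimensional curve, nor does the $2$-disk swept out by the flipped arc generically miss $A$. What actually makes the combinatorial argument work is the \emph{freedom of choice} in where to throw the replacement arc -- each disk fiber of the braid fibration around $B$ meets $A$ in a single point, and one can always route the new arc within the fiber direction so as to avoid that point -- and this needs to be argued explicitly rather than deduced from genericity.
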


A complete description of $\Br_{1,n}$ in terms of generators and relations, similar to those of the non-annular braid group, can be found in \cite[p.13]{HOL}. Among the generators, the ones that are most relevant to us are $\{\sigma_i | i \in \{1,\dots,n-1\}\}$, where $\sigma_i$, as in the non-annular case, is the (annular) braid with a single crossing in which the the $i^{th}$ strand overcrosses the $(i+1)^{st}$ strand, as depicted in Figure \ref{fig:simple}.

\begin{figure}[ht]
\scalebox{.5}{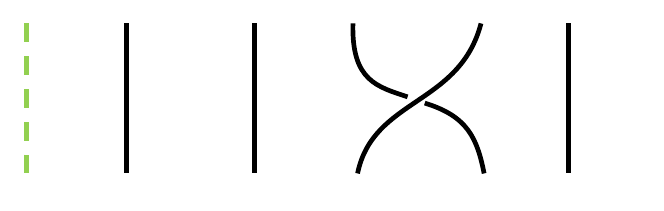}
\caption{The generator $\sigma_3$ in $\Br_{1,5}$}
\label{fig:simple}
\end{figure}

With this notation in mind, we may recall the analog of the Markov theorem for annular braids. Below, composition is written left to right, running upward along the annular braid, and in the same order it would be in the fundamental group.

\begin{theorem}\cite[Theorem 5]{HOL}\label{t:Markov}
Two annular braid closures are isotopic to each other if and only if the annular braids are related by a sequence of the following equivalences
\begin{itemize}
\item (Markov conjugation) $\sigma_j \beta \sigma_j^{-1} \sim \beta$ for any $\beta \in \Br_{1,n}$.
\item (Markov stablization) $ \beta_1 \sigma_n^{\pm 1}\beta_2 \sim \beta_1 \beta_2$ for any $\beta_1, \beta_2 \in \Br_{1,n} \subseteq \Br_{1,n+1}$, where the subset relation is via inclusion on the first $n$ strands.
\end{itemize}
\end{theorem}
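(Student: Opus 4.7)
The plan is to prove the two implications separately, with the ``if'' direction being a direct geometric verification and the ``only if'' direction requiring a relative version of the classical Markov argument adapted to the annular setting.

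For the easy direction, I would exhibit explicit ambient isotopies realizing each Markov move on the corresponding closure. Markov conjugation $\sigma_j \beta \sigma_j^{-1} \sim \beta$ follows because in $\cl(\sigma_j \beta \sigma_j^{-1})$ the initial $\sigma_j$ and final $\sigma_j^{-1}$ can be swept along the closure arcs and cancelled, an isotopy supported in a small neighborhood of the $j$-th and $(j+1)$-st closure strands and manifestly disjoint from the axis $\{0\}\times \mathbb R$. Markov stabilization $\beta_1 \sigma_n^{\pm 1} \beta_2 \sim \beta_1 \beta_2$ is verified by drawing the closure and observing that the extra crossing introduces a planar kink on the outermost strand which can be removed by a Reidemeister~I move; since the outermost strand sits farthest from the axis, this move can again be carried out without touching $\{0\}\times \mathbb R$.

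For the harder direction, the strategy is to mimic Morton's (or Birman--Menasco's) proof of the classical Markov theorem, but carried out in the solid torus $(\mathbb R^2 \setminus \{0\}) \times \mathbb R$ with the axis treated as a forbidden locus. Given annular braids $\beta, \beta'$ with isotopic closures, choose a generic smooth ambient isotopy $\Phi_t$ of $H = (\mathbb R^2 \setminus \{0\}) \times \mathbb R$ carrying $\cl(\beta)$ to $\cl(\beta')$. Combining genericity of $\Phi_t$ with the annular Alexander theorem (Theorem~\ref{t:Alex}), for all but finitely many $t$ the link $\Phi_t(\cl(\beta))$ sits in braid position relative to the axis, producing a path of annular braids $\beta_t$ interrupted at finitely many exceptional values of $t$. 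A transversality analysis of the possible codimension-one singularities --- tangencies of a closure strand with a meridional half-plane, and Morse-type events of a height function on the braid projection --- shows that passing through a singular time alters $\beta_t$ by either a relation in $\Br_{1,n}$ (an honest isotopy of braids, which is free), a Markov conjugation, or a Markov (de)stabilization at the outermost strand. Concatenating the moves over all exceptional times of $\Phi_t$ yields the desired Markov-equivalence between $\beta$ and $\beta'$.

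The principal obstacle is the bookkeeping near the axis. In the classical case, where the ambient manifold is $\mathbb R^3$, strands may be slid freely to restore braid position and so the list of singularity types that can appear is relatively short. In the annular setting, by contrast, any local maneuver used to re-achieve braid position after a singularity must avoid $\{0\}\times \mathbb R$, and this rigidity must be checked case-by-case for each singularity type; in particular it is this restriction which prevents ``wrap-around'' stabilizations (stabilization by a hypothetical generator linking with the axis) from arising, leaving $\sigma_n^{\pm 1}$ as the only stabilizing generator that appears. Verifying that the enumerated list of local moves suffices in every case is the technical core of the argument, after which the global assembly into a Markov-equivalence is formal.
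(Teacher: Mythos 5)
You should first be aware that the paper contains no proof of this statement: it is imported wholesale from \cite{HOL} (Häring-Oldenburg--Lambropoulou), where the Markov theorem for the solid torus is established via an $L$-move formulation in the style of Lambropoulou--Rourke, not via the Morton/Birman-style general-position analysis you outline. So there is no internal argument to compare against; what can be assessed is whether your sketch would constitute an independent proof.

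Your ``if'' direction is fine: both moves are realized by explicit isotopies supported away from the axis, exactly as you describe. The ``only if'' direction, however, is a plan rather than a proof, and the part you defer is precisely where all the content of the theorem lives. In $\Br_{1,n}$ there is an extra generator beyond the $\sigma_j$ --- the strand that loops around the axis --- and the statement you are proving is deliberately restrictive: conjugation is permitted only by the $\sigma_j$ (not by the loop generator), and stabilization only by $\sigma_n^{\pm 1}$. Ruling out the need for ``wrap-around'' stabilizations and for conjugation by the loop generator is not a bookkeeping afterthought to a classical singularity analysis; it is the new phenomenon that distinguishes the solid-torus Markov theorem from the $S^3$ one, and your sketch acknowledges it (``the technical core'') without supplying the case-by-case verification, the precise list of codimension-one events near the axis, or an argument that braid position can always be restored equivariantly with respect to the forbidden locus. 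Until that enumeration and verification are actually carried out (or until you reduce to the $L$-move machinery of \cite{HOL}, which is how the cited proof handles exactly this issue), the hard direction remains unproved. As a minor additional caution, genericity of an ambient isotopy does not by itself give braid position at all but finitely many times; one needs a Morton-type ``threading/combing'' argument adapted to the complement of the axis to even set up the finite list of exceptional events.
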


As a consequence of Theorem \ref{t:Markov}, any invariant of annular braids that is furthermore invariant under Markov conjugation and Markov stablization defines an invariant of annular links.

\subsection{Heegaard Floer homology basics} \label{subsec:knotfloer}

Heegaard Floer homology is an invariant of 3-manifolds and knots and links within them introduced by Ozsv{\'a}th and Szab{\'o}  in the early 2000s \cite{OS_Disks, OS:properties, OS:knots}, and in the knot case independently by J.~Rasmussen \cite{Rasmussen:knots}. In this paper we will require the simplest version of both the 3-manifold and knot invariants, which we now briefly recall. In the 3-manifold case, the theory associates to a 3-manifold $Y$ an $\mathbb F_2$-chain complex $\CFhat(Y)$ and its homology $\HFhat(Y)$. If $Y$ is a rational homology sphere, the complex and its homology admit an absolute homological grading. In the knot case, the theory associates to a nullhomologous knot $K\subseteq Y$ an $\mathbb F_2$-chain complex $\CFKhat(Y,K)$ and its homology $\HFKhat(Y,K)$. As in the 3-manifold case, if $Y$ is a rational homology sphere, this complex admits an absolute homological grading; there is also an additional Alexander grading, which is well-defined if $Y$ is an integer homology sphere and well-defined up to a choice of Seifert surface if $Y$ is a rational homology sphere.

Producing these chain complexes involves taking Lagrangian Floer cohomology in a choice of auxiliary symplectic manifold, the construction of which we now explain. A \emph{multi-pointed Heegaard diagram} for $Y$ consists of

\begin{itemize}
\item A surface $\Sigma$ of genus $g$.
\item Two sets of attaching curves $\alphas = \{\alpha_1, \dots, \alpha_{g+n-1}\}$ and $\betas = \{\beta_1, \dots, \beta_{g+n-1}\}$ for handlebodies $\mathcal H_{\alpha}$ and $\mathcal H_{\beta}$ such that the union  $\mathcal H_{\alpha} \cup_{\Sigma} \mathcal H_{\beta}$ is the 3-manifold Y. We further impose the condition that $\alpha_i$ and $\beta_j$ intersect transversely.
\item A set of basepoints ${\bf w}= \{w_1,\dots, w_n\}$ on $\Sigma \setminus \{\boldsymbol{\alpha} \cup \boldsymbol{\beta}\}$ with the property that every component of $\Sigma \setminus \alphas$ contains a single basepoint, and likewise for $\Sigma \setminus \betas$.
\end{itemize}

One may obtain such data by choosing a self-indexing Morse function $f \co Y \rightarrow [0,3]$. Then $\Sigma = f^{-1}\left( \frac{3}{2} \right)$ is a surface of genus $g$. If there are $n$ critical points each of index zero and three, there must be $g+n-1$ critical points of index one and two respectively. The curves $\alphas$ are then the intersections of the ascending manifolds of the critical points of index one with $\Sigma$, and the curves $\betas$ are the intersections of the descending manifolds of the critical points of index two with $\Sigma$. The basepoints $\bf{w}$ are the intersections with the surface of a set of flowlines of $f$ connecting the index three and index zero critical points of $f$.

We further require that the diagram be \emph{weakly admissible}. Consider the closure of $\Sigma \setminus \{ \alphas \cup \betas\}$, which consists of a set of oriented surfaces with boundary $D_1, \dots, D_n$. Let $D_1, \dots, D_k$ be those not containing any basepoint. A Heegaard diagram is said to be weakly admissible if any 2-chain on the Heegaard surface consisting of a sum of the surfaces $D_1, \dots, D_k$ whose boundary consists of either a union of $\alpha$ curves or a union of $\beta$ curves has both positive and negative multiplicities.

The  simplest version of the Heegaard Floer homology of $Y$ is defined as follows. We form the symmetric product $\Sym^{g+n-1}(\Sigma \setminus {\bf w}) = (\Sigma \setminus {\bf w})^{\times (g+n-1)}/S_{g+n-1}$, where the quotient is via the action of the symmetric group on the product. This is a smooth manifold; moreover, any choice of complex structure on $\Sigma$ induces a complex structure on the symmetric product. Furthermore, this symmetric product contains two half-dimensional totally real submanifolds $\mathbb T_{\alpha} = \alpha_1 \times \dots \alpha_{g+n-1}$ and $\mathbb T_{\beta} = \beta_1 \times \dots \times \beta_{g+n-1}$ which intersect transversely. Given a choice of exact symplectic form on $\Sigma \setminus \{\bf w\}$, Perutz has shown that the symmetric product admits a symplectic form which agrees with the product symplectic form outside a neighborhood of the diagonal \cite{Perutz:handleslide}. In particular, the two tori $\mathbb T_{\alpha}$ and $\mathbb T_{\beta}$ are Lagrangian with respect to this symplectic form. Weak admissibility of the Heegaard diagram implies that $\Sym^{g+n-1}(\Sigma \setminus \{\mathbf{w}\}, \mathbb T_{\alpha}, \mathbb T_{\beta})$ may be taken to be exact and such that the Lagrangians $\mathbb T_{\alpha}$ and $\mathbb T_{\beta}$ are themselves exact. (This fact appears implicitly in \cite[Lemma 4.12]{OS_Disks}; for more detail, the reader may consult \cite[Proposition 4.2]{HLL:rank}.) Given this, using the conventions of Section \ref{sec:floer}, the Heegaard Floer homology of $Y$ is specified by
\[ HF(\Sym^{g+n-1}(\Sigma \setminus \{{\bf w}\})), \mathbb T_{\beta}, \mathbb T_{\alpha}) \simeq \HFhat(Y) \otimes V^{\otimes {(n-1)}}.\]
Here $V$ is a two-dimensional vector space over $\mathbb F_2$ which has generators in homological gradings $0$ and $-1$. This construction is invariant of the choice of symplectic form and complex structure on $\Sigma \setminus\{\bf w\}$, and invariant of the choice of Heegaard diagram for $Y$ except for the dependence on the number $n$ of pairs of basepoints. Correspondingly, the Heegaard Floer cohomology of $Y$ is specified by the homology of the dual chain complex
\[ HF(\Sym^{g+n-1}(\Sigma \setminus \{{\bf w}\})), \mathbb T_{\alpha}, \mathbb T_{\beta}) \simeq \HFhatDual(Y) \otimes H^*(S^1)^{\otimes (n-1)}.\]
Here $H^*(S^1)$ may equivalently be replaced with a vector space $V^*$ of dimension two over $\mathbb F_2$ with generators having homological gradings in degrees $0$ and $1$. Note that, as we are working over a field, $\HFhatDual(Y) \simeq \HFhat(Y)$ non-canonically; the isomorphism reverses the sign of the homological grading. Perhaps more helpfully, there is a canonical isomorphism $\HFhatDual(Y) \simeq \HFhat(-Y)$, where $-Y$ denotes the manifold with its orientation reversed \cite[Section 5]{OS:triangles}.

Given a pseudoholomorphic strip $u \co \RR \times [0,1] \rightarrow \Sym^{g+n-1}(\Sigma)$ counted in the Lagrangian Floer cohomology differential, we may choose a point $x_i \in D_i$ for each of the closures of the regions of $\Sigma \setminus \{\alphas \cup \betas\}$ not containing a basepoint, as defined above. Let $V_{x_i}$ be the holomorphic divisor $\{x_i\} \times \Sym^n(\Sigma)$. Then if $m_i = u \cdot V_{x_i}$ is the algebraic intersection number, which is necessarily positive, we say the \emph{shadow} of the strip is $D = \sum m_i D_i$. The coefficient of a domain containing a basepoint in the shadow is necessarily zero.

We now turn our attention to the knot theory. Given a nullhomologous knot $K$ in a three-manifold $Y$, a multi-pointed Heegaard diagram for $(Y,K)$ consists of data $H = (\Sigma, \alphas, \betas, {\bf w}, {\bf z})$ satisfying the following.

\begin{itemize}
\item $(\Sigma, \alphas, \betas, {\bf w})$ are the data of a multi-pointed Heegaard diagram for $Y$ as above.
\item There is an additional set of basepoints ${\bf z} = \{z_1, \dots, z_{g+n-1}\}$ with the property that each component of $\Sigma \setminus \alphas$ or $\Sigma \setminus \betas$ contains a single basepoint of each type $w$ and $z$, such that $K$ is the union of a set of arcs connecting $w$ basepoints to $z$ basepoints in $\mathcal H_{\beta}$ and a set of arcs connecting $z$ basepoints to $w$ basepoints in $H_{\alpha}$.
\end{itemize}

In the Morse theory picture, one chooses the function $f$ such that $K$ is the union of a set of flowlines connecting the index three and index zero critical points; the $z$ basepoints are the intersections of $K$ with $\Sigma$ for those arcs in which the orientation of the flowline matches the orientation of $K$, and the $w$ basepoints those for which it disagrees. For practical purposes, one may reconstruct the knot on the Heegaard surface by first connecting $w$ basepoints to $z$ basepoints in the complement of the $\beta$ curves and then $z$ to $w$ in the complement of the $\alpha$ curves, letting the second set of arcs undercross the first.

With this in mind, the knot Floer homology of $(Y,K)$ is specified by
\[ HF(\Sym^{g+n-1}(\Sigma \setminus \{{\bf w}\cup {\bf z}\}), \mathbb T_{\beta}, \mathbb T_{\alpha}) \simeq \HFKhat(Y,K) \otimes V^{\otimes (n-1)}.\]
Here $V$ is again a two-dimensional vector space over $\mathbb F_2$ with suitable gradings; in particular, if $(M,A)$ denotes the homological and Alexander gradings, then the gradings on the generators of $V$ are $(0,0)$ and $(-1,-1)$. Correspondingly, the Heegaard Floer cohomology of $(Y,K)$ is specified by the homology of the dual chain complex
\[ HF(\Sym^{g+n-1}(\Sigma \setminus \{{\bf w}\cup{\bf z}\}), \mathbb T_{\alpha}, \mathbb T_{\beta}) \simeq \HFKhatDual(Y,K) \otimes (V^*)^{\otimes (n-1)}.\]
Analogously to the 3-manifold case there is a canonical isomorphism $\HFKhatDual(Y,K) \simeq \HFKhat(-Y, mK)$, where $mK$ denotes the mirror of the knot.

The theory for links is similar. We say that $L$ is a nullhomologous link in $Y$ if its components jointly bound a surface. If $L$ has $\ell$ components, a Heegaard diagram $\mathcal H = (\Sigma, \alphas, \betas, {\bf w}, {\bf z})$ is defined as in the case of a knot except that the union of flowlines connecting the index zero and index three critical points forms the link $L$. For our purposes we may restrict ourselves to the case that $|{\bf w}| = |{\bf z}| = \ell$, so that each component of $L$ contains exactly one $w$ basepoint and one $z$ basepoint. In this case we have
\[ HF(\Sym^{g+\ell-1}(\Sigma \setminus( \{{\bf w}\cup {\bf z}\}), \mathbb T_{\beta}, \mathbb T_{\alpha}) \simeq \HFLhat(Y,L)\]
and correspondingly, the Heegaard Floer cohomology of $(Y,L)$ is specified by the homology of the dual chain complex
\[ HF(\Sym^{g+\ell-1}(\Sigma \setminus \{{\bf w} \cup {\bf z}\}), \mathbb T_{\alpha}, \mathbb T_{\beta}) \simeq \HFLhatDual(Y,L).\]
As previously there is an isomorphism $\HFLhatDual(Y,L)\simeq \HFLhat(-Y, mL)$.

\subsection{Seidel-Smith's localization theorem} \label{sec:localization}

In this subsection we introduce Seidel and Smith's localization theorem for order-two actions on Lagrangian Floer cohomology. We begin by introducing some terminology necessary to stating their hypotheses. The following two definitions are drawn from Large \cite{Large}.

\begin{definition} Let $M$ be a symplectic manifold containing Lagrangians $\eL_0$ and $\eL_1$. A \emph{set of polarization data} for $(M, \eL_0, \eL_1)$ is a triple $\mathfrak p = (E, F_0, F_1)$ such that
\begin{itemize}
\item $E$ is a symplectic vector bundle over $M$, and
\item $F_i$ is a Lagrangian subbundle of $E|_{\eL_i}$ for $i=0,1$.
\end{itemize}
\end{definition}

Given $\mathfrak p = (E, F_0, F_1)$ a set of polarization data for $(M, \eL_0, \eL_1)$, one may stabilize by a trivial bundle to obtain $\mathfrak p \oplus \underline{\CC}^N = (E \oplus  \underline{\CC}^N, F_0 \oplus \underline{\RR}^N, F_1 \oplus i \underline{\RR}^N)$.

\begin{definition} Let $\mathfrak p = (E, F_0, F_1)$ and $\mathfrak p' = (E', F_0', F_1')$ be two sets of polarization data for $(M, \eL_0, \eL_1)$. An \emph{isomorphism of polarization data} between $\mathfrak p$ and $\mathfrak p'$ is an isomorphism of symplectic vector bundles 
\[\psi \co E \rightarrow E'\]
such that there are homotopies through Lagrangian subbundles of $(E')|_{\eL_i}$ between $\psi(\eL_i)$ and $\eL_i'$ for $i=0,1$. A \emph{stable isomorphism of polarization data} between $\mathfrak p$ and $\mathfrak p'$ is an isomorphism of polarization data between $\mathfrak p \oplus \underline{\CC}^{N_1}$ and $\mathfrak p' \oplus \underline{\CC}^{N_2}$ for some $N_1$ and $N_2$.\end{definition}

We now turn our attention to the situation of involutions on Lagrangian Floer cohomology. Let $M$ be an exact symplectic manifold which is convex at infinity and let $\eL_0$ and $\eL_1$ be two exact compact Lagrangians within it. Suppose further that $M$ admits a symplectic involution $\iota$ which fixes $\eL_0$ and $\eL_1$ setwise, so that $\iota(\eL_i) = \eL_i$ for $i=0,1$. Let $(M^{fix}, \eL_0^{fix}, \eL_1^{fix})$ denote the fixed sets under the involution. In the case that either $M^{fix}$ is connected or that all of the connected components of $M^{fix}$ have the same dimension, it is easy to see that $M^{fix}$ is itself an exact symplectic manifold which is convex at infinity and $\eL_0^{fix}$ and $\eL_1^{fix}$ are two exact compact Lagrangians within it. The \emph{normal polarization} is the set of polarization data $(NM^{fix}, N\eL_0^{fix}, N\eL_1^{fix})$ consisting of the normal bundle to $M^{fix}$ inside $M$, which is a symplectic vector bundle, together with the normal bundles $N\eL_i^{fix}$ to $\eL_i^{fix}$ inside $\eL_i$ for $i=0,1$, which are Lagrangian subbundles of $(NM^{fix})|_{\eL_i}$. This brings us to the following definition of Seidel and Smith \cite{SS10}, as rephrased by Large \cite{Large}.

\begin{definition} \label{def:stable-normal} A \emph{stable normal trivialization} is a stable isomorphism of polarization data between the normal polarization $(NM^{fix}, N\eL_0^{fix}, N\eL_1^{fix})$ and the trivial polarization $(\underline{\CC}, \underline{\RR}, i\underline{\RR}).$ \end{definition}

We remind the reader that, since the symplectic group deformation retracts onto the unitary group, the theory of symplectic vector bundles is isomorphic to the theory of complex vector bundles. Therefore, the definitions above could be equivalently rephrased in terms of complex bundles with totally real subbundles.

Seidel and Smith prove the following.

\begin{theorem}\cite[Theorem 20]{SS10} \label{thm:localization}
Suppose that $(M,\eL_0,\eL_1)$ and $\iota$ satisfy the hypotheses above and $(M^{fix}, \eL_0^{fix}, \eL_1^{fix})$ admits a stable normal trivialization. Then for a generic $\mathbb{Z}/2\mathbb{Z}$-invariant $\omega$-compatible almost complex structure $J$ on $M$, after a suitable stabilization and equivariant isotopy of $\eL_0$ and $\eL_1$ which is trivial on the fixed sets, there is a localization map \[HF_{\ZZ/2\ZZ}(M, \eL_0,\eL_1) \rightarrow HF(M^{fix},\eL_0^{fix},\eL_1^{fix})[\theta]\] which becomes an isomorphism after inverting $\theta$.
\end{theorem}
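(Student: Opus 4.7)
The plan is to adapt classical Smith theory to Lagrangian Floer cohomology via a Borel-type construction. First I would define $HF_{\mathbb Z/2\mathbb Z}(M, \eL_0, \eL_1)$ using a family version of Floer theory parametrized by the total space $S^\infty$ of $B\mathbb Z/2\mathbb Z$, with its free antipodal action. Concretely, one chooses a sequence of finite-dimensional skeleta $S^{2k+1}$ with a $\mathbb Z/2\mathbb Z$-equivariant cell decomposition, and for each cell picks a domain-dependent $\mathbb Z/2\mathbb Z$-equivariant family of $\omega$-compatible almost complex structures on $M$. Counting strips whose almost complex structure is pulled back from this parametrized family yields the equivariant Floer complex, whose cohomology carries a natural $\mathbb F_2[\theta]$-module structure with $\theta \in H^1(\mathbb{RP}^\infty; \mathbb F_2)$.

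Next I would construct the localization map by restriction to the fixed locus. For a generic $\mathbb Z/2\mathbb Z$-invariant $J$, the induced $\mathbb Z/2\mathbb Z$-action on moduli spaces of Floer strips has fixed set equal to strips contained entirely in $M^{fix}$, while all other strips come in free orbits. The invariant strips compute $HF(M^{fix}, \eL_0^{fix}, \eL_1^{fix})$, whereas free orbits contribute chain complexes of free $\mathbb F_2[\mathbb Z/2\mathbb Z]$-modules whose Borel equivariant cohomology becomes acyclic after inverting $\theta$. This is the Floer-theoretic incarnation of the classical isomorphism $H^*_{\mathbb Z/2\mathbb Z}(X; \mathbb F_2)[\theta^{-1}] \cong H^*(X^{\mathbb Z/2\mathbb Z}; \mathbb F_2)[\theta, \theta^{-1}]$ for finite-dimensional $\mathbb Z/2\mathbb Z$-CW complexes, and implementing it requires careful bookkeeping to produce an honest chain-level map from the equivariant complex for $(M, \eL_0, \eL_1)$ to the ordinary Floer complex for $(M^{fix}, \eL_0^{fix}, \eL_1^{fix})[\theta]$.

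The main obstacle, which is also where the stable normal trivialization enters, is equivariant transversality: a generic $\mathbb Z/2\mathbb Z$-invariant $J$ will not make invariant Floer strips into regular points of the ambient moduli space in $M$. The linearized Cauchy--Riemann operator at such a strip splits into its $\iota$-invariant part, which governs deformations inside $M^{fix}$ and can be made surjective by generic invariant data on the fixed set, and its $\iota$-anti-invariant part, which is a Cauchy--Riemann-type operator on the pullback of the normal polarization $(NM^{fix}, N\eL_0^{fix}, N\eL_1^{fix})$. Making this anti-invariant part surjective simultaneously over all invariant strips requires the normal polarization to be isomorphic, after stabilization by trivial $\mathbb C^N$ factors with standard totally real boundary conditions, to the trivial polarization $(\underline{\mathbb C}, \underline{\mathbb R}, i\underline{\mathbb R})$; this is precisely the stable normal trivialization hypothesis. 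Once the corresponding stabilization of $M$ and equivariant isotopy of the Lagrangians (trivial on $M^{fix}$) have been carried out, one can use standard off-diagonal $\bar\partial$-perturbations in the normal direction to achieve surjectivity of the anti-invariant operator. With equivariant regularity in hand, the restriction-to-fixed-locus map is a chain map of $\mathbb F_2[\theta]$-modules whose cone is chain-homotopy equivalent to a complex of free $\mathbb F_2[\mathbb Z/2\mathbb Z]$-modules, hence acyclic after inverting $\theta$, yielding the desired localization isomorphism.
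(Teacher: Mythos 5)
This theorem is quoted from Seidel--Smith \cite[Theorem 20]{SS10}; the paper does not reprove it, so there is no internal proof to compare against, but your outline faithfully reproduces the Seidel--Smith strategy: a Borel-type equivariant Floer theory parametrized over $S^\infty$, localization onto strips contained in $M^{fix}$, and the use of the stable normal trivialization to control the anti-invariant part of the linearized operator at invariant strips. One clarification on the last point: the trivialization is not there to make \emph{generic} surjectivity of the anti-invariant operator achievable --- equivariant perturbation data are block-diagonal along $M^{fix}$, so that block cannot be freely adjusted, and if its index were negative no allowed perturbation could make it surjective --- rather, the stabilization and equivariant isotopy put the normal boundary conditions into the standard form $(\underline{\CC}^N,\underline{\RR}^N, i\underline{\RR}^N)$, for which the anti-invariant $\bar\partial$-operator has index zero and is already invertible, and this is what forces the moduli spaces upstairs and downstairs to coincide with matching dimensions.
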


\noindent Since localization is an exact functor, this implies the following.

\begin{corollary}\label{c:localization}\cite[Theorem 1]{SS10} Suppose that $(M,\eL_0,\eL_1)$ and $\iota$ satisfy the hypotheses above and $(M^{fix}, \eL_0^{fix}, \eL_1^{fix})$ admits a stable normal trivialization. There is a spectral sequence whose $E_1$ page is isomorphic to \[ HF(M, \eL_0,\eL_1)\otimes \mathbb F[\theta, \theta^{-1}]\] and whose $E_{\infty}$ page is isomorphic to \[ HF(M^{fix}, \eL_0^{fix}, \eL_1^{fix})\otimes \mathbb F[\theta, \theta^{-1}],\] and a corresponding dimension inequality
\[ \dim \left( HF(M, \eL_0, \eL_1)\right) \geq \dim \left( HF\left(M^{fix}, \eL_0^{fix}, \eL_1^{fix}\right)\right).\]
\end{corollary}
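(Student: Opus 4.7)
The plan is to deduce the corollary as a formal homological-algebra consequence of Theorem~\ref{thm:localization} by combining it with a standard Borel-type spectral sequence computing equivariant Lagrangian Floer cohomology. First, I would recall that $HF_{\ZZ/2\ZZ}(M, \eL_0, \eL_1)$ is a module over $H^*(B\ZZ/2\ZZ; \FF_2) \cong \FF_2[\theta]$, and that the Cartan-style filtration by powers of $\theta$ on the equivariant Floer cochain complex produces a spectral sequence with $E_1$ page
\[ HF(M, \eL_0, \eL_1) \otimes \FF_2[\theta] \]
converging to $HF_{\ZZ/2\ZZ}(M, \eL_0, \eL_1)$ as an $\FF_2[\theta]$-module. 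This construction is built into the definition of equivariant Floer cohomology used in \cite{SS10}.

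Next, I would tensor the entire spectral sequence with $\FF_2[\theta, \theta^{-1}]$ over $\FF_2[\theta]$. Since this localization is a flat, hence exact, endofunctor on $\FF_2[\theta]$-modules, it commutes with taking cohomology of each page and preserves convergence. The $E_1$ page becomes $HF(M, \eL_0, \eL_1) \otimes \FF_2[\theta, \theta^{-1}]$, and the target becomes $HF_{\ZZ/2\ZZ}(M, \eL_0, \eL_1)[\theta^{-1}]$. By Theorem~\ref{thm:localization}, the latter is isomorphic to $HF(M^{fix}, \eL_0^{fix}, \eL_1^{fix}) \otimes \FF_2[\theta, \theta^{-1}]$, giving the advertised spectral sequence.

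The dimension inequality then follows formally: ranks can only collapse across the differentials of a spectral sequence, so the $\FF_2$-dimension of the $E_\infty$ page in any fixed $\theta$-degree is bounded above by that of the $E_1$ page, yielding
\[ \dim HF(M^{fix}, \eL_0^{fix}, \eL_1^{fix}) \leq \dim HF(M, \eL_0, \eL_1). \]
The substantive content of the entire argument is packed into Theorem~\ref{thm:localization}, whose proof requires the geometric input (invariant almost complex structures, stable normal trivializations, model strips near the fixed locus, and so on); the corollary is essentially a routine consequence once the localization map is in hand, so there is no serious obstacle to address here.
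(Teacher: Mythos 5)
Your argument is correct and is essentially the paper's own: the corollary is deduced from Theorem~\ref{thm:localization} by noting that inverting $\theta$ is exact, so the $\theta$-power filtration spectral sequence computing $HF_{\ZZ/2\ZZ}(M,\eL_0,\eL_1)$ localizes to one with $E_1$ page $HF(M,\eL_0,\eL_1)\otimes\FF_2[\theta,\theta^{-1}]$ converging to $HF(M^{fix},\eL_0^{fix},\eL_1^{fix})\otimes\FF_2[\theta,\theta^{-1}]$, with the dimension inequality following formally. The paper states this in one sentence (``Since localization is an exact functor, this implies the following''), deferring the details to \cite{SS10}; your write-up just makes that sentence explicit.
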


Large has shown that this theorem further holds in the case that there is a stable isomorphism of polarization data between the normal polarization and the tangent polarization $(TM^{fix}, T\eL_0^{fix}, T\eL_1^{fix})$ \cite{Large}, but we shall require only the original version in this paper.

\begin{remark}\label{rem:caveat} Caveat lector: Achieving equivariant transversality is delicate, and the isotopy of the Lagrangians in Theorem~\ref{thm:localization} is a critical step. In particular, suppose $(M, \eL_0, \eL_1)$ and $\iota$ are as above. Further suppose that there is an almost complex structure $J$ with respect to which the map on the Floer chain complex $CF(M, \eL_0, \eL_1)$ induced by the permutation of the generators of $\eL_0 \pitchfork \eL_1$ given by $\iota$ is a chain map, perhaps denoted $\iota_{naive}$. It is not necessarily the case that $\iota_{naive}$ is chain homotopy equivalent to the Seidel-Smith involution, nor that the equivariant cohomology of $(CF(M, \eL_0, \eL_1), \iota_{naive})$ localizes to the homology in the fixed set. Indeed, it is straightforward to produce examples where these involutions differ. This means it is in general difficult to compute the Seidel-Smith action and spectral sequence by hand even in simple examples.

In the setting of Heegaard Floer homology, the first author, Lipshitz, and Sarkar \cite{HLS:flexible} showed one may evade this difficulty by working with diagrams which are ``nice'' in the sense of Sarkar and Wang \cite{SW:nice}, in which case the naively-computed involution is the Seidel-Smith involution; in the symplectic Khovanov case, no such combinatorial workaround is known.

In this paper, we largely do not discuss the induced involution on the chain complex, focusing on the involution on the manifold, which is sufficient to apply Thereom~\ref{thm:localization}. The exceptions are in Section~\ref{subsec:Hopf}, in which case we can determine the Seidel-Smith involution on the homology of the chain complex from constraints on gradings and the known equivariant cohomology, and in Section~\ref{s:strongly}, where we discuss exclusively the action on the chain complex in the fixed set, which is unchanged by the isotopy.
\end{remark}

\subsection{Symmetries of knots and links} \label{sec:symmetry} In this section we briefly recap some relevant facts about symmetric knots and links, partially redundantly with the introduction. For a classification of all knot symmetries, we refer the reader to \cite{Boyle:symmetries}.

\subsubsection{Periodic links and their quotients} A link $L \subseteq S^3$ is said to be \emph{$q$-periodic} if there is an orientation-preserving action of $\mathbb Z/q\mathbb Z$ on $S^3$ which preserves $L$ setwise and whose fixed set is an unknotted axis $A$ disjoint from the knot. Taking the quotient under the action gives a quotient map $p \co (S^3, L) \rightarrow (S^3, \overline{L})$ which is a $q$-fold branched covering map over $A$. The link $\overline{L} = p(L)$ is the \emph{quotient link}. In this paper we will be primarily concerned with $2$-periodic links, also called doubly-periodic links.

Any $q$-periodic link admits a diagram preserved by a rotation of $2\pi/q$ radians in the plane; an example of the trefoil as a $2$-periodic knot is shown in Figure~\ref{fig:periodic}. The unknotted axis intersects the plane of the diagram at the origin, and is marked by a star. Such a diagram may be modified to produce a bridge diagram preserved by the same rotation of the plane in which the bridges are exchanged by the action in pairs.

\begin{figure}[ht]
\scalebox{.5}{
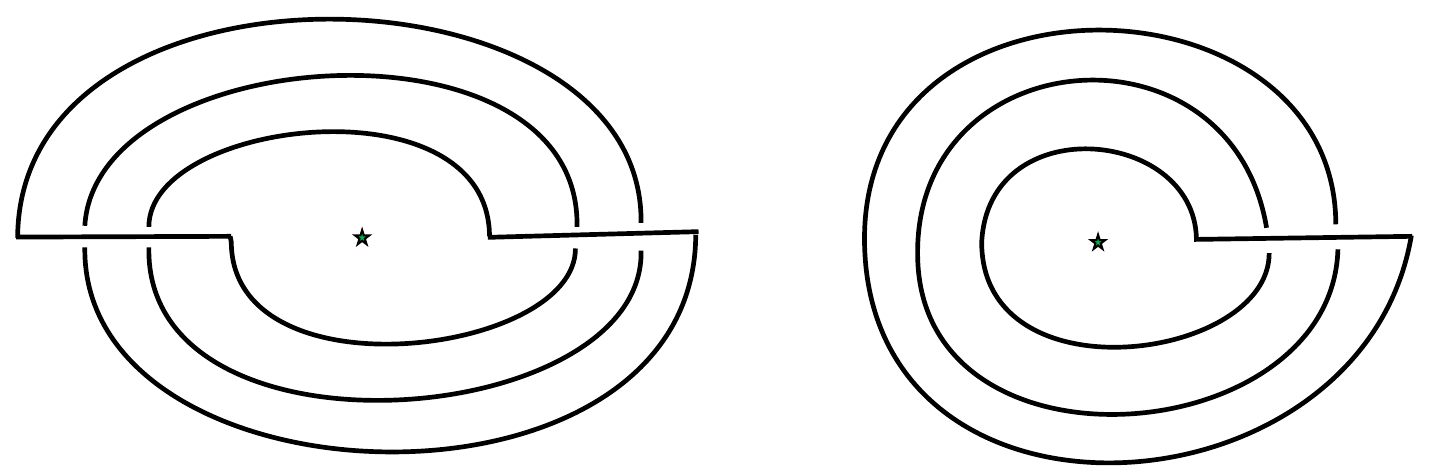}
\caption{Left: A diagram for the left-handed trefoil as a two-periodic knot. Right: The quotient knot (an unknot). The intersection of the axis of symmetry with the plane is marked with a star.}
\label{fig:periodic}
\end{figure}

Finally, note that a $q$-periodic link $L$ can naturally be made into an annular link $L$ by treating the axis of periodicity $A$ as the annular axis; the quotient link may then similarly be made into an annular link with annular axis $\overline{A} = p(A)$.

\subsubsection{Strongly invertible knots and their quotients} A knot $K$ is said to be \emph{strongly invertible} if there is an orientation-preserving action of $\mathbb Z/2\mathbb Z$ on $S^3$ which preserves $K$ setwise and whose fixed set is an unknotted axis $A$ intersecting $K$ in two points; the action therefore reverses the orientation of $K$. There are two natural choices of symmetric diagram for a strongly invertible knot. We will use \emph{intravergent} diagrams, in which the axis of symmetry is perpendicular to the plane of the diagram and the action rotates the diagram by $\pi$ radians. Such diagrams necessarily always have a crossing at the origin, and may be modified to produce a bridge diagram fixed by the same rotation of the plane, in which one bridge is preserved setwise and the remaining bridges are exchanged in pairs. One also has \emph{transvergent} diagrams, in which the axis of symmetry lies in the plane of the diagram. 

\begin{figure}[ht]
\scalebox{.5}{
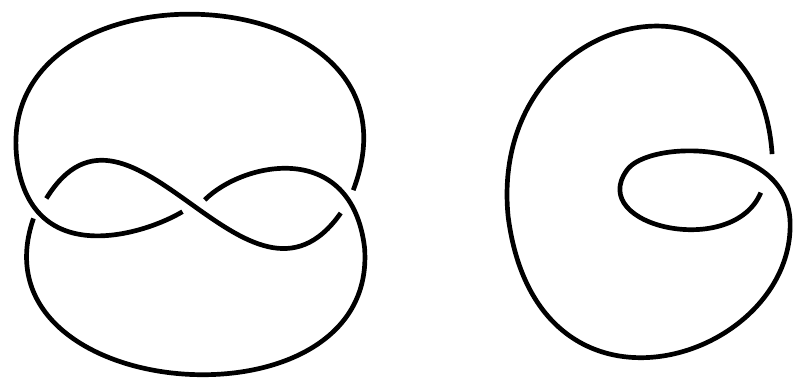}
\caption{Left: An intravergent diagram for the right-handed trefoil as a strongly invertible knot. Right: The quotient knot constructed using the choice of half-axis specified by the diagram.}
\label{fig:strongly_invertible_one}
\end{figure}

Given a strongly invertible knot $K$, the two points $K \cap A$ divide $A$ into half-axes, the closures of which we refer to as $A_1$ and $A_2$. Strongly invertible knots are generally studied together with a choice of (oriented) half-axis, after which there is a well-defined notion of equivariant connected sum \cite{Sakuma:si}. Given a choice of half-axis $A_i$, the \emph{quotient knot} is constructed as follows. If $p \co S^3 \rightarrow S^3$ is the quotient by the action, then $p(K)$ and $p(A_i)$ are line segments with the same endpoints; their union is the quotient knot $\overline{K}$. The two choices of half-axis give two typically non-isotopic quotient knots. A choice of intravergent diagram for the knot fixes a preferred half-axis, to which, the short segment connecting the two strands of the central crossing, and therefore a choice of quotient knot, as recalled in Figure~\ref{fig:strongly_invertible_one}. 

\begin{figure}[ht]
\scalebox{.5}{
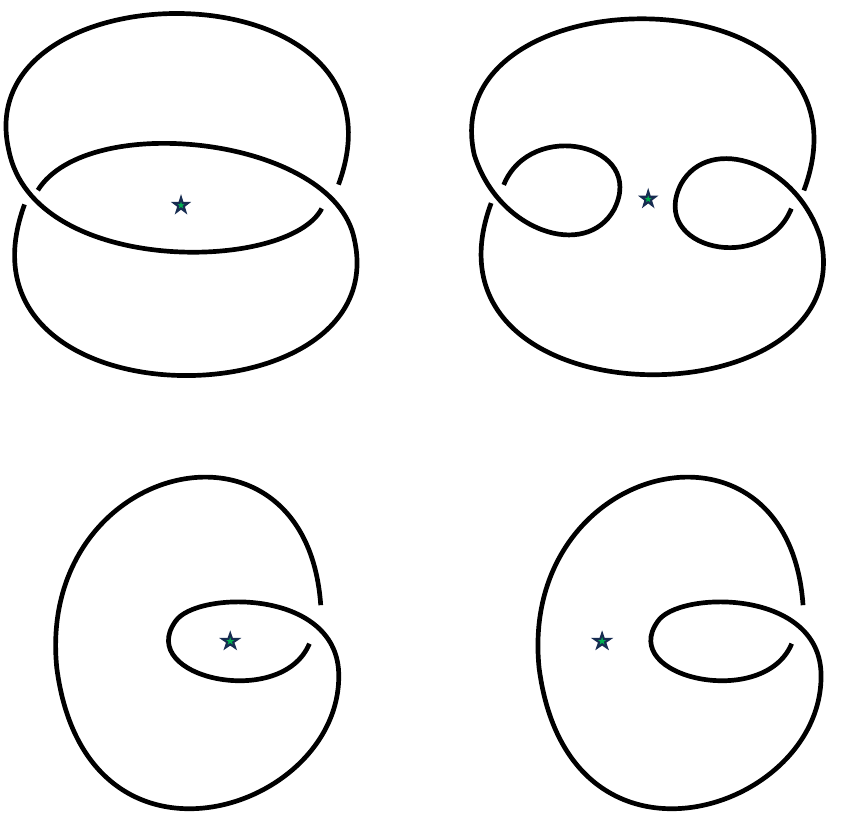}
\caption{Resolutions and quotients of the intravergent diagram of Figure~\ref{fig:strongly_invertible_two}. The top row shows the resolutions $K_0$ and $K_1$ on the left and right respectively. The bottom row shows their quotients $\overline{K}_0$ and $\overline{K}_1$. In all cases the intersection of the axis of symmetry, which is also the annular axis, with the plane is marked with a star.}
\label{fig:strongly_invertible_two}
\end{figure}

Given an intravergent diagram for a strongly invertible knot, taking the zero and one resolutions of the central crossing gives two periodic links $K_0$ and $K_1$, each with axis of symmetry $A$; these periodic links depend on the intravergent diagram in question. The quotients $\overline{K}_0$ and $\overline{K}_1$ are each isotopic to $\overline{K}$ as knots in $S^3$, but differ as annular knots with annular axis $\overline{A}$, as in Figure~\ref{fig:strongly_invertible_two}.

\section{A new symplectic annular Khovanov homology}\label{s:SympAKh}

In this section, we modify Seidel and Smith's construction of symplectic Khovanov homology \cite{SS06} to obtain a new annular link invariant, which we call the symplectic annular Khovanov homology. Conjecturally, this invariant is isomorphic to combinatorial annular Khovanov homology over a field of characteristic zero, and to the Mak-Smith formulation of symplectic annular Khovanov homology likewise; for the precise conjectural relationship, see Conjecture \ref{con:isom}.

\subsection{Geometric setup} \label{sec:geometry}

\subsubsection{Milnor fibres}

Let $\Sym^m(\mathbb{C})$ be the $m$-fold symmetric product of $\mathbb{C}$
and
$\Conf^m(\mathbb{C}) \subset \Sym^m(\mathbb{C})$ be the configuration space of unordered $m$-tuples of pairwise distinct points in $\mathbb{C}$.
For any $\tau=[\tau_1,\dots,\tau_m] \in \Sym^m(\mathbb{C})$, let $p(z) = \prod_{i=1}^m (z-\tau_i)$. We have the associated $A_{m-1}$-surface
\[
A_{\tau}:=\{u^2+v^2=p(z)\}\subset \mathbb{C}^3.
\]
This is a smooth algebraic surface if and only if $\tau \in \Conf^m(\mathbb{C})$, in which case we equip with it the K\"ahler form $\omega_A$  restricted from the standard K\"ahler form in $\mathbb{C}^3$.
For any loop $\gamma \co S^1 \to \Conf^m(\mathbb{C})$, there is an induced monodromy symplectomorphism $\phi_{\gamma} \co A_{\gamma(0)} \to A_{\gamma(1)}=A_{\gamma(0)}$. Two loops which are isotopic relative to the basepoint induce Hamiltonian isotopic symplectomorphisms. Therefore, we have an induced group homomorphism
\[
\Br_m=\pi_1(\Conf^m(\mathbb{C}),\tau) \to \Symp(A_{\tau})/\Ham(A_{\tau})
\]
from the braid group of $m$ strands to the quotient of the symplectomorphism group by the Hamiltonian diffeomorphism group, which is to say the symplectic mapping class group.
Seidel and Smith's symplectic Khovanov homology uses the case $m=2n$ in its construction. In particular, one needs the action induced by the copy of $\Br_n \subseteq \Br_{2n}$ given by the inclusion $b \mapsto b \times 1^n$, or equivalently as the fundamental group of the subspace
\[\{\tau=[\tau_1,\dots, \tau_{2n}] \in \Conf^{2n}(\mathbb{C})| \tau_i=i \text{ for }i=1,\dots,n, \text{ and } Re(\tau_j)<0.5 \text{ for }j>n\}.\]
In the annular setting, we are interested in the subspace of the configuration space consisting of unordered tuples of disjoint points not containing the origin, to wit
\[
\Conf^{2n,0}(\mathbb{C}):=\{\tau=[\tau_1,\dots, \tau_{2n}] \in \Conf^{2n}(\mathbb{C})| \tau_{i} \neq 0 \text{ for all }i\}.
\]
Fix $\tau_o:=[1,\dots,n,-1,\dots,-n] \in \Conf^{2n,0}(\mathbb{C})$, we have the isomorphism
\[
\pi_1(\Conf^{2n,0}(\mathbb{C}), \tau_o) \simeq \Br_{1,2n}
\]
and hence an associated group homomorphism
\[
\Br_{1,2n} \to \Symp(A_{\tau_o})/\Ham(A_{\tau_o}).
\]
Analogously to the above, let $\Br_{1,n}$ denote the copy of $\Br_{1,n} \subseteq \Br_{1,2n}$ included along the map $b \mapsto b \times 1^n$ or equivalently as the fundamental group of the subspace
\[
\{\tau=[\tau_1,\dots, \tau_{2n}] \in \Conf^{2n,0}(\mathbb{C})| \tau_i=i \text{ for }i=1,\dots,n, \text{ and } Re(\tau_j)<0.5 \text{ for }j>n\}.
\]
The induced group homomorphism
\[
\Br_{1,n} \to \Symp(A_{\tau_o})/\Ham(A_{\tau_o})
\]
will be used to define the annular link invariant in Section~\ref{sec:floer}.

Continuing our review of the geometry of $A_{\tau}$, let $\pi_A \co A_{\tau} \to \mathbb{C}_z$ be the projection to the $z$-coordinate. This map is Lefschetz, and the singular values of $\pi_A$ are precisely the points in $\tau$. We say that a path $c$ in $\mathbb{C}$ is a \emph{matching path} for $\tau \in \Conf^{m}(\mathbb C)$ if it is an embedding $c\co [0,1] \to \mathbb{C}$ such that $c(0)$ and $c(1)$ are distinct elements of  $\tau$ and furthermore $c(t) \notin \tau$ for all $t$. To a matching path for $\tau$, one may associate an embedded Lagrangian matching sphere $\eL_c \subseteq A_{\tau}$, a schematic of which is shown in Figure \ref{fig:matching}. Similarly, we say that $c$ is an \emph{annular matching path} if $\tau \in \Conf^{m,0}(\mathbb C)$ and $c$ is 
a matching path for $\tau$ which further satisfies $c(t)\neq 0$ for all $t$, which likewise comes with an embedded matching sphere in $A_{\tau}$.

\begin{figure}[ht]
\begingroup%
  \makeatletter%
  \providecommand\color[2][]{%
    \errmessage{(Inkscape) Color is used for the text in Inkscape, but the package 'color.sty' is not loaded}%
    \renewcommand\color[2][]{}%
  }%
  \providecommand\transparent[1]{%
    \errmessage{(Inkscape) Transparency is used (non-zero) for the text in Inkscape, but the package 'transparent.sty' is not loaded}%
    \renewcommand\transparent[1]{}%
  }%
  \providecommand\rotatebox[2]{#2}%
  \newcommand*\fsize{\dimexpr\f@size pt\relax}%
  \newcommand*\lineheight[1]{\fontsize{\fsize}{#1\fsize}\selectfont}%
  \ifx\svgwidth\undefined%
    \setlength{\unitlength}{140.89285663bp}%
    \ifx\svgscale\undefined%
      \relax%
    \else%
      \setlength{\unitlength}{\unitlength * \real{\svgscale}}%
    \fi%
  \else%
    \setlength{\unitlength}{\svgwidth}%
  \fi%
  \global\let\svgwidth\undefined%
  \global\let\svgscale\undefined%
  \makeatother%
  \begin{picture}(1,0.78326992)%
    \lineheight{1}%
    \setlength\tabcolsep{0pt}%
    \put(0,0){\includegraphics[width=\unitlength,page=1]{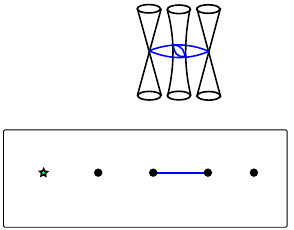}}%
    \put(0.02281365,0.04182502){\color[rgb]{0,0,0}\makebox(0,0)[lt]{\lineheight{1.25}\smash{\begin{tabular}[t]{l}$\mathbb C$\end{tabular}}}}%
    \put(0,0){\includegraphics[width=\unitlength,page=2]{matching_sphere.pdf}}%
  \end{picture}%
\endgroup%

\caption{A matching sphere $\eL_c$. Here the black dots are elements of $\tau \in \Conf^{2m,0}(\mathbb C)$, and the green star is the origin. The blue line segment is an annular matching path connecting two elements of $\tau$, which are critical values of the Lefschetz fibration $A_{\pi}$, and avoiding the origin. The matching sphere $\eL_c$ lies over $c$ in $A_{\tau}$ as shown.}
\label{fig:matching}
\end{figure}

One way to define $\eL_c$ is to use symplectic parallel transport with respect to the fibration $\pi_A$ to construct Lefschetz thimbles emanating from $c(0)$ and $c(1)$ respectively. The resulting vanishing cycle over $z=c(t)$, for $0<t<1$, consists of points where both $u$ and $v$ are real, and the two Lefschetz thimbles can therefore be glued together to give a Lagrangian sphere $\eL_c$.
Alternatively, $\eL_c$ is itself a vanishing cycle: consider the fibration with base $\Sym^{2n}(\mathbb C)$ and fibre $A_{\tau}$ above each $\tau \in \Sym^{2n}(\mathbb{C})$. There is a Lefschetz thimble above the path 
\[
\tilde{c} \co [0,0.5] \to \Sym^{2n}(\mathbb{C}), \quad t \mapsto (\tau \setminus \{c(0),c(1)\})\cup \{c(t),c(1-t)\},
\]
 and its vanishing cycle is $\eL_c$, possibly up to Hamiltonian isotopy.

\subsubsection{Nilpotent slices} \label{subsec:nilpotent}

Let $G=GL_{2n}(\CC)$ and $ \mathfrak{g}$ be its Lie algebra. Let $S_{n} \subseteq \kg$ be the affine subspace consisting of matrices
\[  \left( \begin{array}{cccccccccccccccccccccccc}
a_1   & 1 &         &  & b_1  &    &  & \\
a_2   & 0 & 1       &  & b_2  &    &  & \\
\dots &   & \dots   &   & \dots &  &    \\
a_{n-1}& &         & 1 & b_{n-1}&  &  & \\
a_n    & &         & 0 & b_n    &  &  & \\
c_1      &   &      &    & d_1     & 1  &   & \\
c_2 &     &     &    & d_2    &  0 &1    & \\
\dots & &      &     & \dots & & &\dots & 1 \\
c_n   & &       &     & d_{n}& &  & &0
\end{array} 
\right)\] 
This is a nilpotent slice of the nilpotent element of Jordan type $(n,n)$ (see \cite[Lemma 23]{SS06}).
The set of regular values of the adjoint quotient map $\chi \co S_n \to \Sym^{2n}(\CC)$, which sends a matrix to its set of coefficients of its characteristic polynomial, is precisely $\Conf^{2n}(\CC)$.
For $ \tau \in \Conf^{2n}(\CC)$, we define
\begin{align}
\cY_{n,\tau}:= \chi^{-1}(\tau)
\end{align}
equipped with the K\"ahler form $\omega$ arising from restriction of the standard K\"ahler form on $\mathbb{C}^{4n}$ with coordinates $\{a_i,b_i,c_i,d_i \in \CC: 1 \leq i \leq n\}$.

In \cite{SS06}, Seidel and Smith prove that ``rescaled'' symplectic parallel transport is sufficiently well-behaved on the total space of the adjoint quotient map to define monodromy symplectomorphisms. In particular, for $\tau \in \Conf^{2n}(\mathbb C)$ there is a map
\[\Br_{n} \subseteq \Br_{2n} \to \Symp(\cY_{n,\tau})/\Ham(\cY_{n,\tau})
\]
as in the case of Milnor fibres, and restricting this map we see that for the specific case of $\tau_o \in \Conf^{2n,0}(\mathbb C)$ we have
\[
\Br_{1,n} \subseteq \Br_{1,2n} \to \Symp(\cY_{n,\tau_o})/\Ham(\cY_{n,\tau_o}).
\]

\begin{definition}
A crossingless matching for $\tau \in \Conf^{2m}(\mathbb C)$ consists of $n$ matching paths $\ul{c}=\{c_1,\dots,c_n\}$ for $\tau$ with pairwise disjoint images. Likewise, a crossingless annular matching for $\tau \in \Conf^{2m,0}(\mathbb C)$ consists of $n$ annular matching paths $\ul{c}=\{c_1,\dots,c_n\}$ for $\tau$ with pairwise disjoint images.
\end{definition}

Given a crossingless matching for $\tau_o$, we can use the Seidel-Smith iterated vanishing cycle construction \cite[Section 4(B)]{SS06} to associate to it a Lagrangian submanifold $\eL_{\ul{c}} \subset \cY_{n,\tau_o}$ as follows. Begin with a matching path $c_1$ for $\tau_o$, and use it to define a path 
\[\tilde{c}_1 \co [0,0.5] \to \Symp^{2}(\mathbb{C}), \qquad \tilde{c}_1(t)= \{c_1(t),c_1(1-t)\}.\] 
The vanishing cycle over this path gives a Lagrangian $\eL_{c_1}$ inside of the Milnor fibre $A_{[c_1(0),c_1(1)]} = \cY_{1, [c_1(0), c_1(1)]}$. Similarly, we  define a path 
\[\tilde{c}_2 \co [0,0.5] \to \Symp^{4}(\mathbb{C}), \qquad \tilde{c}_2(t)= \{c_1(0),c_1(1),c_2(t),c_2(1-t)\}.\] 
There is a Morse-Bott degeneration for $\cY_{2,[c_1(0), c_1(1), c_2(t), c_2(1-t)]}$ as $t$ goes to $0.5$, which has critical locus precisely $A_{[c_1(0),c_1(1)]}$. Seidel and Smith consider the vanishing cycles $\eL_{c_1,c_2}$ consisting of points which converge to $\eL_{c_1}$ under this Morse-Bott degeneration. By inductively applying this procedure, they produce a Lagrangian submanifold $\eL_{\ul{c}} \subset \cY_{n,\tau_o}$, which they show that is independent of the ordering of the matching paths $c_i$ up to Hamiltonian isotopy.

Moreover, given a crossingless matching $\ul{c}$ for some $\tau \in \Conf^{2n}(\CC)$, let $\beta$ be an $n$-stranded braid regarded as an element of $\Br_{2n}$. Then there is a symplectomorphism induced by $\beta$ on $\cY_{n,\tau}$, also denoted $\beta$, with the property that $\beta \eL_{\ul{c}}$ is Hamiltonian isotopic to $L_{\beta \ul{c}}$. Likewise, if $\ul{c}$ is a crossingless annular matching for $\tau_o$, if $\beta$ is an $n$-strand annular braid considered as an element of $\Br_{1,2n}$, using the induced symplectomorphism on $\cY_{n,\tau_o}$ gives us $\beta \eL_{\ul{c}}$ Hamiltonian isotopic to $L_{\beta \ul{c}}$.

There is a helpful alternate description of the Lagrangian $\eL_{\ul{c}}$ associated to a crossingless matching due to Manolescu \cite[Theorem 1.2]{Manolescu:nilpotent}, which we now recall. For $\tau \in \Conf^{2n}(\mathbb{C})$, let $\Hilb^n(A_{\tau})$ be the Hilbert scheme of zero dimensional length $n$ subschemes of $A_{\tau}$. This is a smooth algebraic variety. Then Manolescu proves that $\cY_{n,\tau}$ is precisely the horizontal Hilbert scheme of $A_{\tau}$ with respect to the projection $\pi_A \co A_{\tau} \to \mathbb{C}_z$, to wit:
\begin{align}\label{eq:horHilb}
\Hilb^{n,hor}(A_{\tau})=\{ I \in \Hilb^n(A_{\tau}) | \length((\pi_A)_*I)=n\}.
\end{align}
In particular, $\Hilb^{n,hor}(A_{\tau})$ is the open subvariety of $\Hilb^n(A_{\tau})$ consisting of those subschemes $I$ of $A_{\tau}$ whose projection to $\mathbb{C}_z$ remains of length $n$.
We denote the divisor $\Hilb^n(A_{\tau}) \setminus \cY_{n,\tau}$ by $D_r$. 
There is another distinguished divisor $D_{HC}$ in $\Hilb^n(A_{\tau})$, called the Hilbert-Chow divisor, which is the exceptional  divisor of the Hilbert-Chow morphism $\Hilb^n(A_{\tau}) \to \Sym^n(A_{\tau})$.
By \cite[Lemma 5.5]{AS16}, we may equip $\cY_{n,\tau}$ with a K\"ahler form $\omega_{\cY}$ which is the restriction of the product K\"ahler form arising from the K\"ahler structure on $A_{\tau}$ away from a small neighborhood of $D_{HC}$.
By \cite[Theorem 1.2]{Manolescu:nilpotent}, we may deform the K\"ahler form $\omega$ arising from the restriction of the canonical form on $\mathbb{C}^{4n}$ to $\omega_{\cY}$ such that under this deformation, $\eL_{\ul{c}}$ is deformed to a Lagrangian $\eL_{\ul{c}}'$ with respect to $\omega_{\cY}$ which is Hamiltonian isotopic to the product of the matching spheres $\eL_{c_i} \subseteq A_{\tau}$. In other words, $\eL_{\ul{c}}'$ is Hamiltonian isotopic to the Lagrangian consisting of length $n$ subschemes whose support consists of $n$ disjoint points, one on each $\eL_{c_i}$. In coordinates, given a matching path $c$ for $\tau$, one may set $\Sigma_c$ to be the Lagrangian sphere
\begin{equation} \label{eqn:sphere} \Sigma_c = \{(u,v,z): (u,v,z) \in A_{\tau}, z \in \mathrm{Im}(c) u,v \in \sqrt{-p(z)}\mathbb R \}.\end{equation}
Let $\Delta$ be the fat diagonal inside of $\Sym^n{A_{\tau}}$, so that $\Sym^n(A_{\tau})\setminus \Delta$ embeds as an open subset of $\Hilb^n(A_{\tau})$. Then for a crossingless matching $\{c_1, \dots, c_n\}$, after deforming $\omega$ to $\omega_{\cY}$, the product 
\begin{equation}\label{eqn:productL}\Sigma_{\ul{c}} = \Sigma_{c_1} \times \dots \times \Sigma_{c_n} \subseteq (\Sym^n(A_{\tau}) \setminus \Delta) \cap \mathcal Y_{n, \tau_o} \end{equation}
is Hamiltonian isotopic to the Lagrangian $\eL_{\ul{c}}'$. From now on we use this product Lagrangian when working with the K\"ahler form $\omega_{\cY}$.

In the annular setting, we will also consider another distinguished divisor $D_o$.

\begin{definition} Let $D_{axis}:=\{z=0\} \subseteq A_{\tau_o}$, and call this divisor the annular divisor. Then we define
\[
D_o:=\{ I \in \Hilb^{n,hor}(A_{\tau_o})=\cY_{n,\tau_o}: supp(I) \cap D_{axis} \neq \emptyset \}
\]
where $supp(I)$ is the support of $I$ as a subscheme of $A_{\tau_o}$.
\end{definition}

\begin{lemma}
The set $D_o$ is a divisor of $\Hilb^{n,hor}(A_{\tau_o})$.
\end{lemma}

\begin{proof}
The sum $\sum_{k=1}^n A_{\tau_o}^{k-1} \times D_{axis}  \times A_{\tau_o}^{n-k}$ is a $\Sym_n$-invariant divisor of $A_{\tau_o}^{n}$. Therefore, it is locally cutout by a $\Sym_n$-invariant algebraic function.
As a result, it descends to a Cartier divisor 
\[
D:=\{[x_1,\dots, x_n] \in \Sym^n(A_{\tau_o}) : x_i \in D_{axis} \text{ for some }i\}
\]
in $\Sym^n(A_{\tau_o})$. In other words, $D$ is locally cut out by a single element in the ring of algebraic functions.
The pull-back of $D$ along the algebraic morphism
\[
\Hilb^{n,hor}(A_{\tau_o}) \to \Hilb^{n}(A_{\tau_o}) \to \Sym^n(A_{\tau_o})
\]
is $D_o$, by definition.
The pull-back of a Cartier divisor is a Cartier divisor so $D_o$ is a Cartier divisor. 
Since $\Hilb^{n,hor}(A_{\tau_o})$ is smooth, divisors and Cartier divisors are the same, so $D_o$ is a divisor.
\end{proof}

The divisor $D_o$ is preserved by the monodromy action on $\cY_{n,\tau_o}$ induced by the braid group $\Br_{1, 2n}$. We further note that when $\ul{c}$ is a crossingless annular matching, we may choose $\omega_{\cY}$ such that the isotopy from 
$\eL_{\ul{c}}$ to $\eL_{\ul{c}}'$ is disjoint from $D_o$.
We will follow Seidel and Smith's presentation in \cite{SS06} and use the symplectic form $\omega$ arising from the restriction of the form on $\CC^{4n}$ to show that our new definition of symplectic annular Khovanov homology is an annular link invariant. Subsequently, we will employ the deformation from $\omega$ to $\omega_{\cY}$, which does not change the symplectic annular Khovanov homology, and consider various actions on $\cY_{n,\tau}$ using appropriately averaged versions of $\omega_{\cY}$. This is because it is somewhat simpler to work with the theory under the perspective that the Lagrangians are products of spheres $\Sigma_{c_i}$.

\subsection{Floer theory details and definitions} \label{sec:floer}
We now discuss the Floer-theoretic input to Seidel and Smith's symplectic Khovanov homology, and the extensions necessary for symplectic annular Khovanov homology. 

Recall that the manifold $\mathcal Y_{n,\tau}$ with the symplectic form $\omega$ restricted from $\CC^{4n}$ is an exact symplectic manifold which is convex at infinity. Furthermore, for any crossingless matching $\ul{c}$, the associated Lagrangian $\eL_{\ul{c}}$ is topologically a product of spheres, and therefore is an exact Lagrangian. Ergo, given two crossingless matchings $\ul{c}_0$ and $\ul{c}_1$, the Lagrangian Floer cohomology of $(\cY_{n,\tau}, \eL_{\ul{c}_0}, \eL_{\ul{c}_1})$ can be constructed using standard methods. Let $L$ a link in $S^3$ be the closure of a braid $b$ identified with an element $\beta \in \Br_{n}\subseteq \Br_{2n}$. Let $\ul{c}_{up}$ be a crossingless matching of $\tau_o = [-n, \dots, -1, 1, \dots n]$ in the upper half-plane which matches $-k$ to $k$, as in Figure~\ref{fig:upper}. Seidel and Smith's symplectic Khovanov homology is 
\[ \Kh_{symp}^*(L) \vcentcolon = HF^{*+n+w}(\cY_{n, \tau_o}, \eL_{\ul{c}_{up}},\beta \eL_{\ul{c}_{up}}). \]
where $w$ is the writhe of $\beta$.

In the annular setting, we wish to adapt this construction to track intersections of pseudoholomorphic disks with the divisor $D_o$. We now recall how to go about making this precise, using the general setting of Lagrangians $\eL_{\ul{c}_0}$ and $\eL_{\ul{c}_1}$ for two crossingless annular matchings $\ul{c}_0$ and $\ul{c}_1$.

First, we choose a Hamiltonian function $H$ with time-$1$-map $\phi^1_H$ such that $\phi_H(\eL_{\ul{c}_0})$ and $\eL_{\ul{c}_1}$ intersect transversely. We further require that $\phi^t_H(\eL_{\ul{c}_0}) \cap D_o=\emptyset$ for all $t \in [0,1]$.
We next choose a grading datum for each of $\eL_{\ul{c}_0}$ and $\eL_{\ul{c}_1}$ with respect to a holomorphic volume form\footnote{Since $H^1(\cY_{n,\tau_o})=0$, any two choices of holomorphic volume form are homotopic through continuous sections of the canonical line bundle, so there is effectively only one choice of holomorphic volume form from the perspective of gradings.}  on $\cY_{n,\tau_o}$. (See \cite{SeidelGraded} for a general discussion of graded Lagrangians.) 
We let the Floer cochain group be
\[
CF(\cY_{n,\tau_o},\eL_{\ul{c}_0}, \eL_{\ul{c}_1};H; \mathbb F[U]):=\oplus_{x \in \phi_H(\eL_{\ul{c}_0}) \pitchfork \eL_{\ul{c}_1}}\mathbb F[U]x
\]
where $\mathbb F$ is a field of characteristic two, the grading $|x|$ is the Maslov grading, and $U$ is a formal variable of grading zero. 

In addition to the Maslov grading, we can define a relative topological winding grading as follows. Given a crossingless annular matching $\ul{c}$, let $\Sym(\ul{c}) \subseteq  \Conf^n(\mathbb{C})  \subseteq \Sym^n(\mathbb{C})$ be the product of the components of $\ul{c}$.
In other words, a point in $\Sym(\ul{c})$ is an $n$-tuple of points in $\mathbb{C}$ containing one point on each component of $\ul{c}$.
Let $d_o$ be the divisor of $\Sym^n(\mathbb{C})$ consisting of points that meet the origin of $\mathbb{C}$.
If $\ul{c}_0$ and $\ul{c}_1$ are crossingless annular matchings, then the intersection pairing with $d_o$ defines a map \[\cdot d_o \co \pi_2(\Sym^n(\mathbb{C}), \Sym(\ul{c}_0) \cup \Sym(\ul{c}_1)) \to \mathbb{Z}, \qquad u \mapsto u \cdot d_o.\]
Now, the symmetric product of the map $\pi_A \co A_{\tau_o} \to \mathbb{C}_z$ gives us a map $\Sym^n(A_{\tau_o}) \to \Sym^n(\mathbb{C})$.
Precomposing this map with the Hilbert-Chow morphism and restricting to $\cY_{n,\tau_o}$ defines a map $\pi_{\Delta} \co \cY_{n,\tau_o} \to \Sym^n(\mathbb{C})$. It is clear from the definitions that $\pi_{\Delta}(L_{\ul{c}_i})=\Sym(\ul{c}_i)$ for $i=0,1$, and we may choose our Hamiltonian perturbation $H$ to be compatible with $\pi_{\Delta}$ so that this remains true after perturbation.

\begin{definition}\label{d:winding}
For  $x_0,x_1 \in \phi_H(\eL_{\ul{c}_0}) \pitchfork \eL_{\ul{c}_1}$, 
we define the relative winding grading $w(x_0,x_1)$ to be $2(u \cdot d_o)$ where $u \co \mathbb{R} \times [0,1] \to \Sym^n(\mathbb{C})$
is a topological strip such that $\lim_{s \to -\infty} u(s,t)=\pi_{\Delta}(x_0)$ and $\lim_{s \to \infty} u(s,t)=\pi_{\Delta}(x_1)$, and furthermore $u(s,0) \in \Sym(\ul{c}_0)$ and $u(s,1) \in \Sym(\ul{c}_1)$.
\end{definition}
Here the factor of two is chosen in order to match the conventions of combinatorial annular Khovanov homology. The fact that the relative winding grading is independent of the choice of the topological strip $u$ follows from the observation that $\Sym^n(\mathbb{C})=\mathbb{C}^n$, $\Sym(\ul{c}_0)$ and $\Sym(\ul{c}_1)$ are all contractible. In Section~\ref{sec:invariance} we discuss how to promote this relative grading to an absolute grading on the annular symplectic Khovanov homology.

Continuing with Floer theory, let $\mathcal{J}$ be the space of $\omega$-tamed families of almost complex structures which agree with the complex structure of $\cY_{n,\tau_o}$ near infinity, near the intersection $D_{HC} \cap \cY_{n,\tau_o}$, and near the annular divisor $D_o$.
For a generic choice of $J=J_t$ for $t \in [0,1]$ in $\in \mathcal{J}$, the moduli space of Floer solutions $u:\mathbb{R} \times [0,1] \to \cY_{n,\tau_o}$ between intersection points $x_0$ and $x_1$ in $\phi_H(\eL_{\ul{c}_0}) \pitchfork \eL_{\ul{c}_1}$ 
\begin{align}
 \mathcal{M}(x_0,x_1;J):=\{u : du^{0,1}=0, \lim_{s \to -\infty} u(s,t)=x_0, \lim_{s \to \infty} u(s,t)=x_1, u(s,0) \in \phi_H(\eL_{\ul{c}_0}), u(s,1) \in \eL_{\ul{c}_1}\}/\mathbb{R} \label{eq:strips}
\end{align}
is transversely cut out and can be compactified to a topological manifold with corners.
The condition that $J$ agrees with the complex structure of $\cY_{n,\tau_o}$ near infinity guarantees that we can run Gromov compactness to compactify the moduli spaces.

The condition that $J$ agrees with the complex structure of $\cY_{n,\tau_o}$ near $D_o$, together with the fact that $D_o \cap \cY_{n,\tau_o}$ does not contain any $J$-holomorphic sphere, guarantees that every intersection between $u$ and $D_o$ is positive. This positivity in turn implies that we can define the differential to be
\begin{align} \label{eq:differential}
\partial x_1=\sum_{x_0, |x_0|-|x_1|=1} \#\mathcal{M}(x_0,x_1;J) U^{u \cdot D_o}x_0 
\end{align}
where $u \cdot D_o$ is the algebraic intersection number between $u \in \mathcal{M}(x_0,x_1;J)$ and $D_o$. This number is independent of the choice of $u \in \mathcal{M}(x_0,x_1;J)$, and moreover $2(u \cdot D_o)$ is equal to the relative winding grading $w(x_0,x_1)$. 
We denote the homology with respect to the differential $d$ by $HF(\cY_{n, \tau_o},\eL_0,\eL_1;H)$, and let the \emph{annular homology} $HF_{ann}(\cY_{n, \tau_o},\eL_0,\eL_1;H)$ be the homology of the truncated complex
\[CF(\cY_{n, \tau_o},\eL_0,\eL_1;H; \mathbb F[U])/(U=0):=CF(\cY_{n, \tau_o},\eL_0,\eL_1;H) \otimes_{\mathbb{F}[U]} \mathbb{F}.\]
In particular, the annular homology does not count disks passing through the divisor $D_o$. Both $HF(\cY_{n, \tau_o},\eL_0,\eL_1;H; \mathbb F[U])$ and $HF_{ann}(\cY_{n, \tau_o},\eL_0,\eL_1;H)$ are independent of the choice of Hamiltonian $H$; we therefore drop $H$ from the notation when the explicit choice is not important. Note furthermore that the differential of $CF_{ann}(\cY_{n, \tau_o}, L_{\ul{c}_0}, L_{\ul{c}_1})$ preserves the relative winding grading, and therefore $HF_{ann}(\cY_{n, \tau_o},L_{\ul{c}_0}, L_{\ul{c}_1})$ splits with respect to the relative winding grading.

With this in mind, we are ready to define our annular link invariant. Let $L$ an annular link in $S^3$ be the closure of a braid $b$ identified with an element $\beta \in \Br_{1,n} \subseteq \Br_{1,2n}$. As previously, let $\ul{c}_{up}$ be a crossingless annular matching of $\tau_o = [-n, \dots, -1, 1, \dots n]$ in the upper half-plane which matches $-k$ to $k$, as in Figure~\ref{fig:upper}.
\begin{definition} \label{def:AKh} The annular symplectic Khovanov homology of $L$ over $\FF$ is
\[ \AKh_{symp}^*(L) \vcentcolon = HF_{ann}^{*+n+w}(\cY_{n, \tau_o}, \eL_{\ul{c}_{up}}, \beta\eL_{ \ul{c}_{up}}). \]
\end{definition}

\begin{remark}\label{r:char0} For notational simplicity, we have introduced the annular symplectic Khovanov homology over fields of characteristic two; and indeed in this paper we will be almost exclusively concerned with the theory over $\mathbb F_2$ the field with two elements. However, the Lagrangian $\eL_{\ul{c}}$ associated to a crossingless matching is diffeomorphic to a product of spheres, and is thus spin with a unique spin structure. Therefore, it is also straightforward to give coherent orientations to the moduli spaces of Floer solutions to define the symplectic annular Khovanov homology over the integers, and thus any field. The invariance proofs of Section~\ref{sec:invariance} go through equally well in this case.
\end{remark}

\begin{remark}\label{r:tauto} Note that the condition that $J$ agrees with the complex structure of $\cY_{n,\tau_o}$ near $D_{HC} \cap \cY_{n,\tau_o}$ allows us to apply the tautological correspondence to a pseudoholomorphic strip $u$ to get a pseudo-holomorphic map $(v,\pi_{\Sigma})\co\Sigma \to A_{\tau_o} \times (\mathbb{R} \times [0,1])$ such that $\pi_{\Sigma} \co \Sigma \to \mathbb{R} \times [0,1]$ is an $n$-fold branched covering. See \cite[Section 5.8]{AS16} and \cite{DonSmi, Smith03}, as well as \cite[Lemma 3.6]{OS_Disks} and \cite[Section 13]{Lip06} for further discussion of the tautological correspondence. This observation will be helpful for our applications. \end{remark}

\subsection{Invariance under Markov moves and other properties} \label{sec:invariance}

In this section we establish invariance of our annular symplectic Khovanov homology under Markov moves, and discuss how to promote the relative winding grading constructed in the previous section to an annular grading. We then conclude by proving Theorem~\ref{thm:akh-to-kh}. 

Our invariance arguments closely follow those of \cite{SS06} for ordinary symplectic Khovanov homology. We begin with the following modification of \cite[Lemma 49]{SS06}, showing invariance for handleslides among the components of a crossingless annular matching.

\begin{lemma}[Handleslide invariance]\label{l:slide}
Let $\ul{c}=\{c_1,\dots,c_n\}$ be a crossingless annular matching. Suppose that $c_1'$ is an annular matching obtained from handlesliding $c_1$ across $c_2$ such that the handleslide region does not intersect the origin in $\in \mathbb{C}$. Define $\ul{c}':=\{c_1',c_2,\dots,c_n\}$.
Then the associated Lagrangian $\eL_{\ul{c}}$ is Hamiltonian isotopic to $\eL_{\ul{c}'}$ through an isotopy in the complement of $D_o$. In particular, for any other crossingless annular matching $\ul{d}$, we have  $HF(\cY_{n, \tau_o}, \eL_{\ul{d}},\eL_{\ul{c}}; \mathbb F[U]) \simeq HF(\cY_{n,\tau_o}, \eL_{\ul{d}},\eL_{\ul{c}'}; \mathbb F[U])$.
\end{lemma}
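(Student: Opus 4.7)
The plan is to adapt Seidel--Smith's original handleslide invariance argument to the annular setting, tracking throughout that all auxiliary data of matchings can be taken to avoid the origin $0 \in \mathbb{C}$. First I would pass to the product Lagrangian description: after the Manolescu deformation from $\omega$ to $\omega_{\cY}$, the Lagrangian $\eL_{\ul{c}}$ is Hamiltonian isotopic to $\Sigma_{\ul{c}} = \Sigma_{c_1} \times \cdots \times \Sigma_{c_n} \subseteq (\Sym^n(A_{\tau_o}) \setminus \Delta) \cap \cY_{n,\tau_o}$, and the excerpt already records that when $\ul{c}$ is an annular matching this deformation isotopy can be chosen disjoint from $D_o$, since each $\Sigma_{c_i}$ fibres over the image of $c_i$ and thus avoids the fibre $\{z=0\}$. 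It therefore suffices to produce a Hamiltonian isotopy $\Sigma_{\ul{c}} \rightsquigarrow \Sigma_{\ul{c}'}$ inside $\cY_{n,\tau_o} \setminus D_o$.

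Next I would construct such an isotopy by realizing $\Sigma_{c_1}$ and $\Sigma_{c_1'}$ as endpoints of a one-parameter family of Lagrangian spheres in $A_{\tau_o}$ arising as relative vanishing cycles over a path of matching paths that realizes the handleslide (this is the content of Seidel--Smith's Lemma 49). The hypothesis that the handleslide region avoids the origin means the entire family of matching paths can be chosen disjoint from $0 \in \mathbb{C}$, so every member of the family of Lagrangian spheres in $A_{\tau_o}$ avoids the fibre $\{z=0\}$. Taking the product with the fixed Lagrangians $\Sigma_{c_2}, \dots, \Sigma_{c_n}$ produces an isotopy of Lagrangian submanifolds of $(\Sym^n(A_{\tau_o}) \setminus \Delta) \cap \cY_{n,\tau_o}$ which is disjoint from $D_o$ at every time; since the Lagrangians in question are diffeomorphic to products of spheres and $\cY_{n,\tau_o}$ is exact, the isotopy is automatically Hamiltonian.

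The main obstacle is the usual one for handleslide invariance: as the matching path $c_1$ is deformed across $c_2$, intermediate matching paths necessarily intersect $c_2$, so a naive isotopy of the spheres $\Sigma_{c_1}$ in $A_{\tau_o}$ would collide with $\Sigma_{c_2}$ and escape the open stratum $\Sym^n(A_{\tau_o}) \setminus \Delta$. Following Seidel--Smith, the remedy is to realize the pair $(\Sigma_{c_1}, \Sigma_{c_2})$ jointly as a relative vanishing cycle over a path in a configuration space of matching data, so that the pair stays embedded and disjoint; our annular refinement is only the book-keeping point that this path can be taken in the subspace where matching endpoints and interiors all stay away from $0$, using the handleslide-region hypothesis.

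For the final Floer-theoretic conclusion, once we have a Hamiltonian isotopy $\psi_t$ from $\eL_{\ul{c}}$ to $\eL_{\ul{c}'}$ supported in the complement of $D_o$, the standard continuation map construction furnishes a chain map \[ CF(\cY_{n,\tau_o}, \eL_{\ul{d}}, \eL_{\ul{c}}; \mathbb{F}[U]) \longrightarrow CF(\cY_{n,\tau_o}, \eL_{\ul{d}}, \eL_{\ul{c}'}; \mathbb{F}[U]). \] Because the moving boundary condition $\psi_t(\eL_{\ul{c}})$ is always disjoint from $D_o$ and the almost complex structure is chosen to agree with the integrable one near $D_o$, every continuation strip $u$ meets $D_o$ only in its interior and with nonnegative algebraic multiplicity; weighting each contribution by $U^{u \cdot D_o}$ then yields a chain map over $\mathbb{F}[U]$, and the analogous map from the reverse isotopy provides a homotopy inverse, giving the desired isomorphism.
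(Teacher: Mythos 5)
Your proof is correct and follows essentially the same route as the paper: both defer to Seidel--Smith's handleslide argument (Lemma 49 of their paper) and add the bookkeeping that the relevant family of matching data stays away from the origin, so the resulting Hamiltonian isotopy avoids $D_o$ and the $U$-weighted continuation maps give the $\mathbb F[U]$-isomorphism. The one substantive difference is presentational: you first pass through Manolescu's deformation to the product Lagrangians $\Sigma_{\ul{c}}$, whereas the paper deliberately runs the invariance proofs with the Seidel--Smith form restricted from $\CC^{4n}$ and the iterated-vanishing-cycle Lagrangians, reserving the product picture for later sections. Your detour is harmless but buys nothing here, and it creates the tension you yourself flag: during the handleslide the intermediate spheres must collide with $\Sigma_{c_2}$, so the isotopy cannot literally stay a product in $\Sym^n(A_{\tau_o})\setminus\Delta$. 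The ``joint relative vanishing cycle over a path of matching data'' that you invoke to resolve this is precisely the Morse--Bott degeneration the paper spells out --- collapse the two endpoints of $c_2$ along $c_2$ to reach a singular fibre, note that $c_1$ and $c_1'$ become homotopic in the critical locus once $c_2$ is absent, and transport the resulting isotopy back via vanishing cycles --- and making that explicit (together with the observation that the degenerating path of configurations avoids the origin) is what turns your sketch into a complete argument.
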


An example of such a handleslide is shown in Figure~\ref{fig:sliding}.

\begin{proof}
The proof of \cite[Lemma 49]{SS06} goes as follows. 
Recall that the two endpoints of $c_2$ are elements of $2n$-tuple $\tau_0$.
By moving the two endpoints towards each other along $c_2$ until they coincide and adjoining the endpoints of $c_j$ for $j \neq 2$, one obtains a path $(\tau_t)_{t \in [0,1]}$ of $2n$-tuples starting at $\tau_o$ and ending at a point in $\Sym^{2n}(\CC) \setminus \Conf^{2n}(\CC)$.
The corresponding family $\cY_{n, \tau_t}$ gives a Morse-Bott degeneration from $\cY_{n, \tau_0}$ to the singular space $\cY_{n, \tau_1}$, and $\eL_{\ul{c}}$ is the vanishing cycle of the Lagrangian $\eL_{\{c_1,c_3,\dots,c_n\}}$ in the critical locus of 
$\cY_{n, \tau_1}$.
In the critical locus, $\eL_{\{c_1,c_3,\dots,c_n\}}$ is Hamiltonian isotopic to $\eL_{\{c_1',c_3,\dots,c_n\}}$ because $c_1$ is homotopic to $c_1'$ when $c_2$ (including the two endpoints of $c_2$) is absent.
The vanishing cycle of $\eL_{\{c_1',c_3,\dots,c_n\}}$ is $\eL_{\ul{c}'}$.
Since $\eL_{\{c_1,c_3,\dots,c_n\}}$ is Hamiltonian isotopic to $\eL_{\{c_1',c_3,\dots,c_n\}}$ and $\eL_{\ul{c}}$ and $L_{\ul{c}'}$ are the respective vanishing cycles, $\eL_{\ul{c}}$ is Hamiltonian isotopic to $\eL_{\ul{c}'}$.

Since the path $(\tau_t)_{t \in [0,1]}$ and the isotopy from $c_1$ to $c_1'$ when $c_2$ is absent are both away from the origin, one can show that the Hamiltonian isotopy from  $L_{\ul{c}}$ to $L_{\ul{c}'}$ is away from $D_o$. \end{proof}

\begin{remark}
For the remainder of the paper, we only need the weaker statement that $\eL_{\ul{c}}$ is Floer theoretically isomorphic to $\eL_{\ul{c}'}$, in the sense that $HF(\cY_{n, \tau_o}, \eL_{\ul{c}},\eL_{\ul{c}''}; \mathbb F[U])=HF(\cY_{n, \tau_o}, \eL_{\ul{c}'},\eL_{\ul{c}''}; \mathbb F[U])$ for any crossingless annular matching $\ul{c}''$.
One way to prove this is to show that there is an element $a \in HF(\cY_{n, \tau_o}, \eL_{\ul{c}},\eL_{\ul{c}'}; \mathbb F[U])$ and $a' \in HF(\cY_{n, \tau_o}, \eL_{\ul{c}'},\eL_{\ul{c}}; \mathbb F[U])$ such that the Floer product $\mu_2(a,a')$ and $\mu_2(a',a)$ are the respective identity elements.
This can be proved by combining (i)  $\eL_{\ul{c}}$ is Floer theoretically isomorphic to $\eL_{\ul{c}'}$ when $U=1$, which is exactly \cite[Lemma 49]{SS06}, and (ii) for any pseudoholomorphic map contributing to the Floer product, 
the projection of the corresponding pseudo-holomorphic map $\pi_A \circ v \co \Sigma \to \CC$, as in Remark \ref{r:tauto}, misses the origin because of the open mapping theorem.
\end{remark}

\begin{figure}[ht]
\fontsize{8pt}{12pt}
\begingroup%
  \makeatletter%
  \providecommand\color[2][]{%
    \errmessage{(Inkscape) Color is used for the text in Inkscape, but the package 'color.sty' is not loaded}%
    \renewcommand\color[2][]{}%
  }%
  \providecommand\transparent[1]{%
    \errmessage{(Inkscape) Transparency is used (non-zero) for the text in Inkscape, but the package 'transparent.sty' is not loaded}%
    \renewcommand\transparent[1]{}%
  }%
  \providecommand\rotatebox[2]{#2}%
  \newcommand*\fsize{\dimexpr\f@size pt\relax}%
  \newcommand*\lineheight[1]{\fontsize{\fsize}{#1\fsize}\selectfont}%
  \ifx\svgwidth\undefined%
    \setlength{\unitlength}{209.62502307bp}%
    \ifx\svgscale\undefined%
      \relax%
    \else%
      \setlength{\unitlength}{\unitlength * \real{\svgscale}}%
    \fi%
  \else%
    \setlength{\unitlength}{\svgwidth}%
  \fi%
  \global\let\svgwidth\undefined%
  \global\let\svgscale\undefined%
  \makeatother%
  \begin{picture}(1,0.24508043)%
    \lineheight{1}%
    \setlength\tabcolsep{0pt}%
    \put(0,0){\includegraphics[width=\unitlength,page=1]{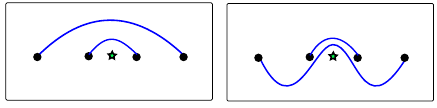}}%
    \put(0.35957058,0.18425753){\color[rgb]{0,0,0}\makebox(0,0)[lt]{\lineheight{1.25}\smash{\begin{tabular}[t]{l}$c_1$\end{tabular}}}}%
    \put(0.29695864,0.14669044){\color[rgb]{0,0,0}\makebox(0,0)[lt]{\lineheight{1.25}\smash{\begin{tabular}[t]{l}$c_2$\end{tabular}}}}%
    \put(0.88729847,0.03756704){\color[rgb]{0,0,0}\makebox(0,0)[lt]{\lineheight{1.25}\smash{\begin{tabular}[t]{l}$c_1'$\end{tabular}}}}%
    \put(0.77280842,0.16457949){\color[rgb]{0,0,0}\makebox(0,0)[lt]{\lineheight{1.25}\smash{\begin{tabular}[t]{l}$c_2$\end{tabular}}}}%
    \put(0,0){\includegraphics[width=\unitlength,page=2]{handleslide.pdf}}%
  \end{picture}%
\endgroup%

\caption{Two crossingless annular matchings $\ul{c}=(c_1,c_2)$ and $\ul{c}'=(c_1', c_2)$, with matching paths marked in blue, each avoiding the origin, marked with a green star. The path $c_1'$ is the result of sliding $c_1$ over $c_2$, with handleslide region not intersecting the origin. The associated Lagrangians $\eL_{\ul{c}}$ and $\eL_{\ul{c}'}$ are Lagrangian isotopic in $\cY_{n,\tau_o} \setminus D_o$.}
\label{fig:sliding}
\end{figure}

We now confirm invariance of the annular symplectic Khovanov homology under Markov conjugation. As in Section~\ref{sec:floer}, let $\ul{c}_{up}$ be a crossingless annular matching for $\tau_o$ in the upper-half plane which matches $-k$ with $k$, as in Figure \ref{fig:upper}. Our proof closely mimics \cite[Proposition 54]{SS06}.

\begin{figure}[ht]
\includegraphics{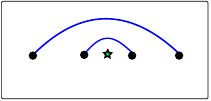}
\caption{A crossingless annular matching $\ul{c}_{up}$ in the upper-half plane, with matching paths drawn in blue, avoiding the origin, marked with a green star.}
\label{fig:upper}
\end{figure}

\begin{lemma}[Invariance under Markov conjugation]\label{l:M1}
For any braid $\beta \in \Br_{1,n}\subseteq \Br_{1,2n}$
\[
HF(\cY_{n, \tau_o},\eL_{\ul{c}_{up}},\sigma_j \beta \sigma_j^{-1} \eL_{\ul{c}_{up}}; \mathbb F[U])\simeq HF(\cY_{n,\tau_o},   \eL_{\ul{c}_{up}},\beta \eL_{\ul{c}_{up}}; \mathbb F[U])
\]
for all $j=1,\dots,n-1$.
\end{lemma}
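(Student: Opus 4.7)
The plan is to follow the proof of \cite[Proposition 54]{SS06} closely, taking additional care that every Hamiltonian isotopy used is supported in the complement of the annular divisor $D_o$, so that the $\mathbb{F}[U]$-refined Floer cohomology is preserved.

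The first step will be to apply the symplectomorphism $\sigma_j^{-1}$ to both Lagrangians on the left-hand side. Since $\sigma_j \in \Br_{1,n} \subseteq \Br_{1,2n}$ arises from a loop in $\Conf^{2n,0}(\mathbb{C})$, the induced symplectomorphism on $\cY_{n,\tau_o}$ preserves the annular divisor $D_o$ setwise. The standard invariance of Lagrangian Floer cohomology under symplectomorphisms preserving $D_o$ then yields
\[
HF(\cY_{n,\tau_o}, \eL_{\ul{c}_{up}}, \sigma_j\beta\sigma_j^{-1}\eL_{\ul{c}_{up}}; \mathbb{F}[U]) \simeq HF(\cY_{n,\tau_o}, \sigma_j^{-1}\eL_{\ul{c}_{up}}, \beta\sigma_j^{-1}\eL_{\ul{c}_{up}}; \mathbb{F}[U]).
\]
It therefore suffices to exhibit a Hamiltonian isotopy between $\sigma_j^{-1}\eL_{\ul{c}_{up}}$ and $\eL_{\ul{c}_{up}}$ whose support is disjoint from $D_o$.

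Second, I will identify $\sigma_j^{-1}\eL_{\ul{c}_{up}}$ with the Lagrangian $\eL_{\sigma_j^{-1}\ul{c}_{up}}$ associated to the matching diagram obtained by pushing $\ul{c}_{up}$ along the braid $\sigma_j^{-1}$. This identification, up to Hamiltonian isotopy, is part of the compatibility between the braid action and the vanishing-cycle construction recalled in Section~\ref{sec:geometry}; because the braid is supported in $\Conf^{2n,0}(\mathbb{C})$, the isotopy can be arranged to stay disjoint from $D_o$.

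Third, I will show that the matching $\sigma_j^{-1}\ul{c}_{up}$ can be transformed back to $\ul{c}_{up}$ by a sequence of handleslides avoiding the origin. The effect of $\sigma_j^{-1}$ on $\ul{c}_{up}$ is a half-twist of the two matching paths whose left endpoints lie at $-j$ and $-(j+1)$, produced as these points are transported around each other along a short arc on the negative real axis. This twist can be undone by handlesliding one affected path over the other, followed by a small isotopy rel endpoints. Because $|{-j}|, |{-(j+1)}| \geq 1$ and all paths of $\ul{c}_{up}$ lie in the upper half-plane, each intermediate matching can be arranged to remain in $\mathbb{C} \setminus \{0\}$, and each handleslide region can be chosen to avoid the origin. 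Iterating Lemma~\ref{l:slide} then provides a Hamiltonian isotopy from $\eL_{\sigma_j^{-1}\ul{c}_{up}}$ to $\eL_{\ul{c}_{up}}$ inside $\cY_{n,\tau_o} \setminus D_o$, completing the proof.

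The main obstacle is the geometric verification in the third step: ensuring that the untangling moves can be arranged entirely in the upper half-plane minus the origin. In this setup the difficulty is mild because the half-twist is localized in a neighborhood of the segment $[-(j+1), -j] \subseteq \mathbb{R}_{<0}$ and hence bounded away from $0$, so all required moves fit into an open region that is contractible within $\mathbb{C} \setminus \{0\}$.
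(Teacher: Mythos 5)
Your first two steps are fine and coincide with the opening move of the paper's argument: conjugating by the symplectomorphism $\sigma_j^{-1}$ (which preserves $D_o$) reduces the claim to comparing $\sigma_j^{-1}\eL_{\ul{c}_{up}}$ with $\eL_{\ul{c}_{up}}$. The third step, however, contains a genuine error: $\sigma_j^{-1}\ul{c}_{up}$ is \emph{not} handleslide equivalent to $\ul{c}_{up}$, and no sequence of handleslides can relate them. A handleslide in the sense of Lemma~\ref{l:slide} replaces a matching path by another path \emph{with the same endpoints} (homotopic to the original once the other path and its endpoints are deleted), so it preserves the underlying perfect matching of the points of $\tau_o$. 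The matching $\ul{c}_{up}$ pairs $-k$ with $k$, and for $1\le j\le n-1$ the generator $\sigma_j$ swaps two adjacent points lying on two \emph{different} arcs of $\ul{c}_{up}$; hence $\sigma_j^{-1}\ul{c}_{up}$ pairs the points of $\tau_o$ differently than $\ul{c}_{up}$ does and cannot be reached from it by handleslides. (A sanity check that something must be wrong: if $\sigma_j^{\pm 1}\eL_{\ul{c}_{up}}$ were Hamiltonian isotopic to $\eL_{\ul{c}_{up}}$ in $\cY_{n,\tau_o}\setminus D_o$ for every generator of $\Br_{1,n}$, then $\beta\eL_{\ul{c}_{up}}$ would be isotopic to $\eL_{\ul{c}_{up}}$ for \emph{every} $\beta\in\Br_{1,n}$, and $\AKh_{symp}$ would not distinguish any two annular links. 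You may be thinking of the plat-closure convention in which the matching pairs consecutive points $2i-1$ and $2i$, where the odd generators twist the two ends of a single cup and act trivially up to isotopy; that is not the convention in force here.)

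The correct route, which is what the paper does following \cite[Lemmas 53--54]{SS06}, is to compare $\sigma_j^{-1}\ul{c}_{up}$ not with $\ul{c}_{up}$ but with $\sigma_{2n-j}^{-1}\ul{c}_{up}$: the generators $\sigma_j$ and $\sigma_{2n-j}$ twist the two opposite pairs of endpoints of the same two nested arcs, so $\sigma_{2n-j}\sigma_j^{-1}\ul{c}_{up}$ \emph{does} induce the same pairing as $\ul{c}_{up}$ and differs from it only by a handleslide away from the origin (Figure~\ref{fig:double_twist}). This gives $\sigma_j^{-1}\eL_{\ul{c}_{up}}\simeq\sigma_{2n-j}^{-1}\eL_{\ul{c}_{up}}$ via Lemma~\ref{l:slide}; one then transports the problem back by the symplectomorphism $\sigma_{2n-j}$, uses that $\sigma_{2n-j}$ commutes with $\beta\in\Br_{1,n}$ in $\Br_{1,2n}$ because they are supported on disjoint sets of strands, and applies the handleslide lemma once more to cancel $\sigma_{2n-j}\sigma_j^{-1}$ acting on $\ul{c}_{up}$. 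Without this detour through $\sigma_{2n-j}$ your argument does not close up.
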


\begin{proof} We recall from \cite[Lemma 53]{SS06} that for $1 \leq j \leq n$, the crossingless annular matching obtained by acting by $\sigma_j^{-1} \sigma_{2n-j}$ on $\ul{c}_{up}$ differs from $\ul{c}_{up}$ by a handleslide away from the origin, as in Figure~\ref{fig:double_twist}. With this in mind, we have the following string of isomorphisms:
\begin{align*}
HF(\cY_{n, \tau_o}, \eL_{\ul{c}_{up}}, \sigma_j \beta \sigma_j^{-1}\eL_{\ul{c}_{up}}; \mathbb F[U])&\simeq HF(\cY_{n, \tau_o},  \sigma_j^{-1} \eL_{\ul{c}_{up}}, \beta \sigma_j^{-1} \eL_{\ul{c}_{up}}; \mathbb F[U])\\
&\simeq HF(\cY_{n, \tau_o} , \sigma_{2n-j}^{-1} \eL_{\ul{c}_{up}},\beta \sigma_j^{-1} \eL_{\ul{c}_{up}}; \mathbb F[U])\\
&\simeq HF(\cY_{n, \tau_o}, \eL_{\ul{c}_{up}}, \sigma_{2n-j} \beta \sigma_j^{-1} \eL_{\ul{c}_{up}}; \mathbb F[U])\\
& \simeq HF(\cY_{n, \tau_o}, \eL_{\ul{c}_{up}},\beta  \sigma_{2n-j} \sigma_j^{-1} \eL_{\ul{c}_{up}}; \mathbb F[U])\\
& \simeq HF(\cY_{n, \tau_o},   \eL_{\ul{c}_{up}}, \beta \eL_{\ul{c}_{up}}; \mathbb F[U]).
\end{align*}
Here the first and third isomorphisms are applications of invariance under symplectomorphisms preserving the divisor $D_o$, the second and fifth isomorphisms are applications of Lemma \ref{l:slide}, and the third isomorphism uses the fact that if $\beta \in \Br_{1,n} \subseteq \Br_{1,2n}$ and $1 \leq j \leq n-1$ then $\beta$ commutes with $\sigma_{2n-j}$ in $\Br_{1,2n}$.
\end{proof}

\begin{figure}[ht]
\includegraphics{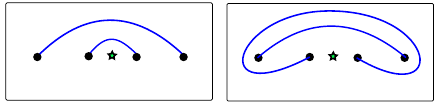}
\caption{The effect of acting on $\ul{c}_{up}$ by $\sigma_{2n-j}\sigma_{j}^{-1}$ is shown on the right in the case $j=1$ and $n=2$. We observe that this is handleslide equivalent to $\ul{c}_{up}$, shown on the left.}
\label{fig:double_twist}
\end{figure}

We now prove invariance under Markov stabilization, following \cite[Proposition 55, Lemma 57]{SS06}. To distinguish $\tau_o$ in different dimensions, let $\tau_{o,n}=\{-n,\dots,n\}$ for $n \in \mathbb{N}$.

\begin{lemma}[Invariance under Markov stabilization]\label{l:M2}
For any $\beta_1,\beta_2 \in \Br_{1,n}$
\begin{align*}
HF^*(\cY_{n+1, \tau_{o,n+1}},\eL_{\ul{c}_{up}},\beta_1 \sigma_{n} \beta_2 \eL_{\ul{c}_{up}}; \mathbb F[U])\simeq HF^*(\cY_{n, \tau_{o,n}},\eL_{\ul{c}_{up}},\beta_1 \beta_2  \eL_{\ul{c}_{up}}; \mathbb F[U])\\
HF^{*+2}(\cY_{n+1, \tau_{o,n+1}},\eL_{\ul{c}_{up}},\beta_1 \sigma_{n}^{-1} \beta_2 \eL_{\ul{c}_{up}}; \mathbb F[U])\simeq HF^*(\cY_{n, \tau_{o,n}},\eL_{\ul{c}_{up}},\beta_1 \beta_2  \eL_{\ul{c}_{up}}; \mathbb F[U])
\end{align*}
\end{lemma}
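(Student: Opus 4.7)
The plan is to adapt \cite[Proposition 55, Lemma 57]{SS06}, replacing their Floer cohomology with our $U$-filtered version. The geometric idea is to realize $\eL_{\ul{c}_{up,n+1}} \subseteq \cY_{n+1, \tau_{o,n+1}}$ as the Seidel-Smith vanishing cycle of $\eL_{\ul{c}_{up,n}} \subseteq \cY_{n, \tau_{o,n}}$ under the Morse-Bott degeneration obtained by moving the two outermost branch points $-(n+1)$ and $n+1$ together along the new matching path $c_{n+1}$ in the upper half plane. Since $\sigma_n \in \Br_{1, n+1} \subseteq \Br_{1, 2n+2}$ acts locally near the matching sphere $\Sigma_{c_{n+1}}$ and commutes up to Hamiltonian isotopy with the action of $\beta_1, \beta_2$ on the remaining strands, the Floer cohomology should decompose into a global contribution coming from $\cY_{n, \tau_{o,n}}$ together with a local contribution near $\Sigma_{c_{n+1}}$.

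First, using Lemma \ref{l:slide} I would handleslide $c_{n+1}$ into a small arc supported in a neighborhood $\mathcal U$ adjacent to a point of $\tau_{o,n+1}$ that $\sigma_n$ permutes, arranged so that (i) the handleslide region is disjoint from the origin, so that the resulting matching path and its sphere $\Sigma_{c_{n+1}}$ lie in the complement of $D_o$, and (ii) $\sigma_n$ acts as a Dehn-twist-type monodromy on $\Sigma_{c_{n+1}}$, fixing the first $n$ matching spheres up to Hamiltonian isotopy. The Morse-Bott degeneration collapsing this small arc identifies the critical locus with $\cY_{n, \tau_{o,n}}$ and recovers $\eL_{\ul{c}_{up,n+1}}$ as the vanishing cycle of $\eL_{\ul{c}_{up,n}}$. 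The neck-stretching argument of \cite[Proposition 55]{SS06}, applied in a tubular neighborhood of $\Sigma_{c_{n+1}}$, then breaks pseudoholomorphic strips defining the differential on $CF(\eL_{\ul{c}_{up,n+1}}, \beta_1 \sigma_n^{\pm 1} \beta_2 \eL_{\ul{c}_{up,n+1}})$ into a strip on $\cY_{n, \tau_{o,n}}$ contributing to $CF(\eL_{\ul{c}_{up,n}}, \beta_1 \beta_2 \eL_{\ul{c}_{up,n}})$ together with a local strip near $\Sigma_{c_{n+1}}$. The local Floer cohomology in this Weinstein neighborhood is one-dimensional, concentrated in degree $0$ for $\sigma_n$ and degree $+2$ for $\sigma_n^{-1}$ by the Maslov index computation of loc.\ cit., producing the stated grading shift.

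The new content relative to \cite{SS06} is that the isomorphism must be $\mathbb F[U]$-linear. Because the whole degeneration takes place in a region disjoint from $D_o$, the local strip near $\Sigma_{c_{n+1}}$ contributes $U^0$, while the residual strip on $\cY_{n, \tau_{o,n}}$ intersects its annular divisor with the same multiplicity as the original strip intersected $D_o \subseteq \cY_{n+1, \tau_{o,n+1}}$. I expect the main obstacle to lie in controlling this intersection count through the neck-stretching limit: one must verify that no sphere or disk bubble develops positive intersection with $D_o$ in ways that fail to match on the two sides. This is handled by the positivity of intersection with $D_o$ built into our choice of $J \in \mathcal J$ together with the fact that the degeneration region is disjoint from $D_o$; consequently, any component of a limit strip meeting $D_o$ must lie in $\cY_{n, \tau_{o,n}}$ and is accounted for on the other side. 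Once this compatibility is established, the SS06 isomorphism refines to an isomorphism of $\mathbb F[U]$-modules in the claimed degrees.
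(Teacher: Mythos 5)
There is a genuine gap in the central geometric step. The paper's proof, following \cite[Proposition 55, Lemma 57]{SS06}, does not collapse the stabilizing matching path alone: it collides \emph{three} points, namely the two endpoints of the stabilizing path together with the adjacent braided point that $\sigma_n$ crosses it with (in the paper's coordinates, the collision $\{-2,-1,1\}$, along a path of configurations avoiding the origin). Only after this three-point collision are \emph{both} Lagrangians $\beta_1^{-1}\eL_{\ul{c}_{up}}$ and $\sigma_n^{\pm 1}\beta_2\eL_{\ul{c}_{up}}$ fibered (as relative vanishing cycles) over Lagrangians in the critical locus $\cY_{n,\tau_{o,n}}$, with fibre spheres meeting transversely in a single point whose degree ($0$ for $\sigma_n$, $2$ for $\sigma_n^{-1}$) produces the grading shift; the identification of Floer cohomologies is then the fibered-$A_2$ lemma of \cite{SS06}, upgraded to $\FF[U]$-coefficients because the degenerating path misses the origin and the identification of critical loci carries annular divisor to annular divisor. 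Your two-point collision (collapsing only the endpoints of $c_{n+1}$) does not achieve this: after twisting by $\sigma_n^{\pm 1}$, the two collapsing points lie on two \emph{different} matching paths of $\sigma_n^{\pm 1}\beta_2\ul{c}_{up}$ (the dragged stabilizing path now ends at the $n$-th strand's point, while the path formerly attached to that strand now ends at the new point), so $\sigma_n^{\pm 1}\beta_2\eL_{\ul{c}_{up}}$ is not a vanishing cycle over anything in the critical locus and does not split into a piece local to $\Sigma_{c_{n+1}}$ and a piece in $\cY_{n,\tau_{o,n}}$. Consequently the neck-stretching decomposition of strips and the assertion that the ``local Floer cohomology near $\Sigma_{c_{n+1}}$'' is one-dimensional in degree $0$ resp.\ $2$ are unjustified; that degree computation in \cite{SS06} takes place in the local $A_2$ model created by the three-point collision, not in a Weinstein neighborhood of the single sphere $\Sigma_{c_{n+1}}$. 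The premise you use to set this up --- that $\sigma_n$ ``acts locally near $\Sigma_{c_{n+1}}$ and commutes up to Hamiltonian isotopy with $\beta_1,\beta_2$'' --- is also false, since $\sigma_n$ braids the new strand with strand $n$, which $\beta_1$ and $\beta_2$ are allowed to braid.

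A secondary problem is the preparatory step of ``handlesliding $c_{n+1}$ into a small arc adjacent to a point that $\sigma_n$ permutes.'' Handleslides in the sense of Lemma~\ref{l:slide} modify a matching path relative to its endpoints, which are points of the fixed configuration $\tau_{o,n+1}$; making the stabilizing sphere small requires moving branch points together, i.e.\ precisely a Morse--Bott degeneration of the configuration, and then the relevant degeneration must include the third point as above. Your discussion of the $U$-powers (positivity of intersection with $D_o$, the degeneration region avoiding the origin) is in the right spirit and matches the paper's key annular observations, but it cannot rescue the underlying splitting argument.
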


\begin{proof}By symplectomorphism invariance, it suffices to prove that 
\begin{align*}
HF^*(\cY_{n+1, \tau_{o,n+1}},\beta_1^{-1}\eL_{\ul{c}_{up}}, \sigma_{n} \beta_2 \eL_{\ul{c}_{up}}; \mathbb F[U])\simeq HF^*(\cY_{n, \tau_{o,n}},\beta_1^{-1}\eL_{\ul{c}_{up}}, \beta_2  \eL_{\ul{c}_{up}}; \mathbb F[U])\\
HF^{*+2}(\cY_{n+1, \tau_{o,n+1}},\beta_1^{-1}\eL_{\ul{c}_{up}}, \sigma_{n}^{-1} \beta_2 \eL_{\ul{c}_{up}}; \mathbb F[U])\simeq HF^*(\cY_{n, \tau_{o,n}},\beta_1^{-1}\eL_{\ul{c}_{up}}, \beta_2  \eL_{\ul{c}_{up}}; \mathbb F[U])
\end{align*}
Following the proof of \cite[Proposition 55]{SS06}, we wish to consider a Morse-Bott degeneration arising from colliding $\{-2,-1,1\}$. Toward this end, we can choose a path $\gamma\co[0,1] \to \Sym^{2(n+1)}(\mathbb{C})$ such that $\gamma(0)=\tau_{o,n+1}$, for all $t \in [0,1)$ we have $\{-(n+1),\dots,-2,2,\dots,(n+1)\} \subseteq \gamma(t) \in \Conf^{2(n+1),0}(\mathbb{C})$, and finally 
\[
\gamma(1)=(-(n+1),\dots,-3,-2,-2,-2,2,3,\dots,n+1) \in \Sym^{2(n+1)}(\mathbb{C}).
\]
In particular this path avoids the origin. The remainder of the argument goes through as in \cite[Proposition 55]{SS06} without alteration. More precisely, the critical locus of this degeneration is 
$$\cY_{n, (-(n+1),\dots,-3,-2,2,3,\dots,n+1)} \simeq \cY_{n, \tau_{o,n}}.$$
and furthermore this identification sends the annular divisor of one side to the annular divisor of the other. Moreover, under this identification, the Lagrangian $\sigma_{n}^{\pm 1} \beta_2 L_{\ul{c}_{up}}$ and $\beta_1 ^{-1}\eL_{\ul{c}_{up}}$ are vanishing cycles over
$\beta_2  \eL_{\ul{c}_{up}}$ and  $\beta_1 ^{-1} \eL_{\ul{c}_{up}}$, respectively, with their fibred spheres transversely intersecting at $1$ point.
 This show that $HF(\cY_{n+1, \tau_{o,n+1}}, \beta_1 ^{-1}\eL_{\ul{c}_{up}},\sigma_{n}^{\pm 1} \beta_2 \eL_{\ul{c}_{up}}; \mathbb F[U])$ is identified with the Floer cohomology inside the critical locus, which is $HF(\cY_{n, \tau_{o,n}}, \beta_1 ^{-1} \eL_{\ul{c}_{up}}, \beta_2  \eL_{\ul{c}_{up}}; \mathbb F[U])$, up to a grading shift by $0$ and $2$ respectively for $\sigma_{n}$ and $\sigma_{n}^{-1}$.
\end{proof}

We may now briefly complete the proof of Theorem~\ref{thm:invariant}, establishing that $\AKh_{symp}(L)$ is a well-defined link invariant.

\begin{proof}[Proof of Theorem \ref{thm:invariant}]
Recall from Theorem~\ref{t:Markov} that two annular braids $\beta_1$ and $\beta_2$ have the same annular link closures if and only if they differ by a sequence of Markov conjugations and Markov stabilizations. Therefore invariance follows from  Lemmas \ref{l:M1} and \ref{l:M2}. Moreover, since the isomorphisms of Lemmas \ref{l:M1} and \ref{l:M2}) preserve the relative winding grading, there is a well-defined relative winding grading on $\AKh_{symp}(L)$; the theory therefore decomposes into a direct sum along the relative winding grading, every direct summand of which is an invariant of $L$.
\end{proof}

We conclude the discussion of invariance with the proof of Theorem~\ref{thm:akh-to-kh}.

\begin{proof} We observe that $\AKh_{symp}(L)$ is the homology of the associated graded of the filtration induced by the (relative) winding grading on $CF(\cY_{n, \tau_o}, \eL_{\ul{c}_{up}}, \beta \eL_{\ul{c}_{up}})$, the chain complex computing $\Kh_{symp}(L)$. The existence of the spectral sequence of Theorem~\ref{thm:akh-to-kh} therefore follows from standard homological algebra. Moreover, the maps of Lemmas \ref{l:M1} and \ref{l:M2} induce filtered chain homotopy equivalences on $CF(\cY_{n, \tau_o}, \eL_{\ul{c}_{up}}, \beta \eL_{\ul{c}_{up}})$, from which follows invariance of every page of the spectral sequence.
\end{proof}

\begin{remark} We remind the reader that so far we have used the K\"ahler form restricted from $\CC^{4n}$.
As noted at the end of Section~\ref{sec:geometry}, a direct generalization of \cite[Theorem 1.2]{Manolescu:nilpotent} show that we can compute the Floer cohomology and hence the symplectic annular Khovanov homology using $\omega_{\cY}$ instead, with respect to which the Lagrangians are isotopic to products of spheres.
\end{remark}

We now discuss a useful alternate construction of $HF_{ann}$, in which we delete the divisor $D_o$ instead of counting its intersections with pseudoholomorphic strips. Denote $A_{\tau} \setminus \{z=0\}$ by $A_{\tau}^*$, which is a hypersurface in $\CC^2 \times \CC^*$.
The standard K\"ahler form on  $\CC^2 \times \CC^*$ is $-dd^c(|u|^2+|v|^2+\log |z|)$, which restricts to a K\"ahler form $\omega_{A^*}$ on $A_{\tau}^*$. There exists a K\"ahler form $\omega_{\cY_o}$ on $\Hilb^{n, hor}(A_{\tau}^*)$ which agrees with the product form of $\omega_{A^*}$ away from a small neighborhood of $D_{HC}$ by the same argument as \cite[Lemma 5.5]{AS19}. With respect to this form, we can consider the Lagrangian Floer cohomology
\begin{equation} HF(\Hilb^{n, hor}(A_{\tau}^*), \Sigma_{\ul{c}}, \Sigma_{\ul{d}})  \label{eqn:deleted}\end{equation}
where $\Sigma_{\ul{c}}$ is the product of the matching spheres for the matching paths $c_i \in \ul{c}$ as in (\ref{eqn:productL}) and likewise $\Sigma_{\ul{d}}$.

As a preparation for the following lemma, we recall that: 
\begin{definition}\label{d:stein}
A Stein homotopy on a smooth manifold  $X$ is a family of Stein structures $(J_t, \phi_t)_{t \in [0,1]}$, where $J_t$ is a complex structure and $\phi_t$ is an  exhausting plurisubharmonic function, such that the critical points of $\phi_t$ do not go to the infinity during the homotopy.
In this case, we call $(J_0, \phi_0)$ Stein homotopy equivalent to $(J_1, \phi_1)$.
A Stein deformation equivalence between $(X_0,J_0,\phi_0)$  and $(X_1,J_1,\phi_1)$ consists of a diffeomorphism $f:X_0 \to X_1$ and a Stein homotopy equivalence from $(J_0,\phi_0)$ to $(f^*J_1,f^*\phi_1)$.
\end{definition}
A Stein homotopy induces a symplectomorphism between the associated K\"ahler forms $-d (d\phi_0 \circ J_0)$ and $-d (d\phi_1 \circ J_1)$. More discussion of Liouville/Weinstein/Stein homotopy can be found in \cite[Chapter 11]{CE}.

\begin{lemma}\label{l:AKHvariant}
The Lagrangian Floer cohomology in the horizontal Hilbert scheme with the divisor deleted \eqref{eqn:deleted} agrees with $HF_{ann}(\Hilb^{n, hor}(A_{\tau}^*), \Sigma_{\ul{c}}, \Sigma_{\ul{d}})$. Therefore, given an annular link $L$ which is the closure of a braid $\beta \in \Br_{1,n}$, we have that $HF(\Hilb^{n, hor}(A_{\tau}^*), \Sigma_{\ul{c}_{up}}, \Sigma_{\beta\ul{c}_{up}})$ agrees with $\AKh_{symp}(L)$ as defined in Definition~\ref{def:AKh}.
\end{lemma}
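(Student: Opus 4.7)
The overall strategy is to show that the two chain complexes computing the two sides have canonically identified generators and differentials, via the standard dictionary: Lagrangian Floer theory in the complement $M \setminus D$ of a smooth K\"ahler divisor whose complement is convex at infinity computes the ``$U = 0$'' truncation of Floer theory on $M$ in which $U$ tracks algebraic intersection with $D$. In our situation $M = \Hilb^{n,hor}(A_{\tau})$, $D = D_o$, and the Lagrangians $\Sigma_{\ul{c}}$ and $\Sigma_{\ul{d}}$ lie in $M \setminus D$. The second assertion about $\AKh_{symp}(L)$ is then immediate from Definition~\ref{def:AKh} after tracking the appropriate grading shifts and applying the isotopy from $\eL_{\ul{c}}$ to the product of matching spheres $\Sigma_{\ul{c}}$ from \eqref{eqn:productL}, which (as noted at the end of Section~\ref{sec:geometry}) can be arranged to be disjoint from $D_o$ when $\ul{c}$ is a crossingless annular matching.

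First I identify the generators. Because each matching path $c_i$ in a crossingless annular matching avoids the origin, the associated matching sphere $\Sigma_{c_i} \subset A_{\tau_o}$ avoids $D_{\mathrm{axis}} = \{z = 0\}$; hence the product Lagrangian $\Sigma_{\ul{c}} \subset \Hilb^{n,hor}(A_{\tau_o})$ is disjoint from $D_o$, and likewise for $\Sigma_{\ul{d}}$. After a Hamiltonian perturbation supported in the complement of a neighborhood of $D_o$, the transverse intersection $\Sigma_{\ul{c}} \cap \Sigma_{\ul{d}}$ is the same whether computed in $\Hilb^{n,hor}(A_{\tau_o})$ or in $\Hilb^{n,hor}(A_{\tau_o}^*)$. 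This identifies the underlying vector spaces of the two Floer complexes.

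Next I compare differentials. Choose an $\omega_{\cY}$-tame almost complex structure $J$ on $\Hilb^{n,hor}(A_{\tau_o})$ which agrees with the integrable complex structure in a neighborhood of $D_o$ and which is generic away from $D_o$; such a $J$ also tames $\omega_{\cY_o}$ away from $D_o$. Since $D_o$ is a smooth $J$-complex divisor containing no $J$-holomorphic spheres, positivity of intersection implies that every $J$-holomorphic strip $u$ in $\Hilb^{n,hor}(A_{\tau_o})$ satisfies $u \cdot D_o \geq 0$, with equality iff the image of $u$ lies in $\Hilb^{n,hor}(A_{\tau_o}^*)$. Setting $U = 0$ in the differential~\eqref{eq:differential} retains precisely those strips with $u \cdot D_o = 0$, i.e., strips in the complement. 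Conversely, any $J$-holomorphic strip in $\Hilb^{n,hor}(A_{\tau_o}^*)$ with Lagrangian boundary on $\Sigma_{\ul{c}} \cup \Sigma_{\ul{d}}$ and finite $\omega_{\cY_o}$-energy gives such a strip: via the tautological correspondence of Remark~\ref{r:tauto}, it lifts to a branched cover $\Sigma \to \mathbb{R} \times [0,1]$ together with a pseudoholomorphic map $v \co \Sigma \to A_{\tau_o}^*$, and the open mapping theorem applied to $\pi_A \circ v \co \Sigma \to \mathbb{C}_z^*$ rules out $v$ approaching $\{z = 0\}$, so $v$ extends to a strip in $\Hilb^{n,hor}(A_{\tau_o})$ with $u \cdot D_o = 0$.

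The main obstacle is ensuring that this correspondence is a \emph{bijection of transversely cut out moduli spaces} and not merely a set-theoretic identification. To handle this I will choose $J$ to be of ``split type'' in a neighborhood of $D_o$, namely a product structure adapted to the normal bundle of $D_o$ which is simultaneously compatible with both $\omega_{\cY}$ and $\omega_{\cY_o}$ near $D_o$ (using that $\omega_{\cY_o}$ differs from $\omega_{\cY}$ only by the logarithmic K\"ahler potential on the normal directions). Away from a fixed neighborhood of $D_o$, the two forms are tamed by a common generic $J$, and the strips relevant to both theories stay in a compact region: the $\omega_{\cY_o}$ side by the energy estimate above, the $\omega_{\cY}$ side by the positivity-of-intersection argument. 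Standard transversality then yields that the $U = 0$ truncation of $CF(\Hilb^{n,hor}(A_{\tau_o}), \Sigma_{\ul{c}}, \Sigma_{\ul{d}}; \mathbb{F}[U])$ is chain-isomorphic to $CF(\Hilb^{n,hor}(A_{\tau_o}^*), \Sigma_{\ul{c}}, \Sigma_{\ul{d}})$, giving the first claim. Specializing to $\ul{c} = \ul{c}_{up}$ and $\ul{d} = \beta \ul{c}_{up}$ and applying Definition~\ref{def:AKh} yields the second.
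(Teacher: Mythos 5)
Your proposal is correct in spirit, and it hinges on the same geometric fact as the paper's sketch (that the relevant holomorphic strips never get near $D_o$), but it packages the argument differently. The paper never compares the two symplectic forms directly: it introduces an intermediate K\"ahler form on $A_{\tau}^*$ with potential $|u|^2+|v|^2+|z|^2+\rho(|z|)\log|z|$, whose induced form $\omega'_{\cY}$ on $\Hilb^{n,hor}(A_{\tau}^*)$ agrees with $\omega_{\cY}$ outside an arbitrarily small neighborhood $N_o$ of $D_o$; it then observes that all Floer solutions contributing to $HF_{ann}(\Hilb^{n,hor}(A_{\tau}),\Sigma_{\ul{c}},\Sigma_{\ul{d}})$ miss $N_o$, so the two chain complexes literally coincide for that form, and finally invokes Stein deformation equivalence to replace $\omega'_{\cY}$ by $\omega_{\cY_o}$. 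You instead fix a single almost complex structure, integrable near $D_o$ and allowable for both forms, and identify the moduli spaces on the nose; this is a legitimate alternative, since the differential depends only on $J$, the Lagrangians, and the perturbation, with the forms entering only through taming, exactness, and compactness. What the paper's route buys is that it sidesteps any direct comparison of $\omega_{\cY}$ and $\omega_{\cY_o}$ and absorbs the compactness of the complement theory into the Stein framework; what your route buys is a chain-level identification without appealing to Stein deformation invariance.

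Two points in your write-up need repair, though neither is fatal. First, the parenthetical claim that $\omega_{\cY_o}$ differs from $\omega_{\cY}$ "only by the logarithmic K\"ahler potential on the normal directions" near $D_o$ is inaccurate: the potentials $|z|^2$ and $\log|z|$ differ everywhere, not just near $\{z=0\}$. What you actually need — that both forms are K\"ahler for the same integrable complex structure on $\Hilb^{n,hor}(A_{\tau}^*)$, so that a common (generically perturbed) tamed $J$ exists — is true and is the statement you should cite. Second, the open mapping theorem applied to $\pi_A\circ v$ does not by itself rule out sequences of strips in the complement escaping toward the deleted divisor (and $\pi_A\circ v$ is only honestly holomorphic where $J$ is integrable); the correct mechanism is Gromov compactness in the compactification $\Hilb^{n,hor}(A_{\tau})$ together with positivity of intersections and the absence of holomorphic spheres in $D_o$: a limit building of strips with $u\cdot D_o=0$ must have every component disjoint from $D_o$, so no energy escapes to the end and the boundary strata match on both sides. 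This is also exactly the argument that justifies the paper's uniform-neighborhood claim, so once you make it explicit your identification of transversely cut out moduli spaces, and hence the lemma, goes through.
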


\begin{proof}[Sketch of proof]
Recall that $\omega_A$ is the standard K\"ahler form on $A_{\tau}$.
For any small neighborhood $N$ of $\pi_A^{-1}(0)=\{z=0\} \subseteq A_{\tau}$, we can construct a K\"ahler form $\omega'$ on  $A_{\tau}^*$ which agrees with $\omega_A|_{A_{\tau}^*}$ outside $N$ such that $(A_{\tau}^*,\omega')$ is Stein and Stein homotopy equivalent to $(A_{\tau}^*,\omega_{A^*})$. Note that we do not change the complex structure during the homotopy. For example, $\omega'$ can be constructed using a K\"ahler potential of the form 
$|u|^2+|v|^2+|z|^2+ \rho(|z|)\log |z|$ for some cutoff function $\rho$ supported in a small neighborhood of $0$. One may then use this to produce a K\"ahler form $\omega'_{\cY}$ on $\Hilb^{n, hor}(A_{\tau}^*)$ which agrees with $\omega_{\cY}$ away from an arbitrarily small neighborhood $N_o$ of $D_o$. Now, for any pair of product Lagrangians associated to matchings $\Sigma_{\ul{c}}$ and $\Sigma_{\ul{d}}$, there is such an arbitrarily small neighborhood $N_o$ of $D_o$ such that all the Floer solutions contributing to $HF_{ann}(\Hilb^{n,hor}(A_{\tau}),\Sigma_{\ul{c}},\Sigma_{\ul{d}})$ do not pass through $N_o$. Therefore, $HF(\Hilb^{n, hor}(A_{\tau}^*), \Sigma_{\ul{c}}, \Sigma_{\ul{d}})$ computed using $\omega'_{\cY}$ agrees with $HF_{ann}(\Hilb^{n, hor}(A_{\tau}), \Sigma_{\ul{c}}, \Sigma_{\ul{d}})$. Since $(A_{\tau}^*,\omega')$ is Stein homotopy equivalent to $(A_{\tau}^*,\omega_{A^*})$, it follows that the same is true if we replace $\omega'_{\cY}$ with $\omega_{\cY_o}$.
\end{proof}

As a consequence of Lemma \ref{l:AKHvariant}, we will use the two constructions of symplectic annular Khovanov homology interchangeably.

\subsubsection{The absolute winding grading}\label{subsec:absolutewinding}

The first application of this new variant of symplectic annular Khovanov homology, in which we delete the divisor $D_o$, is to promote the relative winding grading on symplectic Khovanov homology to an absolute grading. The following discussion is inspired by \cite[Lemma 2]{GLW:Schur}. We can compactify $\mathbb{C}$ to $\mathbb{CP}^1$ by adding a point $\{\infty\}$ at infinity.
Let $d_{\infty}$ be the corresponding divisor of $\Sym^n(\mathbb{CP}^1)$, that is, let $d_{\infty} = \{\infty\} \times \Sym^{n-1}(\mathbb{CP}^1)$. As previously, we have an intersection pairing \[\cdot d_\infty \co \pi_2(\Sym^n(\mathbb{CP}^1), \Sym(\ul{c}_0) \cup \Sym(\ul{c}_1)) \to \mathbb{Z}.\]
We can define the relative winding grading at infinity in exactly the same way as Definition \ref{d:winding} by 
considering $u_{\infty}\co \mathbb{R} \times [0,1] \to \Sym^n(\mathbb{CP}^1 \setminus \{0\})$ with the same boundary and asymptotic conditions and replacing $d_0$ by $d_{\infty}$. Given two intersection points $x_0,x_1 \in \phi_H(\eL_{\ul{c}_0}) \pitchfork \eL_{\ul{c}_1}$, this relative grading is written $w_{\infty}(x_0, x_1)$.

\begin{lemma}\label{l:negativeWinding}
For  any $x_0,x_1 \in \phi_H(\eL_{\ul{c}_0}) \pitchfork \eL_{\ul{c}_1}$, the relative winding grading at infinity is the opposite of the relative winding grading; that is, $w(x_0,x_1) = -w_{\infty}(x_0,x_1)$.
\end{lemma}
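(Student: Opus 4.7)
The plan is to compactify $\mathbb{C}$ to $\mathbb{CP}^1$, so that both topological strips $u$ and $u_\infty$ live in $\Sym^n(\mathbb{CP}^1) \cong \mathbb{CP}^n$, and then compare intersection numbers with $d_0$ and $d_\infty$ using the fact that these two divisors are linearly equivalent hyperplanes.

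First I would note that $u$ and $u_\infty$ may both be regarded as topological strips in $\Sym^n(\mathbb{CP}^1)$ sharing the same asymptotic endpoints $\pi_\Delta(x_0)$ and $\pi_\Delta(x_1)$ and the same boundary conditions in $\Sym(\ul{c}_0)$ and $\Sym(\ul{c}_1)$. Because $\Sym(\ul{c}_i) \subseteq \Sym^n(\mathbb{C}) = \mathbb{C}^n$ is contractible, the boundary 1-cycles $u(\cdot,0) - u_\infty(\cdot,0)$ and $u(\cdot,1) - u_\infty(\cdot,1)$ bound 2-chains $D_0 \subseteq \Sym(\ul{c}_0)$ and $D_1 \subseteq \Sym(\ul{c}_1)$ respectively. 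The chain
\[
S := u - u_\infty + D_0 + D_1
\]
is then a closed 2-cycle in $\Sym^n(\mathbb{CP}^1)$.

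Next I would compute the intersection numbers of $S$ with both divisors. Note that the Lagrangians $\Sym(\ul{c}_i)$ are disjoint from $d_0$ (since the matching paths avoid the origin) and also disjoint from $d_\infty$ (since they lie in $\Sym^n(\mathbb{C})$). Therefore $D_0 \cdot d_0 = D_1 \cdot d_0 = 0$ and $D_0 \cdot d_\infty = D_1 \cdot d_\infty = 0$. Moreover, $u_\infty$ misses $d_0$ because it maps into $\Sym^n(\mathbb{CP}^1 \setminus \{0\})$, and $u$ misses $d_\infty$ because it maps into $\Sym^n(\mathbb{C})$. Consequently,
\[
S \cdot d_0 = u \cdot d_0, \qquad S \cdot d_\infty = -\,u_\infty \cdot d_\infty.
\]

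Finally, I would invoke the identification $\Sym^n(\mathbb{CP}^1) \cong \mathbb{CP}^n$ (via the coefficients of the degree $n$ polynomial whose roots are the points), under which $d_0$ corresponds to the vanishing of the constant term and $d_\infty$ to the vanishing of the leading coefficient. Both are hyperplanes in $\mathbb{CP}^n$, hence linearly equivalent, and so represent the same class in $H_{2n-2}(\Sym^n(\mathbb{CP}^1);\mathbb{Z})$. It follows that $S \cdot d_0 = S \cdot d_\infty$, so $u \cdot d_0 = -u_\infty \cdot d_\infty$, and doubling gives $w(x_0,x_1) = -w_\infty(x_0,x_1)$.

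The only mild subtlety, and the step to be careful about, is verifying that the boundary 1-cycles of $u - u_\infty$ really can be capped off inside the Lagrangians without altering the intersection numbers with $d_0$ and $d_\infty$ --- but this follows immediately from the contractibility of $\Sym(\ul{c}_i) \simeq \mathbb{R}^n$ together with the fact that the capping chains lie in $\Sym^n(\mathbb{C}) \setminus d_0$, which misses both divisors.
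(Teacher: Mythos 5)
Your argument is correct and is essentially the paper's proof: the paper likewise passes to $\Sym^n(\mathbb{CP}^1)\cong\mathbb{CP}^n$, combines $u$ with the orientation-reversed $u_\infty$ (gluing them along $\Sym(\ul{c}_1)$ and working with the relative class in $\pi_2(\Sym^n(\mathbb{CP}^1),\Sym(\ul{c}_0))$ rather than capping both boundaries off inside the Lagrangians as you do), and concludes because $d_o$ and $d_\infty$ are homotopic away from the Lagrangian, so pairing with $d_o-d_\infty$ vanishes. Your capping-off variant, using that both divisors are hyperplanes in $\mathbb{CP}^n$ and hence homologous, is only a cosmetic difference.
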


\begin{proof}
Let $u_{\infty} \co \mathbb{R} \times [0,1] \to \Sym^n(\mathbb{CP}^1 \setminus \{0\})$ be as above and $u \co \mathbb{R} \times [0,1] \to \Sym^n(\mathbb{C})$ be as in Definition \ref{d:winding}.
Precompose $u_{\infty}$ with multiplication by $-1$ on $\mathbb R$ so that the result has the opposite orientation and call it $\bar{u}_{\infty}$.
Since $\Sym(\ul{c}_1)$ is contractible, we can homotope $\bar{u}_{\infty}$ such that its boundary on $\Sym(\ul{c}_1)$ agrees with that of $u$.
We can then glue $\bar{u}_{\infty}$ with $u$ along the boundary to obtain a strip $\bar{u}_{\infty} \#u$ which represents a class in $\pi_2(\Sym^n(\mathbb{CP}^1), \Sym(\ul{c}_0))$.
Note that $\pi_2(\Sym^n(\mathbb{CP}^1), \Sym(\ul{c}_0))=\pi_2(\Sym^n(\mathbb{CP}^1))=\pi_2(\mathbb{CP}^n)=\mathbb{Z}$.
Since the divisor $d_0$ is homotopic to $d_{\infty}$ away from $\Sym(\ul{c}_0)$, the intersection pairing
\[
\cdot (d_o -d_{\infty}):\pi_2(\Sym^n(\mathbb{CP}^1), \Sym(\ul{c}_0)) \to \mathbb{Z}
\]
is the zero map.
Therefore, we have \[0=[\bar{u}_{\infty} \#u] \cdot (d_o -d_{\infty})=[u_{\infty}] \cdot d_{\infty} +[u] \cdot d_o\] from which the result follows.
\end{proof}

\begin{corollary} \label{cor:windingbalanced}
There is a vector space isomorphism $R \co \AKh_{symp}(L) \to \AKh_{symp}(L)$ which negates the relative winding grading.
\end{corollary}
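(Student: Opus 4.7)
The strategy is to construct $R$ from the involution $\iota \co \mathbb{CP}^1 \to \mathbb{CP}^1$, $z \mapsto -1/z$. This is an orientation-preserving biholomorphism of $\mathbb{C}^*$ that swaps $0$ and $\infty$ while preserving the upper half-plane. As a self-diffeomorphism of the thickened annulus $\mathbb{C}^* \times [0,1]$, it preserves the annular axis setwise and reverses winding numbers around it, so every annular link is isotopic as an annular link to its $\iota$-image.

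Concretely, $\iota$ sends $\tau_o = \{-n,\dots,n\}$ to $\iota(\tau_o) = \{-1,\dots,-1/n,1/n,\dots,1\} \in \Conf^{2n,0}(\mathbb{C})$. Symplectic parallel transport along any path joining these two points in $\Conf^{2n,0}(\mathbb{C})$ yields a symplectomorphism $T \co \cY_{n,\iota(\tau_o)} \to \cY_{n,\tau_o}$, well-defined up to Hamiltonian isotopy. Working with the deleted-divisor formulation of Lemma \ref{l:AKHvariant}, I would then lift $\iota$ to a biholomorphism between $A_\tau^*$ and the analogous space obtained from $A_{\iota(\tau)}$ by deleting a neighborhood of the fibre at infinity, after rescaling $(u,v)$ by an appropriate power of $z$ to absorb the factor $z^{-2n}\prod(-\tau_i)$ appearing in the transformation $p(1/z)$. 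Passing to Hilbert schemes and composing with $T$, this produces a map $\Psi$ that interchanges the $d_o$-intersection theory with the $d_\infty$-intersection theory, and that preserves both $\eL_{\ul{c}_{up}}$ and $\beta \eL_{\ul{c}_{up}}$ up to Hamiltonian isotopy in the complement of $D_o$, using that $\iota$ preserves the upper half-plane together with Lemma \ref{l:slide}.

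The induced map $R \defeq \Psi_*$ is then a vector space automorphism of $\AKh_{symp}(L)$, via Theorem \ref{thm:invariant} applied to the $\iota$-image braid closure. Because $\Psi$ converts the $d_o$-based computation of winding into the $d_\infty$-based one, Lemma \ref{l:negativeWinding} implies that $R$ sends the graded piece of winding degree $k$ to that of degree $-k$, as desired.

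The main obstacle will be realizing the fibrewise lift of $\iota$ as a bona fide symplectic map: the factor $z^{-n}$ forced on us by the transformation of the defining equation $u^2+v^2=p(z)$ requires a consistent branch choice, and the rescaled coordinates do not literally preserve the ambient K\"ahler form restricted from $\CC^{4n}$. I would address this by working throughout with the K\"ahler form $\omega_{\cY_o}$ on $\Hilb^{n,hor}(A_\tau^*)$ produced in Lemma \ref{l:AKHvariant}, exploiting the flexibility in its construction so that only the Hamiltonian isotopy class of $\Psi$ matters; equivalently, one may pass to a partial compactification of $A_\tau$ in which the fibre over infinity is adjoined as a smooth divisor, so that $\iota$ is everywhere holomorphic and the branch ambiguity disappears.
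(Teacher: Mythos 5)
Your proposal is correct and follows essentially the same route as the paper's proof, which likewise uses an inversion of the $z$-plane ($z \mapsto 1/z$, applied to an inversion-invariant configuration $\tau$ so that no parallel transport back to the basepoint is needed) to interchange the roles of $d_o$ and $d_\infty$, and then concludes by combining Lemma~\ref{l:negativeWinding} with annular link invariance. The only substantive difference is that you explicitly confront the rescaling of $(u,v)$ required to lift the inversion to the Milnor fibre $A_{\tau}^*$ --- a point the paper's proof elides by writing the lift simply as $(u,v,z)\mapsto(u,v,1/z)$ --- and your plan to absorb this via the Stein-deformation flexibility of Lemma~\ref{l:AKHvariant} is sound.
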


\begin{proof}
Let $\bar{f}\co \mathbb{C}^* \to \mathbb{C}^*$ be the map $z \mapsto \frac{1}{z}$.
Let $\tau \in \Conf^{2n}(\mathbb{C})$ be chosen such that it is preserved by $\bar{f}$.
Let $f \co A_{\tau}^* \to A_{\tau}^*$ be given by $(u,v,z) \mapsto (u,v,\frac{1}{z})$. Observe that this is a symplectomorphism with respect to the restriction of $-dd^c(|u|^2+|v|^2+\log |z|)$ to $A_{\tau}^*$.
In particular, it induces an isomorphism 
\[
CF_{ann}(\cY_{n, \tau},\eL_{\ul{c}_0}, \eL_{\ul{c}_1}) \to CF_{ann}(\cY_{n,\tau}, \eL_{\bar{f}(\ul{c}_0)}, \eL_{\bar{f}(\ul{c}_1)})
\]
for appropriate choices of almost complex structures and Hamiltonian perturbations.

If we choose $\eL_{\ul{c}_0}$ and $\eL_{\ul{c}_1}$ such that $HF_{ann}(\cY_{n,\tau}, \eL_{\ul{c}_0}, \eL_{\ul{c}_1})$ computes symplectic annular Khovanov homology $\AKh_{symp}(L)$, then 
$HF_{ann}(\cY_{n, \tau}, \eL_{\bar{f}(\ul{c}_0)}, \eL_{\bar{f}(\ul{c}_1)})$ also computes $\AKh_{symp}(L)$ because the annular link does not change under applying $f$.
However, the relative winding grading of two elements in $HF_{ann}(\cY_{n, \tau}, \eL_{\ul{c}_0}, \eL_{\ul{c}_1})$ is negated under the map to $HF_{ann}(\cY_{n, \tau}, \eL_{\bar{f}(\ul{c}_0)}, \eL_{\bar{f}(\ul{c}_1)})$ by Lemma \ref{l:negativeWinding}.
\end{proof}

We may use Corollary~\ref{cor:windingbalanced} to establish an absolute winding grading: if $w(x_0, R(x_0))=2k$, then we set the absolute grading of $x_0$ to be $k$. This gives us a unique way to write the annular symplectic Khovanov homology symmetrically as a direct sum decomposition
\[\AKh_{symp}(L)\simeq\oplus_{k \in \mathbb{Z}} \AKh_{symp}(L;k)\]
such that $w(x_0,x_1)=k_0-k_1$ for $x_i \in \AKh_{symp}(L;k_i)$. 

\begin{definition}
The absolute winding grading $w(x)$ for $x \in \AKh_{symp}(L;k)$ is the integer $k$.
\end{definition}

\subsubsection{Relationship to other theories} \label{sec:motivation}

In this section we summarize the expected relationship of our annular symplectic Khovanov homology to other theories, and explain the motivation for its definition. Recall from Remark~\ref{r:char0} that, although we have taken coefficients in $\mathbb F_2$ for convenience, $\AKh_{symp}(L)$ is definable over any field.

\begin{conjecture}\label{con:isom} Over a field $\mathbb F$ of characteristic zero, there is an isomorphism
\[ \AKh_{symp}^i(L; k; \mathbb F) \simeq \oplus_{j-q=i} \AKh^{j,q}(L;k; \mathbb F) \]
in analogy with that of Abouzaid and Smith \cite{AS16, AS19} for the non-annular case. Here $i$ denotes the absolute homological grading on the symplectic side, which we will refer to as the Khovanov grading where there is any possibility of ambiguity. Meanwhile $(j,q)$ are the homological and quantum gradings on the combinatorial side, and $k$ is the winding or annular grading. If $\AKh_{symp}(L)$ is equipped with a relative quantum (equivariant) grading from a non-commutative vector field as in \cite{Cheng23}, then this isomorphism may be upgraded to an isomorphism of (relatively) trigraded groups.
\end{conjecture}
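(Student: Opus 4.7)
My plan is to prove Conjecture~\ref{con:isom} by factoring the conjectured isomorphism through the Mak-Smith construction of symplectic annular Khovanov homology:
\[ \AKh_{symp}(L;\mathbb{F}) \xrightarrow{\;\sim\;} \AKh^{HH}_{symp}(L;\mathbb{F}) \xrightarrow{\;\sim\;} \AKh(L;\mathbb{F}). \]
The second isomorphism, over a field of characteristic zero, should follow by combining Mak-Smith's identification of $\AKh^{HH}_{symp}$ with Hochschild homology of a symplectic braid bimodule, Abouzaid-Smith's formality theorem \cite{AS19} comparing symplectic and combinatorial bimodule categories, and Beliakova-Putyra-Wehrli's theorem \cite{BPW:trace} identifying the Hochschild homology of combinatorial braid bimodules with $\AKh$. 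The genuinely new content is thus the first isomorphism, which compares two distinct symplectic models of the same conjectural invariant.

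To establish the first isomorphism, I would work in the deleted-divisor formulation of Lemma~\ref{l:AKHvariant}, viewing $\AKh_{symp}(L)$ as Lagrangian Floer cohomology in the open Stein manifold $\Hilb^{n,hor}(A_\tau^*)$. The main steps are as follows. First, identify a generating family of matching-sphere Lagrangians adapted to braid closures in this open setting. Second, compute morphism spaces via the tautological correspondence of Remark~\ref{r:tauto}, identifying the resulting $A_\infty$ algebra with an annular enhancement of Khovanov's arc algebra, refined by the absolute winding grading of Section~\ref{subsec:absolutewinding}. Third, adapt the Abouzaid-Smith formality proof to this open setting, using the winding grading as an additional weight. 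Fourth, construct a filtered $A_\infty$ comparison with the Mak-Smith Hochschild model and verify that the braid-closure operation corresponds on the two sides; the match of trigradings should then follow by tracking the winding grading through the comparison maps.

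The main obstacle will be the third step. Abouzaid-Smith's formality argument depends on weight-filtration and Hodge-theoretic inputs tied to a specific compactification of $\cY_{n,\tau_o}$, and removing the axis divisor potentially introduces new disk classes that disrupt their purity arguments. One hopes that the absolute winding grading splits the $A_\infty$ operations into sectors in which the relevant weight considerations still apply, with disks of fixed winding number controlled by the compact-setting analysis after a neck-stretching near $D_o$. A secondary difficulty is the fourth step: a chain-level comparison between our filtered Floer complex and the Mak-Smith Hochschild complex will likely require a partial open-closed map respecting the annular filtrations, whose compatibility is a nontrivial consequence of positivity of intersection with $D_o$. The strategy above is what we expect to carry out in the sequel.
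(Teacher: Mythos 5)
The statement you are addressing is a conjecture, not a theorem: the paper explicitly declines to prove it, states that Section~\ref{sec:motivation} only explains the motivation, and defers any proof to a sequel. So there is no proof in the paper to compare against, and your text is likewise not a proof but a research program. It should not be presented as establishing the statement.

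Within that framing, your outline has one solid leg and one missing one. The factorization through $\AKh^{HH}_{symp}$ is sensible, and the second isomorphism $\AKh^{HH}_{symp}(L;\mathbb F)\simeq\AKh(L;\mathbb F)$ over characteristic zero is genuinely available: it is the main theorem of \cite{MS22}, so you do not need to re-derive it from \cite{AS19} and \cite{BPW:trace}. The first isomorphism $\AKh_{symp}\simeq\AKh^{HH}_{symp}$, however, is the entire content of the conjecture, and each of your steps toward it is itself open. Step 2 --- identifying the endomorphism $A_\infty$ algebra of the matching-sphere Lagrangians in $\Hilb^{n,hor}(A_\tau^*)$ with an annular arc algebra --- is essentially equivalent to the conjecture itself; you cannot cite it as an intermediate step. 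Step 3 is correctly flagged as the main obstacle, but "one hopes that the winding grading splits the operations into sectors where purity still applies" is a hope, not an argument: the Abouzaid--Smith weight arguments are tied to a compactification of $\cY_{n,\tau_o}$, and deleting $D_o$ (or, dually, counting intersections with it) changes both the relevant divisor geometry and the disk classes, so some new input is required. Step 4 presupposes a chain-level open-closed comparison between your filtered Floer complex and the Mak--Smith Hochschild complex that has not been constructed. For what it is worth, the route the authors themselves gesture at in Section~\ref{sec:motivation} is different from yours: they propose identifying the Fukaya algebra of the basic Lagrangians in $\cY_{n,\tau_o}\setminus D_o$ as the Koszul dual of the cylindrical KLRW algebra, possibly via the conjecture of Aganagic--Danilenko--Li--Shende--Zhou, rather than adapting the Abouzaid--Smith formality machinery directly. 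Either route may eventually work, but as written your proposal contains no completed argument and leaves the conjecture exactly where the paper leaves it.
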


In particular, Conjecture~\ref{con:isom} implies that our theory agrees with the Mak-Smith formulation $\AKh_{symp}^{HH}(L)$ over a field of characteristic zero, which is to say wherever both are defined.

The techniques used in Sections~\ref{sec:floer} and \ref{sec:invariance} to introduce annular symplectic Khovanov homology and prove its invariance are essentially standard; it is well-known to the experts that one could in principle define a link invariant by following our recipe. However, a priori there would be no reason to believe that the result was related to combinatorial annular Khovanov homology. We now explain the motivation for Conjecture~\ref{con:isom}. Many annular link invariants including the annular Khovanov homology are governed by cylindrical Khovanov-Lauda-Rouquier-Webster algebras. It is expected that the Koszul dual of the KLRW algebra  agrees with the Fukaya algebra of a collection of basic Lagrangians, including $L_{\ul{c}_{up}}$, in $\cY_{n,\tau_o}\setminus D_o$, and the way that the Koszul dual algebra governs the annular Khovanov homology agrees with the way that the Fukaya algebra governs the symplectic annular Khovanov homology. (We are grateful to Ben Webster for early explanations helpful to understanding the conjectural correspondence.) For more on the non-cylindrical KLRW algebras, we direct the reader to \cite{KL:diagrammatic1, KL:diagrammatic2, Webster:KLRW, Rouquier:KM}.  For more on their relation to link invariants and specifically the annular case, see \cite{Webster:book}. Finally, \cite{QRS:annular} discusses the categorification of annular link invariants, in terms of an algebra expected to be Morita equivalent to the KLRW algebra. 

A related conjecture has also appeared in recent work of Aganagic, Danilenko, Li, Shende, Zhou \cite[Conjecture 1.8]{ADLSZ}. We expect that $\cY_{n,\tau_o}\setminus D_o$ is symplectomorphic to the symplectic manifold $\mathcal{M}^{\times}(\Gamma, \vec{d})$ considered in their paper. If Conjecture 1.8 of \cite{ADLSZ} 
is true, then the Fukaya algebra of the collection of Lagrangians they consider will be Koszul dual to the Fukaya algebra of the collection of basic Lagrangians we use, giving an approach towards proving our symplectic annular Khovanov homology agrees with the combinatorial version.

Alternatively, one could try to relate $\AKh_{symp}^{HH}(L)$ and $\AKh_{symp}(L)$ directly without passing through the combinatorial version. When $L$ is the annular braid closure of a braid with $n$ strands in $\mathbb{R}^2 \times [0,1]$, the symplectic manifold involved in defining $\AKh_{symp}(L)$ is $\Hilb^{n,hor}(A^*_{\tau})$ with $\tau \in \Conf^{2n}(\mathbb{C})$. 
On the other hand, the symplectic manifolds involved in defining $\AKh_{symp}^{HH}(L)$ are $\Hilb^{k,hor}(A_{\tau'})$ with  $\tau' \in \Conf^{n}(\mathbb{C})$ and $k=1,\dots,n$.
In \cite[Section 12]{MS22}, the authors define a full and faithful functor from the Fukaya-Seidel category of 
$\Hilb^{k,hor}(A_{\tau'}) \times \Hilb^{n-k,hor}(A_{\tau'})$ to the Fukaya-Seidel category of $\Hilb^{n,hor}(A_{\tau})$.
The same construction gives us a  full and faithful functor to the Fukaya-Seidel category of $\Hilb^{n,hor}(A_{\tau}^*)$.
Applying the reasoning of  \cite[Section 12]{MS22}, we obtain a spectral sequence which converges to $\AKh_{symp}(L)$ whose second page is a direct sum of Hochschild homology of bimodules. We expect that the second page is in fact exactly $\AKh_{symp}^{HH}(L)$, similarly to how the spectral sequence from $\AKh_{symp}^{HH}(L)$ to $\Kh_{symp}(L)$ is obtained in  \cite[Section 12]{MS22}, and we conjecture that the spectral sequence collapses such that the second page, so that  $\AKh_{symp}^{HH}(L)$ agrees with the last page, $\AKh_{symp}(L)$.

\subsection{Two $\mathbb{Z}/2\mathbb{Z}$-quotients}\label{ss:quotient} 

In this section, we introduce two additional symplectic annular link invariants, each of which is isomorphic to symplectic annular Khovanov homology. These will play crucial roles in the proofs of Theorems~\ref{thm:2periodic} and \ref{thm:stronglyinvertible} in Sections~\ref{s:2periodic} and \ref{s:strongly} respectively.

Let $\iota \co \mathbb{C} \to \mathbb{C}$ be the involution given by $z \mapsto -z$.
Let $\iota_A,\iota_B \co \CC^3 \to \CC^3$ be the maps
\[
\iota_A(u,v,z)=(u,v,-z) \qquad \qquad \iota_B(u,v,z)=(-u,-v,-z).
\]
Given $\tau \in \Conf^{4n,0}$ which is fixed by the action induced by $\iota$ on the configuration space, the involutions $\iota_A, \iota_B$ restrict to involutions on $A_{\tau}$. Moreover, these involutions further restrict to two free involutions on $A^*_{\tau}:=A_{\tau} \setminus \{z=0\}$. We denote the quotients of $A^*_{\tau}$ with respect to these two actions by $Q_{A,\tau}$ and $Q_{B,\tau}$ respectively.

The fibration $\pi_A \co A_{\tau}^* \to \mathbb{C}^*$ commutes with each of the involutions $\iota_A$ and $\iota_B$, so we have induced fibrations
\[
\pi_{Q,A} \co Q_{A,\tau} \to \mathbb{C}^*/\langle \iota \rangle \qquad \qquad \pi_{Q,B} \co Q_{B,\tau} \to \mathbb{C}^*/\langle \iota \rangle.
\]
By varying $\tau \in \Conf^{4n,0} \cap \Fix(\iota)$, we obtain monodromy actions 
\[
\Br_{1,2n} \to \Symp(Q_{A,\tau})/\Ham(Q_{A,\tau}) \qquad \qquad \Br_{1,2n} \to \Symp(Q_{B,\tau})/\Ham(Q_{B,\tau}).
\]
Let $\tau_{Q,o}$ denote the configuration $\{-2n,\dots,-1,1,\dots,2n\}$. We consider the subgroup $\Br_{1,n} \subseteq \Br_{1,2n}$ corresponding to treating the image of $\tau_{Q,o}$ under the quotient map as a basepoint and varying $\tau$ among configurations fixed by $\iota$ which contain $\{-2n,\dots, -n-1\} \cup \{n+1, \dots, 2n\}$ and for which the remaining points $z$ in the configuration have $Re(z) \in [-n-0.5,n+0.5]$. In particular, the only elements of $\tau_{Q,o}$ which may vary are $\{-n,\dots,-1,1,\dots,n\}$.

With respect to the fibrations $\pi_{Q,A}$ and $\pi_{Q,B}$, we can form the horizontal Hilbert schemes analogously to \eqref{eq:horHilb}, written as
\[
\Hilb^{n,hor}(Q_{A,\tau}) \qquad \qquad \Hilb^{n,hor}(Q_{B,\tau}).
\]
Where it is clear that $\tau = \tau_{Q,o}$, we omit the subscripts from the horizontal Hilbert scheme of the quotient for notational simplicity.

Let $\ul{c}_{Q,up}=\{c_1, \dots, c_n\}$ denote the set of $n$ annular matching paths on the upper half plane which match the point $k+1$ with the point $2n-k$ for $k=0,\dots,n-1$, as in Figure \ref{fig:upperQ}. Let
\[ \Sigma_{\ul{c}_{Q,up}} = \Sigma_{c_1} \times \dots \times \Sigma_{c_n} \]
be the isotropic submanifold of $\Hilb^{n,hor}(A_{\tau_{Q,o}})$ which is constructed by taking the product of the spheres $\Sigma_{c_i}$ associated to each of the $n$ paths $c_i$, where $\Sigma_{c_i}$ is the matching sphere of \eqref{eqn:sphere}. The image of $\Sigma_{\ul{c}_{Q,up}}$ under each of the quotient maps to $\Hilb^{n,hor}(Q_{A,\tau})$ and $\Hilb^{n,hor}(Q_{B,\tau})$ is then Lagrangian. Denote these resulting Lagrangians by $\Sigma_{\ul{c}_{Q,A,up}}$ and $\Sigma_{\ul{c}_{Q,B,up}}$ respectively.

\begin{figure}[ht]
\includegraphics{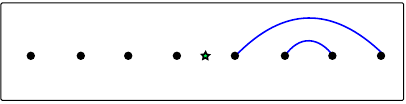}
\caption{The set of $n$ annular matching paths $\ul{c}_{Q,up}$ in the upper-half plane, drawn in blue, which avoid the origin, marked with a green star. These paths may be used to construct a Lagrangian in the horizontal Hilbert scheme of either quotient of $A_{\tau}$.}
\label{fig:upperQ}
\end{figure}

Using the induced $\Br_{1,n}$ actions on $\Hilb^{n,hor}(Q_{A,\tau})$ and $\Hilb^{n,hor}(Q_{B,\tau})$ respectively, given an annular link $L$ represented by an $n$-strand annular braid $\beta$, we may define
\[
\AKh^*_{Q,A/B}(L):=HF^{*+n+w}(\Hilb^{n,hor}(Q_{A/B,\tau}),   \Sigma_{\ul{c}_{Q,A/B,up}},\beta\Sigma_{\ul{c}_{Q,A/B,up}}).
\]
Note that the theory above is standard Lagrangian Floer cohomology over a field of characteristic two, as we have already removed the preimage of zero from $A_{\tau}$ before constructing the various symplectic manifolds involved. Here, as in Lemma \ref{l:AKHvariant} and surrounding discussion, we use the K\"ahler form $\omega_{A^*}=-dd^c(|u|^2+|v|^2+\log |z|)$ on $A_{\tau}^*$ and the resulting quotient symplectic form on $Q_{A/B,\tau}$.

The main result of this section is the following.

\begin{proposition}\label{p:annularSame}
For any annular link $L$, we have
$\AKh_{symp}(L)\simeq\AKh_{Q,A}(L)\simeq\AKh_{Q,B}(L)$.
\end{proposition}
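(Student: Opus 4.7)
The strategy is to identify each quotient theory $\AKh_{Q,A/B}(L)$ with a standard computation of $\AKh_{symp}(L)$ via an explicit biholomorphism between the quotient symplectic manifold and a $2n$-point Milnor fibre. For the A-quotient, the squaring map $(u,v,z)\mapsto (u,v,z^2)$ is $\iota_A$-invariant and induces a biholomorphism $Q_{A,\tau_{Q,o}}\cong A_{\tilde\tau}^*$, where $\tilde\tau=\{1,4,9,\ldots,(2n)^2\}$. Since each matching path $c_k\in\ul{c}_{Q,up}$ lies in the upper half-plane of $\mathbb{C}^*_z$ and is therefore disjoint from its $\iota$-image, $c_k$ descends diffeomorphically to a matching path $\tilde c_k$ in $\mathbb{C}^*_w$ from $k^2$ to $(2n+1-k)^2$. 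The matching spheres descend correspondingly, so $\Sigma_{\ul{c}_{Q,A,up}}$ is identified with the product matching sphere Lagrangian $\Sigma_{\ul{\tilde c}}$ in $\Hilb^{n,hor}(A_{\tilde\tau}^*)$. Moreover, the $\iota$-symmetric $\Br_{1,n}$-monodromy on the inner $n$ pairs of critical values of $A_{\tau_{Q,o}}^*$ descends to the standard $\Br_{1,n}$-action on the inner $n$ critical values of $A_{\tilde\tau}^*$. An isotopy of $\tilde\tau$ to the standard configuration $\tau_o=\{-n,\ldots,-1,1,\ldots,n\}$, together with the corresponding handleslides supplied by Lemma \ref{l:slide}, then identifies the resulting Floer cohomology with the one appearing in the definition of $\AKh_{symp}(L)$, completing the A-case.

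For the B-quotient, the $\iota_B$-invariant substitution $(a,b,w)=(uz,vz,z^2)$ gives a biholomorphism of $Q_{B,\tau_{Q,o}}$ with the affine surface $\{(a,b,w)\in\mathbb{C}^2\times\mathbb{C}^*\co a^2+b^2=w\tilde p(w)\}$, where $\tilde p(w)=\prod_{k=1}^{2n}(w-k^2)$. This differs from the standard Milnor fibre $A_{\tilde\tau}^*$ only by the presence of an extra branch point at $w=0$, whose fibre has already been removed. Since the Lagrangian $\Sigma_{\ul{c}_{Q,B,up}}$ is supported away from a neighbourhood of $w=0$, a Stein deformation argument in the spirit of Lemma \ref{l:AKHvariant} --- moving this extra branch point off to infinity while keeping the symplectic structure fixed near the Lagrangians --- identifies the B-quotient Floer set-up with the standard one for $A_{\tilde\tau}^*$. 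The descent of matching paths, Lagrangians, and braid actions then proceeds exactly as in the A-case, yielding $\AKh_{Q,B}(L)\simeq\AKh_{symp}(L)$.

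The main technical obstacle is the B-case Stein deformation: precisely verifying that the extra branch point at $w=0$ can be moved away to infinity without affecting the Floer cohomology of our Lagrangians, and that the induced K\"ahler form descends appropriately through the quotient. The other ingredients --- establishing the biholomorphisms of the quotient manifolds, descending matching paths and matching spheres, and tracking the $\Br_{1,n}$-actions --- are essentially routine computations that follow directly once the coordinate changes and the disjointness of each matching path from its $\iota$-translate are in place.
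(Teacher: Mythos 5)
Your first half (the $A$-quotient) is essentially the paper's own argument: the squaring map $(u,v,z)\mapsto(u,v,z^2)$ induces a biholomorphism $Q_{A,\tau}\cong A^*_{\tau^2}$ under which matching spheres correspond, and a Moser/Stein-deformation argument compares the K\"ahler structures (this is exactly Corollary~\ref{c:A=A}). The gap is in your $B$-case. Your identification of $Q_{B,\tau_{Q,o}}$ with $\{a^2+b^2=w\tilde p(w)\}\cap(\CC^2\times\CC^*)$ via $(a,b,w)=(uz,vz,z^2)$ is correct, but the step you then lean on --- a ``Stein deformation moving the extra branch point at $w=0$ off to infinity'' --- is both unspecified and genuinely problematic as stated. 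Over the base $\CC^*$ there is no branch point at $w=0$ at all (that zero of $w\tilde p(w)$ sits at the puncture); the difference between your surface and $A^*_{\tilde\tau}$ is not a removable critical value but a global twist of the conic fibration by the unit $w$, which has no square root on $\CC^*$. Any deformation of the defining polynomial that pushes this zero out to infinity (e.g.\ $((1-t)+tw)\tilde p(w)$) necessarily drags an honest critical value through $\CC^*$ and off the end, so the intermediate open surfaces do not form a locally trivial family, a naive Moser argument does not apply, and one would separately have to track the $\Br_{1,n}$ monodromy action and the invariance of the Floer groups through this non-compact degeneration. You flag this as the main obstacle rather than resolving it, so as written the chain $\AKh_{Q,B}(L)\simeq\AKh_{symp}(L)$ is not established.

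The fix is an explicit biholomorphism rather than a deformation, and it is exactly how the paper proceeds. In coordinates $x=u-iv$, $y=u+iv$, your substitution reads $(X,Y,w)=(xz,yz,z^2)$ with $XY=w\tilde p(w)$, and the fibrewise rescaling $(X,Y,w)\mapsto(X/w,Y,w)$ lands biholomorphically in $\{XY=\tilde p(w)\}\cap(\CC^2\times\CC^*)=A^*_{\tilde\tau}$; the composite $(x,y,z)\mapsto(xz^{-1},yz,z^2)$ is $\iota_B$-invariant and is precisely the paper's map of Lemma~\ref{l:biholoAB} (which intertwines $\iota_B$ with $\iota_A$) followed by the $A$-case squaring map. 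With this in hand, the same averaged-potential Moser/Stein-deformation argument used in Corollaries~\ref{c:A=A} and~\ref{c:A=B} compares the quotient K\"ahler form with $\omega_{A^*}$, matching spheres and the $\Br_{1,n}$-action descend just as in your $A$-case, and no branch point ever needs to be moved. In short: replace your deformation step by this explicit rescaling and your proof collapses onto the paper's two-step comparison $\AKh_{Q,B}\simeq\AKh_{Q,A}\simeq\AKh_{symp}$.
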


The proof of Proposition \ref{p:annularSame} will occupy the rest of the section.

\subsubsection{$\AKh_{symp}(L)\simeq \AKh_{Q,A}(L)$} We begin by checking that for any annular link $L$, the first of our new annular invariants agrees with our annular symplectic Khovanov homology. Given $\tau=\{\pm \tau_1, \dots , \pm \tau_{2n}\}$ a configuation in $\Conf^{4n,0}(\mathbb C) \cap \Fix(\iota)$, set $\tau^2=\{\tau_1^2,\dots \tau_{2n}^2\} \in \Conf^{2n}(\mathbb C)$.

\begin{lemma}
There is a biholomorphism from  $(\mathbb{C}^2_{u,v} \times \mathbb{C}^*_z) /\langle \iota_A \rangle$ to $(\mathbb{C}^2_{u,v} \times \mathbb{C}^*_z)$
which restricts to a
biholomorphism from $Q_{A,\tau}$ to $A^*_{\tau^2}$.
\end{lemma}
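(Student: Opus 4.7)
The plan is to write down the biholomorphism explicitly and verify its properties directly. Define
\[
\Phi \co \mathbb{C}^2_{u,v} \times \mathbb{C}^*_z \longrightarrow \mathbb{C}^2_{u,v} \times \mathbb{C}^*_z, \qquad (u,v,z) \mapsto (u,v,z^2).
\]
Since $z \mapsto z^2$ is the standard quotient map realizing $\mathbb{C}^*/\langle z \sim -z \rangle \xrightarrow{\sim} \mathbb{C}^*$, and $\Phi$ is manifestly $\iota_A$-invariant (it is the identity in the $u,v$ factors and squares in the $z$ factor, matching the $\iota_A$-action), $\Phi$ descends to a biholomorphism
\[
\bar{\Phi} \co (\mathbb{C}^2_{u,v} \times \mathbb{C}^*_z)/\langle \iota_A \rangle \xrightarrow{\sim} \mathbb{C}^2_{u,v} \times \mathbb{C}^*_z.
\]

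The only remaining step is to check that $\bar{\Phi}$ restricts to a biholomorphism $Q_{A,\tau} \to A^*_{\tau^2}$. Since $\tau = \{\pm \tau_1, \dots, \pm \tau_{2n}\}$, the defining polynomial of $A_{\tau}$ factors as
\[
p_\tau(z) = \prod_{i=1}^{2n} (z-\tau_i)(z+\tau_i) = \prod_{i=1}^{2n} (z^2 - \tau_i^2) = p_{\tau^2}(z^2).
\]
Hence a point $(u,v,z) \in \mathbb{C}^2 \times \mathbb{C}^*$ lies on $A^*_{\tau}$ if and only if $u^2+v^2 = p_{\tau^2}(z^2)$, which is exactly the condition that $\Phi(u,v,z) = (u,v,z^2)$ lies on $A^*_{\tau^2}$. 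Thus $\Phi^{-1}(A^*_{\tau^2}) = A^*_{\tau}$, and since $\iota_A$ preserves $A^*_\tau$, the descended map $\bar\Phi$ restricts to a biholomorphism $Q_{A,\tau} = A^*_\tau/\langle \iota_A\rangle \xrightarrow{\sim} A^*_{\tau^2}$.

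There is no real obstacle here; the statement is essentially the observation that squaring the $z$-coordinate is the quotient map for the $z \mapsto -z$ action and converts the product $\prod(z\mp\tau_i)(z\pm\tau_i)$ into the polynomial with doubled-up roots. The only small point to highlight is that removing the preimage of $0$ in the $z$-factor is necessary both to make the $\iota_A$-action free (so the quotient is smooth) and to make $z \mapsto z^2$ a genuine biholomorphism rather than a $2$-to-$1$ branched cover.
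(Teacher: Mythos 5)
Your proof is correct and follows the same route as the paper: both use the map $(u,v,z)\mapsto(u,v,z^2)$, note its $\iota_A$-invariance so it descends to a biholomorphism on the quotient, and check that it carries $A^*_\tau$ onto $A^*_{\tau^2}$. Your explicit verification that $p_\tau(z)=p_{\tau^2}(z^2)$ is a nice spelling-out of a step the paper leaves implicit, but the argument is the same.
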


\begin{proof}
Consider the holomorphic map 
\[\phi \co \mathbb{C}^2_{u,v} \times \mathbb{C}^*_z \to \mathbb{C}^2_{u,v} \times \mathbb{C}^*_z, \qquad (u,v,z) \mapsto (u,v,z^2).\]
We have $ \phi \circ \iota_A=\phi$, so it follows that $\phi$ descends to a holomorphic map 
from $(\mathbb{C}^2_{u,v} \times \mathbb{C}^*_z) /\langle \iota_A \rangle$
to $(\mathbb{C}^2_{u,v} \times \mathbb{C}^*_z)$, which is clearly a biholomorphism. Since $\phi|_{A^*_{\tau}}$ is a holomorphic map onto $A^*_{\tau^2}$, it descends to a biholomorphism from $Q_{A,\tau}$ to $A^*_{\tau^2}$.
\end{proof}

Recall the definition of Stein homotopy/deformation equivalence from Definition \ref{d:stein}.

\begin{corollary}\label{c:A=A}
$Q_{A,\tau}$ is Stein deformation equivalent to $A^*_{\tau^2}$. As a result, for any annular link $L$ we have $\AKh_{symp}(L)\simeq\AKh_{Q,A}(L)$.
\end{corollary}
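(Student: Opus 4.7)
The proof has two main steps: first, establishing the Stein deformation equivalence between $Q_{A,\tau}$ and $A^*_{\tau^2}$; second, deducing from this the Floer-theoretic identification $\AKh_{symp}(L) \simeq \AKh_{Q,A}(L)$.

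For the deformation equivalence, the biholomorphism $\phi$ from the preceding lemma sends $(u,v,z) \mapsto (u,v,z^2)$, so it pulls the K\"ahler potential $|u|^2 + |v|^2 + \log|z|$ on $A^*_{\tau^2}$ back to $|u|^2 + |v|^2 + 2\log|z|$ on $A^*_\tau$, since $\log|z^2| = 2\log|z|$. This pulled-back potential is $\iota_A$-invariant and descends to $Q_{A,\tau}$. The family of potentials $|u|^2 + |v|^2 + (1+t)\log|z|$ for $t \in [0,1]$ on $\CC^2 \times \CC^*$ is $\iota_A$-invariant and strictly plurisubharmonic, restricting to exhausting plurisubharmonic functions on $A^*_\tau$ that descend to $Q_{A,\tau}$. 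This interpolates explicitly between the canonical K\"ahler structure on $Q_{A,\tau}$ at $t=0$ and the pullback via $\phi$ of the K\"ahler structure on $A^*_{\tau^2}$ at $t=1$, establishing the Stein deformation equivalence.

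For the Floer-theoretic identification, I use Lemma~\ref{l:AKHvariant} to re-express $\AKh_{symp}(L)$ as Lagrangian Floer cohomology in $\Hilb^{n,hor}(A^*_{\tau_o})$ of products of matching spheres, where $\tau_o = \{-n,\dots,-1,1,\dots,n\}$. The biholomorphism $\phi$ induces a biholomorphism $\Hilb^{n,hor}(Q_{A,\tau_{Q,o}}) \cong \Hilb^{n,hor}(A^*_{(\tau_{Q,o})^2})$, where $(\tau_{Q,o})^2 = \{1,4,\dots,(2n)^2\}$, preserving the annular divisors, and, by a direct analog of \cite[Lemma 5.5]{AS19}, the Stein deformation equivalence above upgrades to the horizontal Hilbert schemes. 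The Lagrangian $\Sigma_{\ul{c}_{Q,A,up}}$ associated to $\ul{c}_{Q,up}$ (pairing $k+1$ with $2n-k$ in the upper half-plane) corresponds under $\phi$ to the product of matching spheres in $A^*_{(\tau_{Q,o})^2}$ associated to paths pairing $(k+1)^2$ with $(2n-k)^2$. An isotopy within $\Conf^{2n,0}(\CC)$ deforms $(\tau_{Q,o})^2$ to $\tau_o$, and carries this lifted matching to the standard $\ul{c}_{up}$. The subgroup $\Br_{1,n} \subseteq \Br_{1,2n}$ used in defining $\AKh_{Q,A}(L)$ corresponds under $\phi$ to the analogous subgroup defining $\AKh_{symp}(L)$, since a loop in the base of $\pi_{Q,A}$ lifts under $z \mapsto z^2$ to the required loop in $\Conf^{2n,0}(\CC)$. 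Combining these identifications gives $\AKh_{Q,A}(L) \simeq \AKh_{symp}(L)$.

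The main obstacle is the bookkeeping in the last identification: verifying that the monodromy action of $\Br_{1,n}$ on $Q_{A,\tau_{Q,o}}$, together with the prescribed matching $\ul{c}_{Q,up}$, corresponds exactly under the square map (and the allowed isotopy of configurations) to the standard data used to define $\AKh_{symp}(L)$. The invariance results of Section~\ref{sec:invariance}, in particular the freedom of choice of base configuration, handleslides of the crossingless matching, and Markov equivalence of braid representatives, provide enough flexibility to absorb any resulting discrepancies.
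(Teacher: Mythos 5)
Your proposal is correct and follows essentially the same route as the paper: the Stein deformation equivalence is obtained by interpolating between the potential $\rho = |u|^2+|v|^2+\log|z|$ and its pullback $\phi^*\rho$ (your family $|u|^2+|v|^2+(1+t)\log|z|$ is exactly the paper's $(1-t)\rho + t\phi^*\rho$), noting $\iota_A$-invariance and passing to quotients. For the Floer-theoretic identification the paper simply asserts that the matching spheres ``correspond to each other naturally'' under this equivalence, whereas you spell out the bookkeeping of matching paths, configurations, and braid actions under $z\mapsto z^2$; this extra detail is consistent with, and a reasonable elaboration of, the paper's one-line conclusion.
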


\begin{proof}
Let $\omega_{A^*}= -dd^c(|u|^2+|v|^2+\log |z|)=-dd^c(\rho)$ be the standard symplectic form on $\mathbb{C}^2_{u,v} \times \mathbb{C}^*_z$.
For $t \in [0,1]$, let $\rho_t:=(1-t)\rho+t\phi^*\rho$.
Since any convex linear combination of exhausting plurisubharmonic functions is an  exhausting plurisubharmonic function,
the family $\rho_t$ gives a family of $\iota_A$-invariant Stein structures on $\mathbb{C}^2_{u,v} \times \mathbb{C}^*_z$.
Let
 \[\omega_t=(1-t)\omega_{A^*}+t \phi^* \omega_{A^*}=-dd^c ((1-t)\rho+t\phi^*\rho)\] 
be the associated family of K\"ahler forms.
It is routine to check that the critical points of $\rho_t$ do not go to infinity so $(\mathbb{C}^2_{u,v} \times \mathbb{C}^*_z, \omega_0)$ is $\iota_A$-equivariantly Stein homotopy equivalent to $(\mathbb{C}^2_{u,v} \times \mathbb{C}^*_z, \omega_1)$.
The space $Q_{A,\tau}$ is a complex hypersurface of the $\iota_A$-quotient of $\mathbb{C}^2_{u,v} \times \mathbb{C}^*_z$.
Restricting the Stein homotopy to $Q_{A,\tau}$  shows that $(Q_{A,\tau}, \omega_0|_{Q_{A,\tau}})$
is Stein homotopy equivalent to $(Q_{A,\tau}, \omega_1|_{Q_{A,\tau}})$, which is isomorphic to $(A^*_{\tau^2}, \omega_{A^*}|_{A^*_{\tau^2}})$ via $\phi$.
Therefore, $(Q_{A,\tau}, \omega_0|_{Q_{A,\tau}})$, which is equipped with the Stein structure descended from 
$(\mathbb{C}^2_{u,v} \times \mathbb{C}^*_z, \omega_{A^*})$, is Stein deformation equivalent  to $(A^*_{\tau^2}, \omega_{A^*}|_{A^*_{\tau^2}})$.

 Finally, we want to compare the Lagrangian matching spheres in $(Q_{A,\tau},\omega_t|_{Q_{A,\tau}})$ for $t \in [0,1]$
with respect to the Lefschetz fibration to $\mathbb{C}^*_z/(z=-z)$.
For each $t_0, t_1 \in [0,1]$, the Stein homotopy from $t=t_0$ to $t=t_1$ induces a symplectomorphism $\psi_{t_0,t_1}$ from $\omega_{t_0}$ to $\omega_{t_1}$.
For any matching path $c$, denote the Lagrangian matching sphere with respect to $\omega_t$ by $\cL_{c,t}$.
When $t_0$ is close to $t_1$, $\psi_{t_0,t_1}(\cL_{c,t_0})$ is $C^1$ close to $\cL_{c,t_1}$.
In particular, $\psi_{t_0,t_1}(\cL_{c,t_0})$ is a graph with respect to a Weinstein neighborhood of $\cL_{c,t_1}$.
Therefore we see that $\psi_{t_0,t_1}(\cL_{c,t_0})$ is Lagrangian isotopic to $\cL_{c,t_1}$ when $t_0$ is close to $t_1$, and hence in fact Hamiltonian isotopic since spheres are simply connected\footnote{In fact, the nearby Lagrangian conjecture for $T^*S^2$ is known \cite{Hind} so to conclude that $\psi_{t_0,t_1}(\cL_{c,t_0})$ is Hamiltonian isotopic to $\cL_{c,t_1}$, we only need that $\psi_{t_0,t_1}(\cL_{c,t_0})$ is $C^0$ close to  $\cL_{c,t_1}$ when $t_0$ is close to $t_1$.}.
By the compactness of $[0,1]$, we know that $\psi_{0,1}(\cL_{c,0})$ is Hamiltonian isotopic to $\cL_{c,1}$.
Since the matching spheres of $(Q_{A,\tau},\omega_1|_{Q_{A,\tau}})$ can be identified with the matching spheres of $(A^*_{\tau^2}, \omega_{A^*}|_{A^*_{\tau^2}})$ via $\phi$, this proves that $\AKh_{symp}(L)\simeq \AKh_{Q,A}(L)$.

\end{proof}

\subsubsection{$\AKh_{Q,A}(\kappa)\simeq \AKh_{Q,B}(\kappa)$} We now check that the two new annular link invariants of this subsection agree.

\begin{lemma}\label{l:biholoAB}
There is a biholomorphism from $(\mathbb{C}^2_{u,v} \times \mathbb{C}^*_z) /\langle \iota_A \rangle$
to $(\mathbb{C}^2_{u,v} \times \mathbb{C}^*_z) /\langle \iota_B \rangle$ which restricts to a
biholomorphism from $Q_{A,\tau}$ to $Q_{B,\tau}$.
\end{lemma}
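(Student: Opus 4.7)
The plan is to exhibit a concrete biholomorphism $\Phi$ of $\mathbb{C}^2 \times \mathbb{C}^*$ that intertwines $\iota_A$ and $\iota_B$ and preserves $A^*_\tau$ setwise, then pass to quotients. The cleanest way to see such a $\Phi$ is to first diagonalize the defining equation of $A_\tau$ by the linear change of variables $(u,v) \mapsto (x,y) = (u+iv, u-iv)$, under which
\[
A_\tau^* = \{(x,y,z) \in \mathbb{C}^2 \times \mathbb{C}^* : xy = p(z)\},
\]
and the two involutions become
\[
\iota_A(x,y,z) = (x, y, -z), \qquad \iota_B(x,y,z) = (-x, -y, -z).
\]
In these coordinates, I would define
\[
\Phi \co \mathbb{C}^2_{x,y} \times \mathbb{C}^*_z \longrightarrow \mathbb{C}^2_{x,y} \times \mathbb{C}^*_z, \qquad \Phi(x,y,z) = (zx,\, z^{-1}y,\, z),
\]
which is well-defined since $z$ is nowhere zero, and is manifestly a biholomorphism with inverse $(x,y,z) \mapsto (z^{-1}x, zy, z)$.

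The next step is to verify the two required compatibilities, each of which reduces to a one-line computation. First, $\Phi$ restricts to a self-map of $A_\tau^*$, because if $xy = p(z)$ then $(zx)(z^{-1}y) = xy = p(z)$. Second, $\Phi$ intertwines the two involutions in the sense that $\Phi \circ \iota_A = \iota_B \circ \Phi$: indeed,
\[
\Phi \circ \iota_A(x,y,z) = \Phi(x,y,-z) = (-zx,\, -z^{-1}y,\, -z) = \iota_B(zx,\, z^{-1}y,\, z) = \iota_B \circ \Phi(x,y,z).
\]

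From the intertwining relation, $\Phi$ descends to a biholomorphism between the quotients $(\mathbb{C}^2_{u,v} \times \mathbb{C}^*_z)/\langle \iota_A\rangle$ and $(\mathbb{C}^2_{u,v} \times \mathbb{C}^*_z)/\langle \iota_B\rangle$, and since $\Phi$ preserves $A_\tau^*$, this descended map carries $Q_{A,\tau}$ biholomorphically onto $Q_{B,\tau}$, as required. There is no real obstacle here: the only ``trick'' is noticing that in $(x,y)$ coordinates the difference between $\iota_A$ and $\iota_B$ is precisely the diagonal $\mathbb{C}^*$-scaling $(x,y) \mapsto (zx, z^{-1}y)$, which is an automorphism of $A_\tau^*$ because it preserves the product $xy$. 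I expect the subsequent (unstated) step, analogous to the proof of Corollary~\ref{c:A=A}, will be to upgrade this biholomorphism to a Stein deformation equivalence via a Moser-type argument on the interpolating family of K\"ahler potentials $(1-t)\rho + t\,\Phi^*\rho$, which will then yield $\AKh_{Q,A}(L) \simeq \AKh_{Q,B}(L)$.
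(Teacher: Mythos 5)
Your proof is correct and is essentially identical to the paper's: the same change of coordinates $x,y$ with $xy=u^2+v^2$, the same map $(x,y,z)\mapsto(zx,z^{-1}y,z)$, and the same two verifications (preservation of $xy$ and the intertwining relation $\Phi\circ\iota_A=\iota_B\circ\Phi$). The paper likewise follows up with the Moser/Stein-deformation argument you anticipate, in Corollary~\ref{c:A=B}.
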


\begin{proof}
It is easier to define the biholomorphism after a change of coordinates.
Let $x=u-iv$ and $y=u+iv$ so that $u^2+v^2=xy$.
With respect to these new coordinates, we have 
\[
\iota_A(x,y,z)=(x,y,-z), \qquad \qquad \iota_B(x,y,z)=(-x,-y,-z).\]
Moreover, with respect to these coordinates we have
\[A^*_{\tau} = \{xy=(z-\tau_1) \dots (z-\tau_{2n})(z+\tau_1) \dots (z+\tau_{2n})\}.\]
Consider the biholomorphic map 
\[\phi \co \mathbb{C}^2_{x,y} \times \mathbb{C}^*_z \to \mathbb{C}^2_{x,y} \times \mathbb{C}^*_z, \qquad \qquad (x,y,z) \mapsto (xz,yz^{-1},z).\]
Now $ \phi \circ \iota_A=\iota_B \circ \phi$, hence $\phi$ descends to a biholomorphism 
from $(\mathbb{C}^2_{x,y} \times \mathbb{C}^*_z) /\langle \iota_A \rangle$
to $(\mathbb{C}^2_{x,y} \times \mathbb{C}^*_z) /\langle \iota_B \rangle$. As $\phi|_{A^*_{\tau}}$ is a biholomorphism onto $A^*_{\tau}$, it also descends to a biholomorphism from $Q_{A,\tau}$ to $Q_{B,\tau}$.
\end{proof}

\begin{corollary}\label{c:A=B}
$Q_{A,\tau}$ is Stein deformation equivalent to $Q_{B,\tau}$. As a result, for any annular link $L$, we have $\AKh_{Q,A}(L)\simeq\AKh_{Q,B}(L)$.
\end{corollary}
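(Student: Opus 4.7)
The plan is to mirror the proof of Corollary \ref{c:A=A} almost verbatim, substituting the biholomorphism $\phi\co\mathbb{C}^2_{x,y}\times\mathbb{C}^*_z\to\mathbb{C}^2_{x,y}\times\mathbb{C}^*_z$, $(x,y,z)\mapsto(xz,yz^{-1},z)$ from Lemma~\ref{l:biholoAB} for the squaring map of the previous corollary. The key structural fact we will exploit is that $\phi\circ\iota_A=\iota_B\circ\phi$, so $\phi$ descends to a biholomorphism between the quotient manifolds $Q_{A,\tau}$ and $Q_{B,\tau}$; what remains is only to promote this to a Stein deformation equivalence and deduce the Floer-theoretic consequence.

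First I would set up the Moser family. Writing $\omega_{A^*}=-dd^c\rho$ where $\rho=\tfrac12(|x|^2+|y|^2)+\log|z|$ in the new coordinates, I form the linear path
\[
\omega_t=(1-t)\omega_{A^*}+t\,\phi^*\omega_{A^*}=-dd^c\bigl((1-t)\rho+t\,\phi^*\rho\bigr),\qquad t\in[0,1].
\]
Since $\rho$ and $\phi^*\rho$ are plurisubharmonic, their convex combination is plurisubharmonic, and one checks as in \cite{AS19} that on $A^*_\tau$ it is actually strictly plurisubharmonic, so $\omega_t$ is a family of Stein structures. Crucially, $\omega_{A^*}$ is invariant under \emph{both} $\iota_A$ and $\iota_B$; combined with the intertwining $\phi\circ\iota_A=\iota_B\circ\phi$, this makes $\phi^*\omega_{A^*}$ and hence every $\omega_t$ an $\iota_A$-invariant form. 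The same intertwining implies the family restricts to $A^*_\tau$.

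Next, a standard $\iota_A$-equivariant Moser argument produces a $\iota_A$-equivariant diffeomorphism of $A^*_\tau$ intertwining $\omega_0$ with $\omega_1$. Passing to the quotient, $(Q_{A,\tau},\omega_{A^*}|_{Q_{A,\tau}})$ is Stein deformation equivalent to the $\iota_A$-quotient of $(A^*_\tau,\phi^*\omega_{A^*}|_{A^*_\tau})$, which via the descent of $\phi$ is symplectomorphic to $(Q_{B,\tau},\omega_{A^*}|_{Q_{B,\tau}})$. This establishes the first sentence of the corollary. Passing to horizontal Hilbert schemes and using the form $\omega_{\cY_o}$ from Lemma~\ref{l:AKHvariant} which agrees with the product form away from $D_{HC}$, the Stein deformation equivalence lifts to a symplectic equivalence between $\Hilb^{n,hor}(Q_{A,\tau})$ and $\Hilb^{n,hor}(Q_{B,\tau})$.

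Finally, I need to check that the Lagrangians used to define the two theories correspond. Because $\phi$ preserves the $z$-coordinate, the Lefschetz fibration $\pi_{Q,A}$ is identified with $\pi_{Q,B}$ over $\mathbb{C}^*/\langle\iota\rangle$, so annular matching paths on the base pull back to annular matching paths, and the product matching sphere Lagrangians $\Sigma_{\ul{c}_{Q,A,up}}$ and $\beta\Sigma_{\ul{c}_{Q,A,up}}$ are carried to $\Sigma_{\ul{c}_{Q,B,up}}$ and $\beta\Sigma_{\ul{c}_{Q,B,up}}$ respectively, intertwining the two $\Br_{1,n}$-actions. Standard invariance of Floer cohomology under Stein deformation equivalence (and under symplectomorphisms matching the Lagrangians) then yields $\AKh_{Q,A}(L)\simeq\AKh_{Q,B}(L)$. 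The mildly subtle point — and the only real obstacle — is the strict plurisubharmonicity of the interpolating potential on $A^*_\tau$ (since $\log|z|$ is only weakly psh on $\mathbb{C}^*$), but this is handled exactly as in \cite[Lemma 5.5]{AS19} and the proof of Corollary~\ref{c:A=A}.
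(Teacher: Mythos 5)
Your proposal is correct and follows exactly the route the paper takes: its proof of Corollary~\ref{c:A=B} simply says to repeat the Moser/Stein-deformation argument of Corollary~\ref{c:A=A} with the squaring map replaced by the biholomorphism $(x,y,z)\mapsto(xz,yz^{-1},z)$ of Lemma~\ref{l:biholoAB}, using the intertwining relation $\phi\circ\iota_A=\iota_B\circ\phi$ precisely as you do. You have merely spelled out the details (invariance of the K\"ahler potential, correspondence of matching-sphere Lagrangians) that the paper leaves implicit.
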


\begin{proof}
We may repeat the proof of Corollary \ref{c:A=A} with the map $\phi$ of that proof replaced by the map $\phi$ of Lemma \ref{l:biholoAB}.
\end{proof}

\begin{proof}[Proof of Proposition~\ref{p:annularSame}] This follows from combining the isomorphisms of Corollaries~\ref{c:A=A} and \ref{c:A=B}.
\end{proof}

\subsection{Manolescu's reformulation, bridge diagrams, and actions} \label{subsec:bridge}

In this subsection we review important features of Manolescu's reformulation \cite{Manolescu:nilpotent} of symplectic Khovanov homology, already introduced in Section~\ref{sec:geometry}, and its relationship to bridge diagrams and actions, and discuss their generalization to the annular case. Note that \cite{Manolescu:nilpotent} deals only with flattened braid diagrams, but it was checked by Waldron \cite[Section 4.2]{Waldron} that the construction works for general bridge diagrams. His arguments apply straightforwardly to the annular case.

Recall that a bridge diagram is a decomposition of a link into the union of two trivial tangles. For our purposes, the notation for bridge diagrams will be as follows. Let $\tau = \{\tau_1, \dots, \tau_{2n}\}$ be such that each $\tau_i$ lies on the real line and we have $\tau_i > \tau_{i+1}$ for $1 \leq i \leq 2n$. Our usual basepoint $\tau_o$ is an example of this. We will usually further require that there is some $j$ such that $\tau_{2j} > 0 > \tau_{2j+1}$, which is to say there are an even number of negative $\tau_i$ and an even number of positive $\tau_i$. Consider the crossingless matching $\ul{b}$ consisting of line segments $b_i$ on the real line joining $\tau_{2i-1}$ to $\tau_{2i}$ for $i=1,\dots,n$, which is annular if additional requirement on the configuration is imposed; these line segments are the \emph{bridges}. We form a link diagram by adding a second crossingless annular matching $\ul{a}$ consisting of $n$ matching paths between the elements $\tau_i$ in $\tau$, insisting that the interiors of the matching paths intersect transversely, the $a_i$ undercross the $b_i$, and no $a_i$ intersects the origin. The result is an (annular) link diagram for a link $L$. For examples, see Figures~\ref{fig:periodic_bridge}, \ref{fig:periodic_bridge_quotient}, \ref{fig:stronginv}, \ref{fig:periodicresolutions}, \ref{fig:zero}, and \ref{fig:one}.

Recall that we may compute symplectic Khovanov homology in $\Hilb^{n,hor}(A_{\tau})$ using the restriction of the K\"ahler form $\omega_{\cY}$, with Lagrangians which are products of spheres. In particular, if given a matching path $c$, $\Sigma_{c}$ is the Lagrangian sphere in $A_{\tau}$ defined in \eqref{eqn:sphere}, then we have Lagrangians
\begin{align*} 
\Sigma_{\beta} &= \Sigma_{b_1} \times \dots \times \Sigma_{b_n} \\
\Sigma_{\alpha} &= \Sigma_{a_1} \times \dots \times \Sigma_{a_n}.
\end{align*}
With this in mind one then has $\Kh_{symp}(L) \simeq HF(\Hilb^{n,hor}(A_{\tau}), \Sigma_{\alpha}, \Sigma_{\beta})$. In the annular case, we treat $L$ as an annular link such that the origin is the intersection of the annular axis with $\mathbb C$.  We may as previously let $A_{\tau}^* = A_{\tau} \setminus \{z=0\}$, such that $D_o$ is exactly the complement of $\Hilb^{n,\tau}(\Sigma_{\alpha}, \Sigma_{\beta})$. The intersection counting theory $HF(\Hilb^{n,hor}(A_{\tau}), \Sigma_{\alpha}, \Sigma_{\beta}; \mathbb F_2[U])$ agrees with that constructed previously, such that its annular truncation $HF_{ann}(\Hilb^{n,hor}(A_{\tau}), \Sigma_{\alpha}, \Sigma_{\beta})$ is again annular symplectic Khovanov homology. If we delete the divisor and compute with the symplectic form of Lemma~\ref{l:AKHvariant}, we equivalently obtain \[\AKh_{symp}(L) \simeq HF(\Hilb^{n,hor}(A_{\tau}^*), \Sigma_{\alpha}, \Sigma_{\beta}).\]

The manifolds involved in symplectic Khovanov homology carry an $O(2)$ action, which is especially easy to see from this perspective. The action of $O(2)$ on $(u,v) \subset \mathbb C^2$ induces a symplectic action on $A_{\tau}$, which fixes the Lagrangian sphere $\Sigma_{c}$ for any matching path $c$. The induced action on the Hilbert scheme preserves $\Hilb^{n,hor}(A_{\tau})$ and indeed $\Hilb^{n,hor}(A_{\tau}^*)$ \cite[Lemma 7.3]{HLS:flexible}, and fixes the Lagrangians. A candidate $O(2)$-equivariant symplectic Khovanov homology has been constructed by 
the first author, Lipshitz, Sarkar \cite{HLS:simplicial}.

Finally, note that $\Sigma_{\beta}$ and $\Sigma_{\alpha}$ intersect in a circle above any intersection between a path $b_i$ and a path $a_j$. Typically one chooses a Hamiltonian perturbation of $\Sigma_{\beta}$ such that this intersection becomes finite and consists of the generators for a Morse complex for the circle. In most cases, it suffices to use a complex with one generator in degree zero and one generator in degree one. If $\tau=\tau_o$, this perturbation may be carried out so as to result in a Lagrangian $\Sigma_{\beta}'$ which is preserved by the action induced by $\iota_A(u,v,z) = (u,v,-z)$ on the Hilbert scheme, as in \cite[Lemma 7.5]{HLS:flexible}. If we instead are interested in the involution $\iota_B(u,v,z) = (-u,-v,-z)$, we must instead choose a Morse complex with two generators in each of degrees zero and one, which again allows us to carry out the perturbation such that $\Sigma_{\beta}'$ remains preserved by the induced action on the Hilbert scheme. In either case, this explicit choice of perturbation makes it possible to list the generators of a chain complex computing $\Kh_{symp}$ or $\AKh_{symp}$ starting from a bridge diagram of the link, and to use the tautological correspondence of Remark~\ref{r:tauto} to restrict the list of possible differentials on the complex.

\section{Symplectic annular Khovanov homology and link Floer homology in the branched cover} \label{sec:knotfloer}

In this section we prove Theorem~\ref{thm:knotfloer}, establishing a symplectic version the Roberts' annular refinement of the Ozsv{\'a}th-Szab{\'o} spectral sequence from Khovanov homology to Heegaard Floer homology. Toward this end, we recall the relationship of Manolescu's reformulation of symplectic Khovanov homology with Heegaard Floer homology \cite{Manolescu:nilpotent, SS10}.

Let $L$ be an annular link with annular axis $A$. Consider an annular bridge diagram for $L$ with notation as in Section~\ref{subsec:bridge}, so that the endpoints of the bridges lie at points of $\tau = \{\tau_1, \dots \tau_{2n}\} \in \Conf^{2n,0}(\CC)$ such that $\tau_{2i-1}$ and $\tau_{2i}$ are connected by the bridge $b_i$ along the real axis in $\CC$ not crossing the origin, and the undercrossing arcs are labelled $a_i, \dots, a_n$ in some order. We may form the Riemann surface 
\[\Sigma'_{\tau} = \{(u,0,z) \in \CC^3 : u^2+p(z) = 0\} \subseteq A_{\tau}\]
which is the branched cover of $\CC \cup \{\infty\}$ of $\CC$ over the $2n$ points in $\tau$. We may compatify $\Sigma_{\tau}'$ to a surface $\Sigma_{\tau}$ by filling in the two punctures at infinity to obtain a closed surface of genus $n-1$. We mark the the two preimages of $\{\infty\}$ with basepoints labeled $w_1$ and $w_2$ and the two preimages of $\{0\}$ with basepoints labeled $z_1$ and $z_2$, so that $\Sigma_{\tau} \setminus \{w_1, w_2, z_1, z_2\}$ is the double branched cover of $\CC \setminus \{0\}$ over the set of $2n$ points $\tau$. Note that Manolescu calls this surface $\Sigma_0$; to avoid confusion in the annular context, we use $\Sigma_{\tau}$. 

\begin{figure}
\scalebox{.5}{
\begingroup%
  \makeatletter%
  \providecommand\color[2][]{%
    \errmessage{(Inkscape) Color is used for the text in Inkscape, but the package 'color.sty' is not loaded}%
    \renewcommand\color[2][]{}%
  }%
  \providecommand\transparent[1]{%
    \errmessage{(Inkscape) Transparency is used (non-zero) for the text in Inkscape, but the package 'transparent.sty' is not loaded}%
    \renewcommand\transparent[1]{}%
  }%
  \providecommand\rotatebox[2]{#2}%
  \newcommand*\fsize{\dimexpr\f@size pt\relax}%
  \newcommand*\lineheight[1]{\fontsize{\fsize}{#1\fsize}\selectfont}%
  \ifx\svgwidth\undefined%
    \setlength{\unitlength}{410.16723633bp}%
    \ifx\svgscale\undefined%
      \relax%
    \else%
      \setlength{\unitlength}{\unitlength * \real{\svgscale}}%
    \fi%
  \else%
    \setlength{\unitlength}{\svgwidth}%
  \fi%
  \global\let\svgwidth\undefined%
  \global\let\svgscale\undefined%
  \makeatother%
  \begin{picture}(1,0.5440313)%
    \lineheight{1}%
    \setlength\tabcolsep{0pt}%
    \put(0,0){\includegraphics[width=\unitlength,page=1]{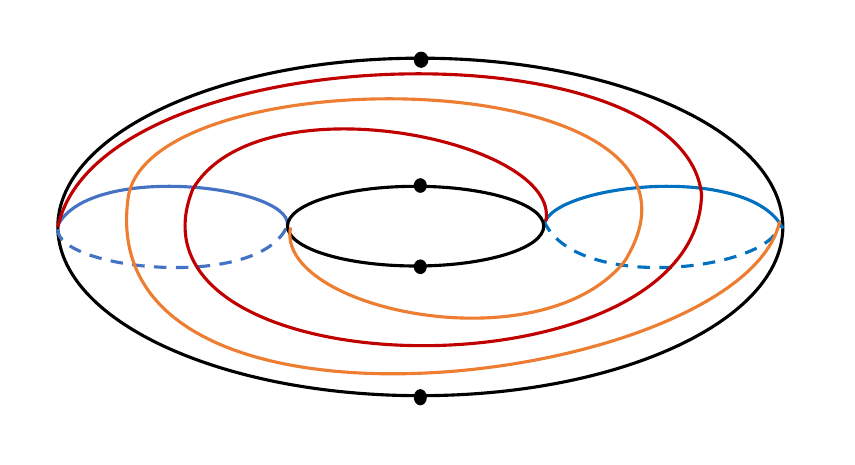}}%
    \put(0.67344937,0.27801881){\color[rgb]{0,0,0}\makebox(0,0)[lt]{\lineheight{1.25}\smash{\begin{tabular}[t]{l}$\widehat{\beta}_1$\end{tabular}}}}%
    \put(0.82385304,0.35995512){\color[rgb]{0,0,0}\makebox(0,0)[lt]{\lineheight{1.25}\smash{\begin{tabular}[t]{l}$\widehat{\alpha}_1$\end{tabular}}}}%
    \put(0.46243405,0.29504111){\color[rgb]{0,0,0}\makebox(0,0)[lt]{\lineheight{1.25}\smash{\begin{tabular}[t]{l}$z_1$\end{tabular}}}}%
    \put(0.45794557,0.24771359){\color[rgb]{0,0,0}\makebox(0,0)[lt]{\lineheight{1.25}\smash{\begin{tabular}[t]{l}$z_2$\end{tabular}}}}%
    \put(0.45794552,0.50250193){\color[rgb]{0,0,0}\makebox(0,0)[lt]{\lineheight{1.25}\smash{\begin{tabular}[t]{l}$w_1$\end{tabular}}}}%
    \put(0.45457837,0.04343388){\color[rgb]{0,0,0}\makebox(0,0)[lt]{\lineheight{1.25}\smash{\begin{tabular}[t]{l}$w_2$\end{tabular}}}}%
  \end{picture}%
\endgroup%
}
\caption{The double branched cover of the annular bridge diagram shown in Figure \ref{fig:periodic_bridge} along the endpoints of the bridges. Only the half of each of the red and orange curves $\widetilde{a}_1$ and $\widetilde{a}_2$ on the top side of the Heegaard surface facing the reader is shown; the remaining portions of the curve may be obtained by symmetry. The branched covering action is the hyperelliptic involution.}
\label{fig:heegaard_diagram_lift}
\end{figure}

The preimage of each bridge $b_i$ is a simple closed curve $\widehat{\beta}_i$ on $\Sigma_{\tau}$, given by 
\[\widehat{\beta}_i = \{ (u,0,z): z \in b_i, u \in \pm \sqrt{p(z)} \}.\]
Let $\widehat{\betas}$ denote this set of $n$ non-intersecting curves. Likewise, the preimage of each $a_i$ is a simple closed curve $\widehat{\alpha}_i$ and we may consider the set $\widehat{\alphas}$. An example of an annular bridge diagram appears in Figure~\ref{fig:periodic_bridge}; the corresponding surface $\Sigma_{\tau}$ together with the decorations we have introduced is shown in Figure~\ref{fig:heegaard_diagram_lift}.

Manolescu \cite[Proposition 7.4]{Manolescu:nilpotent} shows that $\mathcal H' = (\Sigma_0, \widehat{\alphas}, \widehat{\betas}, \{w_1, w_2\})$ is a multipointed Heegaard diagram for $\Sigma(L)$. Since $\Sigma(mL) \simeq -\Sigma(L)$, it follows that that using the cohomologial conventions of this paper the Lagrangian Floer cochain complex $CF(\Sym^n(\Sigma_0 \backslash \{w_1,w_2\}), \mathbb{T}_\alphas, \mathbb{T}_\betas)$ computes the Heegaard Floer cohomology \[\HFhatDual(\Sigma(L) \# (S^1\times S^2)) \simeq \HFhatDual(\Sigma(L))\otimes H^1(S^1) \simeq \HFhat(\Sigma(mL))\otimes V.\]  

We claim that the set of data $\mathcal H = (\Sigma_{\tau}, \widehat{\alphas}, \widehat{\betas}, \{w_1, w_2, z_1, z_2\})$ is a multi-pointed Heegaard diagram for $(\Sigma(L), \widetilde{A})$, where as in the introduction $\widetilde{A}$ denotes the lift of the annular axis $A$ under the map $\Sigma(L) \rightarrow S^3$. We must first check that each component of $\Sigma_{\tau} \setminus \widehat{\alphas}$ contains exactly one $w_i$ and one $z_i$, and similarly for $\Sigma_{\tau} \setminus \widehat{\betas}$. This follows because both sets of curves $\widehat{\alphas}$ and $\widehat{\betas}$ cut the surface into two components interchanged by the branched covering involution. Since the points $w_1$ and $w_2$ are interchanged by the branched covering involution, they must lie in different components in $\Sigma_{\tau} \setminus \widehat{\alphas}$ and also in $\Sigma_{\tau} \setminus \widehat{\betas}$; the same is true of the points $z_1$ and $z_2$.

\begin{figure}
\scalebox{.5}{
\begingroup%
  \makeatletter%
  \providecommand\color[2][]{%
    \errmessage{(Inkscape) Color is used for the text in Inkscape, but the package 'color.sty' is not loaded}%
    \renewcommand\color[2][]{}%
  }%
  \providecommand\transparent[1]{%
    \errmessage{(Inkscape) Transparency is used (non-zero) for the text in Inkscape, but the package 'transparent.sty' is not loaded}%
    \renewcommand\transparent[1]{}%
  }%
  \providecommand\rotatebox[2]{#2}%
  \newcommand*\fsize{\dimexpr\f@size pt\relax}%
  \newcommand*\lineheight[1]{\fontsize{\fsize}{#1\fsize}\selectfont}%
  \ifx\svgwidth\undefined%
    \setlength{\unitlength}{394.11373901bp}%
    \ifx\svgscale\undefined%
      \relax%
    \else%
      \setlength{\unitlength}{\unitlength * \real{\svgscale}}%
    \fi%
  \else%
    \setlength{\unitlength}{\svgwidth}%
  \fi%
  \global\let\svgwidth\undefined%
  \global\let\svgscale\undefined%
  \makeatother%
  \begin{picture}(1,1.17718937)%
    \lineheight{1}%
    \setlength\tabcolsep{0pt}%
    \put(0,0){\includegraphics[width=\unitlength,page=1]{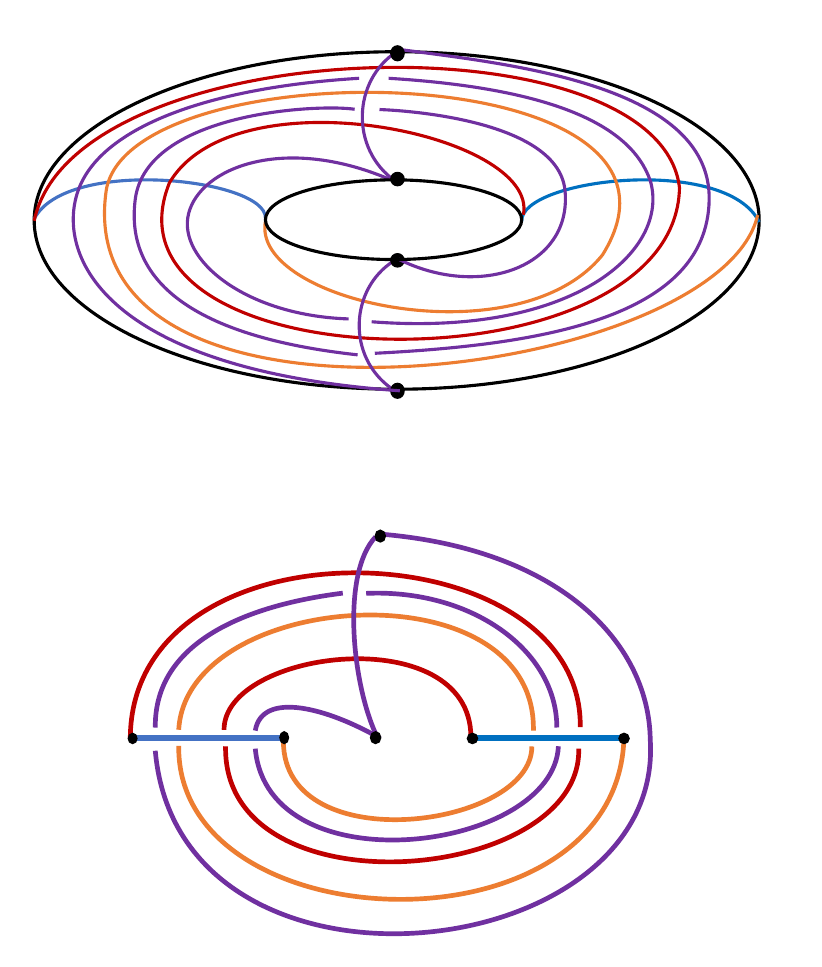}}%
    \put(0.45612086,0.54576426){\color[rgb]{0,0,0}\makebox(0,0)[lt]{\lineheight{1.25}\smash{\begin{tabular}[t]{l}$\infty$\end{tabular}}}}%
    \put(0.46725405,0.26370763){\color[rgb]{0,0,0}\makebox(0,0)[lt]{\lineheight{1.25}\smash{\begin{tabular}[t]{l}$0$\end{tabular}}}}%
    \put(0.48243984,1.12929582){\color[rgb]{0,0,0}\makebox(0,0)[lt]{\lineheight{1.25}\smash{\begin{tabular}[t]{l}$w_1$\end{tabular}}}}%
    \put(0.47192663,0.66671431){\color[rgb]{0,0,0}\makebox(0,0)[lt]{\lineheight{1.25}\smash{\begin{tabular}[t]{l}$w_2$\end{tabular}}}}%
    \put(0.47426287,0.92604029){\color[rgb]{0,0,0}\makebox(0,0)[lt]{\lineheight{1.25}\smash{\begin{tabular}[t]{l}$z_1$\end{tabular}}}}%
    \put(0.44155511,0.87113795){\color[rgb]{0,0,0}\makebox(0,0)[lt]{\lineheight{1.25}\smash{\begin{tabular}[t]{l}$z_2$\end{tabular}}}}%
  \end{picture}%
\endgroup%
}
\caption{The knot defined by the Heegaard diagram shown in Figure \ref{fig:heegaard_diagram_lift}, with its projection to the bridge diagram as the axis of the annular link shown in purple. In this diagram, we have drawn the knot upstairs with regard for visibility, such that it is not preserved by the branch covering action, but it may easily be redrawn as exactly the lift of the purple axis downstairs.}
\label{fig:axis_lift}
\end{figure}

This is sufficient to establish that $\mathcal H$ is indeed a Heegaard diagram for some link $J$ in $\Sigma(L)$. We must now show that the link in question is $\widetilde{A}$, the lift of the annular axis. Recall that we reconstruct $J$ by first connecting the $w$ basepoints to the $z$ basepoints in $\Sigma_{\tau} \setminus \widehat{\betas}$, and then connecting the $z$ basepoints to the $w$ basepoints in $\Sigma_{\tau} \setminus \widehat{\alphas}$, undercrossing any arcs from the first step which we encounter. In particular, in each of these steps we may choose to draw the two arcs in question such that they are interchanged by the branched covering involution on $\Sigma_{\tau}$. This produces either a knot or a two-component link; in either case, projecting to $\CC$ produces a knot with a diagram on $\CC$ consisting of two arcs, one running from $\infty$ to $0$ and the second from $0$ to $\infty$. The first of these arcs does not intersect the blue bridges $b_i$, and may be chosen to be the straight line segment shown in Figure \ref{fig:axis_lift}; the other does not intersect the red arcs $a_i$, and may be chosen to follow any path from $0$ to $\infty$ avoiding those arcs, undercrossing the vertical line segment as it passes it. This is the annular axis $A$. Its lift $\widetilde{A}$ in $\Sigma(L)$ is a knot if $\ell k(A, L)$ is odd and is a two-component link if $\ell k(A, L)$ is even. We therefore see that if $\ell k(A, L)$ is odd we have 
\begin{align*}
H_*(CF(\Sym^n(\Sigma_{\tau} \backslash \{w_1, w_2, z_1, z_2\}), \mathbb T_{\widehat{\alphas}}, \mathbb T_{\widehat{\betas}})) & \simeq \HFKhatDual(\Sigma(L)\# (S^1\times S^2),\widetilde{A}) \\
& \simeq \HFKhatDual(\Sigma(L), \widetilde{A}) \otimes H^*(S^1) \\
& \simeq \HFKhat(\Sigma(mL), m\widetilde{A}) \otimes V. \end{align*}
Here we again use the fact that $-\Sigma(L)=\Sigma(mL)$. Furthermore, note that $m\widetilde{A}$ is the same as the lift of the annular axis $mA$ of the annular link $mL$ to its branched double cover $\Sigma(mL)$. If instead $\ell k (A, L)$ is even, we have
\begin{align*}H_*(CF(\Sym^n(\Sigma_{\tau} \backslash \{w_1, w_2, z_1, z_2\}), \mathbb T_{\widehat{\alphas}}, \mathbb T_{\widehat{\betas}}) &\simeq \HFLhatDual(\Sigma(L), \widetilde{A}) \\
&\simeq \HFLhat(\Sigma(mL),m\widetilde{A}). 
\end{align*}
We now recall from \cite[Proposition 7.1]{Manolescu:nilpotent} that the \emph{anti-diagonal} divisor in $\Sym^n(\Sigma_{\tau})$ is the set
\[ \nabla =  \left \{ \{(u_1,0,z_1), \dots, (u_n, 0, z_n)\} : u_i^2+p_(z)=0, \ \exists \  i, j \text{ such that } z_i = z_j \text{ and } u_i = -u_j \right\} .\]
There is a filtration $\mathcal F$ on the chain complex $CF(\Sym^n(\Sigma_{\tau} \backslash \{w_1,w_2\}), \mathbb T_{\widehat{\alphas}}, \mathbb{T}_{\widehat{\betas}})$ called the \emph{anti-diagonal filtration}, coming from counting intersections of strips in the differential with $\nabla$. We denote the levels of this filtration by $\mathcal F_i$. There exists a combinatorial way to fix the absolute filtration level $i$; for discussion of this, see \cite[Introduction and Section 4]{Tweedy:anti-diagonal}. Deleting the anti-diagonal divisor from the symmetric product and suitably perturbing the symplectic form to be convex at infinity returns the homology of the associated graded of the filtration in the usual way; that is, we have
\[
H_*(CF(\Sym^n(\Sigma_{\tau} \backslash \{w_1,w_2\}) \setminus \nabla, \mathbb T_{\widehat{\alphas}}, \mathbb{T}_{\widehat{\betas}})) \simeq \oplus_{i \in \ZZ} H_*(\mathcal F_i/\mathcal F_{i+1}).  
\] 

Manolescu considers the involution induced by the map $(u,v,z) \mapsto (u,-v,z)$ on the horizontal Hilbert scheme. When we need to distinguish this involution from others, we will refer to it as $\iota_D(u,v,z)=(u,-v,z)$. Here the choice of subscript is meant to remind the reader that this is the involution related to the branched \emph{double} cover. He shows that we have fixed sets
\[ ((\Hilb^{n,hor}(A_{\tau}))^{fix}, \Sigma_{\alpha}^{fix}, \Sigma_{\beta}^{fix}) = (\Sym^{n}(\Sigma_{\tau} \setminus \{w_1,w_2\}) \setminus \nabla, \mathbb T_{\widehat{\alphas}}, \mathbb T_{\widehat{\betas}}).\]
Seidel and Smith checked that this example admits a stable normal trivialization \cite[Lemma 31]{SS10}; their proof is reviewed in Section \ref{sec:normal-trivs}. As a consequence \cite[Corollary 34]{SS10}, using their localization theorem for $\mathbb Z/2\mathbb Z$ Lagrangian Floer cohomology, recapped in this paper as Theorem~\ref{thm:localization} and Corollary~\ref{c:localization}, they prove there is a spectral sequence
\begin{align} \label{eq:ssdbc} \Kh_{symp}(L) \otimes \FF_2[\theta, \theta^{-1}] \rightrightarrows \oplus_{i \in \ZZ} H_*(\mathcal F_i / \mathcal F_{i+1}) \otimes \FF_2 [\theta, \theta^{-1}].\end{align}
This is the symplectic analog of the Ozsv{\'a}th-Szab{\'o} spectral sequence from Khovanov homology to Heegaard Floer homology. Combined with the spectral sequence from $H_*(\mathcal F_i / \mathcal F_{i+1})$ to $\HFhatDual(\Sigma(L))\otimes H^*(S^1)\simeq \HFhat(\Sigma(mL))\otimes V$, this gives a dimension inequality
\[ \dim(\Kh_{symp}(L)) \geq 2\dim(\HFhat(\Sigma(L))).\]
The anti-diagonal filtration and the spectral sequence it induces were further studied by Tweedy \cite{Tweedy:anti-diagonal, Tweedy:anti-diagonal-reduced}, who showed among other results that the spectral sequence collapses on the $E_1$-page for two-bridge knots \cite[Theorem 1.0.14]{Tweedy:anti-diagonal-reduced}; that is, for two-bridge knots we have
\[ H_*(\mathcal F_i / \mathcal F_{i+1}) \simeq \widehat{HF}(\Sigma(K)). \]
A conjecture of Seidel and Smith \cite[Section 4e]{SS10}, echoed by Tweedy \cite[Conjecture 1.0.15]{Tweedy:anti-diagonal-reduced}, states that the spectral sequence in fact collapses immediately for every knot. As far as the authors are aware this remains open even for alternating knots.

We are now ready to prove Theorem~\ref{thm:knotfloer}.  

\begin{proof}[Proof of Theorem~\ref{thm:knotfloer}.] Let $\mathcal F_i$ be the anti-diagonal filtration on $CF(\Sym^n(\Sigma_{\tau} - \{w_1,w_2\}), \mathbb T_{\widetilde{a}}, \mathbb T_{\widetilde{b}})$ coming from counting intersections of pseudoholomorphic curves in the differential with $\nabla$, as above. Let 
\[\nabla_o = \nabla \cap \Sym^n(\Sigma_0 \setminus \{w_1, w_2, z_1, z_2\}))\]
be the \emph{annular anti-diagonal divisor}. The filtration $\mathcal F_i$ induces a filtration on the Lagrangian Floer cochain complex $CF(\Sym^n(\Sigma_{\tau} \setminus \{w_1,w_2, z_1, z_2\}), \mathbb T_{\widehat{\alphas}}, \mathbb T_{\widehat{\betas}})$. We denote this induced filtration by $\mathcal G_i$ and refer to it as the \emph{annular anti-diagonal filtration}. It is explicitly computed by counting intersections of strips in the differential with $\nabla_o$.

Now, the fixed set of the action induced by $\iota_D(u,v,z) = (u,-v,z)$ on the horizontal Hilbert scheme $\Hilb^{n,hor}(A_{\tau}^*) = \Hilb^{n,hor}(A_{\tau}) \setminus D_o$ is precisely the complement of $\nabla_o$ in $\Sym^n(\Sigma_0 \setminus \{w_1, w_2, z_1, z_2\})$, and the fixed sets of the Lagrangians are unchanged from the non-annular case. This set of manifolds and Lagrangians inherits a stable normal trivialization from the non-annular case by restricting the vector bundle isomorphisms to the complement of $D_o \cap \nabla_o$. There is therefore by Corollary~\ref{c:localization} a spectral sequence whose first page is
\[HF(\Hilb^{n,hor}(A_{\tau}^*), \Sigma_{\alpha}, \Sigma_{\beta}) \simeq \AKh_{symp}(K) \otimes \mathbb F_2[\theta, \theta^{-1}]\]
and whose final page is isomorphic to 
\[ H_*(\mathcal G_i/\mathcal G_{i+1}) \otimes \mathbb F_2[\theta, \theta^{-1}].\]
There is then a spectral sequence from $\oplus_{i \in \ZZ} H_*(\mathcal G_i/\mathcal G_{i+1})$ to $HF(\Sym^{n}(\Sigma_{\tau} \setminus \{w_0,w_1,z_0,z_1\}), \mathbb T_{\widehat{\alphas}}, \mathbb T_{\widehat{\betas}})$. By the discussion above, this is isomorphic to either
\begin{align*}
\HFKhatDual(\Sigma(L), \widetilde{A}) \otimes H^*(S^1) \simeq \HFKhat(\Sigma(mL), m\widetilde{A}) \otimes V \end{align*}
if $\ell k(L, A)$ is odd and 
\begin{align*}
\HFLhatDual(\Sigma(L), \widetilde{A}) \simeq \HFLhat(\Sigma(mL), m\widetilde{A}) \end{align*}
if $\ell k (L,A)$ is even. The claimed rank inequalities follow.
\end{proof} 

We now consider Corollary~\ref{cor:two-bridge}. Let $K$ be a two-bridge knot with its natural 2-periodic symmetry, and choose the annular axis $A$ to be the axis of periodicity. Figure~\ref{fig:periodic_bridge} is an example of this choice. We recall to the reader the following well-known lemma.
\begin{lemma} \label{lemma:interchange} If $K$ is two-bridge and $A$ is its axis of periodicity, then the two lifts $\widetilde{K}$ and $\widetilde{A}$ are isotopic in the double-branched cover $\Sigma(K)$. \end{lemma}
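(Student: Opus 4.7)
My plan is to identify $\widetilde{K}$ and $\widetilde{A}$ as fixed sets of two commuting involutions on $\Sigma(K)=L(p,q)$, both of which present this lens space as a double branched cover of $S^{3}$ over a 2-bridge knot, and then apply Schubert's classification of 2-bridge knots by their branched double covers. Let $\iota$ denote the deck transformation of $\pi\colon\Sigma(K)\to S^{3}$, so that $\mathrm{Fix}(\iota)=\widetilde{K}$, and let $\widetilde{\tau}$ be the lift of the two-periodic symmetry chosen so that $\mathrm{Fix}(\widetilde{\tau})=\widetilde{A}$; these commute, and so both are intrinsic involutions of $\Sigma(K)$.

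I would first choose a $\tau$-invariant 2-bridge sphere $S^{2}$, which exists by standard equivariant arguments since the finite cyclic group $\langle\tau\rangle$ acts on $(S^{3},K)$. The involution $\tau$ fixes $A$ pointwise and acts freely on $K$ (since $A\cap K=\emptyset$), so it acts freely on the four points of $K\cap S^{2}$; moreover no bridge can be $\tau$-invariant, as a $\tau$-preserved arc with endpoints swapped by $\tau$ would necessarily meet the fixed axis, contradicting $A\cap K=\emptyset$. Hence $\tau$ swaps the two bridges within each 3-ball of the decomposition, and the quotient knot $\bar{K}=K/\tau$ inherits a 1-bridge presentation in $S^{3}/\tau\cong S^{3}$, so $\bar{K}$ is an unknot. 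From the general computation at the start of Section~\ref{sec:knotfloer} applied to $\widetilde{\tau}$, one obtains $\Sigma(K)/\widetilde{\tau}=\Sigma(\bar{K})=S^{3}$, so $\widetilde{\tau}$ exhibits $L(p,q)$ as the double branched cover of $S^{3}$ over the knot $J$ given by the image of $\widetilde{A}$. Since $\Sigma(J)\cong L(p,q)$ is a lens space, $J$ is a 2-bridge knot, and Schubert's classification forces $J$ to be equivalent to $K$ up to mirroring.

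Any such equivalence $(S^{3},J)\cong(S^{3},K)$ lifts to a self-homeomorphism $\widetilde{\psi}$ of $\Sigma(K)$ carrying the branch locus $\widetilde{A}$ to the branch locus $\widetilde{K}$. The main obstacle is the final step of promoting this homeomorphism to an ambient isotopy. Here I would use the well-known fact that the orientation-preserving mapping class group of $L(p,q)$ is at most $\mathbb{Z}/2\mathbb{Z}$, realized by an involution swapping the two Heegaard solid tori of the genus-1 splitting coming from the $\tau$-invariant 2-bridge decomposition; when this class is nontrivial, one checks by tracking its action on the Heegaard torus that it preserves the isotopy classes of the cores, and hence of the curves representing $\widetilde{K}$. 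Either way, $\widetilde{\psi}$ can be arranged to carry $\widetilde{A}$ to a curve isotopic to $\widetilde{K}$, completing the argument.
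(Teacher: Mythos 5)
Your route is genuinely different from the paper's and much heavier: you characterize $\widetilde{A}$ as the branch locus of a second presentation of $\Sigma(K)\cong L(p,q)$ as a double branched cover of $S^3$, then invoke Hodgson--Rubinstein (lens-space branched double covers come only from $2$-bridge knots) and Schubert's classification to identify the downstairs knot $J$ with $K$, and finally try to convert the resulting self-homeomorphism of $L(p,q)$ into an isotopy. The first two stages can be made to work, with one caveat you should record: the identification $\Sigma(J)\cong\Sigma(K)$ is orientation-preserving, and the oriented classification of lens spaces ($L(p,q')\cong L(p,q)$ iff $q'\equiv q^{\pm1}$) matches Schubert's classification exactly, so in fact $J\simeq K$ on the nose via an orientation-preserving equivalence; leaving in ``up to mirroring'' would force you to lift an orientation-reversing map to $L(p,q)$, which generically does not exist.

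The genuine gap is in the last step. The stated mapping class group fact is incorrect: by Bonahon, $\pi_0\mathrm{Diff}^+(L(p,q))\cong\mathbb{Z}/2\times\mathbb{Z}/2$ whenever $q^2\equiv1\pmod p$ with $q\not\equiv\pm1$ (e.g.\ $L(15,4)$, which arises from a $2$-bridge knot), and the generator of the generic $\mathbb{Z}/2$ is not the solid-torus swap but the ``hyperelliptic'' involution, which preserves each Heegaard solid torus and acts by $-1$ on $H_1$; the swap is a self-map only when $q^2\equiv\pm1$. Moreover, your claim that the nontrivial classes preserve the isotopy class of $\widetilde{K}$ presumes $\widetilde{K}$ is a core of a Heegaard solid torus, which you have not established, and is false for the swap class in any case: it carries one core to the other, and the two cores represent $1$ and $q\not\equiv\pm1$ in $H_1=\mathbb{Z}/p$, so they are not isotopic. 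In the generic case your argument can be repaired without knowing what $\widetilde{K}$ is, because the class acting by $-1$ is represented by the covering involution $\iota$, which preserves both $\widetilde{K}=\mathrm{Fix}(\iota)$ and $\widetilde{A}=\mathrm{Fix}(\widetilde{\tau})$ setwise (the two involutions commute), so $\widetilde{\psi}$ may be corrected by $\iota$. But when $q^2\equiv1$ and $q\not\equiv\pm1$ you have not ruled out that $\widetilde{\psi}$ lies in the coset of the swap class, and the conclusion does not follow as written.

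For comparison, the paper's proof sidesteps all classification results: the quotient $\overline{K}\cup\overline{A}$ is a two-component link of unknots admitting an ambient isotopy of $S^3$ exchanging its components, and $(\Sigma(K),\widetilde{K}\cup\widetilde{A})$ is the $\mathbb{Z}/2\times\mathbb{Z}/2$ branched cover of $(S^3,\overline{K}\cup\overline{A})$, obtained by branching over the two components in either order. Lifting the exchange isotopy gives an ambient isotopy of $\Sigma(K)$ carrying $\widetilde{K}$ to $\widetilde{A}$ directly, with no appeal to Schubert, Hodgson--Rubinstein, or mapping class groups of lens spaces.
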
 

\begin{proof} If $K$ is two-bridge and $A$ is its axis of periodicity, then the quotient $\overline{K}$ together with its axis $\overline{A}$ is a link of two unknots, and $(S^3, K \cup A)$ is the branched double cover of $(S^3, \overline{K}\cup\overline{A})$ over $\overline{A}$. There exists an isotopy interchanging the components, for example by viewing $\overline{K}$ as a one-bridge knot and arranging $\overline{A}$ such that rotation around the origin in the plane of the page is the desired isotopy; see Figure~\ref{fig:interchange} for an example. It is well-known that the double branched cover $\Sigma(K)$ of $K$ is the unique $\mathbb Z/2\mathbb Z \times \mathbb Z/2\mathbb Z$ cover of $\overline{K}\cup \overline{A}$; that is, the manifold $\Sigma(K)$ may be constructed by taking the branched double cover over each of the components of $\overline{K}\cup \overline{A}$ in either order (see, for example, \cite{MV:cyclic}). In the diagram below, $(S^3, B \cup K')$ denotes the branched double cover of $(S^3, \overline{K}, \overline{A})$ over $\overline{K}$, with $B$ being the unknot which is the lift of $\overline{K}$ and $K'$ denoting the lift of $\overline{A}$. Since $\overline{A}$ and $\overline{K}$ are interchanged by isotopy in the bottom copy of the three-sphere, $B$ is isotopic to a copy of the knot $K$ in $(S^3, B \cup K')$.
\begin{center}
\begin{tikzcd}
& (\Sigma(K), \widetilde{K} \cup \widetilde{A}) \dlar \drar & \\
(\Sigma(\overline{K}) = S^3, B \cup K') \drar & & (\Sigma(\overline{A})=S^3, K \cup A) \dlar \\
& (S^3, \overline{K} \cup \overline{A}) & 
\end{tikzcd}
\end{center}
We see that the lift $\widetilde{A}$ of $A$ in $\Sigma(K)$ is also up to isotopy the lift of $K$ in its branched double cover $\Sigma(K)$. \end{proof}

\begin{figure}
\scalebox{.5}{
\begingroup%
  \makeatletter%
  \providecommand\color[2][]{%
    \errmessage{(Inkscape) Color is used for the text in Inkscape, but the package 'color.sty' is not loaded}%
    \renewcommand\color[2][]{}%
  }%
  \providecommand\transparent[1]{%
    \errmessage{(Inkscape) Transparency is used (non-zero) for the text in Inkscape, but the package 'transparent.sty' is not loaded}%
    \renewcommand\transparent[1]{}%
  }%
  \providecommand\rotatebox[2]{#2}%
  \newcommand*\fsize{\dimexpr\f@size pt\relax}%
  \newcommand*\lineheight[1]{\fontsize{\fsize}{#1\fsize}\selectfont}%
  \ifx\svgwidth\undefined%
    \setlength{\unitlength}{384.83056641bp}%
    \ifx\svgscale\undefined%
      \relax%
    \else%
      \setlength{\unitlength}{\unitlength * \real{\svgscale}}%
    \fi%
  \else%
    \setlength{\unitlength}{\svgwidth}%
  \fi%
  \global\let\svgwidth\undefined%
  \global\let\svgscale\undefined%
  \makeatother%
  \begin{picture}(1,0.62410072)%
    \lineheight{1}%
    \setlength\tabcolsep{0pt}%
    \put(0,0){\includegraphics[width=\unitlength,page=1]{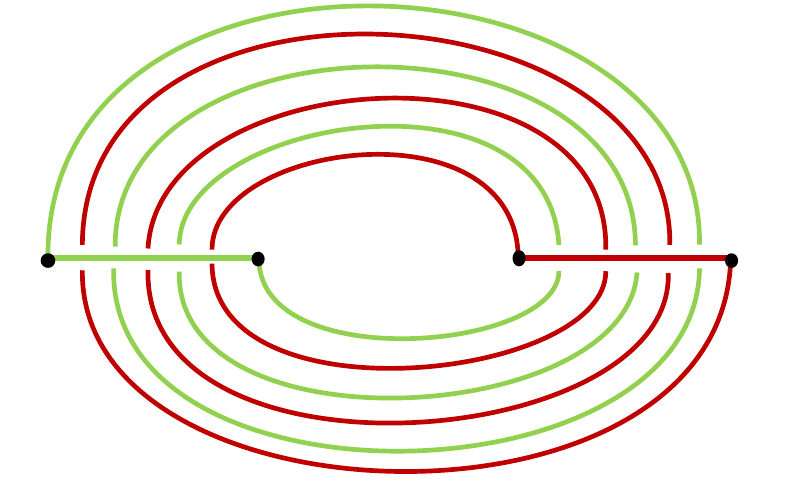}}%
    \put(0.8183453,0.09172663){\color[rgb]{0,0,0}\makebox(0,0)[lt]{\lineheight{1.25}\smash{\begin{tabular}[t]{l}$\overline{K}$\end{tabular}}}}%
    \put(0.80395681,0.5233813){\color[rgb]{0,0,0}\makebox(0,0)[lt]{\lineheight{1.25}\smash{\begin{tabular}[t]{l}$\overline{A}$\end{tabular}}}}%
  \end{picture}%
\endgroup%
}
\caption{The quotient knot $\overline{K}$ of the two-bridge link $K$ of Figure~\ref{fig:periodic_bridge} along its natural two-periodic symmetry, together with the image $\overline{A}$ of the axis $A$ under the quotient map. The knot is drawn in red and the axis in green. Note that the two components are interchanged by rotation.}
\label{fig:interchange}
\end{figure}

We now complete the proof of Corollary~\ref{cor:two-bridge}.

\begin{proof}[Proof of Corollary~\ref{cor:two-bridge}] This is the analog of \cite[Theorem 1.0.14]{Tweedy:anti-diagonal-reduced}; indeed, Tweedy proves that for a two-bridge knot the differential on $CF(\Sym^n(\Sigma_{\tau} \backslash \{w_1,w_2\}), \mathbb T_{\widehat{\alphas}}, \mathbb T_{\widehat{\betas}})$ preserves the filtration level of the anti-diagonal filtration, which therefore remains true after removing the basepoints $z_0$ and $z_1$ and their corresponding divisors in the symmetric product. However, in our setting we may additionally present a direct elementary proof. Consider a periodic two-bridge diagram for $K$, such as that for the trefoil in Figure~\ref{fig:periodic_bridge}, and form a multi-pointed Heegaard diagram for $\Sigma(K)$ by taking the branched cover over the four endpoints of the bridges as described above, {\`a} Figure~\ref{fig:heegaard_diagram_lift}. We observe that this Heegaard diagram is necessarily \emph{nice} in the sense of \cite{SW:nice, MOS:combinatorial}; a nice Heegaard diagram is one in which every region in the closure of the surface with the $\alpha$ and $\beta$ curves removed which does not contain a basepoint is either a square or a bigon. In the Heegaard diagram specified by taking the branched double cover of a two-bridge knot, every region which does not contain one of the four basepoints is in fact a rectangle. Sarkar and Wang showed that given a nice Heegaard diagram, all disks counted by the Heegaard Floer differentials have shadows which are embedded bigons or embedded rectangles on the Heegaard surface \cite{SW:nice}. In our situation, this implies that every domain $D$ which is the shadow of a curve $u$ which counts for the differential is an embedded rectangle. In order for such a curve $u$ to have positive intersection with $\nabla$, its shadow $D$ must contain two points $(u,0,z)$ and $(-u,0,z)$ on the Heegaard diagram. This is not true of any embedded rectangle in the diagram, since the placement of the four basepoints ensures that all embedded rectangles occur either on the front half of the Heegaard diagram, facing the reader, or on the back half. We conclude that the anti-diagonal filtration is trivial for two-bridge knots. It follows that the spectral sequence from $H_*(\mathcal G_i/\mathcal G_{i+1})$ to $\HFKhatDual(\Sigma(K), \widetilde{K})\otimes H^*(S^1)$ collapses on the $E_1$ page.
\end{proof}

\section{Spectral sequences for two-periodic links}\label{s:2periodic}

In this section we prove Theorem~\ref{thm:2periodic}, establishing an analog of the Stoffregen-Zhang spectral sequence from the symplectic Khovanov homology of a doubly-periodic link to the annular symplectic Khovanov homology of the quotient link \cite{SZ:localization}. As a warm-up, in Section~\ref{subsec:easy} we start by proving a simpler spectral sequence from the annular symplectic Khovanov homology of a doubly-periodic link to the annular symplectic Khovanov homology of the quotient link, which is the analog of earlier work of Zhang~\cite[Theorem 1]{Zhang:annular}. We then proceed to the main result of the section in Section~\ref{subsec:szthm}.

Recall from Section \ref{ss:quotient} that $\tau_{Q,o}$ denotes the configuration $\{-2n, \dots, -1,1, \dots, 2n\}$ in the set $\Conf^{4n,0} \cap \Fix(\iota)$, where $\iota$ is the map $z \mapsto -z$ on $\CC$. Furthermore, recall that given a two-periodic link $L$, there is a canonical way of making $L$ into an annular link by choosing the axis of periodicity as the annular axis. We may choose an annular bridge diagram for $L$ which is compatible with the symmetry $\iota$ and such that the endpoints of the bridges occur at the elements of $\tau_{Q,o}$, as in Figure~\ref{fig:periodic_bridge}. Given such a bridge diagram, there is a quotient annular link diagram as in Figure~\ref{fig:periodic_bridge_quotient}, such that the endpoints of the bridges occur on the points of the standard configuration $\tau_o = \{1,\dots, 2n\}$.

\subsection{From the annular theory of a two-periodic knot to the annular theory of the quotient} \label{subsec:easy}

\begin{figure}
\scalebox{.5}{
\begingroup%
  \makeatletter%
  \providecommand\color[2][]{%
    \errmessage{(Inkscape) Color is used for the text in Inkscape, but the package 'color.sty' is not loaded}%
    \renewcommand\color[2][]{}%
  }%
  \providecommand\transparent[1]{%
    \errmessage{(Inkscape) Transparency is used (non-zero) for the text in Inkscape, but the package 'transparent.sty' is not loaded}%
    \renewcommand\transparent[1]{}%
  }%
  \providecommand\rotatebox[2]{#2}%
  \newcommand*\fsize{\dimexpr\f@size pt\relax}%
  \newcommand*\lineheight[1]{\fontsize{\fsize}{#1\fsize}\selectfont}%
  \ifx\svgwidth\undefined%
    \setlength{\unitlength}{380.46823883bp}%
    \ifx\svgscale\undefined%
      \relax%
    \else%
      \setlength{\unitlength}{\unitlength * \real{\svgscale}}%
    \fi%
  \else%
    \setlength{\unitlength}{\svgwidth}%
  \fi%
  \global\let\svgwidth\undefined%
  \global\let\svgscale\undefined%
  \makeatother%
  \begin{picture}(1,0.63502109)%
    \lineheight{1}%
    \setlength\tabcolsep{0pt}%
    \put(0,0){\includegraphics[width=\unitlength,page=1]{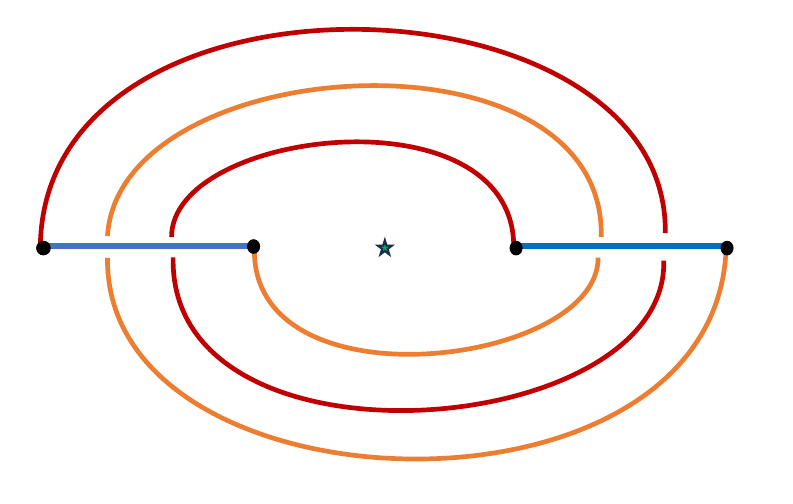}}%
    \put(0.89028623,0.35052878){\color[rgb]{0,0,0}\makebox(0,0)[lt]{\lineheight{1.25}\smash{\begin{tabular}[t]{l}$\tau_1$\end{tabular}}}}%
    \put(0.59577964,0.29185451){\color[rgb]{0,0,0}\makebox(0,0)[lt]{\lineheight{1.25}\smash{\begin{tabular}[t]{l}$\tau_2$\end{tabular}}}}%
    \put(0.01805452,0.2783116){\color[rgb]{0,0,0}\makebox(0,0)[lt]{\lineheight{1.25}\smash{\begin{tabular}[t]{l}$\tau_{2n}$\end{tabular}}}}%
    \put(0.78196172,0.33938764){\color[rgb]{0,0,0}\makebox(0,0)[lt]{\lineheight{1.25}\smash{\begin{tabular}[t]{l}$b_1$\end{tabular}}}}%
    \put(0.41524084,0.4262723){\color[rgb]{0,0,0}\makebox(0,0)[lt]{\lineheight{1.25}\smash{\begin{tabular}[t]{l}$a_1$\end{tabular}}}}%
  \end{picture}%
\endgroup%
}
\caption{A 2-periodic bridge diagram for the trefoil knot. The intersection of the annular axis with the diagram is marked with a star.}
\label{fig:periodic_bridge}
\end{figure}

\begin{figure}
\scalebox{.5}{
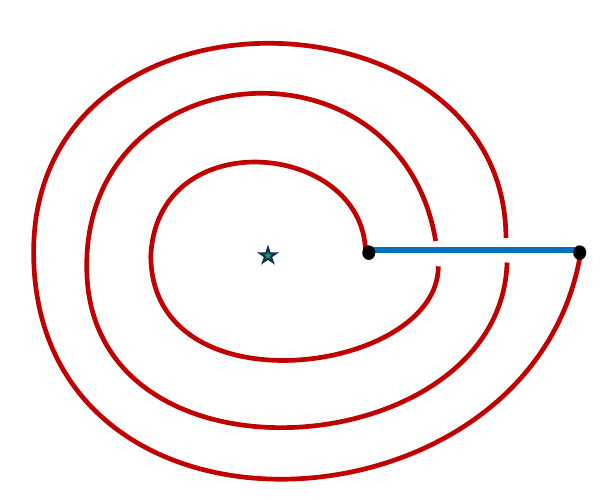}
\caption{The quotient of the periodic bridge diagram shown in Figure \ref{fig:periodic_bridge}. We continue to mark the annular axis with a star.}
\label{fig:periodic_bridge_quotient}
\end{figure}

Consider the involution $\iota_A \co A^*_{\tau_{Q,o}} \to A^*_{\tau_{Q,o}}$ given by $(u,v,z) \mapsto (u,v,-z)$. As in Section~\ref{ss:quotient}, this map induces an involution on $\Hilb^{2n, hor}(A^*_{\tau_{Q,o}})$, which we here call $\iota_{A,H}$. Recall further that $Q_{A,\tau_{Q,o}}$ denotes the quotient of $A^*_{\tau_{Q,o}}$ by $\iota_A$.

\begin{lemma}\label{l:fixed1}
The fixed point set of $\iota_{A,H}$ is naturally isomorphic to $\Hilb^{n, hor}(Q_{A, \tau_{Q,o}})$.
\end{lemma}

\begin{proof}
Let $I$ be a subscheme of points on $Q_{A,\tau_{Q,o}}$ which represents a point in $\Hilb^{n, hor}(Q_{A,\tau_{Q,o}})$.
Let $\tilde{I}$ be the pull-back of $I$ under the quotient map $A^*_{\tau_{Q,o}} \to Q_{A,\tau_{Q,o}}$.
 The pull-back $\tilde{I}$ is a subscheme of $A^*_{\tau_{Q,o}}$ with length $2n$.
Since $\iota_A$ is a lift of $\iota$ to $A^*_{\tau}$, and in particular has no fixed points, the subscheme $\tilde{I}$ necessarily lies in $\Hilb^{2n, hor}(A^*_{\tau_{Q,o}}) \subseteq \Hilb^{2n}(A^*_{\tau_{Q,o}})$.
It is also clear that $\tilde{I}$ is fixed by $\iota_{A,H}$. Therefore, the pullback map defines an embedding from $\Hilb^{n, hor}(Q_{A,\tau_{Q,o}})$ to the fixed set of $\iota_{A,H}$.

On the other hand, if $J$ is a subscheme lying in the fixed set of $\iota_{A,H}$, then its image $\pi_{HC}(J)$ in $\Sym^{2n}(A^*_{\tau_{Q,o}})$ must lie in the fixed set of the involution $\iota_{A,S}$ on $\Sym^{2n}(A^*_{\tau_{Q,o}})$ induced by $\iota_A$.
Since $\iota_A$ has no fixed points on $A_{\tau}^*$, it must be the case that $\pi_{HC}(J)=\{x,\iota_{A,S}(x)\}$ for some $x \in \Sym^{n}(A^*_{\tau_{Q,o}})$. Therefore, $J$ is necessarily of the form $J_x \cap \iota_A^* J_x $ for some length $n$ subscheme $J_x \subset A^*_{\tau_{Q,o}}$ such that $\pi_{HC}(J_x)=x$.
But the image of the embedding from $\Hilb^{n, hor}(Q_{A,\tau_{Q,o}})$ precisely covers all subschemes of the form $J_x \cap \iota_A^* J_x $. We conclude that this embedding is an isomorphism.
\end{proof}

We may now use this identification to prove the promised first spectral sequence.

\begin{proposition}\label{p:2periodicEasy}
 There is a spectral sequence whose $E_1$ page is isomorphic to
\[ \AKh_{symp}(L) \otimes \mathbb F_2[\theta, \theta^{-1}]\]
and whose $E_{\infty}$ page is isomorphic to \[\AKh_{symp}(\overline{L})\otimes \mathbb F_2[\theta, \theta^{-1}].\]
\noindent It follows that 
\[ \dim \AKh_{symp}(L) \geq \dim \AKh_{symp}(\overline{L}).\] \end{proposition}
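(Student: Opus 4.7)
The plan is to apply the Seidel-Smith localization theorem (Corollary~\ref{c:localization}) to the involution $\iota_{A,H}$ on $\Hilb^{2n,hor}(A^*_{\tau_{Q,o}})$ induced by the map $\iota_A \co (u,v,z) \mapsto (u,v,-z)$, then identify the resulting $E_\infty$ page with $\AKh_{symp}(\overline L)$ via Proposition~\ref{p:annularSame}. I will work with bridge diagrams throughout using the equivalence of Lemma~\ref{l:AKHvariant} and the discussion of Section~\ref{subsec:bridge}, since the geometry of the involution and its fixed set is most transparent in that picture.

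First, I would choose a 2-periodic annular bridge diagram for $L$ with bridge endpoints at $\tau_{Q,o}$, compatible with $\iota \co z \mapsto -z$, as exemplified by Figure~\ref{fig:periodic_bridge}. The bridges $\ul b = \{b_1,\dots,b_{2n}\}$ and undercrossing arcs $\ul a = \{a_1,\dots,a_{2n}\}$ decompose into $\iota$-pairs, and together with a chosen Morse perturbation as in the last paragraph of Section~\ref{subsec:bridge} yield Lagrangians $\Sigma_{\ul b}, \Sigma_{\ul a}\subseteq \Hilb^{2n,hor}(A^*_{\tau_{Q,o}})$ which are setwise preserved by $\iota_{A,H}$. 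By Lemma~\ref{l:AKHvariant} and the bridge diagram discussion, $\AKh_{symp}(L) \simeq HF(\Hilb^{2n,hor}(A^*_{\tau_{Q,o}}), \Sigma_{\ul a}, \Sigma_{\ul b})$. To apply Corollary~\ref{c:localization} I must check that this situation satisfies the hypotheses: exactness of the symplectic manifold and Lagrangians is immediate once the K\"ahler form of Lemma~\ref{l:AKHvariant} is used, while equivariant transversality is achieved by a generic $\iota_{A,H}$-invariant almost complex structure together with the small equivariant Lagrangian isotopy supplied by the theorem, with the caveat of Remark~\ref{rem:caveat} in mind.

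Next, I would identify the fixed data. By Lemma~\ref{l:fixed1} the fixed set of $\iota_{A,H}$ on $\Hilb^{2n,hor}(A^*_{\tau_{Q,o}})$ is $\Hilb^{n,hor}(Q_{\tau_{Q,o},A})$. Since the bridge diagram is $\iota$-symmetric, the fixed set of $\Sigma_{\ul b}$ (respectively $\Sigma_{\ul a}$) inside the fixed Hilbert scheme consists precisely of length-$n$ subschemes in the quotient whose support lies on the product of matching spheres $\Sigma_{\overline b_i}$ associated to the quotient bridges $\overline{\ul b}$ (respectively $\overline{\ul a}$) downstairs. In other words, the fixed Lagrangians are exactly those appearing in the definition of $\AKh_{Q,A}(\overline L)$ for the quotient bridge diagram. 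Modulo verifying the existence of a stable normal trivialization in the sense of Definition~\ref{def:stable-normal}, which is deferred to Section~\ref{sec:normal-trivs}, Corollary~\ref{c:localization} now yields a spectral sequence from $\AKh_{symp}(L)\otimes \FF_2[\theta,\theta^{-1}]$ to $\AKh_{Q,A}(\overline L)\otimes \FF_2[\theta,\theta^{-1}]$. Finally, Proposition~\ref{p:annularSame} identifies $\AKh_{Q,A}(\overline L)$ with $\AKh_{symp}(\overline L)$, completing the argument; the dimension inequality is an immediate consequence.

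The main obstacle is the construction of the stable normal trivialization of the normal polarization of $(\Hilb^{n,hor}(Q_{\tau_{Q,o},A}), \Sigma_{\overline{\ul a}}^{\mathrm{fix}}, \Sigma_{\overline{\ul b}}^{\mathrm{fix}})$ inside $\Hilb^{2n,hor}(A^*_{\tau_{Q,o}})$. Once the fixed-point identification of Lemma~\ref{l:fixed1} is in hand, this amounts to a vector-bundle computation on a smooth variety with well-understood structure, analogous to the computation Seidel and Smith carry out for $\iota_D$ in \cite[Lemma 31]{SS10}, but one that must be done with some care because the fiberwise $\iota_A$-action is free on $A^*_{\tau}$ and the resulting normal bundle data pass through the Hilbert-Chow morphism; this is exactly the sort of verification the paper postpones to Section~\ref{sec:normal-trivs}. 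All other ingredients---equivariance of the Lagrangians, identification of fixed sets, matching with $\AKh_{Q,A}$, and the passage to $\AKh_{symp}(\overline L)$---follow by assembling the pieces already in place.
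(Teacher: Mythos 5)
Your proposal is correct and follows essentially the same route as the paper: localize with respect to $\iota_{A,H}$, identify the fixed triple via Lemma~\ref{l:fixed1} with the data computing $\AKh_{Q,A}(\overline{L})$ from the quotient bridge diagram, defer the stable normal trivialization to Section~\ref{sec:normal-trivs}, apply Corollary~\ref{c:localization}, and conclude with Proposition~\ref{p:annularSame}. The only cosmetic difference is that you phrase everything in the divisor-deleted model $\Hilb^{2n,hor}(A^*_{\tau_{Q,o}})$ throughout, which is consistent with how $\iota_{A,H}$ and Lemma~\ref{l:fixed1} are stated.
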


\begin{proof} Given an annular bridge diagram for $L$ a two periodic link, number the bridges in the diagram for $L$ from right to left, as in the notation of Section~\ref{subsec:bridge}, so that $\iota(b_i) = b_{2n+1-i}$, and choose similar notation for the undercrossing arcs $a_i$ so that $\iota(a_i) = a_{2n+1-i}$. Let $\overline{b}_i$ and $\overline{a}_i$ denote the quotient arcs in the bridge diagram.

Now, we recall that $\AKh_{symp}(L)$ may be computed as the Lagrangian Floer cohomology of the triple $(\Hilb^{2n, hor}(A_{\tau_{Q,o}}), \Sigma_{\alpha}, \Sigma_{\beta})$, where 
\begin{align*}\Sigma_{\alpha} &= \Sigma_{a_1} \times \dots \times \Sigma_{a_{2n}} \\
				\Sigma_{\beta} &= \Sigma_{b_1} \times \dots \times \Sigma_{b_{2n}}
\end{align*}
are the products of the matching spheres lying above the arcs $a_i$ and $b_i$ as in \eqref{eqn:sphere} and Section~\ref{subsec:bridge}. The fixed sets of these two Lagrangians under the involution $\iota_{A,H}$ are exactly the embedded copies of the products
 \begin{align*}\Sigma_{\bar{\alpha}} &= \Sigma_{\bar{a}_1} \times \dots \times \Sigma_{\bar{a}_{n}} \\
				\Sigma_{\bar{\beta}} &= \Sigma_{\bar{b}_1} \times \dots \times \Sigma_{\bar{b}_{2n}}
\end{align*}
under the isomorphism of Lemma \ref{l:fixed1}. Therefore, the fixed sets of this triple under $\iota_{A,H}$ are
\[ ((\Hilb^{2n, hor}(A_{\tau_{Q,o}}))^{fix}, \Sigma_{\alpha}^{fix}, \Sigma_{\beta}^{fix}) \simeq (\Hilb^{n,hor}(Q_{A,\tau_{Q,o}}), \Sigma_{\bar{\alpha}}, \Sigma_{\bar{\beta}}).\]
The righthand side computes $\AKh_{Q,A}(\overline{L})$, which we recall from Proposition \ref{p:annularSame} is isomorphic to $\AKh_{symp}(\overline{K})$. Moreover, it follows from the arguments of Section~\ref{sec:normal-trivs} that this setup admits a stable normal trivialization. We may now apply Corollary \ref{c:localization} to obtain the desired spectral sequence.
\end{proof}

\begin{remark} \label{rem:either} The use of $\iota_A$ in this section is for expositional simplicity only. It follows from Lemma \ref{l:fixed2} and Corollary~\ref{c:A=B} that on $\Hilb^{2n, hor}(A^*_{\tau_{Q,o}})$, the involutions $\iota_A$ and $\iota_B$ have isomorphic fixed sets, and each induce a spectral sequence from the annular symplectic Khovanov homology of a two-periodic link to the annular symplectic Khovanov homology of the quotient. \end{remark}

\subsection{From the ordinary theory of a two-periodic knot to the annular theory of the quotient} \label{subsec:szthm}

We now consider the involution $\iota_B \co A_{\tau_{Q,o}} \to A_{\tau_{Q,o}}$ given by $(u,v,z) \mapsto (-u,-v,-z)$, which induces an involution $\iota_{B,H}$ on $\Hilb^{2n, hor}(A_{\tau_{Q,o}})$. We maintain the same choices of notation for periodic annular bridge diagrams as in Section~\ref{subsec:easy}. Recall from Section~\ref{ss:quotient} that $Q_{B,\tau_{Q,o}}$ denotes the quotient of $A_{\tau}^*$ under $\iota_B$.

\begin{lemma}\label{l:fixed2}
The fixed point set of $\iota_{B,H}$ is naturally isomorphic to $\Hilb^{n, hor}(Q_{B,\tau_{Q,o}})$.
\end{lemma}

\begin{proof} Much as in the proof of Lemma \ref{l:fixed1}, one can construct an embedding from $\Hilb^{n, hor}(Q_{B,\tau_{Q,o}})$ to $\Hilb^{2n, hor}(A_{\tau_{Q,o}}^*) \subseteq \Hilb^{2n, hor}(A_{\tau_{Q,o}})$.
Moreover, the image of this embedding is precisely the intersection between the fixed locus of $\iota_{B,H}$ and $\Hilb^{2n, hor}(A_{\tau_{Q,o}}^*)$.
It therefore suffices to check that the fixed locus of $\iota_{B,H}$ lies entirely in $\Hilb^{2n, hor}(A_{\tau_{Q,o}}^*)$.

Let $J \in  \Hilb^{2n, hor}(A_{\tau_{Q,o}})$ be an element in the fixed locus of $\iota_{B,H}$. If $y \in A_{\tau_{Q,o}}$ lies in the support of $J$, then so does $\iota_B(y)$.
Notice that $\iota_B$ has no fixed points in $A_{\tau_{Q,o}}$, so $y$ and $\iota_B(y)$ are two distinct points on the surface. Suppose for the sake of contradiction that $\pi_A(y)=0 \in \CC$. Then we also have $\pi_A(\iota_B(y))=0$.
Since $y$ and $\iota_B(y)$ are distinct points lying in $\pi_A^{-1}(0)$, the ideal $(\pi_{A})_* J$ has length strictly less than $J$ and therefore cannot be in $\Hilb^{2n, hor}(A_{\tau_{Q,o}})$. It follows that in fact $\pi_A(y)$ cannot be $0$.
In other words, the support of $J$ misses $\pi_A^{-1}(0)$ and hence $J \in \Hilb^{2n, hor}(A_{\tau_{Q,o}}^*)$.
\end{proof}

We now have all the tools to give a short proof of Theorem~\ref{thm:2periodic}.

\begin{proof}[Proof of Theorem \ref{thm:2periodic}] Given an annular bridge diagram for $L$ a two periodic link, as in the previous section we number the bridges in the diagram for $L$ from right to left, so that $\iota(b_i) = b_{2n+1-i}$, and choose similar notation for the undercrossing arcs $a_i$ so that $\iota(a_i) = a_{2n+1-i}$. Let $\overline{b}_i$ and $\overline{a}_i$ denote the quotient arcs in the bridge diagram. As previously, $\AKh_{symp}(L)$ may be computed as the Lagrangian Floer cohomology of the triple $(\Hilb^{2n, hor}(A_{\tau_{Q,o}}), \Sigma_{\alpha}, \Sigma_{\beta})$, and the fixed sets of this triple under $\iota_{B,H}$ are
\[ (\Hilb^{2n, hor}((A_{\tau_{Q,o}}))^{fix}, \Sigma_{\alpha}^{fix}, \Sigma_{\beta}^{fix}) \simeq (\Hilb^{n,hor}(Q_{B,\tau_{Q,o}}), \Sigma_{\bar{\alpha}}, \Sigma_{\bar{\beta}})\]
where the identification is via viewing the manifolds on the righthand side as their embedded copies under the isomorphism of Lemma~\ref{l:fixed2}. The righthand side computes $\AKh_{Q,B}(\overline{L})$, which we recall from Proposition \ref{p:annularSame} is isomorphic to $\AKh_{symp}(\overline{L})$. Moreover, it follows from the arguments of Section~\ref{sec:normal-trivs} that this setup admits a stable normal trivialization. We may now apply Corollary \ref{c:localization} to obtain the desired spectral sequence. \end{proof}

\subsection{The example of the Hopf link} \label{subsec:Hopf}

In this section we elaborate on an example originally due to Seidel by examining the positive Hopf link with its standard two-crossing diagram, considered as a two-periodic link, with a diagram as in Figure~\ref{fig:Hopf}. The combinatorial Khovanov homology is
\[ \Kh(H_+; \ZZ) \simeq \ZZ_{(0,0)} \oplus \ZZ_{(0,2)} \oplus \ZZ_{(2,4)} \oplus \ZZ_{(2,6)} \]
where the gradings $(j,q)$ in the subscript are the usual homological gradings followed by the quantum gradings. The Khovanov homology over any field is therefore also four-dimensional with the same gradings. The involution on the chain complex $\CKh(L)$ for the combinatorial Khovanov homology of a two-periodic link $L$ induced by the link periodicity, which we shall call $\iota_{Kh}$, is constructed on the chain level by interchanging resolutions of the periodic link diagram by rotation, and is easily seen to preserve the homological and quantum gradings on combinatorial Khovanov homology. Therefore, it must be the identity. As the differential also preserves the quantum grading, the equivariant homology of $(\CKh(H_+), \iota_{Kh})$ therefore has rank four over $H_*(\mathbb{RP}^{\infty}; \FF_2) = \FF_2[\theta]$; by Stoffregen and Zhang's work \eqref{eq:sz}, it localizes to the annular Khovanov homology of the one-crossing unknot shown in Figure~\ref{fig:Hopf}, which is indeed of dimension four.

\begin{figure}[ht]
\scalebox{.5}{
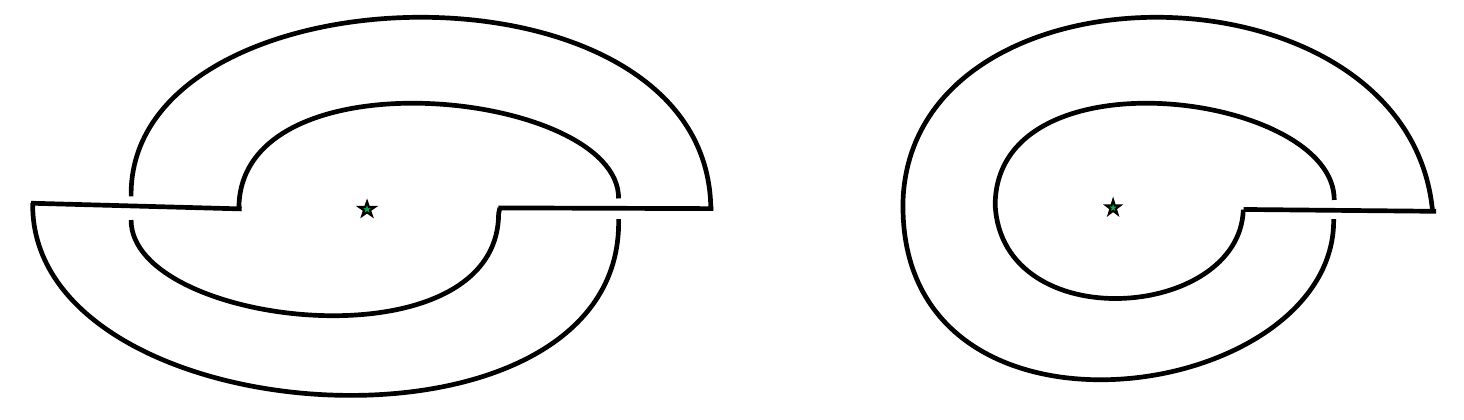}
\caption{The Hopf link considered as a two-periodic link, and its quotient (annular) unknot.}
\label{fig:Hopf}
\end{figure}

The symplectic Khovanov homology of the Hopf link is
\[ \Kh_{symp}(H_+; \FF_2) \simeq \FF_{(0)} \oplus \FF^{2}_{(-2)} \oplus \FF_{(-4)}.\]
Here the subscripts denote the Khovanov gradings $i=j-q$. Let $\iota_A^*$ be the map on $\CKh_{symp}(H_+)$ induced by $\iota_A$ using the constructions of Section~\ref{sec:localization}. This map preserves the Khovanov gradings, and by Seidel and Smith's work \eqref{eq:ss} the equivariant cohomology of $(\CKh_{symp}(H_+), \iota_A^*)$ localizes to the symplectic Khovanov homology of the quotient unknot, hence must have rank two over $\FF[\theta]$. It follows that either the map $\iota_A^*$ exchanges the two generators in Khovanov grading $-2$, or there is a higher-degree cancellation in the localization spectral sequence; consideration of the gradings implies that in fact it is the second possibility that holds. Seidel therefore observed that this implies that the involution induced by $\iota_A$ on $\CKh_{symp}(H_+)$ admits no lift to a bigraded theory, including over characteristic zero. Therefore there is no $\ZZ/2\ZZ$-equivariant quasi-isomorphism between $(\CKh(L), \iota_{Kh})$ and $(\CKh_{symp}(L), \iota_A^*)$, even over characteristic zero, for two-periodic links.

The results of this section, however, show that $\iota_B^*$ must induce the identity on $Kh_{symp}(H_+)$, and that the equivariant cohomology of $(\CKh(H_+), \iota_B^*)$ must have rank four over $\FF_2[\theta]$, so that it localizes to the annular symplectic Khovanov homology of the annular unknot shown in Figure~\ref{fig:Hopf}, which is easily checked to be rank four. One may conjecture there that is a $\ZZ/2\ZZ$-equivariant quasi-isomorphism between $(\CKh(L), \iota_{Kh})$ and $(\CKh_{symp}(L), \iota_B^*)$.

\begin{remark} The reader is reminded that $\iota_A^*$ and $\iota_B^*$ may not be naively computable before stabilization and an isotopy of the Lagrangians; see Remark~\ref{rem:caveat} for further discussion.  \end{remark} 

We are grateful to Ivan Smith for reminding us of this example and helpful discussion.

\section{A spectral sequence for strongly invertible knots}\label{s:strongly}

In this section, we prove Theorem \ref{thm:stronglyinvertible}, establishing an analog of the Lipshitz-Sarkar spectral sequence from the Khovanov homology of a strongly invertible knot to the cone of an axis-moving map relating the annular Khovanov homology on appropriate annular versions of the quotient knot \cite{LS:strongly_invertible}.

\subsection{The symplectic axis-moving map} \label{subsec:axis} Let $L$ be an annular link which is represented by an annular bridge diagram such that the endpoints of the bridges lie at the points of a configuration $\tau \in \Conf^{2n,0}(\CC)$ as in Section~\ref{subsec:bridge}. Let $\ul{b} = \{b_1, \dots, b_n\}$ be the set of bridges and $\ul{a} = \{a_1,\dots, a_n\}$ be the set of undercrossing arcs. Let $\Sigma_{\beta}$ and $\Sigma_{\alpha}$ be the Lagrangians associated respectively to the bridges and the undercrossing arcs as in the same section so that 
$\AKh_{symp}(L)=HF_{ann}(\Hilb^{n,hor}(A_{\tau}), \Sigma_{\alpha}, \Sigma_{\beta})$. Suppose there exists an isotopy of one of the undercrossing arcs $a_i$ across the origin which introduces no new crossings as in Figure~\ref{fig:alpha_slide}; equivalently, suppose there exists a handleslide of one of the arcs $a_i$ over a circle around the origin such that the interior of the handleslide region does not intersect any other $a_j$. Let $a_i'$ denote the result of this isotopy and $\Sigma_{\alpha'}$ be the Lagrangian associated to the set of arcs produced by replacing $a_i$ with $a_i'$.
Then $HF_{ann}(\Hilb^{n,hor}(A_{\tau}), \Sigma_{\alpha'}, \Sigma_{\beta})$ computes the annular Khovanov homology of the underlying link of $L$ with respect to a new choice of annular axis; one here regards the axis as having moved across the arc $a_i$. We refer to this annular link as $L'$. In general, $HF_{ann}(\Hilb^{n,hor}(A_{\tau}), \Sigma_{\alpha}, \Sigma_{\beta})$ and $HF_{ann}(\Hilb^{n,hor}(A_{\tau}), \Sigma_{\alpha'}, \Sigma_{\beta})$ are different. 

\begin{figure}[ht]
\fontsize{15pt}{12pt}
\scalebox{.5}{
\begingroup%
  \makeatletter%
  \providecommand\color[2][]{%
    \errmessage{(Inkscape) Color is used for the text in Inkscape, but the package 'color.sty' is not loaded}%
    \renewcommand\color[2][]{}%
  }%
  \providecommand\transparent[1]{%
    \errmessage{(Inkscape) Transparency is used (non-zero) for the text in Inkscape, but the package 'transparent.sty' is not loaded}%
    \renewcommand\transparent[1]{}%
  }%
  \providecommand\rotatebox[2]{#2}%
  \newcommand*\fsize{\dimexpr\f@size pt\relax}%
  \newcommand*\lineheight[1]{\fontsize{\fsize}{#1\fsize}\selectfont}%
  \ifx\svgwidth\undefined%
    \setlength{\unitlength}{458.88967896bp}%
    \ifx\svgscale\undefined%
      \relax%
    \else%
      \setlength{\unitlength}{\unitlength * \real{\svgscale}}%
    \fi%
  \else%
    \setlength{\unitlength}{\svgwidth}%
  \fi%
  \global\let\svgwidth\undefined%
  \global\let\svgscale\undefined%
  \makeatother%
  \begin{picture}(1,0.36500755)%
    \lineheight{1}%
    \setlength\tabcolsep{0pt}%
    \put(0,0){\includegraphics[width=\unitlength,page=1]{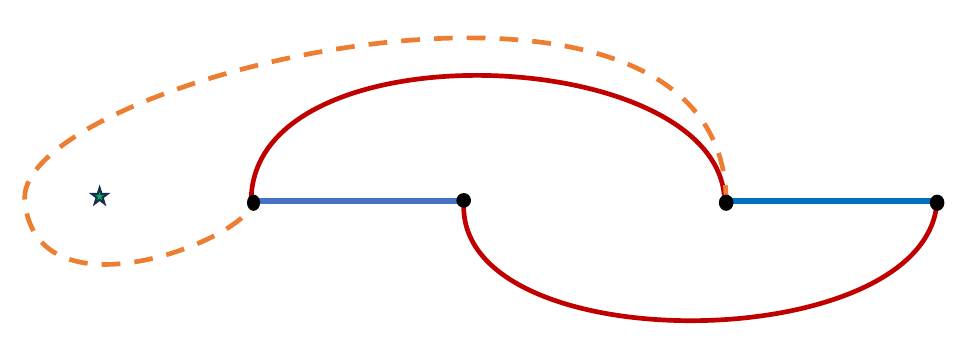}}%
    \put(0.37254903,0.2488688){\color[rgb]{0,0,0}\makebox(0,0)[lt]{\lineheight{1.25}\smash{\begin{tabular}[t]{l}$a_i$\end{tabular}}}}%
    \put(0.09049774,0.25791856){\color[rgb]{0,0,0}\makebox(0,0)[lt]{\lineheight{1.25}\smash{\begin{tabular}[t]{l}$a_i'$\end{tabular}}}}%
  \end{picture}%
\endgroup%
}
\caption{An annular bridge diagram for the unknot together with the result of an isotopy of an undercrossing arc over the origin. The solid red arc $a_i$ is replaced with the dashed orange arc $a_i'$.}
\label{fig:alpha_slide}
\end{figure}

In the case of combinatorial annular Khovanov homology, one may define an axis-moving map between the the annular Khovanov homologies of $L$ and $L'$ in terms of compositions of cobordism maps between them \cite[Section 3]{LS:strongly_invertible}. For symplectic Khovanov homology, the analog of this map has a simple construction in terms of the continuation map associated to the isotopy over the origin, as follows.

Recall that $CF_{ann}(\Hilb^{n,hor}(A_{\tau}), \Sigma_{\alpha},\Sigma_{\beta})=CF(\Hilb^{n,hor}(A_{\tau}), \Sigma_{\alpha},\Sigma_{\beta}; \FF_2[U])/(U=0)$. For the remainder of this section, the symplectic manifold will be fixed, so we drop it from the notation. Notice that with $\Sigma_{\alpha}$ and $\Sigma_{\alpha'}$ as above, up to an isotopy in the complement of $D_o$, we may arrange that $\Sigma_{\alpha}\cap \Sigma_{\alpha'}$ is the set of critical points of the projection map $\pi_A$. More precisely, we can isotope the copies of the matching paths $a_2,\dots,a_n$ in $\alpha'$ to $a_2',\dots,a_n'$ relative to their endpoints so that $a_i \cap a_j' \neq \emptyset$ if and only if $i=j$, and furthermore so that $a_i$ intersects $a_i'$ only at the end points. This isotopy from $a_j$ to $a_j'$ then induces an isotopy from $\Sigma_{\alpha'}$ to another Lagrangian whose intersection with $\Sigma_{\alpha}$ is the 
set of critical points.
After this isotopy, all of the generators of the chain complex have even degree and the differential vanishes. It follows that
\begin{align}
HF(\Sigma_{\alpha'},\Sigma_{\alpha}; \FF[U])&=H^*(S^2)^{\otimes n} \otimes_{\mathbb{F}_2} \mathbb{F}_2[U]  \label{eq:moveHF}\\
HF_{ann}(\Sigma_{\alpha'},\Sigma_{\alpha})&=H^*(S^2; \FF_2)^{\otimes n}. \label{eq:moveHF2}
\end{align}
Let $\tilde{c}_{\alpha,\alpha'} \in HF(\Sigma_{\alpha'},\Sigma_{\alpha}; \FF_2[U])$ be the element corresponding to the identity $1 \in H^0(S^2)^{\otimes n} \otimes_{\mathbb{F}_2} \mathbb{F}_2[U]$ and likewise $c_{\alpha,\alpha'} \in HF_{ann}(\Sigma_{\alpha'},\Sigma_{\alpha})$ be the element corresponding to the identity $1 \in H^0(S^2; \FF_2)^{\otimes n}$ respectively.

There is a Floer theoretic product map 
\begin{align}\label{eq:product}
\mu_2 \co CF(\Sigma_{\alpha},\Sigma_{\beta}; \FF_2[U]) \times CF(\Sigma_{\alpha'},\Sigma_{\alpha}; \FF_2[U]) \to CF(\Sigma_{\alpha'},\Sigma_{\beta}; \FF_2[U]) 
\end{align}
given by counting rigid pseudo-holomorphic triangles $u$ with boundary on $(\Sigma_{\alpha'}, \Sigma_{\alpha}, \Sigma_{\beta})$ weighted by $U^{u \cdot D_o}$ coming from counting intersections with the annular divisor as previously.
Setting $U=0$ induces a map
\begin{align}
\mu_2|_{U=0} \co CF_{ann}(\Sigma_{\alpha},\Sigma_{\beta}) \times CF_{ann}(\Sigma_{\alpha'},\Sigma_{\alpha}) \to CF_{ann}(\Sigma_{\alpha'},\Sigma_{\beta}) 
\end{align}
which descends to a map on cohomology 
\begin{align}\label{eq:productH}
\mu_2|_{U=0} \co HF_{ann}(\Sigma_{\alpha},\Sigma_{\beta}) \times HF_{ann}(\Sigma_{\alpha'},\Sigma_{\alpha}) \to HF_{ann}(\Sigma_{\alpha'},\Sigma_{\beta}).
\end{align}

\begin{definition}\label{d:axis}
The axis-moving map $HF_{ann}(\Sigma_{\alpha},\Sigma_{\beta}) \to HF_{ann}(\Sigma_{\alpha'},\Sigma_{\beta})$ associated to the isotopy from $\alpha$ to $\alpha'$ over the origin is the map $\mu_2(\cdot, c_{\alpha,\alpha'})$.
\end{definition}

We pause to explain the geometric reasoning behind this definition. Consider a smooth isotopy $(\alpha_t)_{t \in [0,1]}$ from $\alpha_0=\alpha$ to $\alpha_1=\alpha'$ such that $\alpha_t$ passes through the origin only at $t=1/2$.
For any small $\epsilon>0$, we may consider the diagram
\begin{align}\label{eq:Comm}
\xymatrixcolsep{5mm}
\xymatrix{ 
CF(\Sigma_{\alpha},\Sigma_{\beta}; \FF_2[U])   \ar[r] \ar[d] & CF(\Sigma_{\alpha_{1/2-\epsilon}},\Sigma_{\beta}; \FF_2[U])  \ar[r] \ar[d] & CF(\Sigma_{\alpha_{1/2+\epsilon}},\Sigma_{\beta}; \FF_2[U]) \ar[r] \ar[d] & CF(\Sigma_{\alpha'},\Sigma_{\beta}; \FF_2[U]) \ar[d]\\
CF_{ann}(\Sigma_{\alpha},\Sigma_{\beta}) \ar[r]            & CF_{ann}(\Sigma_{\alpha_{1/2-\epsilon}},\Sigma_{\beta}) \ar[r]            & CF_{ann}(\Sigma_{\alpha_{1/2+\epsilon}},\Sigma_{\beta}) \ar[r]            & CF_{ann}(\Sigma_{\alpha'},\Sigma_{\beta}).
}
\end{align}
The horizontal maps between the first two columns and last two columns are continuation maps defined by counting rigid pseudo-holomorphic strips with moving Lagrangian boundary conditions and weighting the resulting count by $U^{u \cdot D_o}$, and setting $U=0$ for the bottom horizontal arrows. The fact that the boundary condition defined by the isotopy $\alpha_t$ does not intersect $D_o$ guarantees that $u \cdot D_o$ is well-defined, and as usual positivity of intersections guarantees that it is non-negative.
A standard cobordism argument may be used to show that these continuation maps are homotopic to $\mu_2(\cdot, \tilde{c}_{\alpha,\alpha_{1/2-\epsilon}})$
and $\mu_2(\cdot, \tilde{c}_{\alpha_{1/2+\epsilon},\alpha'})$ respectively, as shown in Figure \ref{fig:cont-tri}.
\begin{figure}[ht]
\begingroup%
  \makeatletter%
  \providecommand\color[2][]{%
    \errmessage{(Inkscape) Color is used for the text in Inkscape, but the package 'color.sty' is not loaded}%
    \renewcommand\color[2][]{}%
  }%
  \providecommand\transparent[1]{%
    \errmessage{(Inkscape) Transparency is used (non-zero) for the text in Inkscape, but the package 'transparent.sty' is not loaded}%
    \renewcommand\transparent[1]{}%
  }%
  \providecommand\rotatebox[2]{#2}%
  \newcommand*\fsize{\dimexpr\f@size pt\relax}%
  \newcommand*\lineheight[1]{\fontsize{\fsize}{#1\fsize}\selectfont}%
  \ifx\svgwidth\undefined%
    \setlength{\unitlength}{311.75831952bp}%
    \ifx\svgscale\undefined%
      \relax%
    \else%
      \setlength{\unitlength}{\unitlength * \real{\svgscale}}%
    \fi%
  \else%
    \setlength{\unitlength}{\svgwidth}%
  \fi%
  \global\let\svgwidth\undefined%
  \global\let\svgscale\undefined%
  \makeatother%
  \begin{picture}(1,0.19684084)%
    \lineheight{1}%
    \setlength\tabcolsep{0pt}%
    \put(0,0){\includegraphics[width=\unitlength,page=1]{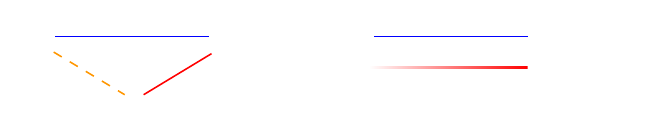}}%
    \put(0.1956257,0.02673147){\color[rgb]{0,0,0}\makebox(0,0)[lt]{\lineheight{1.25}\smash{\begin{tabular}[t]{l}$c$\end{tabular}}}}%
    \put(0.04981762,0.11543139){\color[rgb]{0,0,0}\makebox(0,0)[lt]{\lineheight{1.25}\smash{\begin{tabular}[t]{l}$\leftarrow$\end{tabular}}}}%
    \put(0.32124231,0.11786565){\color[rgb]{0,0,0}\makebox(0,0)[lt]{\lineheight{1.25}\smash{\begin{tabular}[t]{l}$\leftarrow$\end{tabular}}}}%
    \put(0.54504674,0.10910368){\color[rgb]{0,0,0}\makebox(0,0)[lt]{\lineheight{1.25}\smash{\begin{tabular}[t]{l}$\leftarrow$\end{tabular}}}}%
    \put(0.82157511,0.10751612){\color[rgb]{0,0,0}\makebox(0,0)[lt]{\lineheight{1.25}\smash{\begin{tabular}[t]{l}$\leftarrow$\end{tabular}}}}%
    \put(0,0){\includegraphics[width=\unitlength,page=2]{cont-tri.pdf}}%
  \end{picture}%
\endgroup%

\caption{Left: A Floer product $\mu_2(\cdot, \tilde{c}_{\alpha_t,\alpha_s})$. Read clockwise the boundary conditions are $\Sigma_{\alpha_t}$, drawn in red, $\Sigma_{\alpha_s}$, drawn in orange and dashed, and $\Sigma_{\beta}$, drawn in blue. The intersection point $\tilde{c}_{\alpha_t,\alpha_s}$ is labelled $c$.  Right: A strip counted by the continuation map with moving boundary condition from $\Sigma_{\alpha_s}$ to $\Sigma_{\alpha_t}$. The boundary conditions are $\Sigma_{\beta}$ on top, drawn in blue and solid, and the moving Lagrangian below, drawn in red and shaded. Arrows indicate inputs and outputs.}
\label{fig:cont-tri}
\end{figure}

It is not a priori clear how to define the continuation maps between the second and the third columns of the diagram \eqref{eq:Comm} so as to be compatible with the $\mathbb{F}_2[U]$-module structure.
However, motivated by the discussion above, we can define the top and bottom horizontal maps to be 
$\mu_2(\cdot, \tilde{c}_{\alpha_{1/2-\epsilon},\alpha_{1/2+\epsilon}})$ and $\mu_2(\cdot, c_{\alpha_{1/2-\epsilon},\alpha_{1/2+\epsilon}})$ respectively.
It is clear that the resulting diagram commutes and induces a commuting diagram on homology.
\begin{align}\label{eq:CommH} \xymatrixcolsep{5mm}
\xymatrix{ 
HF(\Sigma_{\alpha},\Sigma_{\beta}; \FF_2[U])   \ar[r]^{\kern-10pt{\sim}} \ar[d] & HF(\Sigma_{\alpha_{1/2-\epsilon}},\Sigma_{\beta}; \FF_2[U])  \ar[r] \ar[d] & HF(\Sigma_{\alpha_{1/2+\epsilon}},\Sigma_{\beta}; \FF_2[U]) \ar[r]^{\kern10pt{\sim}}  \ar[d] & HF(\Sigma_{\alpha'},\Sigma_{\beta}; \FF_2[U]) \ar[d]\\
HF_{ann}(\Sigma_{\alpha},\Sigma_{\beta}) \ar[r]^{\kern-10pt{\sim}}             & HF_{ann}(\Sigma_{\alpha_{1/2-\epsilon}},\Sigma_{\beta}) \ar[r]            & HF_{ann}(\Sigma_{\alpha_{1/2+\epsilon}},\Sigma_{\beta}) \ar[r]^{\kern10pt{\sim}}             & HF_{ann}(\eL_{\alpha'},\eL_{\beta}).
}
\end{align}
The compositions of the horizontal maps of \eqref{eq:CommH} along the top and bottom rows are independent of $\epsilon$ and indeed are precisely $\mu^2(\cdot, \tilde{c}_{\alpha,\alpha'})$ and $\mu^2(\cdot, c_{\alpha,\alpha'})$ respectively. This motivates Definition \ref{d:axis}: the symplectic-axis moving map is the composition of the maps along the bottom row of \eqref{eq:CommH} and can be interpreted as an appropriately defined continuation map.

\begin{example}\label{e:continuation}
We consider the example of the one-bridge diagram for the unknot shown in Figure \ref{fig:continuation} with bridge $b_1$ and undercrossing arc $a_1$, along with the result $a_1'$ of isotoping the $a_1$ over the origin. This will provide the basis for a general computation. Note that all three Lagrangians are spheres. Where we are interested in analyzing the generators of the symplectic Khovanov chain complex, we omit crossing information from our diagrams in order to see the intersection points and domains on $\CC$ more clearly. Each of the triple intersection points labeled $c$ and $f$ has a single lift to $A^*_{\tau}$; abusing notation slightly, we also refer to these points as $c$ and $f$. As explained in Section~\ref{subsec:bridge}, we perturb $\Sigma_{\beta}$ compatibly with the projection to $\CC$ so that above each of the intersection points $d$ and $e$ there are two points each in $\Sigma_{\alpha} \cap \Sigma_{\beta}$, called $d_1, d_2, e_1, e_2$ and likewise above $d'$ and $e'$ there are two points each in $\Sigma_{\alpha'} \cap \Sigma_{\beta}$ called $d_1',d_2',e_1',e_2'$.

The intersection points $c$ and $f$ have degrees $0$  and $2$, respectively, as generators in $CF(\Sigma_{\alpha'},\Sigma_{\alpha})$. In particular, we have $\tilde{c}_{\alpha,\alpha'}=c$ and $c_{\alpha,\alpha'}=c$. As generators in $CF(\Sigma_{\alpha},\Sigma_{\beta}; \FF_2[U])$ and $CF(\Sigma_{\alpha'},\Sigma_{\beta}; \FF_2[U])$, the degrees of the intersection points between the spheres described above are
\begin {align*}
|c|=0, \quad |d_1|=|d_1'|=1, \quad |d_2|=|d_2'|=2 \\
|f|=2, \quad |e_1|=|e_1'|=0, \quad |e_2|=|e_2'|=1
\end{align*}
On $CF(\Sigma_{\alpha}, \Sigma_{\beta}; \FF_2[U])$ the differentials are
\begin {align*}
\partial f=0, \quad \partial  c=d_1 , \quad \partial e_1=d_1, \quad \partial  e_2=f+d_2, \quad \partial d_1=\partial d_2 =0
\end{align*}
and on  $CF(\Sigma_{\alpha'}, \Sigma_{\beta}; \FF_2[U])$ the differentials are
\begin {align*}
\partial f=0, \quad \partial c = d_1', \quad \partial e_1'=d_1', \quad \partial  e_2'=Uf+d_2', \quad \partial  d_1' =\partial d_2'=0.
\end{align*}
Therefore, $HF_{ann}(\Sigma_{\alpha},\Sigma_{\beta})$ is generated by $[e_1+c]$ and $[f]=[d_2]$, while $HF_{ann}(\Sigma_{\alpha'},\Sigma_{\beta})$ is generated by $[e_1'+c]$ and $[f]$. Note that $[d_2']=0 \neq [f]$.

We now consider the Floer product \eqref{eq:product}. The constant Floer solution at $c$ gives us $\mu_2(c,c)=c.$

There is a holomorphic triangle which projects onto the bigon bounded by the solid red and dashed orange arcs giving us $\mu_2(f,c)=Uf$.
On the other hand, there are holomorphic triangles giving us $\mu_2(e_i,c)=b_i'$ and $\mu_2(d_i,c)=d_i'$.
Therefore, recalling that we set $U=0$, the chain level axis-moving map sends 
\[ c \mapsto c \qquad d_i \mapsto d_i' \qquad e_i \mapsto e_i' \qquad f \mapsto 0.\]
On cohomology, the axis-moving map sends $[e_1+c]$ to $[e_1'+c]$ and $[f]=[d_2]$ to $0$.
\end{example}

\begin{figure}[ht]
\scalebox{.7}{
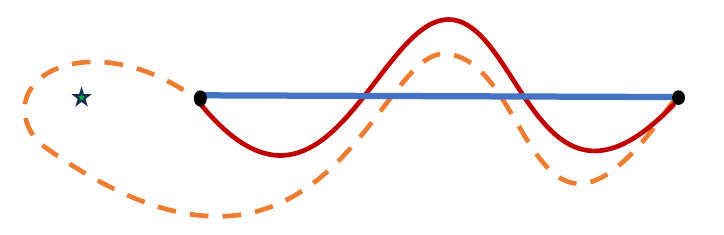}
\caption{A one-bridge diagram for the unknot with horizontal bridge $b_1$ drawn in blue and original undercrossing arc $a_1$in solid red, together with the result $a_1'$ of sliding the undercrossing arc over the origin, drawn orange and dashed. As usual the green star denotes the origin. The intersection points are labeled by $c, d, d', b, b', f$.}
\label{fig:continuation}
\end{figure}

Example \ref{e:continuation} can be generalized to the case of multiple bridges, as we now explain. Consider an annular bridge diagram for a link with bridges $\ul{b} = (b_1, \dots, b_n)$ and undercrossing arcs $\ul{a}=(a_1,\dots,a_n)$.
Let $l=a_i$ be a choice of undercrossing arc which may be slid across the origin to $l'=a_i'$ such that the isotopy region does not intersect any other $a_j$.
Let $l_t$ be the smooth isotopy from $l$ to $l'$. We assume that the image of the isotopy intersects the origin exactly at $t=1/2$.
We may assume that $l_{1/2}$ intersects the bridges transversely and furthermore for any $t,t'$ sufficiently close to $1/2$, we additionally assume that the intersection between $l_t$ and $l_{t'}$ consists precisely of their mutual endpoints, which are also the endpoints of $a_i$.
This gives us a natural bijective correspondence between $l_t \cap b_i$ and $l_{t'} \cap b_i$ for $t,t'$ sufficiently close to $1/2$ and all $i=1,\dots, n$.
Let $\phi_{t,t'}$ be the map which takes an intersection point in the total set of intersection points between $\ell_t$ and the bridges $\cup_{i=1}^n(l_t \cap b_i)$ to its counterpart in $\cup_{i=1}^n (l_{t'} \cap b_i)$.
This bijection in turn gives us a natural bijective correspondence between $\Sigma_{\alpha_{1/2-\epsilon}} \pitchfork \Sigma_{\beta}$ and $\Sigma_{\alpha_{1/2 +\epsilon}} \pitchfork \Sigma_{\beta}$ for any small $\epsilon>0$, where as usual $\Sigma_{\beta}$ is perturbed compatibly with the projection map to the complex plane to achieve transversality.
This bijection induces module isomorphisms
\begin{align}
\Phi_{\epsilon}\co &CF(\Sigma_{\alpha_{1/2-\epsilon}},\Sigma_{\beta}; \FF_2[U])  \to CF(\Sigma_{\alpha_{1/2+\epsilon}},\Sigma_{\beta}; \FF_2[U]) \label{eq:Phi} \\
\Phi_{\epsilon}|_{U=0} \co &CF_{ann}(\Sigma_{\alpha_{1/2-\epsilon}},\Sigma_{\beta})  \to CF_{ann}(\Sigma_{\alpha_{1/2+\epsilon}},\Sigma_{\beta})  \label{eq:PhiU}
\end{align}
In Example \ref{e:continuation}, these module isomorphisms send $c$ to $c$, $d_i$ to $d_i'$, $e_i$ to $e_i'$, and $f$ to $f$.

The assumption on $l_t$ near $t=1/2$ implies that the union of the arcs $l_t$ over $t \in [1/2-\epsilon,1/2+\epsilon]$ foliates a bigon $B$ with corners at the two endpoints.
Denote the corners of $B$ by $c$ and $f$ respectively so that the boundary of $B$ in clockwise order is given by $(f,l_{1/2-\epsilon},c,l_{1/2+\epsilon})$. In the example of Figure~\ref{fig:continuation}, we may take $\epsilon = 1/2$, and $B$ is the bigon bounded by the solid red arc $a_1$ and dashed orange curve $a_1'$.

Motivated by Example \ref{e:continuation}, we divide the intersection points $\cup_{i=1}^n(l_{1/2-\epsilon} \cap b_i)$ into two types, and call them type-I and type-II.
An intersection point $d \in l_{1/2-\epsilon} \cap b_i$ is of type-I if there is a triangle $T \subset B$ with corners $c$, $d$ and $\phi_{1/2-\epsilon,1/2+\epsilon}(d)$, and boundary on the curves $(l_{1/2-\epsilon}, l_{1/2+\epsilon}, b_i)$ in clockwise order such that $T$ does not contain the origin. 
We allow $T$ to be a constant triangle, so $c$ is also of type-I.
We say the remaining intersection points are of type-II.
 
We say that a generator $x$ of $CF(\Sigma_{\alpha_{1/2-\epsilon}},\Sigma_{\beta}; \FF_2[U]) $ is of type-I if $\pi_A(x)$ contains a type-I intersection point of $l_{1/2-\epsilon} \cap b_i$ for some $i$.
Otherwise, it is of type-II. We define a $\mathbb{F}_2[U]$-linear map 
\begin{align}
\Psi_{\epsilon} \co & CF(\Sigma_{\alpha_{1/2-\epsilon}},\Sigma_{\beta}; \FF_2[U])  \to CF(\Sigma_{\alpha_{1/2+\epsilon}},\Sigma_{\beta}; \FF_2[U])  \label{eq:PSI}\\
 x & \mapsto  \begin{cases}\Phi_{\epsilon}(x) \qquad &\text{if $x$ is a type-I generator}  \nonumber \\
 U\Phi_{\epsilon}(x) \qquad &\text{if $x$ is a type-II generator} \nonumber \end{cases}
\end{align}
and let $\Psi_{\epsilon}|_{U=0}\co CF_{ann}(\Sigma_{\alpha_{1/2-\epsilon}},\Sigma_{\beta})  \to CF_{ann}(\Sigma_{\alpha_{1/2+\epsilon}},\Sigma_{\beta})$ be defined by setting $U=0$.
In Example \ref{e:continuation}, these module isomorphisms send $c$ to $c$, $d_i$ to $d_i'$, $e_i$ to $e_i'$, and $f$ to $Uf$, which becomes zero in the annular case.

\begin{lemma}\label{l:continuation}
For $\epsilon>0$ sufficiently small, $\Psi_{\epsilon}$ agrees with the chain map in the first row of \eqref{eq:Comm}. Therefore, the map induced by $\Psi_{\epsilon}|_{U=0}$ on cohomology is the axis-moving map.
\end{lemma}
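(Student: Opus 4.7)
The plan is to show that, for sufficiently small $\epsilon > 0$, the map $\Psi_\epsilon$ of \eqref{eq:PSI} agrees \emph{on the nose} with the Floer product $\mu_2(\cdot, \tilde{c}_{\alpha_{1/2-\epsilon}, \alpha_{1/2+\epsilon}})$ at the chain level. Granting this, setting $U=0$ gives that $\Psi_\epsilon|_{U=0}$ equals $\mu_2(\cdot, c_{\alpha_{1/2-\epsilon},\alpha_{1/2+\epsilon}})|_{U=0}$, so inserting $\Psi_\epsilon|_{U=0}$ into the middle vertical arrow of \eqref{eq:CommH} and using commutativity of the outer squares (which follows from the Floer-theoretic continuation arguments already set up) identifies the induced map on cohomology with the bottom-row composition, which is exactly the axis-moving map $\mu_2(\cdot, c_{\alpha,\alpha'})$. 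Moreover, because $\tilde{c}_{\alpha_{1/2-\epsilon},\alpha_{1/2+\epsilon}}$ is a Floer cycle (it lies in degree zero with no lower-degree generators in its subcomplex, by the grading computation \eqref{eq:moveHF}), the Floer product with it is automatically a chain map, so the chain-map assertion follows as well.

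To establish the chain-level identification, I would use the tautological correspondence of Remark~\ref{r:tauto} together with the projection $\pi_A \co A_\tau^* \to \CC^*$ to reduce the count of rigid pseudo-holomorphic triangles in $\Hilb^{n,hor}(A_\tau)$ with inputs $(x, \tilde{c}_{\alpha_{1/2-\epsilon},\alpha_{1/2+\epsilon}})$ and output $y$ to counts of branched covers of a disk whose image downstairs in $\CC$ is a triangle with corners at $c$, a component of $\pi_A(x)$, and a component of $\pi_A(y)$, and with boundary on $(l_{1/2-\epsilon}, l_{1/2+\epsilon}, b_i)$ in clockwise order. By the open mapping theorem each boundary arc of such a triangle lies on the corresponding arc in $\CC$, so any contributing triangle component must be contained in the bigon $B$ up to excursions along $b_i$. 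The bijection $\phi_{1/2-\epsilon,1/2+\epsilon}$ then forces the corners in $\CC$ to be exactly $(c, d, \phi(d))$ for $d \in l_{1/2-\epsilon} \cap b_i$, plus the small triangles relating the perturbation-generators $d_j, d_j'$ of the Hamiltonian-perturbed $\Sigma_\beta$.

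Next, I would bound the energy of any contributing triangle by the area of $B$, which tends to zero as $\epsilon \to 0$, and apply Gromov compactness together with the uniqueness of a holomorphic triangle in a sufficiently thin bigon bounded by two arcs sharing both endpoints. This yields exactly one rigid triangle contributing to each matrix coefficient of $\mu_2(\cdot, \tilde{c}_{\alpha_{1/2-\epsilon},\alpha_{1/2+\epsilon}})$, namely the one whose downstairs shadow is the triangle $T \subset B$ from the definition of type-I. Positivity of intersection with $D_o$ then gives the weight: the triangle $T$ contains the origin if and only if the generator $x$ is of type-II, so the count contributes $U^{T \cdot \{0\}} \Phi_\epsilon(x)$, which equals $\Phi_\epsilon(x)$ for type-I and $U \Phi_\epsilon(x)$ for type-II. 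This is exactly $\Psi_\epsilon(x)$.

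The main obstacle is the $\epsilon \to 0$ analysis: one must choose the family of $\omega$-tamed almost complex structures and the Hamiltonian perturbation of $\Sigma_\beta$ carefully enough that (i) the relevant triangles remain transversely cut out for every small $\epsilon$, (ii) no extra rigid triangles exist outside a small neighborhood of the bigon $B$, and (iii) the correspondence between bridge intersection points before and after the isotopy via $\phi_{1/2-\epsilon,1/2+\epsilon}$ is compatible with the perturbation data. A standard argument choosing $J$ to be pulled back from a suitably stretched structure on $\CC$ near $\pi_A^{-1}(B)$, together with the fact that the domain $B$ can be made arbitrarily thin, handles all three points; this is the technical heart of the proof.
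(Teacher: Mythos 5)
Your proposal follows essentially the same route as the paper's proof: you identify $\Psi_{\epsilon}$ with the chain-level product $\mu_2(\cdot,\tilde{c}_{\alpha_{1/2-\epsilon},\alpha_{1/2+\epsilon}})$, use the tautological correspondence to argue that for small $\epsilon$ the contributing triangles are local with $\pi_A$-shadow the triangle $T\subset B$ with corners $c$, $d$, $\phi_{1/2-\epsilon,1/2+\epsilon}(d)$, read off the power of $U$ from whether $T$ contains the origin (type-I versus type-II), and conclude by factoring the axis-moving map $\mu_2(\cdot,c_{\alpha,\alpha'})$ through the diagram \eqref{eq:CommH}; the chain-map assertion, which the paper leaves implicit, you correctly dispose of by noting that $\tilde{c}_{\alpha_{1/2-\epsilon},\alpha_{1/2+\epsilon}}$ is a degree-zero cycle. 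One step of your locality argument is not correct as stated: the energy of a rigid triangle with inputs $x$ and $\tilde{c}_{\alpha_{1/2-\epsilon},\alpha_{1/2+\epsilon}}$ is an action difference depending on the output $y$, and it is not bounded by the area of $B$ when $y\neq\Phi_{\epsilon}(x)$, so smallness of $B$ alone does not exclude rigid triangles asymptotic to distant generators. The standard repair --- and what the paper's assertion that ``all contributing triangles are local'' implicitly rests on --- is Gromov compactness as $\epsilon\to 0$: since $\tilde{c}_{\alpha_{1/2-\epsilon},\alpha_{1/2+\epsilon}}$ has degree zero, a sequence of rigid non-local triangles would limit to a nonconstant index-zero strip from $x$ to $y$ for the pair $(\Sigma_{\alpha},\Sigma_{\beta})$, which does not exist for generic $J$; with that substitution your argument coincides with the paper's.
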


\begin{proof}
First note that the generator $\tilde{c}_{\alpha_{1/2-\epsilon},\alpha_{1/2+\epsilon}}$ consists of the intersection point $c$ between $l_{1/2-\epsilon}$ and $l_{1/2+\epsilon}$ together with, for each $j \neq i$, the lowest-degree intersection point between $a_i$ and a small Hamiltonian translate of $a_i$, which is a mutual endpoint of the two arcs. When $\epsilon>0$ is sufficiently small, we have that
\[
\mu_2(x,\tilde{c}_{\alpha_{1/2-\epsilon},\alpha_{1/2+\epsilon}})|_{U=1}=x
\] and all the Floer triangles contributing to the product are local.
In fact, every such triangle lies in $\Sym^n(\CC) \cap \Hilb^{n,hor}(A_{\tau}^*)$ and is the map induced by taking the product of a local holomorphic triangle with one corner at $c$ with constant maps projecting to intersections between the undercrossing arcs $a_j$ such that $i\neq j$ and the bridges $b_i$.
Moreover, any local holomorphic triangle near $c$ is either a constant map to $c$, or admits a map of degree $1$ onto its $\pi_A$-image, which is contained in the bigon $B$.
It follows that $\mu_2(x,\tilde{c}_{\alpha_{1/2-\epsilon},\alpha_{1/2+\epsilon}})=x$ if the projection $\pi_A(x)$ contains a type-I intersection point between $l_{1/2-\epsilon}$ and some $b_i$ and 
$\mu_2(x,\tilde{c}_{\alpha_{1/2-\epsilon},\alpha_{1/2+\epsilon}})=Ux$ otherwise.
By setting $U=0$, we have shown that $\Psi_{\epsilon}|_{U=0}$ coincides with the axis-moving map.
\end{proof}

\subsection{The mapping cone between resolutions of a strongly invertible knot} \label{subsec:mapping}

Let $K$ be a strongly invertible knot together with a preferred choice of half-axis. We may choose a strongly invertible bridge diagram which is \emph{intravergent} in the sense of Section~\ref{sec:symmetry}; that is, the symmetry on $K$ is given by rotation in the plane of the page, which preserves the diagram. Such a bridge diagram necessarily has an odd number of bridges, say $2n+1$. Note that this bridge diagram is not annular and, unlike every other bridge diagram in this paper, contains a bridge crossing the origin. We adopt the convention that the bridge crossing the origin fixed by the action is always $b_0$; meanwhile for $1 \leq i \leq n$ the bridge $b_i$ is interchanged by the symmetry with $b_{2n+1-i}$ as they are in the case of periodic knots, and similarly for the arcs $a_i$. An example of such an intravergent bridge diagram for the right-handed trefoil, matching the diagram of Figure~\ref{fig:strongly_invertible_one}, is shown in Figure~\ref{fig:stronginv}. Recall that taking the zero and one resolutions of the central crossing results in two periodic annular links, for which we may choose periodic bridge diagrams by separating $b_0$ into two bridges as in Figure~\ref{fig:periodicresolutions}. Taking the quotient of the resolutions by the action as in  Section~\ref{sec:symmetry} returns annular bridge diagrams for the two annularizations $\overline{K}_0$ and $\overline{K}_1$ associated to the diagram introduced in the same section, which differ by a slide of the undercrossing arc in the image of $a_0$ over the origin. We will sometimes refer to $\overline{K}_0$ and $\overline{K}_1$ as the \emph{quotient annular resolutions}. Figures~\ref{fig:zero} and \ref{fig:one} show these two bridge diagrams in the case of the trefoil, matching the diagrams of Figure~\ref{fig:strongly_invertible_two}.

\begin{figure}[ht]
\fontsize{15pt}{12pt}
\scalebox{.5}{
\begingroup%
  \makeatletter%
  \providecommand\color[2][]{%
    \errmessage{(Inkscape) Color is used for the text in Inkscape, but the package 'color.sty' is not loaded}%
    \renewcommand\color[2][]{}%
  }%
  \providecommand\transparent[1]{%
    \errmessage{(Inkscape) Transparency is used (non-zero) for the text in Inkscape, but the package 'transparent.sty' is not loaded}%
    \renewcommand\transparent[1]{}%
  }%
  \providecommand\rotatebox[2]{#2}%
  \newcommand*\fsize{\dimexpr\f@size pt\relax}%
  \newcommand*\lineheight[1]{\fontsize{\fsize}{#1\fsize}\selectfont}%
  \ifx\svgwidth\undefined%
    \setlength{\unitlength}{431.83946228bp}%
    \ifx\svgscale\undefined%
      \relax%
    \else%
      \setlength{\unitlength}{\unitlength * \real{\svgscale}}%
    \fi%
  \else%
    \setlength{\unitlength}{\svgwidth}%
  \fi%
  \global\let\svgwidth\undefined%
  \global\let\svgscale\undefined%
  \makeatother%
  \begin{picture}(1,0.52602225)%
    \lineheight{1}%
    \setlength\tabcolsep{0pt}%
    \put(0,0){\includegraphics[width=\unitlength,page=1]{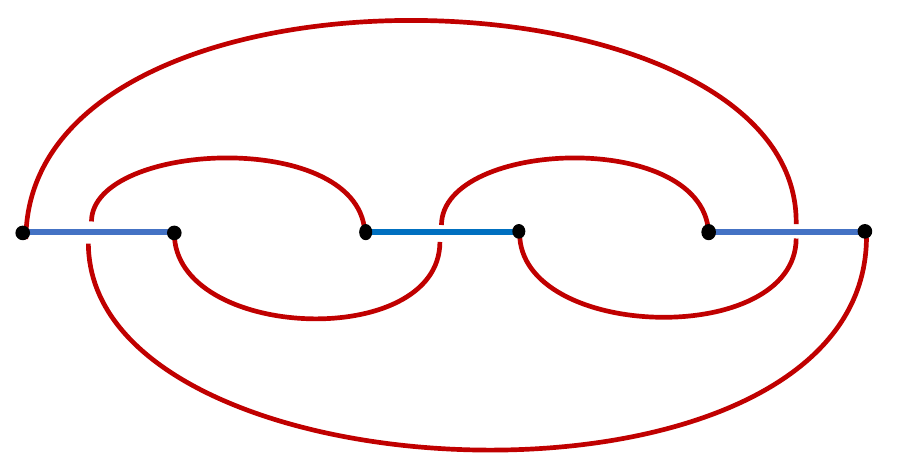}}%
    \put(0.51127238,0.23424215){\color[rgb]{0,0,0}\makebox(0,0)[lt]{\lineheight{1.25}\smash{\begin{tabular}[t]{l}$b_0$\end{tabular}}}}%
    \put(0.4863002,0.33150217){\color[rgb]{0,0,0}\makebox(0,0)[lt]{\lineheight{1.25}\smash{\begin{tabular}[t]{l}$a_0$\end{tabular}}}}%
    \put(0.92002737,0.28287215){\color[rgb]{0,0,0}\makebox(0,0)[lt]{\lineheight{1.25}\smash{\begin{tabular}[t]{l}$b_1$\end{tabular}}}}%
    \put(0.82276736,0.05680828){\color[rgb]{0,0,0}\makebox(0,0)[lt]{\lineheight{1.25}\smash{\begin{tabular}[t]{l}$a_1$\end{tabular}}}}%
  \end{picture}%
\endgroup%
}
\caption{An intravergent bridge diagram for a strongly invertible knot.}
\label{fig:stronginv}
\end{figure}

\begin{figure}[ht]

\scalebox{.5}{
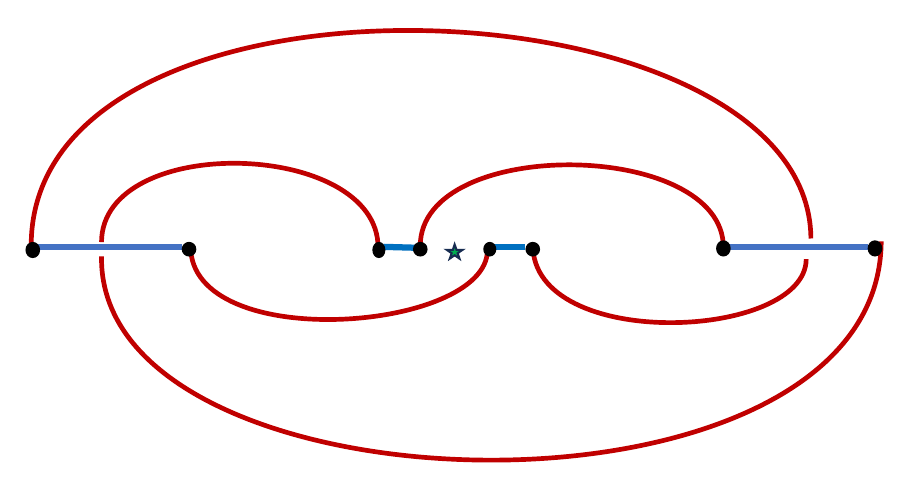
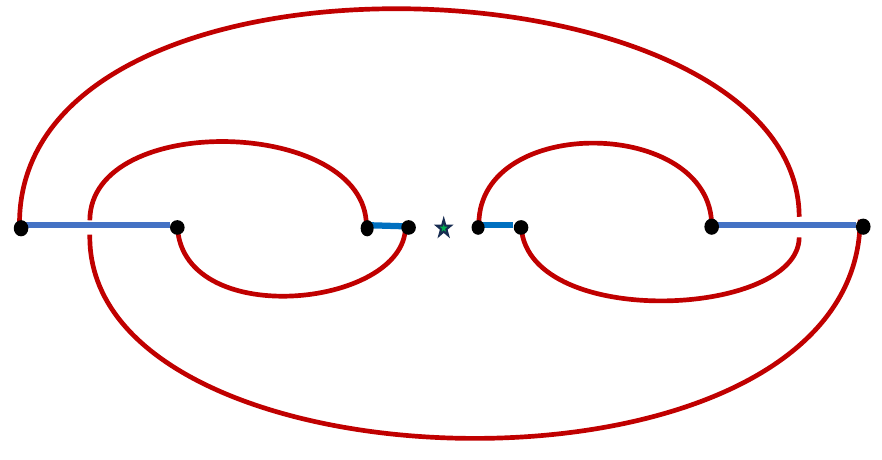}
\caption{Bridge diagrams for the result of resolving the central crossing of the diagram of Figure~\ref{fig:stronginv}. The zero-resolution appears on the left and the one-resolution appears on the right.}
\label{fig:periodicresolutions}
\end{figure}

\begin{figure}[ht]
\fontsize{15pt}{12pt}
\scalebox{.5}{
\begingroup%
  \makeatletter%
  \providecommand\color[2][]{%
    \errmessage{(Inkscape) Color is used for the text in Inkscape, but the package 'color.sty' is not loaded}%
    \renewcommand\color[2][]{}%
  }%
  \providecommand\transparent[1]{%
    \errmessage{(Inkscape) Transparency is used (non-zero) for the text in Inkscape, but the package 'transparent.sty' is not loaded}%
    \renewcommand\transparent[1]{}%
  }%
  \providecommand\rotatebox[2]{#2}%
  \newcommand*\fsize{\dimexpr\f@size pt\relax}%
  \newcommand*\lineheight[1]{\fontsize{\fsize}{#1\fsize}\selectfont}%
  \ifx\svgwidth\undefined%
    \setlength{\unitlength}{666.22077942bp}%
    \ifx\svgscale\undefined%
      \relax%
    \else%
      \setlength{\unitlength}{\unitlength * \real{\svgscale}}%
    \fi%
  \else%
    \setlength{\unitlength}{\svgwidth}%
  \fi%
  \global\let\svgwidth\undefined%
  \global\let\svgscale\undefined%
  \makeatother%
  \begin{picture}(1,0.31927708)%
    \lineheight{1}%
    \setlength\tabcolsep{0pt}%
    \put(0,0){\includegraphics[width=\unitlength,page=1]{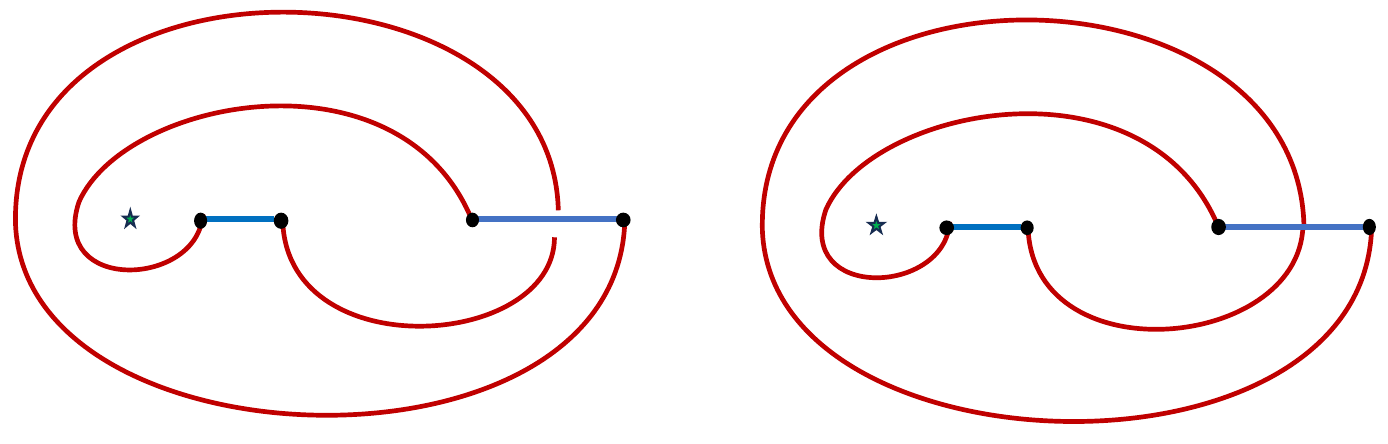}}%
    \put(0.66686748,0.16746985){\color[rgb]{0,0,0}\makebox(0,0)[lt]{\lineheight{1.25}\smash{\begin{tabular}[t]{l}$\omega$\end{tabular}}}}%
    \put(0.74698792,0.15060239){\color[rgb]{0,0,0}\makebox(0,0)[lt]{\lineheight{1.25}\smash{\begin{tabular}[t]{l}$\eta$\end{tabular}}}}%
    \put(0.86144574,0.13554214){\color[rgb]{0,0,0}\makebox(0,0)[lt]{\lineheight{1.25}\smash{\begin{tabular}[t]{l}$\gamma$\end{tabular}}}}%
    \put(0.92168672,0.1638554){\color[rgb]{0,0,0}\makebox(0,0)[lt]{\lineheight{1.25}\smash{\begin{tabular}[t]{l}$\delta$\end{tabular}}}}%
    \put(0.96084333,0.1379518){\color[rgb]{0,0,0}\makebox(0,0)[lt]{\lineheight{1.25}\smash{\begin{tabular}[t]{l}$\epsilon$\end{tabular}}}}%
  \end{picture}%
\endgroup%
}
\caption{The annular link diagram arising from taking the zero-resolution of central crossing of the bridge diagram of Figure~\ref{fig:stronginv} and then taking the quotient of the resulting periodic link. The green star as usual represents the axis of the rotational symmetry. On the right-hand side, intersection points between the bridges and undercrossing arcs are labelled for later use in the examples of Section~\ref{subsec:examples}.}
\label{fig:zero}
\end{figure}

\begin{figure}[ht]
\fontsize{15pt}{12pt}
\scalebox{.5}{
\begingroup%
  \makeatletter%
  \providecommand\color[2][]{%
    \errmessage{(Inkscape) Color is used for the text in Inkscape, but the package 'color.sty' is not loaded}%
    \renewcommand\color[2][]{}%
  }%
  \providecommand\transparent[1]{%
    \errmessage{(Inkscape) Transparency is used (non-zero) for the text in Inkscape, but the package 'transparent.sty' is not loaded}%
    \renewcommand\transparent[1]{}%
  }%
  \providecommand\rotatebox[2]{#2}%
  \newcommand*\fsize{\dimexpr\f@size pt\relax}%
  \newcommand*\lineheight[1]{\fontsize{\fsize}{#1\fsize}\selectfont}%
  \ifx\svgwidth\undefined%
    \setlength{\unitlength}{644.54850769bp}%
    \ifx\svgscale\undefined%
      \relax%
    \else%
      \setlength{\unitlength}{\unitlength * \real{\svgscale}}%
    \fi%
  \else%
    \setlength{\unitlength}{\svgwidth}%
  \fi%
  \global\let\svgwidth\undefined%
  \global\let\svgscale\undefined%
  \makeatother%
  \begin{picture}(1,0.32129516)%
    \lineheight{1}%
    \setlength\tabcolsep{0pt}%
    \put(0,0){\includegraphics[width=\unitlength,page=1]{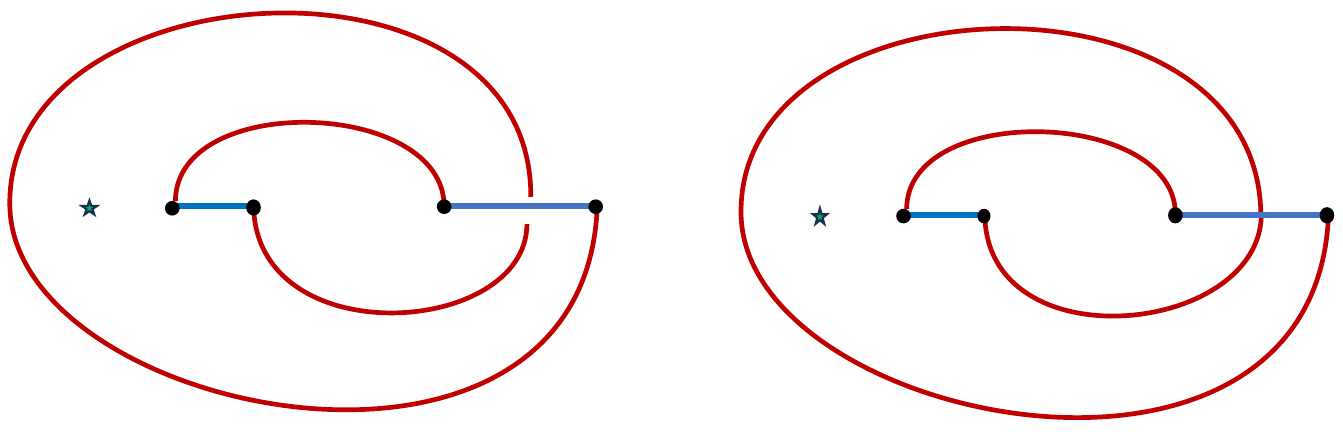}}%
    \put(0.65504361,0.13947698){\color[rgb]{0,0,0}\makebox(0,0)[lt]{\lineheight{1.25}\smash{\begin{tabular}[t]{l}$\omega'$\end{tabular}}}}%
    \put(0.72478201,0.16874223){\color[rgb]{0,0,0}\makebox(0,0)[lt]{\lineheight{1.25}\smash{\begin{tabular}[t]{l}$\eta'$\end{tabular}}}}%
    \put(0.8592777,0.14134497){\color[rgb]{0,0,0}\makebox(0,0)[lt]{\lineheight{1.25}\smash{\begin{tabular}[t]{l}$\gamma'$\end{tabular}}}}%
    \put(0.91843081,0.16936491){\color[rgb]{0,0,0}\makebox(0,0)[lt]{\lineheight{1.25}\smash{\begin{tabular}[t]{l}$\delta'$\end{tabular}}}}%
    \put(0.97198001,0.14134499){\color[rgb]{0,0,0}\makebox(0,0)[lt]{\lineheight{1.25}\smash{\begin{tabular}[t]{l}$\epsilon'$\end{tabular}}}}%
  \end{picture}%
\endgroup%
}
\caption{The annular link diagram arising from taking the one-resolution of the central crossing of the bridge diagram of Figure~\ref{fig:stronginv} and then taking the quotient of the resulting periodic link. The green star as usual represents the axis of the rotational symmetry. On the right-hand side, intersection points between the bridges and undercrossing arcs are labelled for later use in the examples of Section~\ref{subsec:examples}.}
\label{fig:one}
\end{figure}

For $i=0,1$, let $\overline{\Sigma}_{\alpha,i}$ and $\overline{\Sigma}_{\beta,i}$ be the product Lagrangians corresponding to these quotient annular bridge diagrams such that 
$\AKh_{symp}(\overline{K}_i)=HF_{ann}(\Hilb^{n+1,hor}(A^*_{\tau}),\overline{\Sigma}_{\alpha,i},\overline{\Sigma}_{\beta,i})$. Here $\tau$ may be taken to be, for example, the configuration $[1/4, 1/2, 1, 2, \dots, 2n]$. In particular, the annular symplectic Khovanov homologies of $\overline{K}_0$ and $\overline{K}_1$ are computed in the same symplectic manifold; only the placement of $\Sigma_{\alpha,i}$ differs. Moreover, after choosing appropriate Hamiltonian perturbations of the Lagrangians, we may arrange $\overline{\Sigma}_{\alpha,i}$ and $\overline{\Sigma}_{\beta,i}$ such that
\begin{align}\label{eq:intersectionequal}
\overline{\Sigma}_{\alpha,0} \pitchfork \overline{\Sigma}_{\beta,0} = \overline{\Sigma}_{\alpha,1} \pitchfork \overline{\Sigma}_{\beta,1}
\end{align}
so that there is a bijective correspondence between the generators in $CF_{ann}(\overline{\Sigma}_{\alpha,0},\overline{\Sigma}_{\beta,0})$ and those in $CF_{ann}(\overline{\Sigma}_{\alpha,1},\overline{\Sigma}_{\beta,1})$.
Generators in $CF_{ann}(\overline{\Sigma}_{\alpha,i},\overline{\Sigma}_{\beta,i})$ are distinguished by whether they contain the intersection point which occurs at the lefthand endpoint of the leftmost bridge, nearest to the origin. (This point is labelled $\omega$ in Figure~\ref{fig:zero} and $\omega'$ in Figure~\ref{fig:one}.) We let the bridge and the undercrossing arc which intersect at this point be $\overline{b}_0$ and $\overline{a}_0$ respectively, and we call a generator containing this intersection point a \emph{type-f generator}.

\begin{lemma}\label{l:subquotient}
In the chain complex $CF_{ann}(\overline{\Sigma}_{\alpha,0},\overline{\Sigma}_{\beta,0})$, the generators of type-f form a subcomplex. In the chain complex $CF_{ann}(\overline{\Sigma}_{\alpha,1},\overline{\Sigma}_{\beta,1})$, the generators which are not of type-f form a subcomplex.
\end{lemma}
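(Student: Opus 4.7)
Proof plan: I will analyze the shadows of pseudoholomorphic strips contributing to the annular differential via the tautological correspondence (Remark~\ref{r:tauto}). Any strip $u \in \mathcal{M}(x_0, x_1; J)$ lifts to a map $v \circ \pi_\Sigma \co \Sigma \to A_\tau^* \times (\mathbb{R}\times[0,1])$, whose planar shadow $D = \pi_A \circ v(\Sigma)$ is a 2-chain in $\mathbb{C}$ with non-negative local multiplicities, boundary running along the bridges and undercrossing arcs, and corners at the planar points of $\pi_A(x_0)$ (on the $-\infty$ side) and $\pi_A(x_1)$ (on the $+\infty$ side). Because we are computing $CF_{ann}$ (equivalently, setting $U=0$), such a strip contributes only when $D$ has multiplicity zero at the origin, and positivity of intersections with $D_o$ is guaranteed by our choice of almost complex structure.

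The distinction between type-f and non-type-f generators reduces to whether $\omega$ appears as a corner of $D$ on each asymptotic side. The geometric crux of the argument is that near $\omega$, the bridge $\overline{b}_0$ and the undercrossing arc $\overline{a}_0$ (respectively $\overline{a}_0'$) divide a small disk neighborhood of $\omega$ into two wedge-shaped regions, and the isotopy taking $\overline{a}_0$ to $\overline{a}_0'$ (across the origin) swaps which of these wedges contains a germ of the origin. In $\overline{K}_0$, the small wedge bounded locally between the two arcs at $\omega$ lies away from the origin, while in $\overline{K}_1$ the analogous wedge contains the origin (or, dually, the complementary wedge does, depending on the handedness of the wrap).

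I would then use the Floer corner conventions to show that the local multiplicity of $D$ at $\omega$ is concentrated in a specific wedge determined by which asymptote ($-\infty$ or $+\infty$) the corner sits on. Given our cohomological convention $\partial x_1 = \sum \#\mathcal{M}(x_0,x_1;J) U^{u \cdot D_o} x_0$ with $|x_0| = |x_1|+1$, a corner at $\omega$ on the $x_1$-side and no corner on the $x_0$-side forces $D$ locally to occupy the wedge with the "output" orientation; combined with the fact that $\omega$ must be a genuine endpoint of the $D$-boundary (rather than a turning point, since no other arcs pass through $\omega$), positivity then forces $D$ also to have positive multiplicity in the complementary wedge. In $\overline{K}_0$, this complementary wedge is the one containing the origin, giving a contradiction with the $U=0$ condition and ruling out any strip from non-type-f $x_0$ to type-f $x_1$; hence $\partial$ sends type-f to type-f, and type-f generators form a subcomplex. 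In $\overline{K}_1$ the wedges are swapped by the isotopy of $\overline{a}_0'$ across the origin, so by the same analysis it is the strips from type-f $x_0$ to non-type-f $x_1$ that are forbidden, giving the dual statement that non-type-f generators form a subcomplex.

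The main obstacle will be pinning down which wedge is forced by the corner conventions once the Lagrangians $\overline{\Sigma}_{\overline{a}_0}$ and $\overline{\Sigma}_{\overline{b}_0}$ have been Hamiltonian-perturbed to resolve their circle of intersections above $\omega$ into transverse points. A clean way to handle this is to pass to the alternate construction of Lemma~\ref{l:AKHvariant} in which $D_o$ has been deleted and the symplectic form $\omega_{\cY_o}$ is conical near the divisor, simplifying the local model; the perturbation can then be chosen so that the local corner types are read off directly from the projection to $\mathbb{C}$. Alternatively, since the relevant bridge diagrams can be chosen so that the Heegaard-Floer-style shadows are bigons and rectangles in the Sarkar--Wang sense, one can verify the claim combinatorially domain-by-domain, which is in any event the route needed for the explicit computations in Section~\ref{subsec:examples}.
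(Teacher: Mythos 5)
Your overall strategy is the paper's: pass to the shadow via the tautological correspondence of Remark~\ref{r:tauto}, analyze the behavior at the distinguished intersection point $\omega$, and use positivity of intersections with $D_o$ to contradict the $U=0$ truncation. However, the step carrying the entire content of Lemma~\ref{l:subquotient} is the one you both mis-state and explicitly defer. You claim that a corner at $\omega$ on the $x_1$-side forces the shadow to occupy one wedge and that positivity then forces positive multiplicity in the complementary wedge as well. Taken literally, this would exclude every strip with a one-sided corner at $\omega$, in both diagrams: in each of $\overline{K}_0$ and $\overline{K}_1$ one of the two local regions at $\omega$ contains the origin, so your claim would make the type-f generators a direct summand of both complexes. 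That is false --- in the one-resolution complex the component of the differential from the type-f generators to the non-type-f generators is allowed and is generically nonzero (see Lemma~\ref{l:axis-move} and the trefoil computation in Section~\ref{subsec:examples}). The correct mechanism is directional: since $\omega$ is a common endpoint of $\overline{a}_0$ and $\overline{b}_0$, there are only two local regions there, and which single region the shadow must sweep is determined by whether the corner sits at the input or the output asymptote, equivalently by whether a subarc of $\overline{b}_0$, the point $\omega$, and a subarc of $\overline{a}_0$ appear on that region's boundary in clockwise or counterclockwise order; the open mapping theorem applied to $\pi_A\circ v$ then shows the shadow covers that whole complementary region. Only for the forbidden direction (differentials from type-f to non-type-f generators in the zero-resolution, and from non-type-f to type-f generators in the one-resolution) is the swept region the one containing the origin, giving $u\cdot D_o>0$ and the contradiction. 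Since you explicitly leave "pinning down which wedge is forced" as an obstacle, the proposal has a genuine gap exactly where the asymmetry between $\overline{K}_0$ and $\overline{K}_1$ enters.

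Two smaller corrections. There is no circle of intersections above $\omega$ to perturb: $\omega$ is a point of the configuration $\tau$, the shared endpoint of $\overline{a}_0$ and $\overline{b}_0$, so the corresponding matching spheres already meet transversely in the single critical point of $\pi_A$ over $\omega$; the doubling of generators under Hamiltonian perturbation occurs only over interior intersections of bridges with undercrossing arcs, so the "corner convention" worry you raise does not arise in the form you describe. Also, the Sarkar--Wang fallback is neither needed nor generally available: niceness is a property of particular Heegaard diagrams, whereas the lemma must hold for every intravergent bridge diagram, whose shadows need not be bigons or rectangles; and replacing $\omega_{\cY}$ by the deleted-divisor form of Lemma~\ref{l:AKHvariant} does not by itself resolve the directional question either.
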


\begin{proof} We first observe that, because the central crossing of an intravergent bridge diagram consists in all cases of a horizontal bridge overcrossing a vertical undercrossing arc, the local picture of the two resolutions and their quotients is always the same. In particular, for either quotient bridge diagram for a resolution of the central crossing of an intravergent bridge diagram, consider the connected components of the complement of the union of the bridges and undercrossing arcs in $\CC$. For the zero resolution, the component whose boundary contains, running clockwise, a subarc of $\overline{b}_0$, the intersection point between $\overline{a}_0$ and $\overline{b}_0$, and a subarc of $\overline{a}_0$ contains the origin, as in Figure~\ref{fig:zero}. For the one resolution, the component
which contains, running counterclockwise, a a subarc of $\overline{b}_0$, the intersection point between $\overline{a}_0$ and $\overline{b}_0$, and a subarc of $\overline{a}_0$ contains the origin, as in Figure~\ref{fig:one}.

Suppose for the sake of contradiction that $u$ is a Floer strip from a type-f generator to a non-type-f generator in $CF_{ann}(\overline{\Sigma}_{\alpha,0},\overline{\Sigma}_{\beta,0})$. Let $\pi_{\Sigma} \co \Sigma \to S$ and $v \co\Sigma \to A_{\tau}$ be the pair of maps obtained by applying the tautological correspondence of Remark~\ref{r:tauto} to $u$.
By the boundary conditions of $v$ and the open mapping theorem, the image of $\pi_A \circ v$ must intersect the origin, implying that $u$ intersects the divisor $D_o$. As we are working in the annular theory, this is a contradiction. Therefore, there is no such strip $u$ and type-f generators form a subcomplex. The argument for the non-existence of differentials from non-type-f generators to type-f generators in $CF_{ann}(\overline{\Sigma}_{\alpha,1},\overline{\Sigma}_{\beta,1})$ is similar, and uses the fact that a strip incoming to a type-f generator must necessarily intersect $D_o$ in this case.
\end{proof}

Let $(C_0, \partial_{C_0})$ be the subcomplex in $CF_{ann}(\overline{\Sigma}_{\alpha,0},\overline{\Sigma}_{\beta,0})$ generated by the type-f generators. We denote the quotient complex $CF_{ann}(\overline{\Sigma}_{\alpha,0},\overline{\Sigma}_{\beta,0})/C_0$  by $(N_0, \partial_{N_0})$, and treat it as being generated by the the non-type-f generators. Then $CF_{ann}(\overline{\Sigma}_{\alpha,0},\overline{\Sigma}_{\beta,0}) \simeq C_0 \oplus N_0$ with differential given by
\[\left( \begin{matrix} \partial_{C_0} & \partial_{N_0}^{C_0} \\ 0 & \partial_{N_0}\end{matrix}\right)\]
where $\partial_{N_0}^{C_0}$ consists of those terms of the differential which connect a generator of in $N_0$ to one in $C_0$. We represent this schematically as the total complex of
\[ N_0 \xrightarrow{\partial_{N_0}^{C_0}} C_0.\]
Similarly, we denote the subcomplex in $CF_{ann}(\overline{\Sigma}_{\alpha,1},\overline{\Sigma}_{\beta,1})$ generated by non-type-f generators by $N_1$
and the  corresponding quotient complex by $C_1$, generated by the type-f generators, so that $CF_{ann}(\overline{\Sigma}_{\alpha,1},\overline{\Sigma}_{\beta,1}) \simeq C_1 \oplus N_1$ with differential
\[\left( \begin{matrix} \partial_{C_1} & 0 \\ \partial_{N_1}^{C_1}  & \partial_{N_1}\end{matrix}\right)\]
represented schematically as the total complex of
\[ N_1 \xleftarrow{\partial^{N_1}_{C_1}} C_1.\]
Note that the generators of $C_0$ and $C_1$ are in bijection, and the generators of $N_0$ and $N_1$ are in bijection. Indeed, let $\overline{a}_{0,0}$ be the undercrossing arc that meets the center intersection point in the bridge diagram we have constructed for $\overline{K}_0$, and $\overline{a}_{0,1}$ be its counterpart on the diagram for $\overline{K}_1$. 


\begin{lemma}\label{l:axis-move} When the arcs $\overline{a}_{0,0}$ and $\overline{a}_{0,1}$ differ by a sufficiently small isotopy, there are canonical isomorphisms $C_0 \simeq C_1$, $N_0 \simeq N_1$ as chain complexes.
Moreover,  the mapping cone of the axis-moving map $Cone (CF_{ann}(\overline{\Sigma}_{\alpha,0},\overline{\Sigma}_{\beta,0}) \to CF_{ann}(\overline{\Sigma}_{\alpha,1},\overline{\Sigma}_{\beta,1}))$ has total complex of the form
\[
\xymatrix{ 
C_0 & C_1[-1]  \ar[d]_{\partial_{C_1}^{N_1} }\\
N_0 \ar[r]_{\id_N} \ar[u]_{\partial_{N_0}^{C_0}} & N_1[-1].
}
\]
where $[-1]$ denotes a upward grading shift by one and $\id_N$ is the map induced by the canonical identification of $N_0$ and $N_1$. In other words the axis-moving map is the identity on generators in $N_0$ and zero on generators in $C_0$, and the total differential on the resulting mapping cone $C_0\oplus N_0\oplus C_1[-1] \oplus N_1[-1]$ is
\[ \left( \begin{matrix} \partial_{C_0} & \partial_{N_0}^{C_0} & 0 & 0 \\ 0 & \partial_{N_0}& 0 & 0 \\ 0 & 0&  \partial_{C_1} & 0 \\ 0 & \id_N & \partial_{N_1}^{C_1}  & \partial_{N_1} \end{matrix}\right).\]

\noindent As a consequence, the complex $S$ represented schematically by
\[
\xymatrix{ 
C_0 & \\
N_0 \ar[r]_{\id_N} \ar[u]_{\partial_{N_0}^{C_0}} & N_1[-1].
}
\]
is a subcomplex of the mapping cone complex, with quotient complex $C_1[-1]$.
\end{lemma}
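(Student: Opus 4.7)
The plan is to use Lemma~\ref{l:continuation} to describe the axis-moving map explicitly on the chain level, and then do mapping-cone bookkeeping. Choosing a smooth isotopy from $\overline{a}_{0,0}$ to $\overline{a}_{0,1}$ and $\epsilon > 0$ sufficiently small, I would invoke Lemma~\ref{l:continuation} to identify the axis-moving map with the chain map $\Psi_\epsilon|_{U=0}$ of \eqref{eq:PSI}, which sends type-I generators $x$ to $\Phi_\epsilon(x)$ and type-II generators to zero. The crux is then to match the type-I/type-II dichotomy with the non-type-f/type-f dichotomy.

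The foliated bigon $B$ has corners at the two endpoints of $\overline{a}_0$, namely $\omega$ (origin-adjacent, by its definition as the leftmost intersection on $\overline{b}_0$) and the other endpoint $P_2$. Because the isotopy passes through the origin, $B$ contains the origin in its interior. Orienting $B$ so that $c = P_2$ and $f = \omega$ under the conventions of Section~\ref{subsec:axis}, the corner $c$ is type-I by definition while the ``triangle'' at $f$ fills all of $B$ and contains the origin, so $f = \omega$ is type-II. For a sufficiently small isotopy, every other intersection point of $l_{1/2-\epsilon}$ with the bridges lies on the $c$-side of the origin along $l_{1/2-\epsilon}$, yielding a local sub-triangle of $B$ that avoids the origin, and is therefore type-I. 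Hence a generator of $CF_{ann}(\overline{\Sigma}_{\alpha,0},\overline{\Sigma}_{\beta,0})$ is type-II in the sense of Lemma~\ref{l:continuation} if and only if its $\overline{a}_0$-component is $\omega$, i.e. if and only if it is type-f. Furthermore, the bijection $\Phi_\epsilon$ of \eqref{eq:Phi} corresponds to the canonical identification of intersections underlying \eqref{eq:intersectionequal}, so its restriction to non-type-f generators is exactly $\id_N : N_0 \to N_1$. Together these facts yield the chain-level statement: the axis-moving map is zero on $C_0$ and equal to $\id_N$ on $N_0 \to N_1$, which immediately gives the displayed total differential on the cone when expanded against the decomposition $C_0 \oplus N_0 \oplus C_1[-1] \oplus N_1[-1]$, using Lemma~\ref{l:subquotient} for the internal block structure on each side.

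Finally, the subcomplex $S = C_0 \oplus N_0 \oplus N_1[-1]$ is closed under the total differential: Lemma~\ref{l:subquotient} ensures that $\partial_{C_0}$ preserves $C_0$ and $\partial_{N_1}$ preserves $N_1[-1]$; the block $\partial_{N_0}^{C_0}$ lands in $C_0$; and the axis-moving map contributes only an $\id_N$ component into $N_1[-1]$. No block of the total differential maps any summand of $S$ into $C_1[-1]$, so $S$ is indeed a subcomplex, with quotient $C_1[-1]$ inheriting the differential $\partial_{C_1}$. The main technical obstacle is the geometric analysis in the second paragraph, namely verifying that a sufficiently small isotopy can be chosen so that $\omega$ is the unique type-II intersection: this amounts to a controlled transversality argument showing that for small $\epsilon$ the origin lies ``just beyond'' the neighborhood of $f = \omega$ in the foliation of $B$, so that all other intersections on $l_{1/2-\epsilon}$ remain on the $c$-side.
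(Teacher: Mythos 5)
Your proposal is correct and follows essentially the same route as the paper: reduce to Lemma~\ref{l:continuation}, identify the type-II generators with the type-f generators (so that $C_0$ is killed and $N_0$ maps by $\Phi_\epsilon|_{U=0}=\id_N$), and then read off the block form of the cone differential; in fact you supply more geometric detail for the type-II $=$ type-f identification than the paper, which simply asserts it. One small caution: the assignment of the corners of $B$ to $c$ and $f$ is not a free choice of ``orientation'' but is forced by the convention that $c$ is the degree-zero generator $\tilde{c}_{\alpha,\alpha'}$ of $CF(\Sigma_{\alpha'},\Sigma_{\alpha})$, so the claim $f=\omega$ deserves a word of justification (it holds because the local picture at the central crossing of an intravergent diagram is standard, consistent with Example~\ref{e:continuation}).
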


\begin{proof}
When the arcs the arcs $\overline{a}_{0,0}$ and $\overline{a}_{0,1}$ differ by a sufficiently small isotopy, we are in exactly the situation of Lemma \ref{l:continuation} for $\epsilon$ sufficiently small. In this case, in the terminology of Lemma \ref{l:continuation}, $N_0$ is precisely the subspace generated by type-I intersection points, and $C_0$ is the subspace generated by type-II intersection points.

By Lemma \ref{l:continuation}, we know that $\Psi_\epsilon$ (defined by \eqref{eq:PSI}) agrees with the chain map in the first row of \eqref{eq:Comm}.
By Lemma \ref{l:subquotient}, we know that $C_0,C_1,N_0,N_1$ are all chain complexes.
Both together imply that $\Phi_{\epsilon}|_{C_0}:C_0 \to C_1$ and  $\Phi_{\epsilon}|_{N_0}:N_0 \to N_1$ are chain maps (see the definition of $\Phi_{\epsilon}$ in \eqref{eq:Phi}).
Since $\Phi_{\epsilon}|_{C_0}$ and $\Phi_{\epsilon}|_{N_0}$ are bijective, this implies that the maps are isomorphisms of chain complexes.
We denote $\Phi_{\epsilon}|_{N_0}$ by $\id_N$.
Since $\Phi_{\epsilon}|_{U=0}$ is $\id_N$ on $N_0$ and zero on $C_0$, the remaining statements are algebraic rephrasings of this fact together with Lemma \ref{l:subquotient}.

\end{proof}

We next observe that the inclusion map from $C_0$ to $S$ is a chain homotopy equivalence, and thus that $H(S) \simeq H(C_0)$. Ergo, the short exact sequence of cochain complexes 
\begin{align}\label{eq:ses}
0 \to S \to Cone (CF_{ann}(\overline{\Sigma}_{\alpha,0},\overline{\Sigma}_{\beta,0}) \to CF_{ann}(\overline{\Sigma}_{\alpha,1},\overline{\Sigma}_{\beta,1})) \to C_1[-1] \to 0
\end{align}
induces a long exact sequence on cohomology
\begin{align} \label{eq:lescone}
\dots \to H(C_0) \to H(Cone (CF_{ann}(\overline{\Sigma}_{\alpha,0},\overline{\Sigma}_{\beta,0}) \to CF_{ann}(\overline{\Sigma}_{\alpha,1},\overline{\Sigma}_{\beta,1}))) \to H(C_1)[-1] \to \dots
\end{align}
We claim that this long exact sequence splits, as follows.

\begin{proposition} \label{prop:splits}
The connecting map in the long exact sequence on cohomology induced by the short exact sequence of cochain complexes \eqref{eq:ses} is zero. As a consequence, there is an isomorphism
\begin{align}
H( Cone (CF_{ann}(\overline{\Sigma}_{\alpha,0},\overline{\Sigma}_{\beta,0}) \to CF_{ann}(\overline{\Sigma}_{\alpha,1},\overline{\Sigma}_{\beta,1}))) \simeq H(C_0) \oplus H(C_1)[-1]. \label{eq:splits}
\end{align}
\end{proposition}

\begin{proof}
Let $F$ be the composition of the maps \[C_1 \xrightarrow{\partial_{C_1}^{N_1}} N_1 \xrightarrow{\id_N} N_0 \xrightarrow{\partial_{N_0}^{C_0}} C_0.\]
Here again $\id_N$ is the map induced by the canonical identification of $N_0$ and $N_1$. Let $x$ be a cycle in $C_1$. Then we may compute the connecting map in the long exact sequence \eqref{eq:lescone} as follows. In the short exact sequence \eqref{eq:ses}, the element $x$ in $C_1[-1]$ pulls back to the element $x$ in the copy of $C_1$ inside the mapping cone in the second term. Applying the differential inside the mapping cone produces $\partial_{C_1}^{N_1}(x)$. The pullback of this element to the subcomplex $S$ is simply $\partial_{C_1}^{N_1}(x) \subset N_1$. To apply the chain homotopy equivalence between $S$ and $C_0$ to $\partial_{C_1}^{N_1}(x)$, we use $\id_N$ to regard this term as an element of $N_0$ and apply $\partial_{N_0}^{C_0}$. Therefore for $x$ a cycle of $C_1$, we have that the image of $[x]$ under the connecting map is $[F(x)] \in H(S) \simeq H(C_0)$.
Our goal is to show that $[F(x)]=0$. Indeed, when the arcs $\overline{a}_{0,0}$ and $\overline{a}_{0,1}$ differ by a sufficiently small isotopy, we can show the stronger statement that $F=0$.

Let $x_1$ be a cycle in $C_1$ and $y_0$ be a generator of $C_0$.
Let $y_1:=\Phi^{-1}(y_0) \in C_1$, where $\Phi$ is the map defined in \eqref{eq:Phi}.
We claim that the coefficient of $y_0$ in $F(x_1)$ is precisely the coefficient of $y_1$ in $\partial ^2 (x_1)$, where $\partial$ is the differential of $CF(\overline{\Sigma}_{\alpha,1},\overline{\Sigma}_{\beta,1}))|_{U=1}$, and that this coefficient is therefore zero. For since $x_1$ is a cycle in $C_1$, the differential $\partial(x_1)$ is $\partial_{C_1}^{N_1}(x_1) \in N_1$, which we recall is canonically isomorphic to $N_0$. The claim about the coefficients follows. \end{proof}

\begin{remark}\label{r:new}
By Lemma \ref{l:axis-move}, we know that $C_0 \simeq C_1$ as chain complexes, so the left-hand side of \eqref{eq:splits} is a direct sum of two isomorphic homology groups offset by a grading shift. As far as we are aware this result is not known for combinatorial Khovanov homology.
\end{remark}

We end this section with a discussion of the shift in the winding grading with respect to the axis-moving map in Lemma \ref{l:axis-move}.

\begin{lemma}
Let $x_0, x_1 \in CF_{ann}(\overline{\Sigma}_{\alpha,0},\overline{\Sigma}_{\beta,0})$. If $x_0,x_1 \in C_0$ or $x_0,x_1 \in N_0$, then $w(x_0,x_1)=w(\Phi(x_0),\Phi(x_1))$.
If $x_0 \in C_0$ and $x_1 \in N_0$, then $w(x_0,x_1)=w(\Phi(x_0),\Phi(x_1))-2$.
\end{lemma}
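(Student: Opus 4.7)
The plan is to apply the Floer product machinery of Section~\ref{subsec:axis} together with Lemma~\ref{l:continuation} to translate the question into an algebraic shift computation, avoiding any direct geometric comparison of 2-chains in the bigon $B$.

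First, recall from Lemma~\ref{l:continuation} that for $\epsilon > 0$ sufficiently small the chain-level map $\Psi_\epsilon$ of \eqref{eq:PSI}, which is chain homotopic to the Floer product $\mu_2(\cdot, \tilde{c}_{\alpha_{1/2 - \epsilon}, \alpha_{1/2 + \epsilon}})$, sends a type-I generator $x$ to $\Phi(x)$ and a type-II generator $x$ to $U \Phi(x)$. By the identification in the proof of Lemma~\ref{l:axis-move}, the type-I generators are exactly the generators of $N_0$ and the type-II generators are exactly the generators of $C_0$. Setting $\delta(x) = 1$ for $x \in C_0$ and $\delta(x) = 0$ for $x \in N_0$, we have $\Psi_\epsilon(x) = U^{\delta(x)} \Phi(x)$.

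Next, the key point is that $\Psi_\epsilon$ preserves the relative winding grading, where we treat $\mathbb{F}_2[U]$ as winding-graded with $|U| = 2$. Indeed, $\Psi_\epsilon$ is chain homotopic to the Floer product $\mu_2(\cdot, \tilde{c}_{\alpha,\alpha'})$, and the Floer product on $\mathbb{F}_2[U]$-complexes is constructed by counting pseudoholomorphic triangles $v$ weighted by $U^{v \cdot D_o}$ (exactly parallel to the differential in \eqref{eq:differential}), so the $U$-powers precisely absorb the shift in $D_o$-intersection data across the triangle; this is the same computation that shows the differential preserves winding grading. Applying this preservation to our intersection points yields
\[
w(x_0, x_1) \;=\; w(\Psi_\epsilon(x_0), \Psi_\epsilon(x_1)) \;=\; w\bigl(U^{\delta(x_0)} \Phi(x_0), U^{\delta(x_1)} \Phi(x_1)\bigr).
\]
Combining this with the identity $w(U^a y_0, U^b y_1) = w(y_0, y_1) + 2b - 2a$, which follows directly from $U$ having winding grading $+2$, we rearrange to obtain
\[
w(\Phi(x_0), \Phi(x_1)) \;=\; w(x_0, x_1) + 2\delta(x_0) - 2\delta(x_1).
\]
Plugging in the three cases now yields the lemma: if $x_0, x_1 \in C_0$ then $\delta(x_0) = \delta(x_1) = 1$ so the shift vanishes; if $x_0, x_1 \in N_0$ then $\delta(x_0) = \delta(x_1) = 0$ and again the shift vanishes; and if $x_0 \in C_0$, $x_1 \in N_0$ then the shift is $+2$, equivalently $w(x_0, x_1) = w(\Phi(x_0), \Phi(x_1)) - 2$.

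The main obstacle is making precise the fact that the Floer product preserves the relative winding grading on $\mathbb{F}_2[U]$-module complexes. This is essentially a bookkeeping exercise following from the construction of the winding grading and the $U$-variable in Section~\ref{sec:floer} — each contributing triangle contributes $U^{v \cdot D_o}$, and additivity of intersection numbers under gluing a triangle onto a strip shows that the $U$-shifts between $\Psi_\epsilon(x_0), \Psi_\epsilon(x_1)$ and $x_0, x_1$ encode exactly the winding discrepancy — but one must be careful with the direction of the grading convention to arrive at the correct sign of the shift.
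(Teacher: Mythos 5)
Your proof is correct, but it takes a genuinely different route from the paper's. The paper argues directly: it arranges the two collections of undercrossing arcs so that their intersection points with the bridges literally coincide, so that $\pi_{\Delta}(\Phi(x))=\pi_{\Delta}(x)$, and then identifies $w(\Phi(x_0),\Phi(x_1))-w(x_0,x_1)$ with twice the winding number about $d_o$ of the explicit loop $p_{a,1}(y_0,y_1)\#p_{a,0}(y_1,y_0)$, which reduces to the winding number about the origin of a loop in $\bar{a}_{0,0}\cup\bar{a}_{0,1}$ and is computed by hand. You instead route the comparison through the Floer product: since $\mu_2(\cdot,\tilde{c}_{\alpha,\alpha'})=U^{\delta(\cdot)}\Phi(\cdot)$ on the chain level by Lemma~\ref{l:continuation}, the shift is read off from the powers of $U$. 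This is a legitimate and arguably cleaner argument, because the computation of $v\cdot D_o$ for the local triangles has already been done in Lemma~\ref{l:continuation}; but be aware that the step you defer as ``bookkeeping'' is the entire content of the lemma in your approach and must be stated precisely. The clean formulation is the domain-gluing identity
\[
\tfrac{1}{2}\,w(\Phi(x_0),\Phi(x_1))+v_1\cdot d_o \;=\; \tfrac{1}{2}\,w(x_0,x_1)+v_0\cdot d_o,
\]
where $v_i$ is any topological triangle with corners $x_i$, $c$, $\Phi(x_i)$ and the appropriate boundary conditions; this follows from contractibility of $\Sym^n(\CC)$ and of the projected Lagrangians (the same mechanism that makes the relative winding grading well defined), and it is exactly the topological shadow of the statement that $\mu_2$ is a map of $\FF_2[U]$-modules compatible with the winding grading. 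Your committed formula $w(\Phi(x_0),\Phi(x_1))=w(x_0,x_1)+2\delta(x_0)-2\delta(x_1)$ has the correct sign: one can check it against Example~\ref{e:continuation}, where $f\in C_0$, $e_2\in N_0$, $w(f,e_2)=0$ from $\partial e_2=f+d_2$, and $w(f,e_2')=2$ from $\partial e_2'=Uf+d_2'$, consistent with the lemma. I would recommend replacing the appeal to ``chain homotopic to $\mu_2$'' with the chain-level equality from Lemma~\ref{l:continuation}, and writing out the gluing identity above rather than asserting grading preservation of the product in the abstract.
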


\begin{proof}
As in the text preceding Definition \ref{d:winding}, consider the map $\pi_{\Delta}:\cY_{n+1,\overline{\tau}} \to \Sym^{n+1}(\CC)$.
Let $\ul{\bar{a}}_0$ be the collection of undercrossing arcs for the zero-resolution, and $\ul{\bar{a}}_1$ be the collection of undercrossing arcs for the one-resolution.
Let $\ul{\bar{b}}$ be the collection of bridges $b_i$ for both resolutions. We may arrange the undercrossing arcs such that the intersection points between $\ul{\bar{a}}_0 \cap \ul{\bar{b}}$ agree with those in $\ul{\bar{a}}_1 \cap \ul{\bar{b}}$, unlike our usual custom of letting them be offset by a small isotopy. Given this choice, we notice that  for any generator $x \in CF_{ann}(\overline{\Sigma}_{\alpha,0},\overline{\Sigma}_{\beta,0})$, we have
\[\pi_{\Delta}(\Phi(x))=\pi_{\Delta}(x) \in \Sym(\bar{\ul{a}}_0)\cap \Sym(\bar{\ul{b}}) = \Sym(\bar{\ul{a}}_1) \cap \Sym(\bar{\ul{b}}).\]

For $y_0,y_1$ two intersection points in $\Sym(\bar{\ul{a}}_0)\cap \Sym(\bar{\ul{b}}) = \Sym(\bar{\ul{a}}_1) \cap \Sym(\bar{\ul{b}})$, let $p_{a,0}(y_0,y_1)\co [0,1] \to \Sym(\bar{\ul{a}}_0)$ be the unique path up to homotopy relative to the endpoints from $y_0$ to $y_1$, and likewise for $p_{a,1}(y_0,y_1)\co[0,1] \to \Sym(\bar{\ul{a}}_1)$ and $p_{b}(y_0,y_1)\co[0,1] \to \Sym(\bar{\ul{b}})$. 
Let  $x_0,x_1 \in CF_{ann}(\overline{\Sigma}_{\alpha,0},\overline{\Sigma}_{\beta,0})$ be  two generators and $y_i:=\pi_{\Delta}(x_i)$.
It follows easily from definition that the relative winding grading
 $w(x_0,x_1)$ is twice the winding number of the loop 
\[p_{a,0}(y_0,y_1)\#p_{b}(y_1,y_0)\]
about the divisor $d_o$.
Similarly,  $w(\Phi(x_0),\Phi(x_1))$ is twice the winding number of the loop \[p_{a,1}(y_0,y_1)\#p_{b}(y_1,y_0)\] about the divisor $d_o$.
As a result, $w(\Phi(x_0),\Phi(x_1))-w(x_0,x_1)$  is twice the winding number of the loop \[p_{a,1}(y_0,y_1)\#p_{a,0}(y_1,y_0)\] about the divisor $d_o$.

We can write $p_{a,1}(y_0,y_1)(t)$ as the symmetric product of a tuple of paths $(v_0(t),\dots,v_n(t))$ where $v_i(t)$ lies in the $i^{th}$ arc of $\ul{\bar{a}}_1$.
Since $\ul{\bar{a}}_0$ and $\ul{\bar{a}}_1$ only differ by their zeroth undercrossing arc, we can choose $p_{a,0}(y_1,y_0)(t)$ to be of the form $(v_0'(t),v_1(1-t)\dots,v_n(1-t))$.
It is therefore clear that $w(\Phi(x_0),\Phi(x_1))$ is twice the winding number of $v_0 \# v_0'$ about the origin in $\CC$.
The result now follows from a simple calculation.
\end{proof}
We observe that, because the symplectic-axis-moving map is zero on generators of $C_0$, this implies that the axis-moving map itself has a constant grading shift.

\subsection{Localization for strongly invertible knots} \label{subsec:strongly-loc}

In this section we use the tools of Sections~\ref{subsec:axis} and \ref{subsec:mapping} to complete the proof of Theorem~\ref{thm:stronglyinvertible} modulo the discussion of stable normal trivializations in Section~\ref{sec:normal-trivs}. As in the previous section, let $K$ be a strongly invertible knot, and choose an intravergent bridge diagram for $K$ with $2n+1$ bridges as in the example of Figure~\ref{fig:stronginv}. Let $\tau = [-2n,\dots,-1, -1/2,1/2,1,\dots,2n]$ be the associated configuration. Let $(\Hilb^{2n+1, hor}(A_{\tau}), \Sigma_{\alpha}, \Sigma_{\beta})$ be the symplectic manifold and Lagrangians associated to the bridge diagram.  We will be interested in the involution $\iota_{A,H}$ on the Hilbert scheme induced by $\iota_A(u,v,z) = (u,v,-z)$. Note that after possibly averaging the symplectic form this is a symplectic involution which preserves the Lagrangians setwise. Indeed, we may perturb $\Sigma_{\beta}$ compatibly with the projection map to $\CC$  so that $\Sigma_{\alpha} \pitchfork \Sigma_{\beta}$ and so that above any intersection point between the interiors of the bridges and the undercrossing arcs there are two intersection points between the Lagrangian matching spheres in $A_{\tau}$ differing in degree by one, as explained in Section~\ref{subsec:bridge}. Indeed, we may insist that this perturbation be equivariant with respect to the map on the Hilbert scheme $\iota_{A,H}$ induced by $\iota_A$, so that $\Sigma_{\beta}$ is still preserved by $\iota_{A,H}$ after perturbation.

There is a special intersection point between the bridges and undercrossing arcs in an intravergent bridge diagram at the origin, which we denote $\lambda$. Its two lifts in $\pi_A^{-1}(0) \subseteq A_{\tau}$ will be denoted $\lambda_0$ and $\lambda_1$. Note that every generator in $\Sigma_{\alpha} \cap \Sigma_{\beta} \subseteq \Conf^{2n+1}(A_{\tau}) \cap \Hilb^{n, hor}(A_{\tau})$ which is fixed by $\iota_{A,H}$ must contain exactly one of $\lambda_0$ and $\lambda_1$ exactly once, and otherwise consist of pairs of intersection points between the Lagrangian matching spheres in $A_{\tau}$ which are interchanged by the map induced by $\iota_A$. Note further that there are two pseudo-holomorphic (with respect to a generic choice of almost complex structure) strips in $\pi_A^{-1}(0)$ connecting $\lambda_1$ and $\lambda_0$. We call these strips $v_1$ and $v_2$. 

We furthermore form bridge diagrams for the two annular quotients of $K$ as described in Section~\ref{subsec:mapping} and the example of Figures~\ref{fig:zero} and \ref{fig:one}. Let the configuration used in both diagrams be $\overline{\tau}=[1/4,1/2,1,\dots, 2n]$ as previously, so that there is a double branched cover $A_{\tau} \setminus \pi_A^{-1}(B_{1/3}(0)) \rightarrow A_{\overline{\tau}} \setminus \pi_A^{-1}(B_{1/3}(0)).$ Here $B_{1/3}(0)$ denotes a neighborhood of the origin with radius $1/3$. Using the notation of Section~\ref{subsec:mapping}, observe that after a compatible Hamiltonian perturbation there is a bijection between the union of the type-f intersection points for $\overline{K}_0$ in $\overline{\Sigma}_{\alpha,0} \pitchfork \overline{\Sigma}_{\beta,0}$ together with those for $\overline{K}_1$ in $\overline{\Sigma}_{\alpha,1} \pitchfork \overline{\Sigma}_{\beta,1}$, and the fixed intersection points in $\Sigma_{\alpha} \pitchfork \Sigma_{\beta}$ upstairs as follows. Given a type-f intersection for $\overline{K}_0$ consisting of a union of intersection points between the Lagrangian matching spheres in $A_{\overline{\tau}}$, remove the intersection point $\omega$ between $\overline{a}_0$ and $\overline{b}_0$, lift the remaining points along the branched covering map to their symmetric lifts in $A_{\tau}$, and take the union of these symmetric lifts and adjoin $\lambda_0$. For type-f generators for $\overline{K}_1$ we similarly lift, but adjoin $\lambda_1$ instead. In particular this gives an identification of graded modules
\begin{align}CF(\Hilb^{2n+1,hor}(A_{\tau})^{fix}, \Sigma_{\alpha}^{fix}, \Sigma_{\beta}^{fix}) \simeq C_0 \oplus C_1[-1]\label{eq:bijection}\end{align}
where $C_0$ and $C_1$ are the subcomplex and quotient complex of type-f generators in their respective annular cochain groups as in Section~\ref{subsec:mapping}. 

We now begin with the following observation about Floer strips whose images lie in the fixed set of $\iota_{A,H}$ and therefore contribute to the Lagrangian Floer differential on $(\Hilb^{2n+1, hor}(A_{\tau})^{fix}, \Sigma_{\alpha}^{fix}, \Sigma_{\beta}^{fix})$.

\begin{lemma} \label{lemma:fixed} Let $K$ be a strongly invertible knot together with a choice of intravergent bridge diagram for $K$, and $(\Hilb^{2n+1, hor}(A_{\tau}), \Sigma_{\alpha}, \Sigma_{\beta})$ be the associated symplectic manifold and Lagrangians, as above. There is no non-trivial Floer strip passing through the fibre over zero which contributes to the differential of the Lagrangian Floer complex of the fixed point set of the involution $\iota_{A,H}$; more precisely, applying the tautological correspondence to any Floer strip in the differential of the fixed set returns either the union of one of the strips $v_1$ and $v_2$ with a union of constant strips in $A_{\tau}$, or the union of a constant strip at one of $\lambda_0$ or $\lambda_1$ with a strip with image in $A_{\tau}^*$ which is the lift of a strip in $A_{\overline{\tau}}^*$ along the branched covering map away from a neighborhood of the origin.
\end{lemma}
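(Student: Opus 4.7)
The plan is to apply the tautological correspondence of Remark~\ref{r:tauto} to a Floer strip $u$ in the differential of the fixed set, decompose the source into a piece mapping to the $\iota_A$-fixed fibre $\pi_A^{-1}(0)$ and a piece mapping to $A_\tau^*$ that descends to the quotient, and then argue by an index count that exactly one of these two pieces is non-trivial.

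First I identify the fixed set of $\iota_{A,H}$ on $\Hilb^{2n+1,hor}(A_\tau)$. As in Lemma~\ref{l:fixed2}, any invariant length-$(2n+1)$ subscheme decomposes as a union of $n$ $\iota_A$-symmetric pairs in $A_\tau^*$ together with a single point of $\pi_A^{-1}(0)$; the parity is forced because $2n+1$ is odd and a horizontal ideal cannot contain two distinct points in the fibre over $0$. By Corollary~\ref{c:A=A}, the quotient $A_\tau^*/\iota_A$ is biholomorphic to $A_{\bar{\tau}}^*$, yielding a product decomposition
\[
\Hilb^{2n+1,hor}(A_\tau)^{fix} \;\simeq\; \Hilb^{n,hor}(A_{\bar{\tau}}^*) \,\times\, \pi_A^{-1}(0),
\]
under which $\Sigma_\alpha^{fix}$ and $\Sigma_\beta^{fix}$ factor as products of Lagrangians on each factor. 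After an equivariant Hamiltonian perturbation respecting this product structure, the factor Lagrangians in $\pi_A^{-1}(0) \simeq \mathbb{C}^*$ are a pair of Hamiltonian-isotopic copies of the vanishing cycle meeting transversely in the two points $\lambda_0, \lambda_1$.

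Taking an $\iota_{A,H}$-equivariant almost complex structure on $\Hilb^{2n+1,hor}(A_\tau)$ that is of split product form in a neighborhood of the fixed set, any pseudo-holomorphic strip $u$ with image in the fixed set decomposes uniquely as a pair
\[
u = (u_{\mathrm{hor}}, u_{\mathrm{fib}}), \qquad u_{\mathrm{hor}} \co \mathbb{R} \times [0,1] \to \Hilb^{n,hor}(A_{\bar{\tau}}^*), \qquad u_{\mathrm{fib}} \co \mathbb{R} \times [0,1] \to \pi_A^{-1}(0),
\]
of pseudo-holomorphic strips with boundary on the factor Lagrangians. The Fredholm index is additive; under generic data each non-constant factor has index at least one, while constant factors contribute index zero. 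For $u$ to be rigid in the differential (index $1$ after translation) exactly one of $u_{\mathrm{hor}}, u_{\mathrm{fib}}$ is non-constant of index $1$, and the other is constant. This is precisely the dichotomy of the lemma: if $u_{\mathrm{fib}}$ is non-constant, it is a holomorphic bigon in $\mathbb{C}^*$ between the two perturbed vanishing cycles with corners at $\lambda_0$ and $\lambda_1$, and there are exactly two such, namely $v_1$ and $v_2$, while $u_{\mathrm{hor}}$ constant translates, via the tautological correspondence, to a union of constant strips in $A_\tau$. If instead $u_{\mathrm{hor}}$ is non-constant, then $u_{\mathrm{fib}}$ is constant at one of $\lambda_0, \lambda_1$, and applying the tautological correspondence to $u_{\mathrm{hor}}$ and lifting along the branched double cover $A_\tau^* \to A_\tau^*/\iota_A \simeq A_{\bar{\tau}}^*$ (away from a neighborhood of $\pi_A^{-1}(0)$) produces a strip in $A_\tau^*$ of the stated form.

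The main obstacle is to arrange the Hamiltonian perturbations, invariant almost complex structures, and transversality data so that the product decomposition near the fixed set is simultaneously $\iota_{A,H}$-equivariant, of split product form, and sufficiently generic to achieve transversality for the fixed-set moduli spaces. This is standard in the equivariant Lagrangian Floer literature and compatible with the perturbation data used elsewhere in the paper for Seidel--Smith localization, but it does require a careful explicit choice; once such data is fixed, the product decomposition holds and the index argument concludes the proof.
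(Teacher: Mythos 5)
Your key step is the asserted identification $\Hilb^{2n+1,hor}(A_{\tau})^{fix} \simeq \Hilb^{n,hor}(A_{\overline{\tau}}^*) \times \pi_A^{-1}(0)$, and this is where the argument breaks. Unlike the situation of Lemma~\ref{l:fixed2}, the involution $\iota_A$ is \emph{not} free on $A_{\tau}$: it fixes the whole central fibre $\pi_A^{-1}(0)$, and at a point of that fibre $d\iota_A$ has a $(-1)$-eigendirection which is horizontal (since $p$ is even, $p'(0)=0$). Consequently an $\iota_{A,H}$-invariant horizontal subscheme need not be (symmetric pairs in $A_{\tau}^*$) $\cup$ (one reduced point of the central fibre): it can contain a non-reduced component of odd length $\geq 3$ supported at a single point of $\pi_A^{-1}(0)$, e.g.\ locally $(t-bs^2, s^3)$ in coordinates where $s$ is the horizontal $(-1)$-eigendirection and $t$ is tangent to the fibre. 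Such points arise precisely as limits in which a symmetric pair collides with the central fibre, i.e.\ they lie in the closure of your product region inside the fixed locus. So the map you write down is only an open embedding, the fixed set is not globally a product, and a pseudoholomorphic strip with image in the fixed set has no reason a priori to stay in the product region; hence it need not split as a pair $(u_{\mathrm{hor}}, u_{\mathrm{fib}})$, and the split-$J$/index-additivity dichotomy does not get off the ground. Ruling out exactly this behaviour is the actual content of the lemma: the paper's proof applies the tautological correspondence of Remark~\ref{r:tauto} to produce the distinguished section $\overline{S^o}$ of the branched cover mapping to $\pi_A^{-1}(0)$, and then uses a genericity/dimension-count argument for domain-dependent almost complex structures (kept fixed near $\pi_A^{-1}(0)$) to show that the remaining sheets never meet the central fibre, which is the analytic substitute for your product decomposition.

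A secondary issue is that even on the product region the almost complex structures in play are restrictions of structures on $\Hilb^{2n+1,hor}(A_{\tau})$ that agree with the integrable one near $D_{HC}$ and near the central fibre, and these are not of split form; your closing remark treats choosing a split, equivariant, regular $J$ as a routine matter, but such a choice would also have to be compatible with the identification of the fixed-set complex with $C_0 \oplus C_1[-1]$ used later (Theorem~\ref{thm:samediff}), and in any case it cannot repair the global failure of the product structure described above. The parity observation you make ("a horizontal ideal cannot contain two distinct points in the fibre over $0$") is correct but insufficient: it excludes two distinct central points, not a fat point along the horizontal eigendirection, and it is the latter that your decomposition silently discards.
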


\begin{proof} For simplicity, let $S = \RR \times [0,1]$.
Let $u \co S \to \Fix(\iota_{A,H})$ be a Floer strip contributing to the differential. We assume we have already perturbed the Lagrangians to be transverse, and that therefore this strip may be taken to be pseudo-holomorphic with respect to an appropriate family of almost complex structures.

Let $\pi_{\Sigma} \co \Sigma \to S$ and $v \co \Sigma \to A_{\tau}$ be the pair of maps obtained by applying the tautological correspondence of Remark~\ref{r:tauto} to $u$.
For all but finitely many $z \in S$, there is a precisely one $\tilde{z} \in \Sigma$ such that $\pi_{\Sigma}(\tilde{z})=z$ and $v(\tilde{z}) \in \pi_A^{-1}(0)$.
This is because for $u(z)$ to lie in $\Fix(\iota_{A,H})$, we must have $\iota_A(v(\pi_{\Sigma}^{-1}(z)))=v(\pi_{\Sigma}^{-1}(z))$. This means that $v(\pi_{\Sigma}^{-1}(z)) \cap \pi_A^{-1}(0) \neq \emptyset$.
However, since the horizontal divisor is deleted, $v(\pi_{\Sigma}^{-1}(z)) \cap \pi_A^{-1}(0)$ can only be a singleton for any $z \in S$.
For some $z \in S$, there might be more than one lift in $\Sigma$ whose $v$-image lies in $\pi_A^{-1}(0)$. 
However, this occurs only if $z \in u^{-1}(D_{HC} \cap \Hilb^{2n+1,hor}(A_{\tau}))$ which is a finite set.
We denote the map $z \mapsto \tilde{z}$ by $\psi$, which is well-defined away from a finite set in $S$.

Let $S^o$ be the image of $\psi$. Note that the monodromy action on $\Sigma$ as a branched cover of $S$ preserves $S^o$.
Therefore, the closure $\overline{S^o}$ of $S^o$ is a component of $\Sigma$, and is biholomorphic to $S$ under $\pi_{\Sigma}$. We can therefore extend the domain of $\psi$ to be $S$, and $\psi$ is precisely the inverse of $\pi_{\Sigma}|_{\overline{S^o}}$.
By definition, $v|_{\overline{S^o}}$ is a holomorphic map to $\pi_A^{-1}(0)$ with boundary on the vanishing circles.
Therefore, it is either one of the two Floer strips $v_1$ and $v_2$ in $\pi_A^{-1}(0)$, or is a constant map.

We now consider the intersection $v(\Sigma \setminus \overline{S^o}) \cap \pi_A^{-1}(0)$, with the goal of proving that it is empty. For suppose it is nonempty, such that we have some $w \in \Sigma \setminus \overline{S^o}$ with $v(w) \in \pi_A^{-1}(0)$, then since $u$ does not intersect the horizontal divisor, we must have $v(w)=v(\psi(\pi_{\Sigma}(w)))$. There are two cases, distinguished by whether $w$ is a critical point of $\pi_{\Sigma}$ or not. Note that if it is not, there is another $w' \in \Sigma \setminus \overline{S^o}$ such that $v(w)=v(w')=v(\psi(\pi_{\Sigma}(w)))$ because $\iota_A(v(\pi_{\Sigma}^{-1}(z)))=v(\pi_{\Sigma}^{-1}(z))$. We will argue that for a sufficiently generic choice of domain dependent complex structures, neither of these two cases can actually occur.

We keep $\pi_{\Sigma}$ fixed.
For a generic choice of domain dependent family of complex structures which keep the complex structure near $\pi_A^{-1}(0)$ fixed and the corresponding pseudo-holomorphic map $v$, we want to argue that there is no $w \in \crit(\pi_{\Sigma})$ such that $v(w)=v(\psi(\pi_{\Sigma}(w)))$, and furthermore there is no pair of points $w \neq w'$ which are not critical points of $\pi_{\Sigma}$ such that  $v(w)=v(w')=v(\psi(\pi_{\Sigma}(w)))$. Toward this end, observe that since the complex structure near $\pi_A^{-1}(0)$ is fixed, the map $v|_{\overline{S^o}}$ remains the same under generic perturbation of complex structure. If the image of $v|_{\overline{S^o}}$ is a point, then we can choose a generic complex structure such that $v|_{\Sigma \setminus \overline{S^o}}$ avoids that point.
Even if the image of $v|_{\overline{S^o}}$ is not a point, a simple dimension count of the matching condition implies that a generic choice of complex structure is sufficient to avoid both the aforementioned cases.

As a result, we have $v(\Sigma \setminus \overline{S^o}) \cap \pi_A^{-1}(0) = \emptyset$. This implies that either $v|_{\overline{S^o}}$ is one of the two Floer strips $v_1$ and $v_2$ contained in $\pi_A^{-1}(0)$ and $v(\Sigma \setminus \overline{S^o})$ is a union of Lagrangian intersection points, or that $v|_{\overline{S^o}}$ is a constant map at $\lambda_0$ or $\lambda_1$ in $\pi_A^{-1}(0)$ and $v|_{\Sigma \setminus \overline{S^o}}$ is a strip in $A_{\tau}^*$. Indeed, the composition of $v|_{\Sigma \setminus \overline{S^o}}$ with the branched covering map which exists between $A_{\tau}*$ and $A_{\overline{\tau}^*}$ away from a neighborhood of the origin together with $\pi_{\Sigma}|_{\Sigma \setminus \overline{S^o}}$ tautologically corresponds to a Floer strip in $\Hilb^{n+1,hor}(A^*_{\overline{\tau}})$. \end{proof}

This allows us to establish that the differential of the Lagrangian Floer cochain complex in the fixed set of $\iota_{A,H}$ coincides with the cone of the axis-moving map connecting $\overline{K}_0$ and $\overline{K}_1$.

\begin{theorem} \label{thm:samediff}
The Floer cohomology of the fixed point sets $(\Hilb^{2n+1, hor}(A_{\tau})^{fix}, \Sigma_{\alpha}^{fix}, \Sigma_{\beta}^{fix})$ of $\iota_{A,H}$ isomorphic to the homology of the mapping cone of the axis-moving map from the $\AKh_{symp}(\overline{K}_0)$ to $\AKh_{symp}(\overline{K}_1)$.
\end{theorem}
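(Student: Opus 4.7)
The plan is to use the module identification \eqref{eq:bijection} together with the classification of holomorphic strips in Lemma~\ref{lemma:fixed}. The fixed-set cochain complex has underlying module $C_0 \oplus C_1[-1]$; I would decompose its Floer differential into ``internal'' and ``cross'' pieces according to whether a strip preserves or exchanges the two summands. By Lemma~\ref{lemma:fixed}, the internal pieces come from tautological correspondence data consisting of a constant component at $\lambda_0$ or $\lambda_1$ coupled with a strip in $A_\tau^*$ that doubly covers a strip in $A_{\overline{\tau}}^*$, while the cross pieces come from tautological correspondence data consisting of $v_1$ or $v_2$ coupled with constant strips at the remaining intersection points.

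Next, I would identify the internal differential on the summand $C_i$ with the annular Floer differential $\partial_{C_i}$. The branched covering map $A_\tau^* \to A_{\overline{\tau}}^*$ induces, after applying the tautological correspondence, a natural bijection between $\iota_{A,H}$-invariant rigid pseudoholomorphic strips in $\Hilb^{2n+1,hor}(A_\tau)$ with a constant component at $\lambda_i$ and rigid pseudoholomorphic strips in $\Hilb^{n+1,hor}(A_{\overline{\tau}}^*)$ contributing to the differential on the type-$f$ subcomplex $C_i$ (as in Lemma~\ref{l:subquotient}); moreover, positivity of intersection with $D_o$ upstairs matches positivity of intersection with the annular divisor downstairs, so the annular truncation is preserved.

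The key remaining step, and the main obstacle, is to show the cross differential vanishes modulo $2$. Since $0 \notin \tau$, the fibre $\pi_A^{-1}(0) \subset A_\tau$ is a smooth regular fibre biholomorphic to $\CC^*$, and the two vanishing cycles $\Sigma_{b_0} \cap \pi_A^{-1}(0)$ and $\Sigma_{a_0} \cap \pi_A^{-1}(0)$ are essential simple closed curves on this cylinder meeting transversely in exactly the two points $\lambda_0,\lambda_1$. Their Lagrangian Floer cohomology inside $\pi_A^{-1}(0)$ is therefore of rank two, which forces the algebraic count of rigid strips between $\lambda_0$ and $\lambda_1$ in $\pi_A^{-1}(0)$ to vanish modulo~$2$. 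Combined with the observation that any rigid strip of the second type in Lemma~\ref{lemma:fixed} is obtained by coupling $v_1$ or $v_2$ with a unique tuple of constant strips at the other intersection points, this will imply that the contribution of the second type of strip to the fixed-set differential is zero. A priori it is possible that additional bubbling or incidental strips emerge from pairing $v_1$ or $v_2$ with the full tuple upstairs, and the main technical work will be to rule out such phenomena via a generic equivariant almost complex structure on $\Hilb^{2n+1,hor}(A_\tau)$ compatible with the product structure near $\pi_A^{-1}(0)$.

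Combining these three steps, the fixed-set cochain complex is isomorphic to the direct sum $(C_0, \partial_{C_0}) \oplus (C_1[-1], \partial_{C_1})$, whose cohomology is $H(C_0) \oplus H(C_1)[-1]$. By Proposition~\ref{prop:splits}, this agrees with the cohomology of the mapping cone of the axis-moving map $\AKh_{symp}(\overline{K}_0) \to \AKh_{symp}(\overline{K}_1)$, which will complete the proof.
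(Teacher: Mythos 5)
Your proposal follows essentially the same route as the paper's proof: the module identification of the fixed-set complex with $C_0 \oplus C_1[-1]$, the classification of strips via Lemma~\ref{lemma:fixed} into those carrying $v_1$ or $v_2$ (which cancel in pairs mod $2$) and those with a constant component at $\lambda_0$ or $\lambda_1$ (which match the differentials $\partial_{C_0}$, $\partial_{C_1}$ under the branched covering and tautological correspondence), followed by Proposition~\ref{prop:splits}. The ``additional bubbling'' concern you flag as the main technical obstacle is precisely what the genericity argument in Lemma~\ref{lemma:fixed} already rules out, so no further work is needed there.
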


\begin{proof} We recall from Proposition~\ref{prop:splits} that the homology of the cone of the axis-moving map is precisely
$H(C_0)\oplus H(C_1)[-1]$, where 
 $C_0$ is the subcomplex of type-f generators in $CF_{ann}(\Hilb^{n+1,hor}(A_{\overline{\tau}}), \overline{\Sigma}_{\alpha,0}, \overline{\Sigma}_{\beta,0})$ and $C_1$ is the quotient complex of type-f generators in $CF_{ann}(\Hilb^{n+1,hor}(A_{\overline{\tau}}), \overline{\Sigma}_{\alpha,1}, \overline{\Sigma}_{\beta,1})$. We have already seen in \eqref{eq:bijection} that there is an isomorphism of modules 
\begin{align*}CF(\Hilb^{2n+1,hor}(A_{\tau})^{fix}, \Sigma_{\alpha}^{fix}, \Sigma_{\beta}^{fix}) \simeq C_0 \oplus C_1[-1]\end{align*}
It suffices to check that the strips counted in the Floer differential on the left-hand side are exactly those which appear in the differentials $\partial_{C_0}$ or $\partial_{C_1}$ on the right-hand side.

By Lemma~\ref{lemma:fixed}, there are two types of Floer solutions in the fixed set. The first type has a component in the tautological correspondence which maps to one of the two Floer strips $v_1$ and $v_2$ in $\pi_A^{-1}(0)$. These types come in pair and cancel each other. The second type has a component in the tautological correspondence which maps to a constant map at $\lambda_0$ or $\lambda_1$ in $\pi_A^{-1}(0) \subseteq A_{\tau}$. If we forget this component, the remainder of the components in the tautological correspondence determine a Floer solution in $\Hilb^{n, hor}(A_{\overline{\tau}}^*)$, which counts for the differential in $C_0$ or $C_1$ according whether the constant map was at $\lambda_0$ or $\lambda_1$.

In the other direction, suppose $u$ is a Floer solution in $\Hilb^{n,hor}(A_{\overline{\tau}}^*)$ representing a differential from a type-f generator to a type-f generator. Then $u$ which is associated under the tautological correspondence to a constant map at the intersection point $\omega$ or $\omega'$ closest to the origin and a pseudo-holomorphic map $v': \Sigma' \rightarrow A_{\overline{\tau}}^*$ with image missing a small neighborhood of the origin. We may lift $v'$ to $A_{\tau}^*$ and adjoin either a constant map at $\lambda_0$ or a constant map at $\lambda_1$ according to whether we started in $C_0$ or $C_1$ to obtain a differential in the fixed set.

As a result, the Floer cochain complex in the fixed set is isomorphic to $C_0 \oplus C_1[-1]$. The claim follows.\end{proof}

We can now briefly conclude the proof of Theorem~\ref{thm:stronglyinvertible}.

\begin{proof}[Proof of Theorem~\ref{thm:stronglyinvertible}] Let $K$ be strongly invertible, and choose an intravergent bridge diagram for $K$ as in the arguments of this section so that 
\[ Kh_{symp}(K) \simeq HF(\Hilb^{2n+1, hor}(A_{\tau}), \Sigma_{\alpha}, \Sigma_{\beta}).\]
By the arguments of Section~\ref{sec:normal-trivs}, specifically Theorem~\ref{thm: stable normal trivialization}, the triple $(\Hilb^{2n+1, hor}(A_{\tau}), \Sigma_{\alpha}, \Sigma_{\beta})$ together with the involution $\iota_{A,H}$ admits a stable normal trivialization. By Theorem \ref{thm:samediff}, 
\[HF (\Hilb^{2n+1, hor}(A_{\tau})^{fix}, \Sigma_{\alpha}^{fix}, \Sigma_{\beta}^{fix})\]
is isomorphic to the homology of the cone of the axis-moving map from $AKh_{symp}(\overline{K}_0)$ to $AKh_{symp}(\overline{K}_1)$. The claimed localization spectral sequence now follows from Theorem~\ref{thm:localization} and Corollary~\ref{c:localization}. \end{proof}

\subsection{Examples} \label{subsec:examples} In this subsection, we give some example computations of the symplectic axis-moving map and the isomorphism of Theorem~\ref{thm:samediff}. For all bridge diagrams throughout, we will assume that we have perturbed the spheres $\Sigma_{b_i}$ so that an intersection point between the interiors of a bridge $b_i$ and an undercrossing arc $a_j$ lifts to two intersection points in $A_{\tau}$ between the spheres $\Sigma_{a_j}$ and $\Sigma_{b_i}$, differing in degree by one; if the intersection point on the plane has a label $k$ then these intersection points will be $k_0$ and $k_1$. When there is a relevant action we will assume we have carried out the perturbation equivariantly.

\subsubsection{The trefoil with quotient the unknot} We begin with the example of the right-handed trefoil with intravergent bridge diagram as in Figure~\ref{fig:stronginv}, so that the quotient knot is an unknot and the two quotient annular resolutions have bridge diagrams as in Figure~\ref{fig:zero} and Figure~\ref{fig:one}. This is sometimes called $3_1^+$ in the literature to distinguish it from the other possible choice of half-axis; c.f., e.g., \cite[Appendix A]{BI:equivariant-genera}. The intersection points on the strongly intravergent diagram which are relevant to computing the chain complex in the fixed set are exhibited in Figure~\ref{fig:stronginvgenerators}; for the diagrams of Figures~\ref{fig:zero} and \ref{fig:one} the intersection points are marked in the original figures. 
\begin{figure}[ht]
\fontsize{16pt}{14pt}
\scalebox{.5}{
\begingroup%
  \makeatletter%
  \providecommand\color[2][]{%
    \errmessage{(Inkscape) Color is used for the text in Inkscape, but the package 'color.sty' is not loaded}%
    \renewcommand\color[2][]{}%
  }%
  \providecommand\transparent[1]{%
    \errmessage{(Inkscape) Transparency is used (non-zero) for the text in Inkscape, but the package 'transparent.sty' is not loaded}%
    \renewcommand\transparent[1]{}%
  }%
  \providecommand\rotatebox[2]{#2}%
  \newcommand*\fsize{\dimexpr\f@size pt\relax}%
  \newcommand*\lineheight[1]{\fontsize{\fsize}{#1\fsize}\selectfont}%
  \ifx\svgwidth\undefined%
    \setlength{\unitlength}{427.02337646bp}%
    \ifx\svgscale\undefined%
      \relax%
    \else%
      \setlength{\unitlength}{\unitlength * \real{\svgscale}}%
    \fi%
  \else%
    \setlength{\unitlength}{\svgwidth}%
  \fi%
  \global\let\svgwidth\undefined%
  \global\let\svgscale\undefined%
  \makeatother%
  \begin{picture}(1,0.53007525)%
    \lineheight{1}%
    \setlength\tabcolsep{0pt}%
    \put(0,0){\includegraphics[width=\unitlength,page=1]{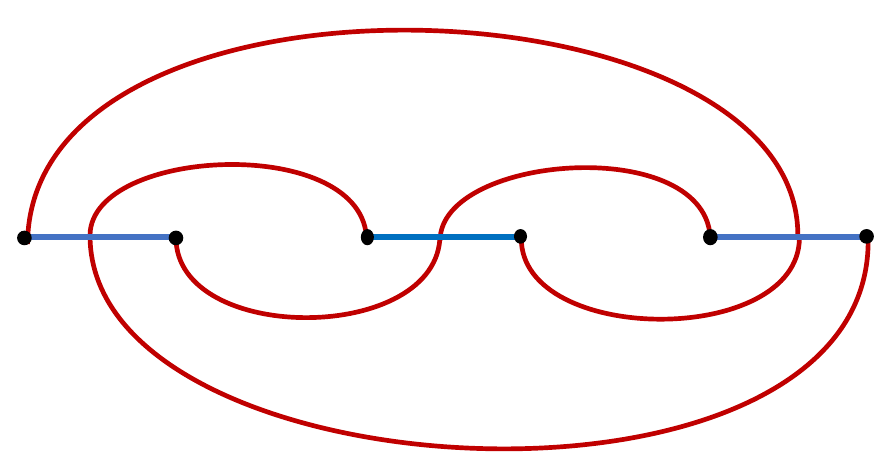}}%
    \put(0.04125028,0.27619203){\color[rgb]{0,0,0}\makebox(0,0)[lt]{\lineheight{1.25}\smash{\begin{tabular}[t]{l}$e^2$\end{tabular}}}}%
    \put(0.06094012,0.21934872){\color[rgb]{0,0,0}\makebox(0,0)[lt]{\lineheight{1.25}\smash{\begin{tabular}[t]{l}$d^2$\end{tabular}}}}%
    \put(0,0){\includegraphics[width=\unitlength,page=2]{stronginvgenerators.pdf}}%
    \put(0.46240605,0.27443614){\color[rgb]{0,0,0}\makebox(0,0)[lt]{\lineheight{1.25}\smash{\begin{tabular}[t]{l}$\lambda$\end{tabular}}}}%
    \put(0.95488723,0.28007522){\color[rgb]{0,0,0}\makebox(0,0)[lt]{\lineheight{1.25}\smash{\begin{tabular}[t]{l}$e^1$\end{tabular}}}}%
    \put(0.85150381,0.27725568){\color[rgb]{0,0,0}\makebox(0,0)[lt]{\lineheight{1.25}\smash{\begin{tabular}[t]{l}$d^1$\end{tabular}}}}%
  \end{picture}%
\endgroup%
}
\caption{The intravergent bridge diagram of Figure~\ref{fig:stronginv} with intersection points relevant for the computation of the chain complex in the fixed set of the involution $\iota_{A,H}$ labelled.}
\label{fig:stronginvgenerators}
\end{figure}

For the bridge diagram in Figure~\ref{fig:stronginvgenerators}, the intersection points in the fixed set of the involution $\iota_{A,H}$ are the six points
\begin{align}
\lambda_0 e_1 e_2, \qquad \lambda_0 d_0^1 d_0^2 , \qquad \lambda_0 d_1^1 d_1^2 \label{eq:fixedcomplex}\\
\lambda_1 e_1 e_2, \qquad \lambda_1 d_0^1 d_0^2 , \qquad \lambda_1 d_1^1 d_1^2.  \nonumber
\end{align}
We remind the reader that we refer to the absolute homological grading on $\Kh_{symp}$ as the Khovanov grading, to distinguish it from the homological grading on Khovanov homology; recall that in Conjecture \ref{con:isom}) the Khovanov grading $i$ corresponds to $j-q$, where $(j,q)$ are the homological and quantum gradings on combinatorial Khovanov homology. In the present example, the Khovanov gradings may be computed via the algorithm for bridge diagrams given by Cheng in \cite[Section 2.2]{Cheng23}, and are
\begin{align*}
|\lambda_0 e^1 e^2| =-6 , \qquad |\lambda_0 d_0^1 d_0^1| =-4 , \qquad |\lambda_0 d_1^1 d_1^2|=-2 \\
|\lambda_1 e^1 e^2| =-5 , \qquad |\lambda_1 d_0^2 d_0^2| =-3 , \qquad |\lambda_1 d_1^1 d_1^2|=-1 .  
\end{align*}
Following the logic of Lemma~\ref{lemma:fixed} we see that in this case the differential in the fixed set consists exclusively of strips connecting intersection points of the form $\lambda_0 a b$ and $\lambda_1 a b$, which come in pairs, so that the total differential is zero and the homology agrees with \eqref{eq:fixedcomplex}.
We now consider the mapping cone of the symplectic axis-moving map, and show that the resulting chain complex is chain homotopy equivalent to the complex above, exhibiting the isomorphism of Theorem~\ref{thm:samediff}. In the notation of Figures~\ref{fig:zero}, the intersection points in $\overline{\Sigma}_{\alpha,0}\cap \overline{\Sigma}_{\beta,0}$ are 
\[ \omega \delta_0, \qquad \omega \delta_1, \qquad \omega \epsilon, \qquad \eta \gamma \]
with Khovanov gradings
\[ |\omega \delta_0|=1, \qquad |\omega \delta_1|=2, \qquad |\omega \epsilon|=-1, \qquad |\eta \gamma|=3. \]
The absolute winding gradings can be easily checked to be
\[ w(\omega \delta_0)=0, \qquad w(\omega \delta_1)=0, \qquad w(\omega \epsilon)=-2, \qquad w(\eta \gamma)=2 \]
using the symmetry properties of the winding grading explained in Section~\ref{subsec:absolutewinding}. There are no nontrivial differentials. The subcomplex $C_0$ is generated by the type-f generators $\{\omega \delta_0, \omega \delta^1, \omega \epsilon\}$. Meanwhile, in the notation of Figures~\ref{fig:one}, the intersection points in $\overline{\Sigma}_{\alpha,1}\cap \overline{\Sigma}_{\beta,1}$ are
\[ \omega' \delta_0', \qquad \omega' \delta_1, \qquad \omega' \epsilon', \qquad \eta' \gamma' \]
with Khovanov gradings
\[ |\omega' \delta_0'|=1, \qquad |\omega' \delta_1'|=2, \qquad |\omega' \epsilon'|=-1, \qquad |\eta' \gamma'|=3 \]
and absolute winding gradings
\[ w(\omega' \delta_0')=1, \qquad w(\omega' \delta_1)=1, \qquad w(\omega' \epsilon')=-1, \qquad w(\eta' \gamma')=1. \]
There is a single nontrivial differential, $\partial(\omega' \delta_1') = \eta' \epsilon'$. The quotient complex $C_1$ is generated by the type-f generators $\{ \omega' \delta_0', \omega' \delta_1, \omega' \epsilon'\}$. In this example the symplectic axis-moving map $CF_{ann}(\overline{\Sigma}_{\alpha,0}, \overline{\Sigma}_{\beta,0}) \rightarrow CF_{ann}(\overline{\Sigma}_{\alpha,1}, \overline{\Sigma}_{\beta,1})$ sends the type-f generators in $C_0$ to zero and sends $\eta \gamma$ to $\eta'\gamma'$. The mapping cone of the symplectic axis-moving map is therefore

\begin{center}
\begin{tikzcd}[row sep=0pt]
\omega \delta_0 & \omega' \delta_0' & [-7ex]\\
\omega \delta_1 & \omega' \delta_1' &[-7ex]\phantom{ }\ar[dd, bend left]\\
\omega \epsilon &  \omega' \epsilon' &[-7ex]\\
\eta \gamma \ar[r] & \eta' \gamma' &[-7ex] \phantom{}
\end{tikzcd}
\end{center}
where the generators on the right-hand side are understood to have been shifted upward in homological grading by one. We observe that this is chain homotopy equivalent to the complex~\eqref{eq:fixedcomplex} and has homology $H_*(C_0)\oplus H_*(C_1)[-1]$ as promised.

\subsubsection{The trefoil with quotient the trefoil} We now consider the example of the right-handed trefoil with the second possible choice of half-axis, so that the quotient knot is a left-handed trefoil. This example is sometimes called $3_1^-$ in the literature. An intravergent bridge diagram compatible with this choice of half-axis is exhibited in Figure~\ref{fig:five_bridge}. Importantly, note that the arguments of Sections~\ref{subsec:mapping} and \ref{subsec:strongly-loc} do not assume that the interior of the bridge $b_0$ crossing the origin only intersects the interior of a single undercrossing arc, and continue to apply to the diagram of Figure~\ref{fig:five_bridge}. 
\begin{figure}[ht]
\fontsize{16pt}{14pt}
\scalebox{.5}{
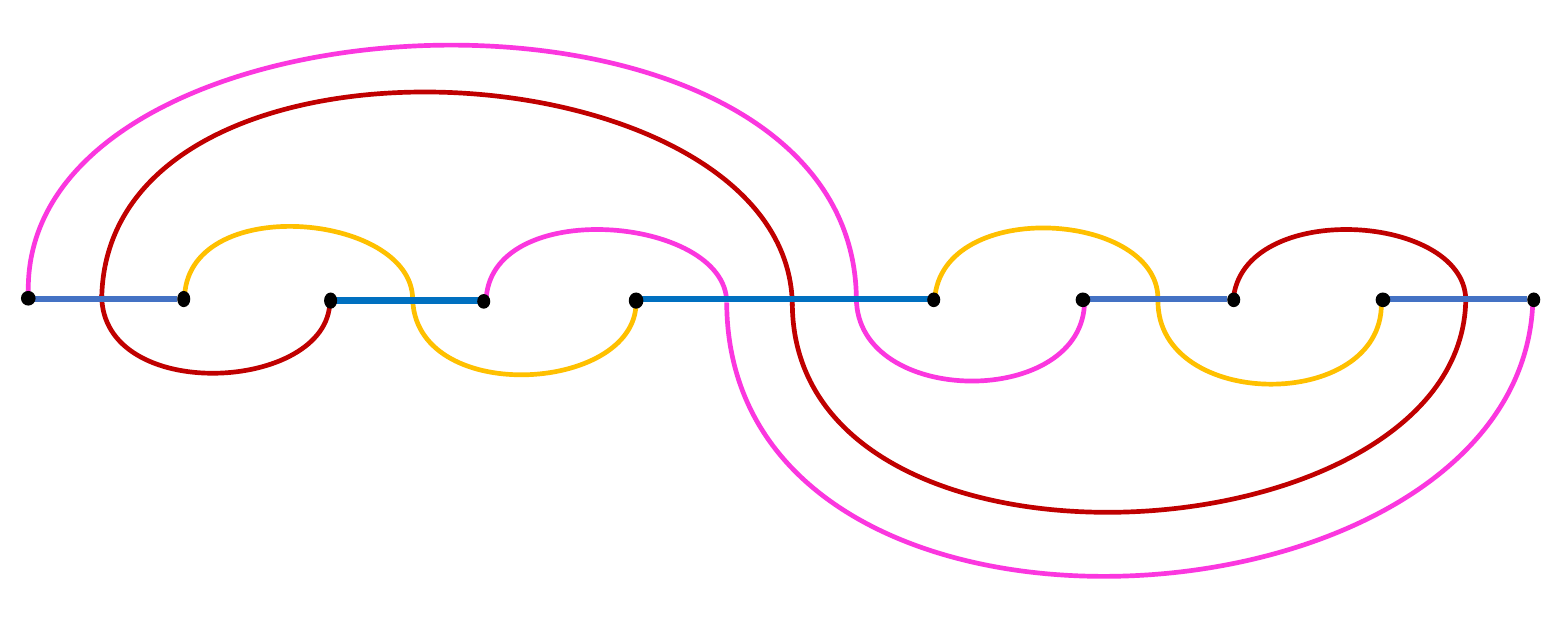}
\caption{An intravergent diagram for the right-handed trefoil with half-axis chosen such that the quotient is again a trefoil. Intersection points relevant to the fixed set are labelled.}
\label{fig:five_bridge}
\end{figure}
Once again there are six generators in the fixed set upstairs, to wit
\begin{align*}
\lambda_0 d^1 d^2 g^1 g^2, \qquad \lambda_0 e^1e^2 f^1_0 f^2_0, \qquad  \lambda_0 e^1e^2 f^1_1 f^2_1 \\
\lambda_1 d^1 d^2 g^1 g^2, \qquad \lambda_1 e^1e^2 f^1_0 f^2_0, \qquad  \lambda_1 e^1e^2 f^1_1 f^2_1
\end{align*}
with Khovanov gradings
\begin{align*}
|\lambda_0 d^1 d^2 g^1 g^2|=0, \qquad |\lambda_0 e^1e^2 f^1_0 f^2_0|=-2, \qquad  |\lambda_0 e^1e^2 f^1_1 f^2_1|=0 \\
|\lambda_1 d^1 d^2 g^1 g^2|=1, \qquad |\lambda_1 e^1e^2 f^1_0 f^2_0|=-1, \qquad  |\lambda_1 e^1e^2 f^1_1 f^2_1|=1
\end{align*}
and the total differential in the fixed set is zero. 

The two quotient resolution bridge diagrams for the annularizations of the quotient knot $\overline{K}_0$ and $\overline{K}_1$ appear in Figure~\ref{fig:five_bridge_resolutions}. The bridges and undercrossing arcs which are shared between the two diagrams are only drawn once, so that the two diagrams only differ in that the bridge diagram for $\overline{K}_0$ contains the solid red contains the solid red undercrossing arc with endpoint at $\omega$ and the bridge diagram for $\overline{K}_1$ contains the orange dashed undercrossing arc with endpoint at $\omega$. Intersection points are drawn unprimed as they would be in the diagram for $\overline{K}_0$, and should be assumed to have a primed counterpart in the diagram for $\overline{K}_1$.
\begin{figure}[ht]
\fontsize{16pt}{14pt}
\scalebox{.5}{
\begingroup%
  \makeatletter%
  \providecommand\color[2][]{%
    \errmessage{(Inkscape) Color is used for the text in Inkscape, but the package 'color.sty' is not loaded}%
    \renewcommand\color[2][]{}%
  }%
  \providecommand\transparent[1]{%
    \errmessage{(Inkscape) Transparency is used (non-zero) for the text in Inkscape, but the package 'transparent.sty' is not loaded}%
    \renewcommand\transparent[1]{}%
  }%
  \providecommand\rotatebox[2]{#2}%
  \newcommand*\fsize{\dimexpr\f@size pt\relax}%
  \newcommand*\lineheight[1]{\fontsize{\fsize}{#1\fsize}\selectfont}%
  \ifx\svgwidth\undefined%
    \setlength{\unitlength}{468.76254272bp}%
    \ifx\svgscale\undefined%
      \relax%
    \else%
      \setlength{\unitlength}{\unitlength * \real{\svgscale}}%
    \fi%
  \else%
    \setlength{\unitlength}{\svgwidth}%
  \fi%
  \global\let\svgwidth\undefined%
  \global\let\svgscale\undefined%
  \makeatother%
  \begin{picture}(1,0.54623284)%
    \lineheight{1}%
    \setlength\tabcolsep{0pt}%
    \put(0,0){\includegraphics[width=\unitlength,page=1]{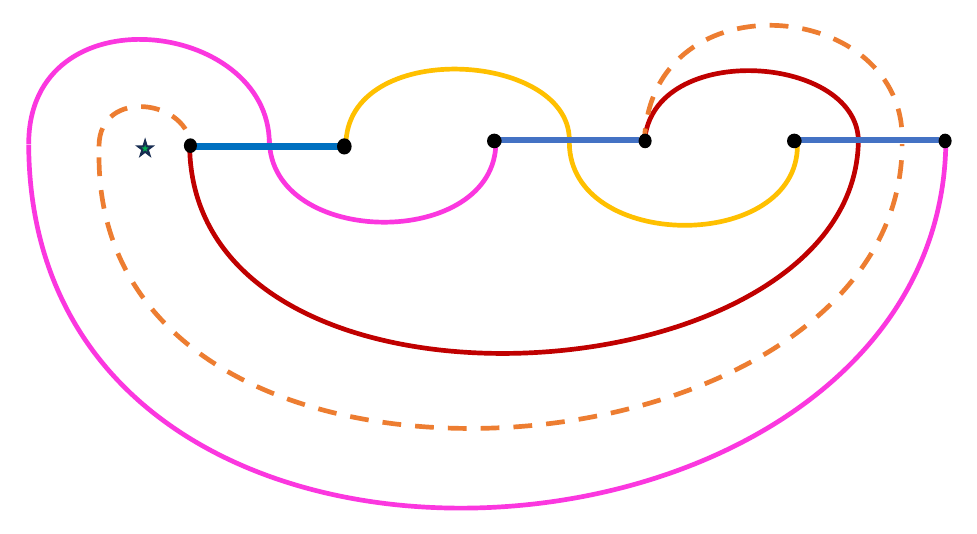}}%
    \put(0.20719177,0.36729448){\color[rgb]{0,0,0}\makebox(0,0)[lt]{\lineheight{1.25}\smash{\begin{tabular}[t]{l}$\omega$\end{tabular}}}}%
    \put(0.95034244,0.41695201){\color[rgb]{0,0,0}\makebox(0,0)[lt]{\lineheight{1.25}\smash{\begin{tabular}[t]{l}$\epsilon$\end{tabular}}}}%
    \put(0.79280818,0.41695203){\color[rgb]{0,0,0}\makebox(0,0)[lt]{\lineheight{1.25}\smash{\begin{tabular}[t]{l}$\delta$\end{tabular}}}}%
    \put(0.58647259,0.41609587){\color[rgb]{0,0,0}\makebox(0,0)[lt]{\lineheight{1.25}\smash{\begin{tabular}[t]{l}$\eta$\end{tabular}}}}%
    \put(0.4880137,0.41523968){\color[rgb]{0,0,0}\makebox(0,0)[lt]{\lineheight{1.25}\smash{\begin{tabular}[t]{l}$\theta$\end{tabular}}}}%
  \end{picture}%
\endgroup%
}
\caption{The quotient annular resolutions of the intravergent diagram of Figure~\ref{fig:five_bridge} The two bridge diagrams for $\overline{K}_0$ and $\overline{K}_1$ agree except that $\overline{K}_0$ contains the solid red undercrossing arc with endpoint at $\omega$ and $\overline{K}_1$ contains the orange dashed undercrossing arc with endpoint at $\omega$.}
\label{fig:five_bridge_resolutions}
\end{figure}
Each of $\overline{\Sigma}_{\alpha,i} \pitchfork \overline{\Sigma}_{\beta,i}$ for $i=0,1$ contains a total of sixteen intersection points. The subcomplex $C_0$ of $CF_{ann}(\overline{\Sigma}_{\alpha,0}, \overline{\Sigma}_{\beta,0})$ is generated by the type-f generators
\[ \omega \delta \theta, \qquad \omega \epsilon \eta_0, \qquad \omega \epsilon \eta_1\]
with Khovanov gradings
\[ |\omega \delta \theta|=5, \qquad |\omega \epsilon \eta_0|=3, \qquad |\omega \epsilon \eta_1|=4\]
and has trivial differential. Likewise the quotient complex $C_1$ of $CF_{ann}(\overline{\Sigma}_{\alpha,1}, \overline{\Sigma}_{\beta_1})$ is generated by the type-f generators 
\[ |\omega' \delta' \theta'|=5, \qquad |\omega' \epsilon' \eta'_0|=3, \qquad |\omega' \epsilon' \eta'_1|=4\]
and has trivial differential. The symplectic axis-moving map is zero on the three generators in $C_0$ and sends each of the remaining thirteen generators in $CF_{ann}(\overline{\Sigma}_{\alpha,0}, \overline{\Sigma}_{\beta,0})$ to its primed counterpart in $CF_{ann}(\overline{\Sigma}_{\alpha,1}, \overline{\Sigma}_{\beta,1})$. Using Theorem~\ref{thm:samediff} we see that the cone of the symplectic axis-moving map has homology
\[ H_*(C_0) \oplus H_*(C_1)[-1]\]
which is isomorphic to the homology of the fixed set upstairs.

\begin{remark} The reader may note that the choice of half-axis for the strong inversion on the right-handed trefoil did not affect the dimension of the cone of the symplectic-axis moving map. As far as the authors are aware, it unknown whether the dimension of the cone of the axis-moving map in combinatorial Khovanov homology, which is to say of the final page of the Lipshitz-Sarkar spectral sequence for strongly invertible knots of \cite{LS:strongly_invertible}, ever depends on the choice of half-axis and corresponding quotient knot for the strongly invertible knot. The same question is similarly open for our analog of their spectral sequence.
\end{remark}

\subsubsection{Crossing changes and the one-to-zero axis-moving map} As our final example, we explain a natural interpretation of the mapping cone of the symplectic axis-moving map from the symplectic annular Khovanov homology of the quotient one-resolution of an intravergent bridge diagram to the symplectic annular Khovanov homology of the quotient zero-resolution.

Let $K$ be an intravergent bridge diagram. Thinking of $K$ purely as a knot diagram, we may consider switching the overcrossing and undercrossing strands of the crossing at zero. In order to express the result as a bridge diagram, we must add two bridges; the result of doing so to the diagram of Figure~\ref{fig:stronginv} is shown in Figure~\ref{fig:crossing_change_bridge}. In this case intersection points are named such that $k$ lies in the one-resolution diagram and $k'$ lies in the zero resolution diagram, to agree with the naming of Figures~\ref{fig:zero} and \ref{fig:one}.

\begin{figure}[ht]
\fontsize{16pt}{14pt}
\scalebox{.5}{
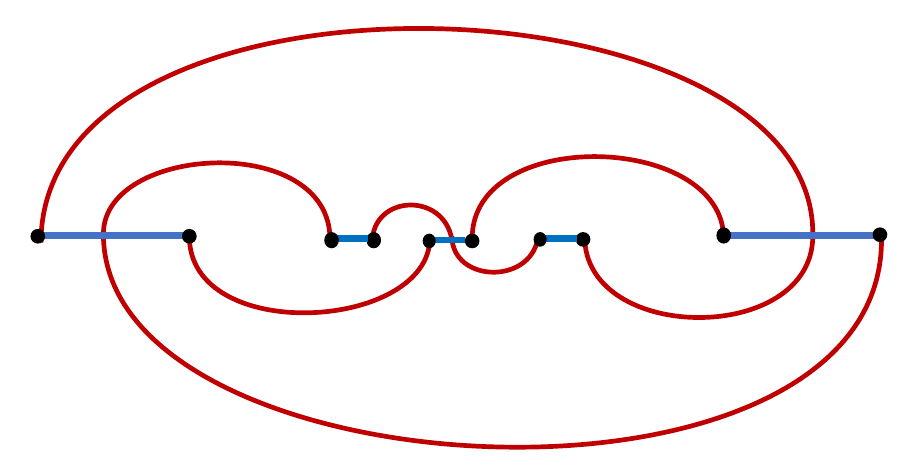}
\caption{A bridge diagram for the result of switching the central crossing in the intravergent diagram for the right-handed trefoil of Figure~\ref{fig:strongly_invertible_one}, obtained by modifying the bridge diagram of Figure~\ref{fig:stronginv} to accommodate the crossing change. The result is an unknot.}
\label{fig:crossing_change_bridge}
\end{figure}
We now consider the zero and one quotient annular resolutions of the bridge diagram in Figure~\ref{fig:crossing_change_bridge}, which are shown in Figure~\ref{fig:crossing_change_resolutions}. We observe that they are stabilizations of (respectively) the one and zero resolutions of the bridge diagram of Figure~\ref{fig:stronginv}, which appear in Figures~\ref{fig:zero} and \ref{fig:one}.
\begin{figure}[ht]
\fontsize{16pt}{14pt}
\scalebox{.5}{
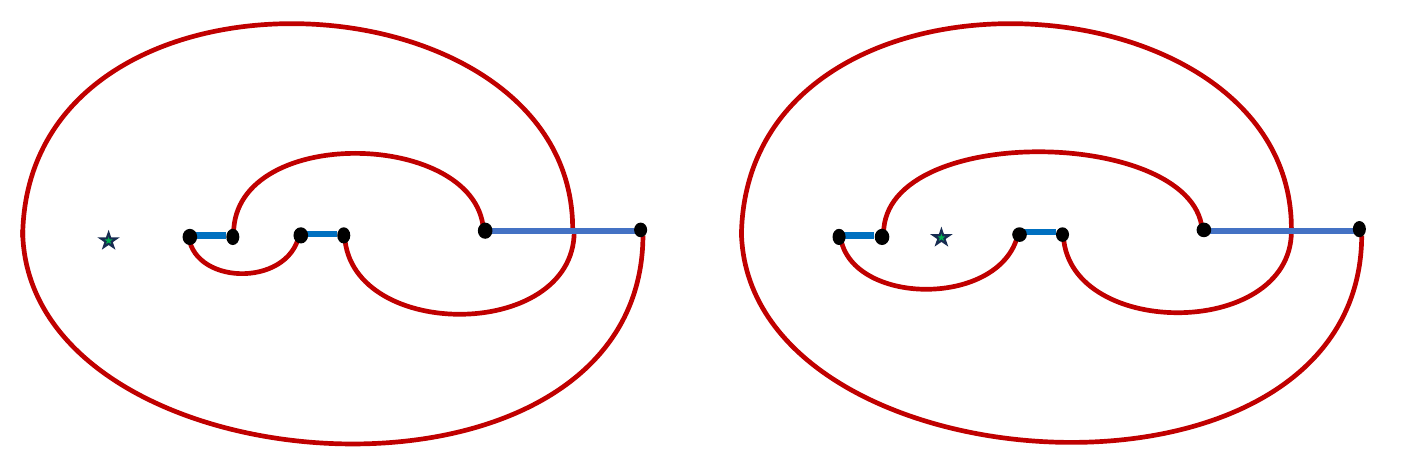}
\caption{The zero and one quotient annular resolutions of the diagram of Figure~\ref{fig:crossing_change_resolutions}. Notice that these are stabilizations of respectively the one and zero quotient annular resolutions of the diagram of Figure~\ref{fig:stronginv}.}
\label{fig:crossing_change_resolutions}
\end{figure}
We observe that the cone of the symplectic axis-moving map between the annular symplectic Khovanov homology is canonically identified with the cone of the symplectic axis-moving map from the one-resolution to the zero-resolution of the original diagram. This mapping cone is drawn below; its homology has rank two.
\begin{center}
\begin{tikzcd}[row sep=0pt]
\omega \delta_0 \rho & \omega' \delta_0' \rho' \ar[l] & [-7ex]\phantom{ }\ar[ddd, bend left]\\
\omega \delta_1 \rho & \omega' \delta_1' \rho' \ar[l] &[-7ex]\\
\omega \epsilon \rho &  \omega' \epsilon' \rho' \ar[l] &[-7ex]\\
\eta \gamma \sigma  & \eta' \gamma' \sigma' &[-7ex] \phantom{}
\end{tikzcd}
\end{center}

\section{Stable normal trivializations} \label{sec:normal-trivs}

In this section we confirm that all of the situations in this paper to which we apply Seidel-Smith localization in fact satisfy the hypotheses of Theorem~\ref{thm:localization} and Corollary~\ref{c:localization}. In all cases the symplectic hypotheses are trivial; our symplectic manifolds are exact and convex at infinity, our Lagrangians are products of spheres and therefore exact, and after possibly averaging the symplectic forms appropriately, the involutions we consider are symplectic. It remains to check the algebraic-topological hypotheses of Seidel-Smith localization, namely that for each instance of a triple $(M, \eL_0, \eL_1)$ with a symplectic involution $\iota$ preserving the Lagrangians setwise, the fixed data $(M^{fix}, \eL^{fix}, \eL_1^{fix})$ admits a stable normal trivialization.

Our proof is essentially the same as Seidel and Smith's proof for the situations of their spectral sequences \eqref{eq:ssdbc} from symplectic Khovanov homology to the Heegaard Floer homology of the branched double cover of the mirror of the link, and \eqref{eq:ss} from the symplectic Khovanov homology of a two-periodic link to that of its quotient \cite[Lemma 31]{SS10}. We first collect some basic definitions.

\begin{definition}[Stable trivialization]
A complex (or real) vector bundle $V$ over a manifold $M$ is said to be stably trivial if there exists an isomorphism $\phi \colon V \oplus \ul{\mathbb{C}}^k \xrightarrow{\sim} \ul{\mathbb{C}}^n$ (respectively $\phi \colon V \oplus \ul{\mathbb{R}}^k \xrightarrow{\sim} \ul{\mathbb{R}}^n$) for some values of $k$ and $n$. In the complex case this is equivalent to the existence of a short exact sequence
 \[
        \begin{tikzcd}
    0 \arrow[r] & V  \arrow[r] & \ul{\mathbb{C}}^n  \arrow[r] & \ul{\mathbb{C}}^k  \arrow[r] & 0 
        \end{tikzcd}
    \]
and similarly in the real case mutatis mutandis. In particular, the choice of a stable trivialization $\phi$ is equivalent to a choice of splitting of the exact sequence above.
\end{definition}

\begin{definition}[Relative stable trivialization with respect to Lagrangian subbundles]
    Let $M$ be a symplectic manifold and $\eL$ be a Lagrangian submanifold in $M$. We say that the pair $(TM,T\eL)$ is relatively stably trivial if 
    \begin{itemize}
        \item $TM$ has a complex stable trivialization $\phi \colon TM \oplus \ul{\mathbb{C}}^k \xrightarrow[]{\sim} \ul{\mathbb{C}}^n$
        \item $T\eL$ is stably trivial as a real vector bundle
        \item If $\Upsilon(M) \rightarrow M \times [0,1]$ is the pullback of $TM$ to $M \times [0,1]$, there is a Lagrangian subbundle $\Lambda \subset (\Upsilon(TM)\oplus \ul{\CC}^k)|_{\eL\times [0,1]}$ such that $\Lambda|_{\{0\} \times \eL} = T\eL \oplus \ul{\mathbb{R}}^k$ and $\phi(\Lambda|_{\{1\} \times \eL}) = \ul{\mathbb{R}}^n$.
    \end{itemize}
\end{definition}

\begin{lemma}\label{lemma: SES stable trivialization}
The pair $(TM,T\eL)$ is relatively stably trivial if there exists a complex stable trivialization of the tangent bundle $TM$ which restricts to a real stable trivialization of the tangent bundle $T\eL$. This is equivalent to the existence of a complex stable trivialization of $TM$ given by

\[
\begin{tikzcd}
0 \arrow[r] & TM  \arrow[r, "F"] & \ul{\mathbb{C}}^n  \arrow[r, "G"] & \ul{\mathbb{C}}^{k}  \arrow[r] & 0 
\end{tikzcd}
\]
and a commutative diagram of complex vector bundles over the Lagrangian $\eL$ as below
\[
        \begin{tikzcd}
    0  \arrow[r] & T\eL \otimes_\mathbb{R}\ul{\mathbb{C}} \arrow [d, "\sim", sloped] \arrow[r, "f "] & \mathbb{R}^n \otimes_\mathbb{R}\ul{\mathbb{C}} \arrow [d, "\sim", sloped] \arrow[r, "g"] & \mathbb{R}^k \otimes_\mathbb{R}\ul{\mathbb{C}}   \arrow [d, "\sim", sloped]\arrow[r] & 0 \\
     0 \arrow[r] & TM  \arrow[r, "F"] & \ul{\mathbb{C}}^n  \arrow[r, "G"] & \ul{\mathbb{C}}^{k}  \arrow[r] & 0 
        \end{tikzcd}
\]
where the first vertical isomorphism is $v\otimes 1 \mapsto v, \; v \otimes i \mapsto Jv$ and the other vertical isomorphisms are the canonical maps given by $v\otimes 1 \mapsto v, \; v \otimes i \mapsto iv$.
    
\end{lemma}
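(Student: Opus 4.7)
The plan is to split the lemma into two claims: (1) the equivalence of the two formulations of ``stable trivialization of $TM$ restricting to a real stable trivialization of $T\eL$'' (as a stable isomorphism versus as a split short exact sequence with the commuting diagram), and (2) the implication that such data yields a relative stable trivialization in the sense of the preceding definition. The second claim will turn out to be essentially formal once the first is unpacked.

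For the first step, I would recall the standard linear-algebraic fact that a complex stable trivialization $\phi \co TM \oplus \ul{\CC}^k \xrightarrow{\sim} \ul{\CC}^n$ is the same data as a split short exact sequence $0 \to TM \xrightarrow{F} \ul{\CC}^n \xrightarrow{G} \ul{\CC}^k \to 0$. In one direction, one sets $F = \phi \circ \iota_1$ and $G = \pi_2 \circ \phi^{-1}$, where $\iota_1$ and $\pi_2$ are the inclusion of and projection to the appropriate summand of $TM \oplus \ul{\CC}^k$; in the reverse direction, any splitting $s \co \ul{\CC}^k \to \ul{\CC}^n$ of $G$ furnishes an inverse to $\phi$ via $w \mapsto (F^{-1}(w - s(G(w))), G(w))$. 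To bring the Lagrangian into the picture, I would use the canonical identification $TM|_\eL \cong T\eL \otimes_\RR \ul{\CC}$ furnished by the Lagrangian splitting $TM|_\eL = T\eL \oplus J(T\eL)$, and observe that the condition $\phi(T\eL \oplus \ul{\RR}^k) = \ul{\RR}^n$ is, after complexification of the restricted real trivialization, exactly the commutativity of the diagram stated in the lemma. Both implications are then a matter of chasing definitions through the vertical isomorphisms.

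For the second step, I would exhibit $\Lambda = (T\eL \oplus \ul{\RR}^k) \times [0,1]$ as a constant family of subbundles of $(\Upsilon(TM) \oplus \ul{\CC}^k)|_{\eL \times [0,1]}$. This subbundle is Lagrangian fibrewise because $T\eL$ is Lagrangian in $TM|_\eL$ by hypothesis on $\eL$ and $\ul{\RR}^k$ is the standard Lagrangian in $\ul{\CC}^k$; it equals $T\eL \oplus \ul{\RR}^k$ at $t=0$ by construction; and $\phi$ sends it onto $\ul{\RR}^n$ at $t=1$ by the restriction hypothesis. No nontrivial interpolation is required, precisely because the two Lagrangian endpoint conditions already coincide as subbundles of $(TM \oplus \ul{\CC}^k)|_\eL$ under the assumption.

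The main obstacle, such as it is, lies in the bookkeeping of real versus complex structures when translating between the two formulations of the hypothesis, and in particular in invoking the Lagrangian condition to identify $T\eL \otimes_\RR \ul{\CC}$ with $TM|_\eL$ so that the left-hand vertical isomorphism in the diagram is well-defined. Once this identification is in place and the stable trivialization is recast as a split short exact sequence, the whole lemma reduces to a direct unwinding of the definitions.
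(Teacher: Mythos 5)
Your proposal is correct and takes essentially the same route as the paper: the equivalence is proved by the same definition-unwinding, trading the stable trivialization $\phi$ for a split short exact sequence and using the Lagrangian splitting $TM|_{\eL} = T\eL \oplus J(T\eL)$ to identify $T\eL\otimes_{\RR}\ul{\CC}$ with $TM|_{\eL}$, exactly as in the paper's explicit construction of $F,G,f,g$ and of $\Phi,\phi$ from a choice of splitting. The only cosmetic difference is that for the first assertion the paper cites \cite[Lemma 7.1]{Hen12:dcov}, whereas you spell out directly the (equally immediate) constant Lagrangian family $\Lambda = (T\eL\oplus\ul{\RR}^k)\times[0,1]$.
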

\begin{proof}
    The first assertion, nearly trivial, is proved by the first author in \cite[Lemma 7.1]{Hen12:dcov}. We prove the second assertion by unraveling definitions. Suppose first that there is a complex stable trivialization $ \Phi \colon TM \oplus \ul{\CC}^k \xrightarrow[]{\sim} \ul{\CC}^n$ which restricts to a real stable trivialization of $T\eL$ given by $\phi \colon T\eL \oplus \ul{\RR}^k \xrightarrow[]{\sim} \ul{\RR}^n$. Then, we can choose the maps in the second row to be $F(v) = \Phi(v,0)$ and $G(v)= pr_{\ul{\CC}^k} \circ \Phi^{-1}(v)$, where $pr_{\ul{\CC}^k}$ denotes projection onto the $\ul{\CC}^k$ term. Similarly, we choose the maps in the first row to be $f(v \otimes z) = \phi(v,0) \otimes z$ and $g(v \otimes z)= \left(pr_{\CC^k} \circ \phi^{-1}(v)\right) \otimes z$. The rows are exact as $\Phi$ and $\phi$ are vector bundle isomorphisms. The squares in the diagram commute by construction since $\Phi$ is a complex vector bundle morphism and therefore $\Phi(Jv) = i \phi(v)$ and $\Phi^{-1}(iv) = i \phi^{-1}(v)$.
    
Now suppose that we have a complex stable trivialization of $TM$ given by 
\[
\begin{tikzcd}
0 \arrow[r] & TM  \arrow[r, "F"] & \ul{\mathbb{C}}^n  \arrow[r, "G"] & \ul{\mathbb{C}}^{k}  \arrow[r] & 0 
\end{tikzcd}
\]
and a commutative diagram of vector bundles over $\eL$ as in the statement of the lemma. Then, we can choose a splitting $S \co \ul{\CC}^k \to \ul{\CC}^n$ of the exact sequence above such that $G \circ S = id_{\ul{\CC}^k}$. We lift this splitting $S$ using the vertical isomorphisms to get a splitting $s \colon \ul{\RR}^k \otimes \ul{\CC} \to \ul{\RR}^n \otimes \ul{\CC}$. Then, we define $\Phi \colon TM \oplus \ul{\CC}^k \to \ul{\CC}^n$ by $\Phi (v, w) = F(v)+S(w)$. The map $\Phi$ is then an isomorphism of complex vector bundles which gives the complex stable trivialization of $TM$. Similarly, we can define $\phi \colon T\eL \oplus \ul{\RR}^k \to \ul{\RR}^n$ by $\phi(v,w) = f(v) + s(w)$, and $\phi$ gives a real stable trivialization of $T\eL$. By definition of the vertical maps, we see that $\phi$ is the restriction of $\Phi$ to the Lagrangian $\eL$, so we have shown that $(TM, T\eL)$ is relatively stably trivial.
\end{proof}

We now study the tangent bundles of manifolds that arise as the regular fiber of a smooth map.

\begin{lemma} \label{lemma:fibre}
    If $\pi \colon \mathbb{C}^n \to \mathbb{C}^k$ is a smooth map and $a \in \mathbb{C}^k$ is a regular value of $\pi$, then the tangent bundle of the fibre $M_a = \pi^{-1}(a)$ is stably trivial.
\end{lemma}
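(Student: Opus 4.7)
The plan is to produce a canonical short exact sequence of real vector bundles over $M_a$ whose middle and right-hand terms are trivial, and then split it.

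The first step is to use the regular value hypothesis: since $a$ is a regular value, the differential $d\pi$ restricts to a fibrewise surjective bundle map $T\mathbb{C}^n|_{M_a} \twoheadrightarrow \pi^* T\mathbb{C}^k|_{M_a}$ over $M_a$ whose kernel is precisely $TM_a$. This yields a short exact sequence
\[
0 \to TM_a \to T\mathbb{C}^n|_{M_a} \to \pi^* T\mathbb{C}^k|_{M_a} \to 0
\]
of smooth real vector bundles on $M_a$.

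The second step is to observe that $T\mathbb{C}^n$ and $T\mathbb{C}^k$ are canonically trivial as real vector bundles, of ranks $2n$ and $2k$ respectively. Restricting and pulling back, the sequence above becomes
\[
0 \to TM_a \to \underline{\mathbb{R}}^{2n} \to \underline{\mathbb{R}}^{2k} \to 0.
\]

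The third and final step is to split this sequence. Since $M_a$ is paracompact (being a smooth manifold), every short exact sequence of smooth real vector bundles over it splits; for instance, one equips $\mathbb{C}^n$ with its standard Euclidean metric and takes the orthogonal complement of $TM_a$ inside $T\mathbb{C}^n|_{M_a}$. The resulting isomorphism $TM_a \oplus \underline{\mathbb{R}}^{2k} \simeq \underline{\mathbb{R}}^{2n}$ exhibits $TM_a$ as stably trivial.

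There is no serious obstacle here; this is a standard differential topology argument. The only minor point of care is that because $\pi$ is merely smooth rather than holomorphic, the subbundle $TM_a \subset T\mathbb{C}^n|_{M_a}$ need not be complex, so the trivialization produced is a real stable trivialization (of the expected even rank), which is all the statement of the lemma asks for.
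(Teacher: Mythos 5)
Your proposal is correct and follows essentially the same route as the paper: the paper's proof also writes down the short exact sequence $0 \to TM_a \to T\mathbb{C}^n|_{M_a} \to M_a \times T_a\mathbb{C}^k \to 0$ coming from $d\pi$ and the regular value theorem, and concludes stable triviality from the triviality of the outer terms. Your extra remarks (splitting via the Euclidean orthogonal complement, and that the trivialization is only real when $\pi$ is merely smooth) are consistent with how the paper later uses this sequence.
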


\begin{proof}
    By the regular value theorem, we know that $M_a$ is a manifold. Let $j \colon M_a \to \mathbb{C}^n$ be the inclusion of the manifold as the fibre $\pi^{-1}(a)$. Then we have a short exact sequence of vector bundles on $M_a$
 \begin{equation}
       \begin{tikzcd} \label{eq:trivialization}
    0 \arrow[r] & TM_a  \arrow[r, "dj"] & T\mathbb{C}^n|_{M_a}  \arrow[r, "p_a \times d\pi"] & M_a \times T_a \mathbb{C}^k  \arrow[r] & 0 
        \end{tikzcd}\end{equation}
where $p_a$ is the projection map $p_a \co T\mathbb{C}^n|_{M_a} \rightarrow M_a$. Thus we see that $TM_a$ is stably trivial.
\end{proof}

We derive as a corollary that the tangent bundle of $\cY_{n,\tau} = \Hilb^{n,hor}(A_{\tau})$ used in the computation of symplectic and annular symplectic Khovanov homology has stably trivial tangent bundle.

\begin{corollary}
The manifold $\cY_{n,\tau} = \Hilb^{n,hor}(A_{\tau})$ has stably trivial tangent bundle. The submanifold $\Hilb^{n,hor}(A_{\tau}^*)$ therefore does as well.
\end{corollary}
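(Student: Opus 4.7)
The plan is to apply Lemma \ref{lemma:fibre} directly, using the description of $\cY_{n,\tau}$ as the fibre of the adjoint quotient map on the nilpotent slice. Recall from Section~\ref{subsec:nilpotent} that $\cY_{n,\tau}=\chi^{-1}(\tau)$, where the nilpotent slice $S_n$ is an affine subspace of $\mathfrak{gl}_{2n}(\CC)$ parametrized by the $4n$ complex coordinates $\{a_i,b_i,c_i,d_i : 1\le i\le n\}$, and hence is biholomorphic to $\CC^{4n}$. The adjoint quotient map $\chi\co S_n \to \Sym^{2n}(\CC)\cong \CC^{2n}$ is smooth (indeed polynomial), and by construction any $\tau\in \Conf^{2n}(\CC)$ is a regular value of $\chi$.

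First, I would invoke Lemma~\ref{lemma:fibre} with $\pi=\chi$, $n\rightsquigarrow 4n$, $k\rightsquigarrow 2n$, and $a=\tau$. The short exact sequence \eqref{eq:trivialization} then takes the form
\[
0 \longrightarrow T\cY_{n,\tau} \longrightarrow T\CC^{4n}|_{\cY_{n,\tau}} \xrightarrow{\;p_\tau\times d\chi\;} \cY_{n,\tau}\times T_\tau\CC^{2n} \longrightarrow 0,
\]
and since the middle and right-hand bundles are trivial, this exhibits $T\cY_{n,\tau}$ as stably trivial.

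Next, under Manolescu's identification $\cY_{n,\tau}\cong \Hilb^{n,hor}(A_\tau)$ from \eqref{eq:horHilb}, the annular open subvariety $\Hilb^{n,hor}(A_\tau^*)=\Hilb^{n,hor}(A_\tau)\setminus D_o$ is the complement of the closed analytic subset $D_o$ (consisting of length-$n$ subschemes whose support meets $\{z=0\}$), and hence is an open submanifold of $\cY_{n,\tau}$. Stable triviality of the tangent bundle is preserved under restriction to an open subset, so restricting the trivializing short exact sequence above gives the desired stable trivialization of $T\Hilb^{n,hor}(A_\tau^*)$.

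There is essentially no obstacle here beyond unpacking the definitions: the key input is that $S_n$ is literally an affine complex space (not just an abstract smooth variety), which is clear from the matrix presentation given in Section~\ref{subsec:nilpotent}. The only minor point worth flagging is that we are using Lemma~\ref{lemma:fibre} in the smooth category, which is enough for our purposes since stable trivializations of complex vector bundles are controlled by their underlying smooth structure and all that localization will ultimately require (via Lemma~\ref{lemma: SES stable trivialization} in the ambient manifolds involved) is a short exact sequence of the displayed form.
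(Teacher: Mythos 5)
Your proof is correct and follows essentially the same route as the paper: both apply Lemma~\ref{lemma:fibre} to the adjoint quotient map $\chi$ on the nilpotent slice $S_n \simeq \CC^{4n}$ with $\tau$ a regular value, and deduce the statement for $\Hilb^{n,hor}(A_\tau^*)$ by restriction to the open subset. Your write-up merely spells out details (the identification of dimensions and the open-submanifold restriction) that the paper leaves implicit.
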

\begin{proof}
    The symplectic manifold $\cY_{n,\tau}$ is a regular fibre of the adjoint quotient map $\chi \colon S_n \to \Sym^{2n}(\mathbb{C})$ over the point $\tau \in \Conf^{2n}_0(\mathbb{C})\subseteq \Sym^{2n}(\CC)$. By choosing local coordinates at $\tau$ and recalling that $S_n \simeq \mathbb C^{4n}$, we can assume that we are in the set up of the Lemma~\ref{lemma:fibre}, from which the conclusions follow.
\end{proof}

\begin{lemma} \label{lemma:monodromy} Let $\pi \co \CC^n \rightarrow \CC^k$ be a symplectic fibration, and let $\pi^{-1}(a) = M_a$ for a regular value $a$. Let $t \mapsto a_t$ for $t \in [0,1]$ be a loop of regular values of $\pi$ in $\CC^k$, so that $a_0=a=a_1$. Consider a path of monodromy symplectomorphisms $\phi_t \co M_a \rightarrow M_{a_t}$ such that $\phi = \phi_1 \co M_a \rightarrow M_{a_1}=M_a$. Suppose that $\eL$ is a Lagrangian in $M$ such that the trivialization of $TM$ from the short exact sequence \eqref{eq:trivialization} restricts to a real stable trivialization of $TL$. Then the triple $(TM, T\eL, T(\phi(\eL)))$ has a stable normal trivialization in the sense of Definition~\ref{def:stable-normal}. \end{lemma}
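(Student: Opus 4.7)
The plan is to assemble the stable normal trivialization in three stages, starting from the ambient trivialization produced by Lemma~\ref{lemma:fibre} and propagating it along the monodromy path.

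First, by Lemma~\ref{lemma:fibre} applied to the fibration $\pi$, we have the short exact sequence \eqref{eq:trivialization}
\[
0 \to TM \xrightarrow{dj} T\CC^n|_M \xrightarrow{p_a \times d\pi} M \times T_a\CC^k \to 0,
\]
which exhibits a stable trivialization $TM \oplus \ul{\CC}^k \xrightarrow{\sim} \ul{\CC}^n$. By the hypothesis on $\eL$, this trivialization restricts to a real stable trivialization $T\eL \oplus \ul{\RR}^k \xrightarrow{\sim} \ul{\RR}^n$. Lemma~\ref{lemma: SES stable trivialization} then implies that the pair $(TM, T\eL)$ is relatively stably trivial, i.e.\ there is a Lagrangian subbundle $\Lambda_0$ of $\Upsilon(TM)\oplus \ul{\CC}^k$ over $\eL\times[0,1]$ realizing a homotopy from $T\eL\oplus\ul{\RR}^k$ to the constant subbundle $\ul{\RR}^n$.

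Second, I will transport this trivialization along the monodromy to obtain the analogous compatibility datum for $\phi(\eL)$. The path $\phi_t \co M_a \to M_{a_t}$ extends to a smooth family of symplectomorphisms between fibres, and the derivatives $d\phi_t \co TM|_\eL \to TM|_{\phi_t\eL}$ give, upon fibrewise composition with the splitting produced in the first step, a smooth one-parameter family of Lagrangian subbundles $d\phi_t(T\eL) \oplus \ul{\RR}^k$ inside the pullback of $TM \oplus \ul{\CC}^k$ along the map $[0,1] \times \eL \to M$, $(t,x)\mapsto \phi_t(x)$. At $t=0$ this subbundle equals $T\eL \oplus \ul{\RR}^k$ and at $t=1$ it equals $T(\phi\eL)\oplus \ul{\RR}^k$. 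Pulling back by $\phi^{-1}$ on one side and concatenating with $\Lambda_0$ rotated by multiplication by $i$ (to target $i\ul{\RR}^n$) produces a Lagrangian subbundle $\Lambda_1$ of $\Upsilon(TM)\oplus \ul{\CC}^k$ over $\phi(\eL)\times[0,1]$ realizing the homotopy from $T(\phi\eL)\oplus \ul{\RR}^k$ to $i\ul{\RR}^n$, after possibly one further stabilization to absorb the rotation by $i$.

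Third, I combine $\Lambda_0$ and $\Lambda_1$ to produce the stable normal trivialization in the sense of Definition~\ref{def:stable-normal}. The stable trivialization of $TM$ from the first step plays the role of the symplectic bundle isomorphism in the definition of stable isomorphism of polarization data; the two Lagrangian homotopies $\Lambda_0$ and $\Lambda_1$ provide the required homotopies through Lagrangian subbundles on the boundary Lagrangians, identifying $T\eL$ with $\ul{\RR}^n$ and $T(\phi\eL)$ with $i\ul{\RR}^n$ respectively. This is exactly a stable isomorphism of polarization data between the tangent polarization $(TM, T\eL, T(\phi\eL))$ and the trivial polarization $(\ul{\CC}, \ul{\RR}, i\ul{\RR})$. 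In the applications to fixed sets of the involutions $\iota_A$, $\iota_B$, $\iota_{A,H}$, $\iota_{B,H}$, and $\iota_D$ treated in the body of the paper, the normal polarization on the fixed set is canonically identified with the tangent polarization via the $(-1)$-eigenspace decomposition of the linearized involution (which in each case differs from the identity map on $TM^{fix}$ only by a symplectic bundle automorphism covered by multiplication by $i$), so this tangent stable trivialization yields the desired stable normal trivialization.

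The main obstacle is the second step: producing $\Lambda_1$ requires showing that the monodromy isotopy $\phi_t$ genuinely gives a smooth family of Lagrangian subbundles compatible with the ambient stabilization and that the concatenation with the rotated $\Lambda_0$ remains Lagrangian throughout. The rescaled symplectic parallel transport of Seidel–Smith is only piecewise smooth near the critical locus of $\pi$, so some care will be needed to smooth $\phi_t$ without changing its homotopy class, and to check that the concatenation can be arranged to occur inside the space of Lagrangian subbundles of the stabilized bundle rather than merely through totally real subbundles.
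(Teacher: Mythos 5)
Your overall strategy coincides with the paper's: take the ambient stable trivialization coming from the short exact sequence \eqref{eq:trivialization}, transport $T\eL$ around the monodromy loop to produce a Lagrangian homotopy connecting $T(\phi(\eL))$ back to $T\eL$, use the hypothesis that the trivialization restricts to a real stable trivialization of $T\eL$, and rotate by $i$ to handle the second Lagrangian. However, your second step has a gap at exactly the point where the paper's proof has its only real content. The vectors $d\phi_t(w)$ for $t\in(0,1)$ live in $TM_{a_t}$, the tangent bundle of a \emph{different} fibre of $\pi$ for each $t$; ``fibrewise composition with the splitting produced in the first step'' does not yet make sense, because that splitting is defined only over the single fibre $M_a$. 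To turn the family $t\mapsto d\phi_t(T\eL)\oplus\ul{\RR}^k$ into a homotopy of Lagrangian subbundles of a \emph{fixed} trivial bundle $\ul{\CC}^n$ over $\eL$, one needs a continuously varying family of stable trivializations $\psi_t$ of $TM_{a_t}\oplus\ul{\CC}^k$, one for each fibre along the loop, with $\psi_0=\psi_1$ (since $a_0=a_1=a$). The paper obtains this by noting that each sequence $0\to TM_{a_t}\to T\CC^n|_{M_{a_t}}\to \ul{\CC}^k\to 0$ has a canonical splitting $s_{a_t}$ sending $T_{a_t}\CC^k$ to the orthogonal complement of $TM_{a_t}$, unique up to Gram--Schmidt homotopy, so the family automatically closes up at the endpoints. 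With that in hand the required homotopy is simply $t\mapsto p_{\CC}\bigl(\psi_t((\phi_t)_*(T\eL\oplus\ul{\RR}^k))\bigr)$, and no concatenation with your $\Lambda_0$ is needed. (Your worry about smoothing the rescaled parallel transport is moot at the level of generality of the lemma, where the path $\phi_t$ is part of the hypotheses.)

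Separately, your third step is not part of this lemma and contains an incorrect claim: the normal polarization of a fixed set is the $(-1)$-eigenbundle of the linearized involution while the tangent polarization is the $(+1)$-eigenbundle, and there is no canonical identification between them by ``multiplication by $i$.'' The paper passes from tangent to normal data not by any such identification but by arranging the ambient stable trivialization to be $\iota$-equivariant, so that it splits into invariant and anti-invariant summands (Lemma~\ref{lemma: eqvt stable trivialization} and Theorem~\ref{thm: stable normal trivialization}); the anti-invariant summand is what trivializes the normal polarization.
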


\begin{proof} Consider the short exact sequence \eqref{eq:trivialization}, replicated below with the additional notation of letting $j_a \co M_a \hookrightarrow \CC^n$ be the inclusion map.
\begin{equation*}
       \begin{tikzcd}
    0 \arrow[r] & TM_a  \arrow[r, "d(j_a)"] & T\mathbb{C}^n|_{M_a}  \arrow[r, "p_a \times d\pi"] & M_a \times T_a \mathbb{C}^k  \arrow[r] & 0 
        \end{tikzcd}\end{equation*}
We observe that there is a unique choice of splitting $s_a$ which has $s_a(p_a \times d\pi) = \id$ and sends $T_a(\CC^k)$ to the orthogonal complement of $TM_a$ in $T\mathbb{C}^n|_{M_a}$ at each point. Any other choice is homotopic to this $s_a$ via applying the Gram-Schmidt algorithm. This implies that using this preferred choice of splitting for each $t$, there is a continuously varying family of stable trivializations  $\psi_t \co TM_{a_t} \times \CC^k \rightarrow M_{a_t} \times \CC^n$ given by $d(j_{a_t})\times s_{a_t}$ with the property that $\psi_0=\psi_1=\psi$. Now let $p_{\CC}$ denote the projection from $M_{a_t}\times \CC^n$ to $\CC^n$. For $(v,x) \in T(\phi(\eL))\oplus \ul{\RR}^k \subseteq TM_a \oplus \ul{\CC}^k$, consider the homotopy 
\[H_t(v,x) = p_{\CC}(\psi_{1-t}((\phi_{1-t})_*(v,x)))\]
Then $H_t$ is a homotopy of Lagrangian vector bundles between $\psi(T(\phi \eL) \oplus \ul{\RR}^k)$ and $\ul{\RR}^N$, which may be rotated to a homotopy between $\psi(T(\phi \eL) \oplus i\ul{\RR}^k)$ and $i\ul{\RR}^N$. As $\psi$ also restricts to a real stable trivialization on $T\eL \oplus \ul{\RR}^k$ we are done.
\end{proof}

We next show the tangent polarization data $(T\cY_{n,\tau}, T\eL_{\ul{c}}, T(\beta \eL_{\ul{c}}))$ on the triple $(\cY_{n,\tau}, \eL_{\ul{c}}, \beta \eL_{\ul{c}})$ is also stably trivial, where as in Section~\ref{s:SympAKh}, $\ul{c}$ is a crossingless matching, $\eL_{\ul{c}}$ is its associated Lagrangian, and the Lagrangian $\beta \eL_{\ul{c}}$ is obtained by applying the symplectic monodromy map associated to the braid $\beta$. We begin by recalling the following from \cite[Lemma 29]{SS10} and \cite[Lemma 32]{SS06}.

\begin{lemma}\label{lemma: trivial neighborhood}
    Let  $\mu_1 < \ldots < \mu_n$ be a collection of real numbers such that $\mu_i \neq 0$ and $\{\mu_1, \dots, \mu_n\}$ is invariant under $z \mapsto -z$. For $\epsilon>0$ sufficiently small, consider the polydisc neighborhood $D_\epsilon \subseteq \Conf^{2n}_0(\mathbb{C}) \subseteq \Sym^{2n}(\CC)$ consisting of configurations
\[
[\mu_1 + \sqrt{z_1}, \mu_1 - \sqrt{z_1}, \ldots, \mu_n + \sqrt{z_n}, \mu_n - \sqrt{z_n}]
\]
where $|z_i|<\epsilon$. For $\delta < \epsilon$, let $\ul{c}_{inv}$ be the crossingless matching whose image is the set of intervals $[\mu_i - \sqrt{\delta}, \mu_i +\sqrt{\delta}]$ on the real line. Then $\chi^{-1}(D_\epsilon)$ contains a properly embedded contractible totally real submanifold $\Delta \subset \chi^{-1}(D_\epsilon)$ such that $ \Delta \cap \chi^{-1}(\delta, \dots, \delta)$ is Lagrangian isotopic to $\eL_{\ul{c}_{inv}}$ in $\chi^{-1}(\delta, \dots, \delta)$.
\end{lemma}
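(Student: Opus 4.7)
The strategy is to construct $\Delta$ explicitly as a product of local Lefschetz thimbles, adapting the approach of \cite[Lemma 32]{SS06} and \cite[Lemma 29]{SS10}. The key geometric input is that for $\epsilon$ sufficiently small the family $\chi^{-1}(D_\epsilon) \to D_\epsilon$ decomposes, on the region relevant for our construction, as a product of $n$ one-parameter families of local $A_1$-degenerations centered at the real points $\mu_i$.

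First, I would pass to the horizontal Hilbert scheme using Manolescu's identification from Section~\ref{subsec:nilpotent}, so that elements of $\chi^{-1}(\tau)$ are realized as length-$n$ subschemes of $A_\tau$ with length-$n$ projection to $\CC_z$. For $\epsilon$ small enough that the discs $B_i := B(\mu_i, 2\sqrt{\epsilon})$ are pairwise disjoint and avoid the origin, the open subscheme of $\Hilb^{n,hor}(A_\tau)$ consisting of subschemes with one point in the preimage of each $B_i$ is symplectomorphic, away from the Hilbert--Chow diagonal, to the product $\prod_{i=1}^n A_i(z_i)$, where $A_i(z_i) = \{(u,v,z) \in \CC^2 \times B_i : u^2 + v^2 = (z-\mu_i)^2 - z_i\}$ is the local $A_1$-Milnor fibre; as $\tau$ varies in $D_\epsilon$, the parameters $z_i$ vary independently over $\{|z_i| < \epsilon\}$.

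Second, in each local family I construct a Lagrangian Lefschetz thimble over the radial path $\gamma_i(t) = t\delta$ from the critical value $0$ to $\delta$, given explicitly by
\[
\Delta_i = \left\{\left(iy_1,\, iy_2,\, \mu_i + w,\, y_1^2 + y_2^2 + w^2\right) : (y_1, y_2, w) \in \RR^3,\ y_1^2 + y_2^2 + w^2 \leq \delta\right\}.
\]
A direct substitution confirms $\Delta_i$ lies in the total space of the local family. The pullbacks $du = i\,dy_1$, $dv = i\,dy_2$, $dz = dw$, and $dz_i = 2y_1\,dy_1 + 2y_2\,dy_2 + 2w\,dw$ all take values in the real span of $dy_1, dy_2, dw$, so every wedge of the form $d\zeta \wedge d\bar\zeta$ vanishes on $\Delta_i$ and hence $\Delta_i$ is Lagrangian for the ambient K\"ahler form. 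It is diffeomorphic to the closed ball $D^3$ via $(y_1, y_2, w)$, meets the central fibre only at the $A_1$-singularity $(0, 0, \mu_i)$, and meets $A_i(\delta)$ in precisely the standard real matching sphere over $[\mu_i - \sqrt\delta, \mu_i + \sqrt\delta]$.

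Third, set $\Delta = \Delta_1 \times \cdots \times \Delta_n$ under the product identification. This is a closed, properly embedded Lagrangian submanifold of $\chi^{-1}(D_\epsilon)$ diffeomorphic to $(D^3)^n$, hence contractible, and totally real, being Lagrangian for an $\omega$-compatible almost complex structure. Its intersection with $\chi^{-1}(\delta, \ldots, \delta)$ is the product of the $n$ real matching spheres, which by the iterated vanishing cycle description of $\eL_{\ul{c}}$ reviewed in Section~\ref{subsec:nilpotent} is Hamiltonian isotopic to $\eL_{\ul{c}_{inv}}$ in the fibre after the small Hamiltonian deformation between $\omega$ and $\omega_\cY$ from \cite[Theorem 1.2]{Manolescu:nilpotent}. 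The main difficulty I would expect to address is verifying that the product decomposition in the first step is accurate on a neighborhood of $\Delta$: because each $\Delta_i$ projects under $z$ to the closed disc $\{|z - \mu_i| \leq \sqrt\delta\} \subset B_i$ and the $B_i$ are disjoint, the corresponding subschemes stay away from the Hilbert--Chow diagonal, and the product symplectic form and $\omega_\cY$ differ there only by a Hamiltonian term that does not affect the Lagrangian isotopy class of the construction.
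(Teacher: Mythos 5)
Your overall strategy (reduce to a product of local $A_1$-models near the degenerating pairs and take thimbles/the real locus) is the right one in spirit, and it differs from the paper's route mainly in where it tries to realize the product structure: the paper stays in the nilpotent slice and invokes \cite[Lemma 28]{SS06}, which supplies genuine holomorphic coordinates on $\chi^{-1}(D_\epsilon)$ in which $\chi$ literally becomes $\prod_{r=1}^n\CC^3\to\CC^n$, $(a_r,b_r,c_r)\mapsto a_r^2+b_r^2+c_r^2$; it then takes $\Delta$ to be the real locus of these coordinates, and cites \cite[Lemmas 32 and 36]{SS06} for the identification of $\Delta\cap\chi^{-1}(\delta,\dots,\delta)$ with $\eL_{\ul{c}_{inv}}$. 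However, your first step, as written, has a genuine gap. The region of $\Hilb^{n,hor}(A_\tau)$ consisting of subschemes with one point over each disc $B_i$ is \emph{not} symplectomorphic to a product of the standard local families $\{u^2+v^2=(z-\mu_i)^2-z_i\}$: over $B_i$ the actual surface is $\{u^2+v^2=((z-\mu_i)^2-z_i)\,q_i(z)\}$, where $q_i(z)=\prod_{j\neq i}((z-\mu_j)^2-z_j)$ is nonvanishing but nonconstant and depends on all the other parameters $z_j$. The identification with your model $A_i(z_i)$ is only a fibrewise biholomorphism obtained by rescaling $(u,v)$ by a square root of $q_i$; it is neither a symplectomorphism for the relevant K\"ahler forms nor independent of the remaining parameters. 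Consequently your explicit thimble $\Delta_i$, whose Lagrangian/totally real property you verify only in the model coordinates, does not (as written) define a totally real submanifold of the genuine total space, and neither the ``totally real'' conclusion nor the identification of $\Delta\cap\chi^{-1}(\delta,\dots,\delta)$ with the matching-sphere product survives the transfer. This is exactly the issue that \cite[Lemma 28]{SS06} is designed to resolve, and your argument needs either that normal form or a replacement for it (e.g.\ defining the thimbles intrinsically by symplectic parallel transport over the real radial paths, in which case total reality with respect to the honest complex structure must still be argued separately, since a parallel-transport thimble is Lagrangian but not automatically totally real in the sense needed for the complexification step later in the paper).

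Two secondary points. First, the lemma is stated for $\chi^{-1}(D_\epsilon)$ in the nilpotent slice with the Seidel--Smith Lagrangian $\eL_{\ul{c}_{inv}}$, while you work throughout in the Hilbert-scheme picture with $\omega_{\cY}$; Manolescu's comparison \cite[Theorem 1.2]{Manolescu:nilpotent} is a fibrewise statement, so if you want to prove the lemma in the Hilbert-scheme model you should say explicitly how the total-space statement over $D_\epsilon$ is transported (this matters for the later use of $T\Delta$ in the relative stable trivialization). Second, your $\Delta=\Delta_1\times\cdots\times\Delta_n$ is a compact manifold with boundary and corners, whereas the intended $\Delta$ (the real locus) is a boundaryless, properly embedded totally real submanifold fibering over the real part of the base; taking closed thimbles whose boundary lies exactly on the fibre over $(\delta,\dots,\delta)$ makes the subsequent short exact sequence argument awkward, so you should at least take the open real locus over $[0,\epsilon)^n$ rather than truncating at $\delta$.
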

\begin{proof}
The fibre of $\chi$ over $\{0, \ldots, 0\} \in \Sym^{2n}(\CC)$ is singular and corresponds to degenerating the $2n$ eigenvalues in pairs. From \cite[Lemma 28]{SS06}, we know that there are holomorphic local coordinates $(a_1, b_1, c_1, \ldots, a_n, b_n, c_n)$ in $\chi^{-1}(D_\epsilon)$ centered around this singular fibre with respect to which the adjoint quotient $\chi$ has the form

\begin{equation}\label{eqn:polydisc}
	\begin{tikzcd}
	\chi^{-1}(D_\epsilon)\cap S_n \arrow[r,"\sim"]\arrow[d,"\chi"] &\prod_{r=1}^{n} 
	\mathbb{C}^3 \arrow{d}{(a_{1}^{2}+b_{1}^{2}+c_{1}^{2}, \dots, a_{n}^{2}+b_{n}^{2}+c_{n}^{2})}\\
	D_\epsilon \arrow[r] &\mathbb{C}^n
	\end{tikzcd}
\end{equation}
   Let $\Delta$ be the real locus in these coordinates, i.e., $\Delta$ is the fixed set of the complex conjugation of the complex structure. Seidel and Smith show that if $(z_1,\dots, z_n)= (\delta, \dots, \delta)$ in the disc $D_{\epsilon}$ then the intersection $\eL = \Delta \cap \chi^{-1}(\delta, \dots, \delta)$ is Lagrangian isotopic in the fibre to $\eL_{\ul{c}_{inv}}$ \cite[Lemmas 32 and 36]{SS06}.
\end{proof}
Let the configuration $[\mu_1 - \sqrt{\delta}, \mu_1 +\sqrt{\delta}, \dots, \mu_i - \sqrt{\delta}, \mu_i +\sqrt{\delta}]$ be $\tau_{\delta}$.

\begin{lemma}\cite[Lemma 31]{SS10} \label{lemma: rel stable trivialization}
    The relative vector bundle $(T\cY_{n,\tau_{\delta}}, T\eL_{\ul{c}_{inv}})$ is relatively stably trivial for the crossingless matching $\ul{c}_{inv}$. Moreover, the same is true of $(T \Hilb^{n,hor}(A_{\tau_{\delta}}), \Sigma_{\ul{c}_{inv}})$, where $\Sigma_{\ul{c}_{inv}}$ denotes the Manolescu product Lagrangian.
\end{lemma}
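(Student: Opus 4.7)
The plan is to use the explicit local coordinate description furnished by Lemma~\ref{lemma: trivial neighborhood}. In those coordinates, $\chi^{-1}(D_\epsilon)\cap S_n$ is biholomorphic to $\prod_{r=1}^n \mathbb{C}^3_{(a_r,b_r,c_r)}$, and the adjoint quotient restricts to the map $(a_r^2+b_r^2+c_r^2)_{r=1}^n$. Consequently the fibre
\[
\cY_{n,\tau_\delta} \;=\; \chi^{-1}(\tau_\delta) \;\cong\; \prod_{r=1}^n Q_\delta, \qquad Q_\delta := \{a^2+b^2+c^2 = \delta\}\subset \mathbb{C}^3,
\]
is a product of smooth affine quadrics, and under the isotopy of Lemma~\ref{lemma: trivial neighborhood} the Lagrangian $\eL_{\ul{c}_{inv}}$ corresponds to $\Delta\cap \chi^{-1}(\tau_\delta) \cong \prod_r S^2_\delta$, where $S^2_\delta = \{(a,b,c)\in\mathbb{R}^3 : a^2+b^2+c^2=\delta\}$ is the real round $2$-sphere inside each quadric factor.

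With this identification in hand, I would apply Lemma~\ref{lemma:fibre} to the map $(a^2+b^2+c^2)\co \mathbb{C}^3\to\mathbb{C}$ in each factor to produce the short exact sequence
\[
0\longrightarrow TQ_\delta \xrightarrow{\;dj\;} T\mathbb{C}^3\big|_{Q_\delta} \xrightarrow{\;dq\;} \underline{\mathbb{C}} \longrightarrow 0,
\]
where $q=a^2+b^2+c^2$. Restricting to the real sphere $S^2_\delta\subset Q_\delta$ and noting that the real coordinates $(a,b,c)\in\mathbb{R}^3$ satisfy $dq|_{\mathbb{R}^3} = 2(a\,da + b\,db + c\,dc)\in \underline{\mathbb{R}}$, the same short exact sequence restricts to its real form
\[
0\longrightarrow TS^2_\delta \longrightarrow T\mathbb{R}^3\big|_{S^2_\delta}=\underline{\mathbb{R}}^3 \longrightarrow \underline{\mathbb{R}} \longrightarrow 0,
\]
and moreover the real short exact sequence is exactly the subcomplex of the complex one obtained by tensoring with $\mathbb{R}\hookrightarrow\mathbb{C}$. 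Thus we are precisely in the situation of the commutative diagram in Lemma~\ref{lemma: SES stable trivialization}, so that $(TQ_\delta, TS^2_\delta)$ is relatively stably trivial. Taking the direct sum over the $n$ factors yields the relative stable trivialization for $(T\cY_{n,\tau_\delta}, T\eL_{\ul{c}_{inv}})$.

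For the second assertion, I would invoke Manolescu's identification (reviewed in Section~\ref{subsec:nilpotent}) of $\cY_{n,\tau_\delta}$ with the horizontal Hilbert scheme $\Hilb^{n,hor}(A_{\tau_\delta})$, together with the fact that under the K\"ahler deformation from $\omega$ to $\omega_{\cY}$ the Lagrangian $\eL_{\ul{c}_{inv}}$ is carried to one Hamiltonian isotopic to the product Lagrangian $\Sigma_{\ul{c}_{inv}}$. Since relative stable triviality of polarization data is invariant under diffeomorphism and Lagrangian isotopy, the result for $(T\Hilb^{n,hor}(A_{\tau_\delta}), T\Sigma_{\ul{c}_{inv}})$ follows immediately from the first part.

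The main technical point is checking that the splitting in Lemma~\ref{lemma: SES stable trivialization} genuinely lifts from the real sequence to the complex one; but this is essentially automatic here because the complex sequence is obtained from the real one by complexification via the manifestly real defining function $a^2+b^2+c^2$, so the canonical splitting of the complex sequence restricts to the canonical splitting of the real one. Everything else is bookkeeping across the product and transport through Manolescu's identification.
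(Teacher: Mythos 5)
Your proposal is correct and follows essentially the same route as the paper: both arguments reduce to the local coordinates of Lemma~\ref{lemma: trivial neighborhood}, obtain the real short exact sequence for the tangent bundle of the totally real locus, complexify it to get the commutative diagram required by Lemma~\ref{lemma: SES stable trivialization}, and then transport the resulting relative stable trivialization through the Lagrangian isotopies to $\eL_{\ul{c}_{inv}}$ and to the Manolescu product Lagrangian. Your factor-by-factor computation with the quadrics $Q_\delta$ and spheres $S^2_\delta$ is just the explicit coordinate form of the paper's intrinsic argument with $\Delta$ and $\chi$.
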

\begin{proof}
    Consider the point $(z_1,\dots, z_n)= (\delta, \dots, \delta)$ in the disc $D_{\epsilon}$ and the intersection $\eL = \Delta \cap \chi^{-1}(\delta, \dots, \delta)$. We obtain a short exact sequence of real vector bundles
    \[
        \begin{tikzcd}
    0 \arrow[r] & T\eL  \arrow[r, "dj"] & T\Delta|_{\eL}  \arrow[rr, "id\times d\chi"] & & \eL \times T_{\tau_{\delta}}(\Conf^{2n}_0(\mathbb{C}))  \arrow[r] & 0 
        \end{tikzcd}
    \]
where $j$ is the inclusion map $j \colon \eL \to \Delta$ and $\chi$ is the restriction of the adjoint quotient map $\chi \colon \Delta \to D_\epsilon$. This shows that the tangent space $T\eL$ is stably trivial as a real vector bundle. Complexifying this short exact sequence, we obtain the following diagram.
\begin{equation}\label{eq: rel stable trivialization}
	\begin{tikzcd}[row sep=large, column sep=large]
    0  \arrow[r] & T\eL \otimes_\mathbb{R}\mathbb{C} \arrow [d, "\sim", sloped] \arrow[r, "dj"] & T\Delta|_{\eL} \otimes_\mathbb{R}\mathbb{C} \arrow [d, "\sim", sloped] \arrow[r, "id\times d\chi"] & \eL \times \mathbb{C}^{k}  \arrow [d, "\sim", sloped]\arrow[r] & 0 \\
     0 \arrow[r] & T\chi^{-1}(\delta, \ldots, \delta)|_{\eL}  \arrow[r, "dj"] & T\chi^{-1}(D_{\epsilon})|_{\eL}  \arrow[r, "id\times d\chi"] & \eL \times \mathbb{C}^{k}  \arrow[r] & 0 
	\end{tikzcd}
\end{equation}
      
    As $\eL$ is totally real in $\chi^{-1}(\delta, \ldots, \delta)$, we see that the complexification $T\eL \otimes_\mathbb{R}\mathbb{C}$ is isomorphic to $T\chi^{-1}(\delta, \ldots, \delta)|_{\eL}$. The vertical isomorphisms are as described in Lemma~\ref{lemma: SES stable trivialization} , and since the hypotheses of Lemma ~\ref{lemma: SES stable trivialization} are satisfied, we see that the relative vector bundle $(T\chi^{-1}(\delta, \ldots, \delta), T\eL)$ is stably trivial.
    
Now, recall that $\eL$ is isotopic to $\eL_{\ul{c}_{inv}}$ inside $\chi^{-1}(\delta, \ldots, \delta)$ . Hence, the relative vector bundle $(T\chi^{-1}(\delta, \ldots, \delta) , T\eL)$ is carried by parallel transport to the relative bundle $(T\cY_{n,\tau_{\delta}}, T\eL_{\ul{c}_{inv}})$. This proves the first statement. Moreover, $L_{\ul{c}_{inv}}$ is Lagrangian isotopic to the product Lagrangian $\Sigma_{\ul{c}_{inv}}$. Thus, we see that the relative vector bundle for the product Lagrangian $(T\cY_{n,\tau}, T\Sigma_{\ul{c}_{inv}}) = (T\Hilb^{n,hor}(A_{\tau_{\delta}}), T\Sigma_{\ul{c}_{inv}})$ is also stably trivial. %
 %
\end{proof}

\begin{lemma} Let ${\ul{c}}$ and $\ul{d}$ be two crossingless matchings for the configuration $\tau_{\delta}$.
The tangent polarization $(T\cY_{n,\tau_{\delta}}, T\eL_{\ul{c}}, T\eL_{\ul{d}})$ is stably trivial, as is the tangent polarization $(T\Hilb^{n,hor}(A_{\tau_{\delta}}), T\Sigma_{\ul{c}}, T\Sigma_{\ul{d}})$.
\end{lemma}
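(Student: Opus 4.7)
The plan is to reduce the general case to the stable trivialization of the pair $(T\cY_{n,\tau_{\delta}}, T\eL_{\ul{c}_{inv}})$ already supplied by Lemma~\ref{lemma: rel stable trivialization}, and then propagate it to the second Lagrangian by invoking the monodromy argument of Lemma~\ref{lemma:monodromy}.

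First I would use the classical fact that $\Br_{2n}$ acts transitively on isotopy classes of crossingless matchings for $\tau_\delta$ to choose loops in $\Conf^{2n}(\CC)$ based at $\tau_\delta$ realizing braids $\beta_c,\beta_d \in \Br_{2n}$ with $\beta_c\ul{c}_{inv}$ isotopic to $\ul{c}$ and $\beta_d\ul{c}_{inv}$ isotopic to $\ul{d}$. The associated monodromy symplectomorphisms $\phi_{\beta_c}, \phi_{\beta_d} \co \cY_{n,\tau_{\delta}}\to\cY_{n,\tau_{\delta}}$ then carry $\eL_{\ul{c}_{inv}}$ to Lagrangians Hamiltonian isotopic to $\eL_{\ul{c}}$ and $\eL_{\ul{d}}$ respectively. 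A smooth family of Lagrangians $\eL_t\subseteq M$ produces a homotopy through Lagrangian subbundles of the pulled-back tangent bundle over $\eL_0\times[0,1]$, so a Hamiltonian isotopy preserves the stable isomorphism class of the tangent polarization; one may therefore replace $\eL_{\ul{c}}$ by $\phi_{\beta_c}\eL_{\ul{c}_{inv}}$ and $\eL_{\ul{d}}$ by $\phi_{\beta_d}\eL_{\ul{c}_{inv}}$ without loss. Pushing the whole triple forward by the symplectomorphism $\phi_{\beta_c}^{-1}$, whose differential gives a fiberwise isomorphism of polarization data, reduces the claim to stable triviality of
\[
(T\cY_{n,\tau_{\delta}}, T\eL_{\ul{c}_{inv}}, T(\phi_{\beta_c^{-1}\beta_d}\eL_{\ul{c}_{inv}})).
\]

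Next I would apply Lemma~\ref{lemma:monodromy}. Lemma~\ref{lemma: rel stable trivialization} produces exactly its required input, namely a complex stable trivialization of $T\cY_{n,\tau_{\delta}}$ whose restriction to $\eL_{\ul{c}_{inv}}$ is a real stable trivialization; moreover the braid $\beta_c^{-1}\beta_d$ is realized by a loop of regular values of the adjoint quotient $\chi \co S_n\simeq\CC^{4n}\to\Sym^{2n}(\CC)\simeq\CC^{2n}$, so Lemma~\ref{lemma:monodromy} applies and yields the desired stable trivialization. The Hilbert scheme statement follows at once: under Manolescu's deformation of the K\"ahler form from $\omega$ to $\omega_{\cY}$ discussed in Section~\ref{sec:geometry}, each iterated vanishing cycle $\eL_{\ul{c}}$ is Hamiltonian isotopic to the product matching sphere Lagrangian $\Sigma_{\ul{c}}\subseteq\Hilb^{n,hor}(A_{\tau_{\delta}})$ inside the same underlying smooth manifold, and the stable isomorphism class of the tangent polarization is preserved.

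Most of the argument is an assembly of the two prior lemmas together with the transitivity of the $\Br_{2n}$-action on crossingless matchings. The one step which deserves careful justification is the Hamiltonian-isotopy invariance of stable triviality of the tangent polarization, which must be spelled out explicitly against the definition of isomorphism of polarization data; once that formal check is in place, no substantive obstacle remains.
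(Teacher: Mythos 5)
Your proposal is correct and follows essentially the same route as the paper: reduce to $\ul{c}=\ul{c}_{inv}$ up to symplectomorphism (your braid-transitivity and push-forward step makes this explicit), feed the relative stable trivialization of $(T\cY_{n,\tau_{\delta}}, T\eL_{\ul{c}_{inv}})$ from Lemma~\ref{lemma: rel stable trivialization} into the monodromy Lemma~\ref{lemma:monodromy} to handle the second Lagrangian, and transfer to the Hilbert scheme via the isotopy between the Seidel--Smith vanishing-cycle Lagrangians and Manolescu's product Lagrangians. The extra care you flag about Hamiltonian-isotopy invariance of the polarization class is a reasonable elaboration of what the paper leaves implicit, not a deviation.
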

\begin{proof} Up to symplectomorphism, we may take $\ul{c}=\ul{c}_{inv}$. From the previous lemma, we see that $(T\cY_{n,\tau_{\delta}}, TL_{\ul{c}_{inv}})$ is stably trivial. Moreover, $\eL_{\ul{d}}$ can be obtained from $L_{\ul{c}_{inv}}$ by symplectic parallel transport in the fibration $S_n \to \Sym^{2n}(\mathbb{C})$. Lemma~\ref{lemma:monodromy} now implies that  $(T\cY_{n,\tau_{\delta}}, T\eL_{\ul{c}}, T\eL_{\ul{d}})$ is stably trivial. The second statement follows from the first via the usual isotopy of the Seidel-Smith vanishing cycle Lagrangian and the Manolescu product Lagrangian.
\end{proof}

We now turn our attention to equivariant trivializations. Consider a smooth vector bundle $\pi \colon V \to B$ such that the total space $V$ has a $\Z/2\Z$ action $\iota \co V \to V$ which is an order-two fibrewise linear automorphism of $V$. Then $V$ splits as $V \cong V^{inv} \oplus V^{anti}$ where $V^{inv}$ is the invariant subbundle  of $V$ preserved by the $\Z/2\Z$ action, corresponding to the eigenvalue $+1$ in each fibre, and $V^{anti}$ is the anti-invariant subbundle of $V$, corresponding to the eigenvalue $-1$ in each fibre.

\begin{definition}[Equivariant stable trivialization]\label{def: eqvt stable trivialization}
We say that a stable trivialization of $V$ is $\iota$-equivariant (or $\Z/2\Z$ -equivariant) if there exists an order-two linear automorphism $T \co \mathbb{C}^n \to \mathbb{C}^n$ which restricts to an order-two linear automorphism $T \co \mathbb{C}^k \to \mathbb{C}^k$, with $k \leq n$, such that the following diagram, in which the two rows are identical and correspond to the stable trivialization, commutes
\[
\begin{tikzcd}
    0  \arrow[r] & V \arrow [d, "\iota"] \arrow[r, "\rho"] &B \times \mathbb{C}^n  \arrow [d, "id \times T"] \arrow[r] & B \times \mathbb{C}^k  \arrow [d, "id \times T"]\arrow[r, "\xi"] & 0 \\
     0 \arrow[r] &V \arrow[r, "\rho"] &B \times \mathbb{C}^n  \arrow[r] &B \times\mathbb{C}^k \arrow[r, "\xi"] & 0.
\end{tikzcd}
\]
Equivariant relative stable trivializations are defined similarly.
\end{definition}

\begin{lemma}\label{lemma: eqvt stable trivialization}
    If a smooth complex vector bundle $V \to B$ has a $\Z/2\Z$ equivariant stable trivialization, then there is an induced a stable trivialization of the invariant subbundle $V^{inv}$ and anti-invariant subbundle $V^{anti}$. Similarly, if $V$ is a complex vector bundle with a Lagrangian subbundle $W$, and the pair $(V,W)$ has an $\iota$-equivariant relative stable trivialization, then this induces a relative stable trivialization of the invariant pair $(V^{inv}, W^{inv})$ and anti-invariant pair $(V^{anti}, W^{anti})$.
\end{lemma}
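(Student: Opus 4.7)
The plan is to exploit the fact that an order-two complex-linear automorphism of $\mathbb{C}^n$ is diagonalizable, so that all the bundles and maps in Definition \ref{def: eqvt stable trivialization} split canonically into $+1$ and $-1$ eigenspaces, and then verify that this splitting respects the short exact sequence.

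More concretely, first I would decompose $\mathbb{C}^n = \mathbb{C}^n_+ \oplus \mathbb{C}^n_-$ and $\mathbb{C}^k = \mathbb{C}^k_+ \oplus \mathbb{C}^k_-$ into the $(\pm 1)$-eigenspaces of the order-two automorphism $T$. Since the trivialization maps $\rho$ and $\xi$ in the diagram commute with the $\mathbb{Z}/2\mathbb{Z}$-action, they send invariant vectors to invariant vectors and anti-invariant vectors to anti-invariant vectors. Consequently, $\rho$ restricts to injective bundle maps $V^{inv} \hookrightarrow B \times \mathbb{C}^n_+$ and $V^{anti} \hookrightarrow B \times \mathbb{C}^n_-$, and $\xi$ restricts to surjective bundle maps $B \times \mathbb{C}^n_{\pm} \to B \times \mathbb{C}^k_{\pm}$ whose kernels are exactly the images of the restricted $\rho$'s. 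This splits the original short exact sequence into two short exact sequences
\[
0 \to V^{inv} \to B \times \mathbb{C}^n_+ \to B \times \mathbb{C}^k_+ \to 0, \qquad 0 \to V^{anti} \to B \times \mathbb{C}^n_- \to B \times \mathbb{C}^k_- \to 0,
\]
each of which exhibits a stable trivialization of the corresponding subbundle as a complex vector bundle (for $V^{inv}$) or a real vector bundle after identifying $\mathbb{C}^n_-$ with $\mathbb{R}^n_-\otimes_{\mathbb{R}} \mathbb{C}$ via a complex structure $i \cdot \iota$ (or more simply, treating $V^{anti}$ as a complex bundle in its own right).

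For the relative case with a Lagrangian subbundle $W \subseteq V$, I would repeat the same eigenspace decomposition for $W$, noting that the hypothesis that the equivariant relative trivialization identifies $W$ with a totally real subbundle of $B \times \mathbb{C}^n$ fixed by a compatible involution means $W$ splits as $W^{inv} \oplus W^{anti}$ where $W^{inv} \subseteq V^{inv}$ and $W^{anti} \subseteq V^{anti}$ are Lagrangian (equivalently totally real of maximal real rank) in their respective ambient bundles. The Lagrangian subbundle $\Lambda \subset (\Upsilon(V) \oplus \underline{\mathbb{C}}^k)|_{W \times [0,1]}$ interpolating between $W \oplus \underline{\mathbb{R}}^k$ and the standard totally real subspace is likewise $\iota$-equivariant by hypothesis, and therefore splits as $\Lambda = \Lambda^{inv} \oplus \Lambda^{anti}$, providing the required Lagrangian homotopies in the invariant and anti-invariant pieces separately.

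I do not expect any serious obstacle here: the entire content is that an order-two automorphism diagonalizes and every morphism in sight commutes with it, so the data splits off the eigenspaces tautologically. The only mild subtlety is to remember that $V^{anti}$ inherits its complex structure from $V$, not from the $\pm i$ action of a would-be complex involution, so the trivialization of $V^{anti}$ comes from $B \times \mathbb{C}^n_-$ as an honest complex bundle; this is automatic once one observes that $T$ is complex-linear of order two and thus its eigenspaces are complex subspaces.
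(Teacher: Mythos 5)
Your proposal is correct and follows essentially the same route as the paper: both observe that because $\rho$ and $\xi$ intertwine the order-two complex-linear automorphisms, the short exact sequence splits fibrewise into $\pm 1$-eigenspace sequences, yielding stable trivializations of $V^{inv}$ and $V^{anti}$, with the relative/Lagrangian case handled by the identical equivariant splitting. The only nitpick is notational: the interpolating Lagrangian subbundle $\Lambda$ lives over (the Lagrangian base) $\times [0,1]$, not over $W \times [0,1]$, but this does not affect the argument.
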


\begin{proof}
Consider the $\Z/2$ equivariant stable trivialization for $V$ with notation as in Definition ~\ref{def: eqvt stable trivialization}.
\[
\begin{tikzcd}
    0  \arrow[r] & V \arrow [d, "\iota"] \arrow[r,"\rho"] &B \times \mathbb{C}^n  \arrow [d, "id \times T"] \arrow[r, "\xi"] & B \times \mathbb{C}^k  \arrow [d, "id \times T"]\arrow[r] & 0 \\
     0 \arrow[r] &V \arrow[r, "\rho"] &B \times \mathbb{C}^n  \arrow[r, "\xi"] &B \times\mathbb{C}^k \arrow[r] & 0 
\end{tikzcd}
\]
Since the involutions $\iota$ and $id \times T$ intertwine the inclusion map $\rho$ and the quotient map $\xi$, it follows that the maps $\rho$ and $\xi$ must also split. In particular we have $\rho = \rho^{inv} \oplus \rho^{anti}$ such that $\rho^{inv} \colon V^{inv} \to B \times (\mathbb{C}^n)^{inv}$ maps the invariant subbundle of $V$ to the invariant subbundle of $B \times \mathbb{C}^n$, and $\rho^{anti} \colon V^{anti} \to B \times (\mathbb{C}^n)^{anti}$ maps one anti-invariant subbundle to the other. We similarly have $\xi = \xi^{inv} \oplus \xi^{anti}$. Then we see that 
\[
\begin{tikzcd}
    0  \arrow[r]  &V^{inv}  \arrow[r,"\rho^{inv}"] &B \times (\mathbb{C}^n)^{inv}  \arrow[r, "\xi^{inv}"]  &B \times (\mathbb{C}^k)^{inv}  \arrow[r] & 0
\end{tikzcd}
\]
is a stable trivialization for the invariant subbundle $V^{inv}$ and similarly
\[
\begin{tikzcd}
    0  \arrow[r] & V^{anti}  \arrow[r,"\rho^{anti}"] &B \times (\mathbb{C}^n)^{anti}  \arrow[r, "\xi^{anti}"] & B \times (\mathbb{C}^k)^{anti}  \arrow[r] & 0
\end{tikzcd}
\]
is a stable trivialization for $V^{anti}$. The statement for relative vector bundles is similar.
\end{proof}

We now apply this discussion to our situation. For notation, recall that we have considered three involutions on $\chi^{-1}(\tau) = \cY_{n,\tau}$ for $\tau \in \Conf^{2n}(\CC) \subseteq \Sym^n(\CC)$ in this paper, to wit
\[ \iota_A(u,v,z) = (u,z,-z)\qquad \iota_B(u,v,z)=(-u,-v,-z) \qquad \iota_{D}(u,v,z) = (u,-v,z).\]
In the Appendix, we rephrase each of these in terms of the nilpotent slice coordinates on $\cY_{n,\tau}$. With respect to this rephrasing, it follows that all three involutions have a natural extension to $S_n$. We observe that $\tau_{\delta}$ and $\ul{c}_{inv}$ have been chosen so that $\cY_{n, \tau_{\delta}}$ and $L_{\ul{c}_{inv}}$ are preserved by these actions. We have the following mild extension of \cite[Lemma 29]{SS10}.

\begin{lemma} The submanifold $\Delta \subseteq \chi^{-1}(D_{\epsilon})$ of Lemma~\ref{lemma: trivial neighborhood} is preserved by the natural extension of any of the three involutions induced on $\cY_{n,\tau}$ by $\iota_A$, $\iota_B$ or $\iota_D$. Moreover, the isotopy between $\chi^{-1}(\delta, \dots, \delta) \cap \Delta$ and $\eL_{\ul{c}_{inv}}$ may be taken to be $\iota$-equivariant. \end{lemma}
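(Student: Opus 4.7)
My strategy is to follow Seidel and Smith's proof of \cite[Lemma 29]{SS10}, which establishes this statement in the case of $\iota_D$, and extend their argument to cover $\iota_A$ and $\iota_B$. The proof splits into two parts: showing that $\Delta$ is preserved, which relies on the Seidel-Smith local normal form of $\chi$ near the most degenerate fibre, and producing an equivariant isotopy, which is a standard averaging argument.

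For the first part, I would begin by verifying that each involution $\iota \in \{\iota_A, \iota_B, \iota_D\}$, when extended to the nilpotent slice $S_n$ as described in the Appendix, covers the involution $z \mapsto -z$ on $\Sym^{2n}(\CC)$ under the adjoint quotient $\chi$. Since the configuration $\tau_\delta = [\mu_1 \pm \sqrt{\delta}, \dots, \mu_n \pm \sqrt{\delta}]$ is setwise fixed by $z \mapsto -z$ (recall $\{\mu_i\}$ is invariant under this), and the polydisc $D_\epsilon$ of Lemma~\ref{lemma: trivial neighborhood} consists entirely of such invariant configurations, each $\iota$ preserves $\chi^{-1}(D_\epsilon)$. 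Next, I would examine the local normal form \eqref{eqn:polydisc} from \cite[Lemma 28]{SS06}. Each of $\iota_A$, $\iota_B$, $\iota_D$ is \emph{defined over $\RR$}, meaning each commutes with the natural real structure $(u,v,z) \mapsto (\bar u, \bar v, \bar z)$ on $A_\tau$, and likewise extends to a real-defined involution on $S_n$. An equivariant refinement of the holomorphic normal form argument giving \eqref{eqn:polydisc} then allows us to choose the coordinates $(a_r, b_r, c_r)$ so that $\iota$ acts on $\prod_r \CC^3$ by a real-linear transformation (explicitly, a signed permutation determined by the induced action on the pairs $\{\mu_i, -\mu_i\}$ together with the sign choices encoded by $\iota$ on $(u,v)$). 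Since $\Delta$ is by definition the real locus $\prod_r \RR^3$ in these coordinates, and real-linear maps preserve real loci, $\Delta$ is $\iota$-invariant.

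For the second part, the isotopy from $\chi^{-1}(\delta,\dots,\delta) \cap \Delta$ to $\eL_{\ul{c}_{inv}}$ constructed in \cite[Lemmas 32 and 36]{SS06} takes place inside the $\iota$-invariant fibre $\chi^{-1}(\delta,\dots,\delta)$ and connects two $\iota$-invariant Lagrangians (the former is invariant because $\Delta$ is, and the latter is invariant by the explicit symmetry of $\ul{c}_{inv}$ under $z \mapsto -z$). We may then average the generating Hamiltonian of this isotopy under the $\ZZ/2\ZZ$-action: the average is still a Hamiltonian generating an isotopy between the same two $\iota$-invariant endpoints, and the resulting flow is now $\iota$-equivariant.

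The main obstacle is establishing the equivariant version of the local normal form \eqref{eqn:polydisc} simultaneously for each of the three involutions. Concretely, one must verify that the holomorphic coordinate lifts produced in the proof of \cite[Lemma 28]{SS06} can be chosen (or $\ZZ/2\ZZ$-averaged) so that $\iota$ intertwines with a linear model of the sum-of-squares map $\sum_r (a_r^2 + b_r^2 + c_r^2)$. This is a routine but delicate equivariant enhancement of a holomorphic Morse-Bott-type argument, and is the only genuinely new input beyond \cite[Lemma 29]{SS10}.
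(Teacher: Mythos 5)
Your overall route is the same as the paper's (preserve $\chi^{-1}(D_\epsilon)$, show the real locus $\Delta$ of the coordinates \eqref{eqn:polydisc} is preserved, then make the isotopy of \cite[Lemmas 32 and 36]{SS06} equivariant), but the two places where you deviate both have problems. First, two of your setup claims are inaccurate: $\iota_D(u,v,z)=(u,-v,z)$ does not cover $z\mapsto -z$ on $\Sym^{2n}(\CC)$ (it acts trivially on the base), and the configurations in $D_\epsilon$ are not individually invariant under $z\mapsto -z$ --- only the polydisc is preserved setwise, with the factors permuted according to $\mu_i\mapsto -\mu_i$. These slips are harmless since all one needs is that $\chi^{-1}(D_\epsilon)$ is preserved. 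The real issue is that you reduce the invariance of $\Delta$ to an ``equivariant refinement'' of the normal form of \cite[Lemma 28]{SS06}, i.e.\ to re-choosing the coordinates $(a_r,b_r,c_r)$ so that $\iota$ acts real-linearly, and then you explicitly decline to prove this, calling it routine. Since this compatibility of $\iota$ with the real structure defining $\Delta$ is precisely the content of the lemma (everything else is formal), leaving it as an unverified claim is a genuine gap. The paper does not re-run the normal-form construction at all: it argues that $\iota$ preserves $\chi^{-1}(D_\epsilon)$ and is compatible with the complex structure there (in the nilpotent-slice coordinates of the Appendix the three involutions are real-linear --- signed permutations, sign changes, and transposes --- so they commute with the conjugation whose fixed locus is $\Delta$), hence preserve the real locus as constructed.

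Second, your averaging argument for the equivariant isotopy does not work as stated. If $H_t$ generates a flow $\phi_t$ with $\phi_1\bigl(\Delta\cap\chi^{-1}(\delta,\dots,\delta)\bigr)=\eL_{\ul{c}_{inv}}$, the flow of the averaged Hamiltonian $\tfrac{1}{2}(H_t+H_t\circ\iota)$ is indeed $\iota$-equivariant, but there is no reason its time-one map still carries the first Lagrangian to the second: averaging Hamiltonians does not preserve the endpoints (or the intermediate Lagrangians) of the induced path. To salvage an averaging argument you would first need to know that the entire path of Lagrangians $L_t$ produced in \cite[Lemma 32]{SS06} is $\iota$-invariant, and then equivariantly re-generate it --- but that invariance is exactly what needs to be read off from the construction, which is what the paper does: it simply observes that the isotopy of \cite[Lemma 32]{SS06} is equivariant by construction, with no averaging required.
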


\begin{proof} As previously, let $\Delta$ be the real locus in these coordinates given in \eqref{eqn:polydisc} the $\chi^{-1}(D_{\epsilon})$ so that, $\Delta$ is the fixed set of the complex conjugation of the complex structure. Let $\iota$ be the extension of any of the involutions on $Y_{n, \tau}$ considered in this paper. As $\iota$ preserves each fibre of $\chi$, it preserves $\chi^{-1}(D_{\epsilon})$. Moreover, as $\iota$ is a symplectic involution, it preserves the complex structure and hence the real locus of the complex structure in this neighborhood. Thus, the neighborhood $\Delta$ constructed above is preserved by $\iota$. The second assertion follows immediately from the construction of the isotopy in \cite[Lemma 32]{SS06}.\end{proof}

Moreover, the isotopy between the vanishing cycle Lagrangians and Manolescu's product Lagrangians \cite[Lemmas 4.2 and 4.3]{Manolescu:nilpotent} may be made equivariant with respect to any of $\iota_A$, $\iota_B$, or $\iota_D$ as follows.

\begin{lemma}\label{lemma: Manolescu lagrangians}
	The Lagrangian isotopy relating the Seidel-Smith Lagrangian $\eL_{\ul{c}_{inv}}$ and the Manolescu product Lagrangian $\Sigma_{\ul{c}_{inv}} = \prod_{i=1}^{n} \Sigma_{c}$ associated to the crossingless matching $\ul{c}$ is equivariant with respect to any of the three involutions $\iota_A$, $\iota_B$ and $\iota_D$.
\end{lemma}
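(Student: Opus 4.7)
The plan is to show that Manolescu's isotopy can be carried out equivariantly for each of the three involutions $\iota \in \{\iota_A, \iota_B, \iota_D\}$ by verifying that every step of Manolescu's construction is compatible with the involution. First, I would verify that $\iota$ preserves both endpoints of the isotopy. The Seidel--Smith vanishing cycle Lagrangian $\eL_{\ul{c}_{inv}}$ is preserved because $\ul{c}_{inv}$ is a collection of real intervals $[\mu_i-\sqrt{\delta}, \mu_i+\sqrt{\delta}]$ whose endpoint set $\{\mu_i\pm\sqrt{\delta}\}$ is invariant under $z\mapsto -z$, so the iterated vanishing cycle construction produces a Lagrangian preserved by all three involutions (the components are individually preserved for $\iota_D$, and individually preserved or interchanged in pairs for $\iota_A, \iota_B$). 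The product Lagrangian $\Sigma_{\ul{c}_{inv}} = \prod_i \Sigma_{c_i}$ is preserved for the same reason, since each matching sphere $\Sigma_{c_i}$ defined via \eqref{eqn:sphere} consists of points with $(u,v)\in \sqrt{-p(z)}\RR^2$ lying over a $\iota$-invariant real interval.

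Next, I would recall that Manolescu's Hamiltonian isotopy \cite[Theorem 1.2]{Manolescu:nilpotent} arises from a Moser argument applied to a one-parameter family of K\"ahler forms $\omega_t$ interpolating between the K\"ahler form $\omega$ restricted from $\CC^{4n}$ on $\cY_{n,\tau_{\delta}}$ and the form $\omega_{\cY}$ that agrees with the product form on $\Sym^n(A_{\tau_{\delta}})$ away from a neighborhood of the Hilbert--Chow divisor $D_{HC}$. The key observation is that each of the three involutions acts by a unitary complex-linear map on $\CC^3$, namely as the diagonal matrices $\diag(1,1,-1)$, $\diag(-1,-1,-1)$, and $\diag(1,-1,1)$ respectively, and therefore acts holomorphically and K\"ahler-isometrically with respect to the standard structure on $\CC^3$. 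Consequently, the induced action on $\Hilb^{n,hor}(A_{\tau_{\delta}})$ (or equivalently $\cY_{n,\tau_{\delta}}$ via Manolescu's identification) preserves $\omega$, the product form on $\Sym^n(A_{\tau_{\delta}})$, and can be arranged to preserve $\omega_{\cY}$ by performing the modification near $D_{HC}$ in an $\iota$-equivariant fashion (using an $\iota$-invariant bump function near the diagonal, which is invariant since $\iota$ preserves $D_{HC}$ setwise).

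Then, since $\omega_t$ is $\iota$-invariant for all $t$, the Moser primitive $\alpha_t$ satisfying $d\alpha_t = d\omega_t/dt$ can be made $\iota$-invariant by averaging with $\iota^*\alpha_t$; the corresponding Moser vector field $X_t$ defined by $\iota_{X_t}\omega_t = -\alpha_t$ is then $\iota$-invariant, and its time-one flow yields an $\iota$-equivariant symplectomorphism carrying the deformed vanishing cycle Lagrangian to one which is Hamiltonian isotopic to $\Sigma_{\ul{c}_{inv}}$. The remaining Hamiltonian isotopy relating these two Lagrangians inside the common symplectic manifold $(\cY_{n,\tau_{\delta}}, \omega_{\cY})$ can be chosen $\iota$-equivariant by the standard fact that equivariant Hamiltonian isotopies between equivariant Hamiltonian-isotopic Lagrangians exist whenever $H^1$ of the ambient fixed locus vanishes, which holds here since the fixed loci are unions of products of spheres and configuration spaces with vanishing first cohomology.

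The main technical obstacle will be ensuring that the modification of $\omega$ to $\omega_{\cY}$ near $D_{HC}$ can genuinely be performed equivariantly; however, this reduces to choosing an $\iota$-invariant tubular neighborhood of $D_{HC}$ and an $\iota$-invariant cutoff function, both of which exist straightforwardly since $\iota$ acts on the Hilbert scheme preserving $D_{HC}$. All other parts of the argument are formal consequences of equivariance of the K\"ahler forms and Lagrangians.
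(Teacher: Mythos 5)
Your proposal correctly handles the easier half of the argument: the three involutions are induced by unitary diagonal matrices on $\CC^3$, so they preserve $\omega$, the product form on $\Sym^n(A_{\tau_\delta})$, and (after an equivariant choice of cutoff near $D_{HC}$, or averaging) the form $\omega_{\cY}$; and both endpoint Lagrangians are manifestly $\iota$-invariant. The interpolation between the two K\"ahler forms can indeed be made $\iota$-equivariant, and this part matches the paper's argument, which works with Manolescu's linear interpolation between the Seidel--Smith form and an extension $\Omega_{\cY}$ of $\omega_{\cY}$ to the total space $S_n$, concluding that the iterated vanishing-cycle Lagrangians for the two forms are equivariantly isotopic.

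The gap is in your last step. Manolescu's isotopy is not just a Moser flow between two forms applied to a fixed submanifold: after deforming the form, one still has to identify the iterated vanishing-cycle Lagrangian built from $\Omega_{\cY}$ with the product Lagrangian $\Sigma_{\ul{c}_{inv}}$, and this identification (Manolescu's Lemma 4.3) is carried out via a torus action on the fibration $\mathcal{V}_{m,\tau}\to\Sym^{2n}(\CC)$ and its moment map $f$, restricting the construction to $f^{-1}(0)$. Making \emph{that} step equivariant is where the content lies, and it is not formal: $\iota_A$ and $\iota_B$ commute with the torus action, but $\iota_D(u,v,z)=(u,-v,z)$ does \emph{not}; one needs the specific observation that $f_\tau=-f_{\iota\circ\tau}$, so that $f^{-1}(0)$ is nonetheless preserved and the construction still goes through. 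Your substitute --- the claim that two $\iota$-invariant, Hamiltonian-isotopic Lagrangians are \emph{equivariantly} Hamiltonian isotopic whenever $H^1$ of the fixed locus vanishes --- is not a standard fact and does not hold in that generality (an ambient Hamiltonian isotopy gives no control over the fixed-point sets, and equivariance of the isotopy is exactly what is needed downstream to transport the polarization data). So the proposal as written does not close the argument, particularly for $\iota_D$.
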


\begin{proof} In the Appendix, we translate each of the three involutions into coordinates on the nilpotent slice. With
respect to this description, the Seidel-Smith K\"ahler form $\omega$ on $S_n$ is $\iota$-equivariant for $\iota$ any of $\iota_A$, $\iota_B$, and $\iota_D$. Meanwhile, Manolescu constructs an extension of the form $\omega_{\cY}$ to a form $\Omega_{\cY}$ $S_n$ which agrees with the product form induced by $\omega_{A_{\tau}}$ on each fibre on a large compact set, which contains an open subset $U$ of the two Lagrangians which itself lies inside the set
\[\mathcal V_{m,\tau} = \{[(u_i,v_i,z_i)] \in \Conf^{2n}(A_{\tau}) : z_i \neq z_j \text{ for all } i\neq j\}.\]
This form is clearly $\iota$-equivariant on $U$ and may be made universally so via averaging. In \cite[Lemma 4.2]{Manolescu:nilpotent}, Manolescu interpolates linearly between these two symplectic forms; this interpolation is $\iota$-equivariant, implying that the Lagrangians constructed by iterated parallel transport for both the symplectic forms are also $\iota$-equivariantly isotopic. In \cite[Lemma 4.3]{Manolescu:nilpotent}, Manolescu uses the torus action $\tau$ on the fibration
\[ \mathcal V_{m,\tau} \rightarrow \Sym^{2n}(\CC)\]
and associated moment map $f$ on the restriction to $U$ to show that the iterated vanishing cycle construction for $\Omega_{\cY}$ yields a product Lagrangian $\Sigma_{\ul{c}}$. The involutions $\iota_A$ and $\iota_B$ commute with the torus action $\tau$ and hence preserve the moment map $f$ and the fibre $f^{-1}(0)$. More subtly, the involution $\iota_D((u,v,z) = (u, -v, z)$ does not commute with the torus action. Nonetheless the moment map $f_\tau = - f_{\iota \circ \tau}$, and hence the fibre $f^{-1}(0)$ is still preserved. Therefore the construction of the further isotopy of \cite[Lemma 4.3]{Manolescu:nilpotent} carries through $\iota$-equivariantly. In particular, in any step when the istopy class of a Lagrangian is necessarily that of a product Lagrangian, it is in particular equivariantly isotopic to a product Lagrangian preserved by $\iota$.\end{proof}

Now, let $\iota$ be any of the three actions $\iota_A$, $\iota_B$, and $\iota_D$, and let $\ul{c}$ and $\ul{d}$ be any two crossingless matchings with endpoints on $\tau_{\delta}$.

\begin{lemma}\label{lemma: polarization eqvt}
The restriction of the polarization data $(T\cY_n, T\eL_{\ul{c}}, T\eL_{\ul{d}})$ to $(\cY_{n, \tau_{\delta}}^{\fix}, \eL_{\ul{c}}^{fix}, \eL_{\ul{d}}^{fix})$ has an $\iota$-equivariant stable trivialization, as does the restriction of the $(T\Hilb^{n,hor}(A_{\tau,\delta}), T(\Sigma_{\ul{c}}), T(\Sigma_{\ul{d}}))$ polarization data to $(\Hilb^{n,hor}(A_{\tau,\delta})^{fix}, \Sigma_{\ul{c}}^{fix}, \Sigma_{\ul{d}}^{fix})$.
\end{lemma}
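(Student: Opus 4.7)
The plan is to carry out the constructions of Lemmas~\ref{lemma:fibre}, \ref{lemma: rel stable trivialization}, and \ref{lemma:monodromy} equivariantly, using the extensions of $\iota_A$, $\iota_B$, $\iota_D$ to linear involutions of $S_n \simeq \CC^{4n}$ described in the Appendix. Under these extensions, the adjoint quotient $\chi \co S_n \to \Sym^{2n}(\CC)$ becomes equivariant for the induced $\iota$-action on $\Sym^{2n}(\CC)$, the Seidel--Smith K\"ahler form on $S_n$ is $\iota$-invariant, and $\tau_\delta$ is an $\iota$-fixed point in the base.

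First I would establish an $\iota$-equivariant stable trivialization of $T\cY_{n,\tau_\delta}$ itself. Since $\iota$ acts linearly on $\CC^{4n}$ and preserves $\cY_{n,\tau_\delta}$, the short exact sequence (\ref{eq:trivialization}) is $\iota$-equivariant once $\iota$ is extended to act in the obvious way on $T_{\tau_\delta}\Sym^{2n}(\CC)$. The orthogonal splitting with respect to the standard Hermitian metric on $\CC^{4n}$, which is $\iota$-invariant because $\iota$ acts unitarily in these coordinates, is then itself equivariant, giving an $\iota$-equivariant stable trivialization in the sense of Definition~\ref{def: eqvt stable trivialization}. To upgrade this to a relative trivialization for $\eL_{\ul{c}_{inv}}$, I would use that the real locus $\Delta \subseteq \chi^{-1}(D_\epsilon)$ and the fibre $\chi^{-1}(\tau_\delta)\cap\Delta$ are $\iota$-preserved, and that the isotopy of this fibre to $\eL_{\ul{c}_{inv}}$ can be taken equivariantly by the lemma just preceding this one. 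The commutative diagram (\ref{eq: rel stable trivialization}) from the proof of Lemma~\ref{lemma: rel stable trivialization} is then $\iota$-equivariant, yielding the required equivariant relative stable trivialization of $(T\cY_{n,\tau_\delta}, T\eL_{\ul{c}_{inv}})$, and the equivariant Lagrangian isotopy of Lemma~\ref{lemma: Manolescu lagrangians} transfers this to an equivariant trivialization for the Manolescu product Lagrangian $\Sigma_{\ul{c}_{inv}}$.

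To treat two general $\iota$-preserved matchings $\ul{c}$ and $\ul{d}$, I would reduce to the case $\ul{c} = \ul{c}_{inv}$ via an $\iota$-equivariant symplectomorphism, and then realize $\eL_{\ul{d}}$ as the image of $\eL_{\ul{c}_{inv}}$ under parallel transport along a loop in $\Conf^{2n}(\CC)$ that can be chosen to lie in the $\iota$-fixed locus of the base. Parallel transport with respect to an averaged $\iota$-invariant connection then produces an equivariant monodromy symplectomorphism $\phi$. Running the argument of Lemma~\ref{lemma:monodromy} with this equivariant data, and using the canonical orthogonal splittings $s_{a_t}$, which are automatically $\iota$-equivariant because the metric is invariant, yields an $\iota$-equivariant homotopy between the trivialization coming from $T\eL_{\ul{c}_{inv}}$ and the induced trivialization on $T(\phi\eL_{\ul{c}_{inv}}) = T\eL_{\ul{d}}$. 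Restricting to the fixed sets and invoking Lemma~\ref{lemma: eqvt stable trivialization} completes the argument, with the statement for the Manolescu product Lagrangians then following via one final equivariant isotopy. The hard part will be verifying that the monodromy carrying $\eL_{\ul{c}_{inv}}$ to $\eL_{\ul{d}}$ can indeed be chosen equivariantly; this should reduce to a routine check using that $\ul{c}_{inv}$ and $\ul{d}$ are both $\iota$-equivariant crossingless matchings on the $\iota$-invariant configuration $\tau_\delta$, so that an $\iota$-symmetric path of configurations realizing the required braid exists within the $\iota$-fixed locus.
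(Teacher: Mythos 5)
Your proposal is correct and follows essentially the same route as the paper's proof: equivariance of the real locus $\Delta$ and of the commutative diagram~\eqref{eq: rel stable trivialization}, combined with Lemma~\ref{lemma: eqvt stable trivialization} and the equivariant isotopies of Lemma~\ref{lemma: Manolescu lagrangians}. The one place you go beyond the paper's quite terse argument is in spelling out the equivariant version of the monodromy step of Lemma~\ref{lemma:monodromy} needed to handle the second Lagrangian $\eL_{\ul{d}}$ --- the paper leaves this implicit, and your observation that the path of configurations carrying $\eL_{\ul{c}_{inv}}$ to $\eL_{\ul{d}}$ can be chosen inside the $\iota$-fixed locus of the base (with the canonical orthogonal splittings automatically equivariant for the invariant metric) is exactly the right way to close that step.
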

\begin{proof} It follows from Lemma ~\ref{lemma: eqvt stable trivialization} that the polarization data $(T\cY_{n,\bar{\mu}}, T\eL_{\ul{c}}, T\eL_{\ul{d}})$ is stably trivial. Moreover, as the neighborhoods $\Delta$ and $\chi^{-1}(D_\epsilon)$ constructed in Lemma ~\ref{lemma: trivial neighborhood} which realize the relative stable trivialization are $\iota$-equivariant, we see that $\iota$ will intertwines the commutative diagram of equation ~\ref{eq: rel stable trivialization}. Thus, the relative stable trivialization from Lemma ~\ref{lemma: rel stable trivialization} is $\iota$-equivariant. Lemma~\ref{lemma: Manolescu lagrangians} implies that the same is true if we consider the product Lagrangians.
\end{proof}
    
\begin{theorem}\label{thm: stable normal trivialization}
If $\iota$ is any of the three actions $\iota_A$, $\iota_B$, and $\iota_D$, then polarization data $(\cY_{n, \tau_{\delta}}^{\fix}, \eL_{\ul{c}}^{fix}, \eL_{\ul{d}}^{fix})$ admits a stable normal trivialization in the sense of Definition~\ref{def:stable-normal}, and likewise if we substitute the product Lagrangian triple $(\Hilb^{n,hor}(A_{\tau,\delta})^{fix}, \Sigma_{\ul{c}}^{fix}, \Sigma_{\ul{d}}^{fix})$.
\end{theorem}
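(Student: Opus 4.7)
The plan is to deduce the existence of a stable normal trivialization almost immediately from the equivariant trivialization already constructed in Lemma~\ref{lemma: polarization eqvt}, by extracting its anti-invariant part. Recall that for a symplectic involution $\iota$ acting linearly on a complex vector bundle with a Lagrangian subbundle, the restriction of the bundle to the fixed locus splits as a direct sum of invariant and anti-invariant subbundles, and in this splitting the normal bundle $NM^{fix}$ is canonically identified with $(TM|_{M^{fix}})^{anti}$, and likewise $N\eL^{fix} = (T\eL|_{\eL^{fix}})^{anti}$ for each Lagrangian $\eL$. So a stable normal trivialization amounts to a compatible stable trivialization of the anti-invariant half.

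First I would invoke Lemma~\ref{lemma: polarization eqvt} to obtain an $\iota$-equivariant relative stable trivialization of the polarization data $(T\cY_{n,\tau_\delta}, T\eL_{\ul{c}}, T\eL_{\ul{d}})$ restricted to the fixed locus. This produces an $\iota$-equivariant short exact sequence
\[
0 \to TM|_{M^{fix}} \to \underline{\CC}^n \to \underline{\CC}^k \to 0,
\]
together with Lagrangian homotopies (through equivariant Lagrangian subbundles) from $T\eL_{\ul{c}} \oplus \underline{\RR}^k$ to $\underline{\RR}^n$ and from $T\eL_{\ul{d}} \oplus i\underline{\RR}^k$ to $i\underline{\RR}^n$, all intertwined with a fixed complex linear involution $T$ on the ambient trivial bundles. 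Then I would apply Lemma~\ref{lemma: eqvt stable trivialization} to split the whole diagram into its $\pm 1$-eigenspaces, yielding a separate stable trivialization of the anti-invariant subbundle
\[
0 \to NM^{fix} \to \underline{\CC}^{n^-} \to \underline{\CC}^{k^-} \to 0
\]
together with the restriction of the equivariant Lagrangian homotopies. This produces the desired stable isomorphism between the normal polarization $(NM^{fix}, N\eL_{\ul{c}}^{fix}, N\eL_{\ul{d}}^{fix})$ and the trivial polarization, after possibly stabilizing once more by the invariant half of a trivial bundle.

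The main obstacle, and really the only nontrivial point, is to arrange that the ambient involution $T$ on $\underline{\CC}^n$ produced by the stable trivialization is compatible with the standard real structures $\underline{\RR}^n$ and $i\underline{\RR}^n$, so that the Lagrangian subbundles also split cleanly into invariant and anti-invariant parts whose anti-invariant halves are again of the form $\underline{\RR}^{n^-}$ and $i\underline{\RR}^{n^-}$. To achieve this I would note that in each of our three cases ($\iota_A$, $\iota_B$, $\iota_D$), the natural lift of $\iota$ to the ambient coordinates on $S_n$ reviewed in the Appendix is a unitary involution that preserves the standard real structure, and the construction of the trivialization via Lemmas~\ref{lemma: trivial neighborhood} and \ref{lemma: rel stable trivialization} uses only the real locus $\Delta$ and symplectic parallel transport, both of which intertwine with $\iota$ and with the real form. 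An equivariant Gram--Schmidt procedure then diagonalizes $T$ compatibly with both the complex and the real structure, so the anti-invariant piece inherits the correct Lagrangian reductions.

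Finally, since Lemma~\ref{lemma: Manolescu lagrangians} provides an $\iota$-equivariant Lagrangian isotopy between the Seidel--Smith vanishing cycle Lagrangians $\eL_{\ul{c}}$ and the Manolescu product Lagrangians $\Sigma_{\ul{c}}$, the same stable normal trivialization transports across this isotopy. Hence the conclusion holds equally for the product Lagrangian triple $(\Hilb^{n,hor}(A_{\tau_\delta})^{fix}, \Sigma_{\ul{c}}^{fix}, \Sigma_{\ul{d}}^{fix})$, completing the proof.
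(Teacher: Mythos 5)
Your proposal is correct and follows essentially the same route as the paper: the paper's proof likewise combines Lemma~\ref{lemma: polarization eqvt} with an equivariant version of Lemma~\ref{lemma: eqvt stable trivialization} to extract the anti-invariant (normal) piece, and then transports the result to the product Lagrangians via Lemma~\ref{lemma: Manolescu lagrangians}. The compatibility of the ambient involution with the real structures, which you address explicitly, is exactly the content the paper compresses into the phrase ``an equivariant version of the arguments of Lemma~\ref{lemma: eqvt stable trivialization}.''
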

\begin{proof} It follows from Lemma ~\ref{lemma: polarization eqvt} and an equivariant version of the arguments of Lemma ~\ref{lemma: eqvt stable trivialization} that the relative normal bundle $(N\cY_{n,\tau_{\delta}}, N\eL_{\ul{c}}, N\eL_{\ul{d}})$ has a stable trivialization. By Lemma~\ref{lemma: Manolescu lagrangians} the same holds if we instead use the product Lagrangians.
\end{proof}

\appendix

\section{Comparison of coordinates}


We conclude by describing how the involutions 
\[\iota_A(u,v,z)=(u,v,-z), \quad \iota_B(u,v,z)=(-u,-v,-z), \quad \iota_D(u,v,z)=(u,-v,z)\]  
considered in this paper act on $\cY_{m,\tau}$ in the original nilpotent slice picture of Seidel and Smith. As explained in Remark~\ref{rem:asymmetry}, in the case of $\iota_A$ and $\iota_B$ this resolves an apparent asymmetry in the actions used to recover the symplectic analogs of the Stoffregen-Zhang and Lipshitz-Sarkar spectral sequences, which on the combinatorial side are extremely similar.

Recall from Section~\ref{subsec:nilpotent} that if $G=GL_{2m}(\CC)$ and $ \mathfrak{g}$ be its Lie algebra, we consider the nilpotent slice $S_{m} \subseteq \kg$ which is the affine subspace consisting of matrices
\[  \left( \begin{array}{cccccccccccccccccccccccc}
a_1   & 1 &         &  & b_1  &    &  & \\
a_2   & 0 & 1       &  & b_2  &    &  & \\
\dots &   & \dots   &   & \dots &  &    \\
a_{m-1}& &         & 1 & b_{m-1}&  &  & \\
a_m    & &         & 0 & b_m    &  &  & \\
c_1      &   &      &    & d_1     & 1  &   & \\
c_2 &     &     &    & d_2    &  0 &1    & \\
\dots & &      &     & \dots & & &\dots & 1 \\
c_m   & &       &     & d_{m}& &  & &0
\end{array} 
\right)\] 
and that for $ \tau \in \Conf^{2m}(\CC)$, we define
\begin{align}
\cY_{m,\tau}:= \chi^{-1}(\tau)
\end{align}
where $\chi$ is the adjoint quotient map $\chi \co S_m \to \Sym^{2m}(\CC)$, which sends a matrix to its set of coefficients of its characteristic polynomial. We group the elements of this matrix into matrices $A_i \in \mathfrak{gl}_2{\mathbb C}$ where

\[ A_i = \left( \begin{matrix} a_i & b_i \\ c_i & d_i \end{matrix} \right).\]
In \cite[Section 4.1]{SS10}, Seidel and Smith describe the nilpotent slice in terms of these matrices, so that
\[
A = \begin{pmatrix}
&A_1 &I &0 &0 &\cdots\\
&A_2 &0 &I &0 &\cdots\\
&\cdots\\
&A_{m-1} &0 &0 &\cdots &I\\
&A_m &0 &0 &\cdots &0
\end{pmatrix}.
\]
(In the original formulation, the condition $a_1=-d_1$ is also imposed, with the result that we may insist that $A_1 \in \mathfrak{sl}_2(\CC)$, but this difference is not important to what follows.)

Let  
\[
A(x) = x^mI - x^{m-1}A_1 -x^{m-2}A_2 - \cdots - A_m \in \mathfrak{gl}_2(\mathbb{C}[x])
\]
so that
\[
\det(xI-A) = \det(A(x)) = (x-\tau_1)\dots(x-\tau_{2m}) = p(x).
\]
The involution considered by Seidel and Smith for 2-periodic links in \cite[Section 5]{SS10} is as below, for $m$ even.
\begin{align*}
\iota \colon \cY_{m,\tau} &\to \cY_{m,\tau} \\
(A_1, A_2, \cdots, A_{m-1}, A_m) &\mapsto (-A_1, A_2, \cdots, -A_{m-1}, A_m)
\end{align*}
For their case, this corresponds to the involution induced by $\iota_A$ on the Hilbert scheme. We will now describe how this and the involution induced by $\iota_B$ on the Hilbert scheme translate into the nilpotent slice picture more generally. For a matrix $A \in M_{2\times 2}(\CC)$, let
\[
A =\begin{pmatrix}
	a &b\\
	c &d
\end{pmatrix}, \; \; \;
\widehat{A} = \begin{pmatrix}
				a &-b\\
				-c &d
\end{pmatrix}.
\]

\begin{lemma}\label{lemma:change_coord} The involutions induced by $\iota_A$ and $\iota_B$ on the horizontal Hilbert scheme correspond to the following maps on the nilpotent slice.
	\begin{enumerate}
	\item When $m$ is even, 
	\[
	\iota \colon	(A_1, A_2, \cdots, A_m) \mapsto (-A_1, A_2, \cdots, -A_{m-1}, A_m)
	\]
	corresponds to the involution induced by $\iota_A \co (u,v,z) \mapsto (u,v,-z)$.
	\item When $m$ is even
	\[
	\iota' \colon (A_1, A_2, \cdots, A_m) \mapsto (-\widehat{A}_1, \widehat{A}_2, \cdots, -\widehat{A}_{m-1}, \widehat{A}_m)
	\]
	corresponds to the involution induced by $\iota_B\co (u,v,z) \mapsto (-u,-v,-z).$
	\item When $m=2k+1$ is odd,
	\[
	\iota \colon (A_1, A_2, \cdots, A_{2k-1}, A_{2k}, A_{2k+1}) \mapsto (-A_1, A_2, \cdots, -A_{2k-1}, A_{2k}, - A_{2k+1})
	\]
	corresponds to the involution induced by $\iota_B \co (u,v,z) \mapsto (-u, -v, -z).$
	\item When $m=2k+1$ is odd,
	\[
	\iota' \colon (A_1, A_2, \cdots, A_m) \mapsto (-\widehat{A}_1, \widehat{A}_2, \cdots, -\widehat{A}_{2k-1}, \widehat{A}_{2k}, -\widehat{A}_{2k+1})
	\]
	corresponds to the involution induced by $\iota_A\co(u, v, z) \mapsto (u,v, -z).$
	\end{enumerate}
\end{lemma}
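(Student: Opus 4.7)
The approach is to pass through Manolescu's biholomorphism $\Phi \colon \cY_{m,\tau} \xrightarrow{\sim} \Hilb^{m,hor}(A_\tau)$ and compute the image of each proposed involution case by case. First I would work out, purely algebraically, how each map on the slice coordinates acts on the polynomial matrix $A(x) = x^m I - \sum_{i=1}^m x^{m-i} A_i \in \mathfrak{gl}_2(\CC[x])$. The substitution $A_i \mapsto (-1)^i A_i$ produces $A(-x)$ when $m$ is even and $-A(-x)$ when $m$ is odd, as follows by comparing the two expansions coefficient by coefficient using $(-x)^{m-i} = (-1)^{m-i} x^{m-i}$ and the parity of $m$. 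Adjoining the hat operation $A_i \mapsto \widehat{A}_i$ corresponds to conjugating the polynomial matrix by $D = \operatorname{diag}(1,-1)$, so cases (2) and (4) land at $\pm D\,A(-x)\,D^{-1}$. In each case the resulting matrix is manifestly still monic of degree $m$ with determinant $p(x)$ (using that $\tau$ is $\{z \mapsto -z\}$-symmetric), so it represents a genuine element of $\cY_{m,\tau}$.

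Next I would translate each of the three elementary operations on polynomial matrices --- substitution $x \mapsto -x$, scalar negation, and conjugation by $D$ --- into operations on the horizontal Hilbert scheme via $\Phi$. The substitution $x \mapsto -x$ permutes the roots of $p$ by $z_0 \leftrightarrow -z_0$, inducing $z \mapsto -z$ on the support of the subscheme while leaving the fibre data unchanged at matched points. At a simple root the $(u,v)$-coordinates of the corresponding point in $A_\tau$ are read off from the trace-free part of $A(z_0)$ under Manolescu's identification of the off-diagonal entries $(\beta(z_0), \gamma(z_0))$ with $(u-iv, u+iv)$; in these terms both scalar negation and conjugation by $D$ flip the sign of $(\beta, \gamma)$, so each induces $(u,v) \mapsto (-u,-v)$ on the fibre of $A_\tau \to \CC_z$.

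Combining these observations yields all four claims of the lemma. Case (1) gives only the base involution, namely $\iota_A$; case (3) gives the base involution composed with $(u,v)\mapsto(-u,-v)$, namely $\iota_B$; case (2) gives the base involution combined with conjugation by $D$, also producing $\iota_B$; and in case (4) the effects of $D$-conjugation and of scalar negation on $(u,v)$ cancel, leaving the base involution $\iota_A$. The main obstacle is simply being careful about sign conventions when identifying $(\beta(z_0), \gamma(z_0))$ with $(u-iv, u+iv)$; the rest is routine bookkeeping, and Manolescu's local rank-one standard form around each simple eigenvalue gives a direct verification of this identification.
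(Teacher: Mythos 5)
Your proposal is correct and follows essentially the same route as the paper: both compute the effect of each involution on the polynomial matrix $A(x) = x^m I - \sum_i x^{m-i}A_i$ (obtaining $\pm A(-x)$ according to the parity of $m$, with the hat operation realized as conjugation by $\mathrm{diag}(1,-1)$) and then read off the induced map on $(u,v,z)$ via Manolescu's recipe for recovering the subscheme from the off-diagonal entries, on the locus of reduced subschemes with distinct $z$-coordinates. The only step worth making fully explicit is the reduction to that open dense locus, where the paper invokes the identity theorem for holomorphic maps to extend the agreement to all of $\cY_{m,\tau}$; your appeal to the local normal form at simple eigenvalues implicitly presupposes this same reduction.
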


\begin{proof} We will sketch a proof of this lemma by understanding the change of coordinates between the Hilbert scheme picture and the nilpotent slice picture more clearly. We will work on $\mathcal{U}_m = \Sym^m(A_\tau)\setminus \Delta$, which is open in $\Hilb^{m,hor}(A_\tau)$. Note that any  $\overrightarrow{\xi} \in \Sym^m(A_\tau)\setminus \Delta$ is given by coordinates
\[
\overrightarrow{\xi} = \left \{(u_1,v_1, z_1), \dots , (u_m, v_m, z_m):  (u_i,v_i,z_i) \neq (u_j,v_j,z_j) \text{ for } i\neq j \right\}.
\]
We claim that it is sufficient to show the correspondence of the involutions over the open set $\mathcal{U}_m$. For, if the change of coordinates map from the Seidel-Smith to the Hilbert scheme coordinates is $\Phi$, let two involutions in the Seidel-Smith and Hilbert scheme coordinates be $\iota_{SS}$ and $\iota_{Man}$ respectively. Since the involutions $\iota_{SS}$ and $\iota_{Man}$ are holomorphic maps, and the change of coordinates map $\Phi$ is also holomorphic, then by the identity theorem for multiple complex variables, if $\iota_{Man} $ agrees with $\iota_{SS} \circ \Phi$ on the open set $\mathcal{U}_n$, then they must also agree on the entire complex manifold $\mathcal{Y}_{m,\tau}$.

In \cite[Section 2.1]{Manolescu:nilpotent}, the nilpotent slice is described using slightly different coordinates
\begin{align*}
A(t) &\defeq t^m -a_1' t^{m-1} + a_2' t^{m-2} \cdots + (-1)^m a_m'\\
B(t) &\defeq b_1' t^{m-1} - b_2' t^{m-2} + \cdots + (-1)^m b_m'\\
C(t) &\defeq c_1' t^{m-1} - c_2' t^{m-2} + \cdots + (-1)^m c_m'\\
D(t) &\defeq t^m -d_1' t^{m-1} + d_2' t^{m-2} \cdots + (-1)^m d_m'
\end{align*}
For  $\tau \in \Conf^{2m}(\CC)$, we have
\[
\mathcal{Y}_{m,\tau} = \{(A,B,C,D) \mid (AD - BC)(z) = (z-\tau_1) \cdots (z-\tau_{2m}) = p(z)\}
\]
(It is again the case that Manolescu imposes the slightly stricter condition $a_1=-d_1$.) Comparing this with the definition from \cite[Section 4]{SS10}, where $A_k^{ij}$ is the $ij$-th entry of the matrix $A_k$
\[
\det(A(x)) = \det \begin{pmatrix}
x^m - A_{1}^{11}x^{m-1} - A_{2}^{11}x^{m-2} - \cdots - A_m^{11}   &-A_{1}^{12}x^{m-1} - A_{2}^{12}x^{m-2} - \cdots - A_m^{12}\\
-A_{1}^{21}x^{m-1} - A_{2}^{21}x^{m-2} - \cdots + A_m^{21}		&x^m - A_{1}^{22}x^{m-1} - A_{2}^{22}x^{m-2} - \cdots - A_m^{22}
\end{pmatrix}
\]
we can identify 
\begin{align*}
A_1 &= \begin{pmatrix}
a_1' &b_1'\\
c_1' &d_1'
\end{pmatrix} \; \; \;
&A_2 &= \begin{pmatrix}
-a_2' &-b_2'\\
-c_2' &-d_2'
\end{pmatrix}\\
\cdots \\
A_{2k-1} &= \begin{pmatrix}
a_{2k-1}' &b_{2k-1}'\\
c_{2k-1}' &d_{2k-1}'
\end{pmatrix} \; \; \;
&A_{2k} &= \begin{pmatrix}
-a_{2k}' &-b_{2k}'\\
-c_{2k}' &-d_{2k}'
\end{pmatrix}
\end{align*}

From \cite[Remark 2.8]{Manolescu:nilpotent} we see that $\overrightarrow{\xi} = \{ (u_1,v_1, z_1), \cdots , (u_m, v_m, z_m)\}$ is obtained from the polynomials $A(t), B(t), C(t), D(t)$ as follows. The coordinates $z_k$ are the roots of $A(t)$, and for each $k$ we have 
\begin{align*}
u_k &= \frac{1}{2}(B+C)(z_k) \defeq U(z_k)\\
v_k &= \frac{1}{2 \sqrt{-1}}(B-C) (z_k) \defeq V(z_k)
\end{align*}
Now, when $m$ is even
\[
\iota \colon (A_1, A_2, \cdots, A_{m-1}, A_m) \mapsto (-A_1, A_2, \cdots, -A_{m-1}, A_m)
\]
corresponds to
\begin{align*}
A(t) &\mapsto A(-t)\\
B(t) &\mapsto B(-t) \;\; &U(t) \mapsto U(-t)\\
C(t) &\mapsto C(-t) \;\; &V(t) \mapsto V(-t)\\
D(t) &\mapsto D(-t)
\end{align*}
Thus, $\iota$ maps $z_k$ to $-z_k$. We see that
\begin{align*}
U(-(-z_k))= U(z_k) =u_k\\
V(-(-z_k))= V(z_k) = v_k
\end{align*}
so $\iota$ corresponds to the involution induced the Hilbert scheme by $(u,v,z) \mapsto (u,v,-z)$. Similarly, for $m$ odd, $m=2k+1$
\[
\iota \colon (A_1, A_2, \cdots, A_{2k-1}, A_{2k}, A_{2k+1}) \mapsto (-A_1, A_2, \cdots, -A_{2k-1}, A_{2k}, - A_{2k+1})
\]
corresponds to
\begin{align*}
A(t) &\mapsto -A(-t)\\
B(t) &\mapsto -B(-t) \;\; &U(t) \mapsto -U(-t)\\
C(t) &\mapsto -C(-t) \;\; &V(t) \mapsto -V(-t)\\
D(t) &\mapsto -D(-t)
\end{align*}
which is the involution induced by $(u,v,z) \mapsto (-u,-v,-z)$ on the Hilbert scheme. We can also check that when $m$ is even
\[
\iota' \colon (A_1, A_2, \cdots, A_m) \mapsto (-\widehat{A}_1, \widehat{A}_2, \cdots, -\widehat{A}_{m-1}, \widehat{A}_m)
\]
corresponds to 
\begin{align*}
A(t) &\mapsto A(-t)\\
B(t) &\mapsto -B(-t) \;\; &U(t) \mapsto -U(-t)\\
C(t) &\mapsto -C(-t) \;\; &V(t) \mapsto -V(-t)\\
D(t) &\mapsto D(-t)
\end{align*}
which is induced by the involution $(u,v,z) \mapsto (-u,-v,-z)$ on the Hilbert scheme, and for $m=2k+1$ odd, 
\[
\iota' \colon (A_1, A_2, \cdots, A_m) \mapsto (-\widehat{A}_1, \widehat{A}_2, \cdots, -\widehat{A}_{2k-1}, \widehat{A}_{2k}, -\widehat{A}_{2k+1})
\]
corresponds to
\begin{align*}
A(t) &\mapsto -A(-t)\\
B(t) &\mapsto B(-t)\;\; &U(t) \mapsto U(-t)\\
C(t) &\mapsto C(-t)\;\; &V(t) \mapsto V(-t)\\
D(t) &\mapsto -D(-t)
\end{align*}
which is induced by the involution $(u,v,z) \mapsto (u,v,-z)$ on the Hilbert scheme. \end{proof}

We now consider the behavior of the final involution. For a matrix $A \in M_{2\times 2}(\CC)$, let $A^\intercal$ denote the transpose of $A$.

\begin{lemma}\cite[Section 4.1]{SS10}
The involution $\iota_D$ on $\Hilb^{m,hor}(A_\tau)$ induced by $(u,v,z)\mapsto (u, -v, z)$ from Section \ref{sec:knotfloer} corresponds in the Seidel-Smith coordinates to the involution
\[
(A_1, A_2, \cdots, A_{m-1}, A_m) \mapsto (A_1^{\intercal},A_2^{\intercal}, \cdots, A_{m-1}^{\intercal}, A_m^{\intercal}).
\]
\end{lemma}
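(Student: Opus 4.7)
The plan is to follow the strategy used in the proof of Lemma~\ref{lemma:change_coord}: reduce the verification to the open dense subset $\mathcal{U}_m = \Sym^m(A_{\tau}) \setminus \Delta \subseteq \Hilb^{m,hor}(A_{\tau})$ by the identity theorem for holomorphic maps, and then unwind Manolescu's change of coordinates explicitly on this subset. Since both the transposition map on the nilpotent slice and the involution $\iota_D$ act holomorphically on $\Hilb^{m,hor}(A_{\tau})$, it suffices to check that they agree on $\mathcal{U}_m$.

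First I would record how the transposition $(A_1,\dots,A_m) \mapsto (A_1^{\intercal}, \dots, A_m^{\intercal})$ acts in Manolescu's polynomial coordinates $(A(t), B(t), C(t), D(t))$. Transposition swaps the off-diagonal entries $b_k' \leftrightarrow c_k'$ in each matrix $A_k$ while fixing the diagonal entries $a_k'$ and $d_k'$ (the identification of entries, including signs, being exactly as in the proof of Lemma~\ref{lemma:change_coord}). This immediately gives
\[
A(t) \mapsto A(t), \quad B(t) \mapsto C(t), \quad C(t) \mapsto B(t), \quad D(t) \mapsto D(t),
\]
and hence $U(t) = \tfrac{1}{2}(B+C)(t) \mapsto U(t)$, while $V(t) = \tfrac{1}{2\sqrt{-1}}(B-C)(t) \mapsto -V(t)$.

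Next I would use the recipe in \cite[Remark 2.8]{Manolescu:nilpotent}, recalled in the proof of Lemma~\ref{lemma:change_coord}, which reconstructs a point $\overrightarrow{\xi} = \{(u_k, v_k, z_k)\}_{k=1}^{m} \in \mathcal{U}_m$ from the polynomials by taking the $z_k$ to be the roots of $A(t)$ with $u_k = U(z_k)$ and $v_k = V(z_k)$. Because $A(t)$ is fixed by transposition, the $z_k$ are unchanged; combined with the action on $U$ and $V$ above, this yields $u_k \mapsto u_k$, $v_k \mapsto -v_k$, $z_k \mapsto z_k$, which is precisely the involution $\iota_D$ on $\mathcal{U}_m$.

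Finally, invoking the identity theorem as at the start of the proof of Lemma~\ref{lemma:change_coord} extends this agreement from $\mathcal{U}_m$ to the whole of $\Hilb^{m,hor}(A_{\tau})$. I do not expect any essential obstacle: the computation is a close parallel of the four cases handled in Lemma~\ref{lemma:change_coord}, with the swap $B \leftrightarrow C$ induced by transposition playing the role of the various sign changes that appeared there.
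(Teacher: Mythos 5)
Your proposal is correct and matches the paper's own argument: the paper likewise checks that transposition sends $(A(t),B(t),C(t),D(t))$ to $(A(t),C(t),B(t),D(t))$, hence fixes $U$ and negates $V$, recovers the action $(u_k,v_k,z_k)\mapsto(u_k,-v_k,z_k)$ via Manolescu's Remark 2.8 on $\mathcal{U}_m$, and extends by the identity theorem exactly as you do. No gaps.
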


\begin{proof}
Going through the intermediate coordinates $A(t)$, $B(t)$, $C(t)$, and $D(t)$ as in Lemma \ref{lemma:change_coord}, we see that
\[
(A_1, A_2, \cdots, A_{m-1}, A_m) \mapsto (A_1^{\intercal},A_2^{\intercal}, \cdots, A_{m-1}^{\intercal}, A_m^{\intercal})
\]
corresponds to 
\begin{align*}
A(t) &\mapsto A(t)\\
B(t) &\mapsto C(t) \;\; &U(t) \mapsto U(t)\\
C(t) &\mapsto B(t) \;\; &V(t) \mapsto -V(t)\\
D(t) &\mapsto D(t)
\end{align*}
As before, from \cite[Remark 2.8]{Manolescu:nilpotent}, $\overrightarrow{\xi} = \{ (u_1,v_1, z_1), \cdots , (u_m, v_m, z_m) \}$ is obtained from $A(t), B(t), U(t)$, and $V(t)$ such that the elements $z_k$ are the roots of $A(t)$ and

\begin{align*}
u_k &= \frac{1}{2}(B+C)(z_k) \defeq U(z_k)\\
v_k &= \frac{1}{2 \sqrt{-1}}(B-C) (z_k) \defeq V(z_k)
\end{align*}

Thus we see that the involution defined above on the Seidel-Smith nilpotent slice is induced by the involution $(u,v,z) \mapsto (u, -v, z)$ on the Hilbert scheme in the Manolescu reformulation. Again, we have only shown this correspondence for the open set $\mathcal{U}_n \subset \mathcal{Y}_n$, but this is sufficient as we are working with holomorphic involutions and a holomorphic change of coordinates, so the identity theorem applies.
\end{proof}

\bibliographystyle{amsalpha}
\bibliography{biblio}

\end{document}